\documentclass{amsart}

\usepackage[T1]{fontenc}
\usepackage[utf8]{inputenc}

\usepackage{amssymb,amsmath,amsfonts,amsthm,mathrsfs,mathtools,amscd,tikz-cd,dirtytalk,comment}
\usepackage[hidelinks]{hyperref}
\usepackage[capitalise]{cleveref}

\newtheorem{theorem}{Theorem}[section]
\newtheorem{lemma}[theorem]{Lemma}
\newtheorem{proposition}[theorem]{Proposition}
\newtheorem{corollary}[theorem]{Corollary}

\theoremstyle{definition}
\newtheorem{definition}[theorem]{Definition}
\newtheorem{example}[theorem]{Example}

\theoremstyle{remark}
\newtheorem{remark}[theorem]{Remark}

\numberwithin{equation}{section}


\DeclareMathOperator{\Aut}{Aut}

\DeclareMathOperator{\id}{id}

\DeclareMathOperator{\Inj}{Inj}

\DeclareMathOperator{\Ret}{Ret}
\DeclareMathOperator{\Soc}{Soc}
\DeclareMathOperator{\Perm}{Perm}

\DeclareMathOperator{\Triv}{Triv}

\DeclareMathOperator{\mpl}{mpl}
\DeclareMathOperator{\Stab}{Stab}



\newcommand{\N}{\mathbb{N}}
\newcommand{\Z}{\mathbb{Z}}
\newcommand{\Q}{\mathbb{Q}}

\newcommand{\cO}{\mathcal{O}}
\newcommand{\M}{\mathcal{M}}


\newcommand{\G}{\mathcal{G}}

\newcommand{\AbBr}{\mathbf{AbBr}}
\newcommand{\AbSol}{\mathbf{AbSol}}

\setcounter{tocdepth}{1}

\begin{document}

\title[Studying solutions of the YBE through skew braces]{Studying solutions to the Yang--Baxter equation through skew braces, with an application to indecomposable involutive solutions with abelian permutation group}

\author{M.~Castelli}

\address{Lecce, Italy}         
\email{marco.castelli@unisalento.it - marcolmc88@gmail.com}
\urladdr{https://www.researchgate.net/profile/Marco-Castelli-3}

\thanks{The author is a member of the GNSAGA (INdAM)}

\author{S.~Trappeniers}

\address{Department of Mathematics and Data Science, 
 Vrije Universiteit Brussel, 
 Pleinlaan 2, 
 1050 Brussels, Belgium} 

\email{senne.trappeniers@vub.be} 
\urladdr{https://sites.google.com/view/sennetrappeniers/}

\thanks{The author was supported by Fonds Wetenschappelijk Onderzoek – Vlaanderen, grant 1160522N}

\subjclass[2020] {16T25, 20N99, 81R50} 

\keywords{Yang--Baxter equation, skew brace, set-theoretic solution, indecomposable solution, multipermutation solution}

\begin{abstract}
We connect properties of set-theoretic solutions to the Yang--Baxter equation to properties of their permutation skew brace. In particular, a variation of the multipermutation level of a solution is presented and we show that it coincides with the multipermutation level of the permutation skew brace, contrary to the inequality that one has for the usual multipermutation level of solutions. We relate the number of orbits of a solution to generators of its permutation skew brace and relate different kinds of notions of generating sets of a skew brace. Also, the automorphism groups of solutions are studied through their permutation skew brace. As an application, we obtain a surprising result on subsolutions of multipermutation solutions and we give a description of all finite indecomposable involutive solutions to the Yang--Baxter equation with abelian permutation group. For multipermutation level 3, we obtain the precise number of isomorphism classes of such solutions of a given size.
\end{abstract}

\maketitle

\section{Introduction}

Motivated by the study of set-theoretic solutions to the Yang-Baxter equation, suggested in 1992 by Drinfel'd \cite{drinfeld1992some}, a considerable part of the literature has been devoted to the study of skew braces, algebraic structures introduced by Rump in \cite{rump2007braces} and generalised in a non-commutative setting by Guarnieri and Vendramin in \cite{GV17}. As highlighted in the theory developed in \cite{Ba18,bachiller2016solutions,rump2007braces}, the classification of skew braces is a milestone for the classification of non-degenerate set-theoretic solutions to the Yang-Baxter equation (which we will simply call solutions); indeed, skew braces provide solutions and, conversely, every solution arises from a skew brace. Even before the introduction of skew braces, several authors focused on the study of solutions to the Yang-Baxter equation. The seminal papers of Gateva-Ivanova and Van den Bergh \cite{gateva1998semigroups} and Etingof, Schedler and Soloviev \cite{etingof1998set} attracted attention on the involutive solutions. In this direction, a successful approach comes from \cite[Section 2]{etingof1998set}, in which the authors developed an extension-tool that allows to construct every involutive non-degenerate solution starting from the so called indecomposable solutions. In light of this fact, the classification of indecomposable involutive solutions is the first step in the classification-problem of involutive solutions. 
In the last years, several authors developed different techniques to study indecomposable solutions.
In \cite[Theorem 2.13]{etingof1998set} all the indecomposable involutive solutions with a prime number of elements were classified using bijective $1$-cocycles. In \cite{cacsp2018}, the first author, with Catino and Pinto, developed a cycle set theory for indecomposable involutive solutions. Also, Jedlička, Pilitowska and Zamojska-Dzienio used $2$-reductive left-quasigroups (see also \cite{JePiZa}) to describe all the indecomposable involutive solutions with multipermutation level at most $2$ \cite{JePiZa22x} and, among these ones, to classify those having arbitrary cardinality and abelian permutation group \cite{JPZ2021}.
However, the theory of braces allows to obtain further strong results in this field. Indeed, in \cite{cedo2022indecomposable} Ced\'o and Okni\'nski described indecomposable involutive solutions having square-free size by their permutation brace, while in \cite{cedo2020primitive}, together with Jespers, they classified involutive non-degenerate solutions having primitive permutation group. In \cite{rump2020,smock}, the authors prove several results relating indecomposable involutive multipermutation solutions and one-generated braces. Using this theory, indecomposable involutive solutions having cyclic permutation group were completely classified in \cite{jedlivcka2021cocyclic}. Braces turned out to be particularly useful to understand a particular class of indecomposable solutions, the so-called simple solutions: indeed, these solutions were recently studied by means of braces in \cite{cedo2021constructing,cedo2022new} and completely described in \cite{castelli2022characterization}. For further recent results on indecomposable involutive solutions, we refer the reader to \cite{ramirez2022decomposition,ramirez2022indecomposable,dietzel2023indecomposable,lebed2022involutive}. Some authors recently started the study of non-involutive indecomposable solutions by means of tools that are suitable generalizations of the ones used in the involutive case; for some references, see \cite{EtSoGu01,castelli2022simplicity,Ga21,colazzo2022derived,CSV19,CJKVAV23}.

The results in this paper are driven by two questions, the first one being one of the central questions in most current research on skew braces and the Yang--Baxter equation.
\begin{enumerate}
    \item What information from a solution can be recovered from its permutation skew brace and vice versa?
    \item Can we classify indecomposable involutive solutions with abelian permutation group?
\end{enumerate}
After covering important definitions, constructions and results in \cref{section: preliminaries}, we obtain some first new results in \cref{section: variation mpl}. Here we introduce a slight variation of the multipermutation level of a solution. For the main result of this section, \cref{theorem: mpl of permutation skew brace is mpl of solution}, we show that this variation can be related in a exact way with the multipermutation level of the permutation skew brace, contrary to the usual multipermutation level. In \cref{section: generators of skew brace} we relate the existence of transitive cycle bases of a skew brace, which are inherently connected to indecomposable solutions, to results concerning the generators of the skew brace. In this way, we are able to extend results of \cite{smock,rump2020one} to the non-involutive case and also link the existence of cycle bases to the weight of a skew brace, as introduced in \cite{JKVAV2021}. As a surprising application of the results on generators of skew braces, in \cref{section: an application to mp solutions} we obtain that an indecomposable solution of finite multipermutation level has no subsolutions. This implies that all endomorphisms of a finite indecomposable solution of finite multipermutation level are bijective, providing a strong generalisation of \cite[Proposition 5.1]{JPZ2021}. In \cref{section: automorphism group}, we look at the automorphism group of indecomposable involutive solutions with a regular permutation group. We obtain obtain both results on the order and the structure of the automorphism group, extending some results contained in \cite{JPZ2021}. For those indecomposable involutive solutions with an abelian permutation group and multipermutation level 2, we prove that their automorphism group is isomorphic to the additive group of the permutation brace. In \cref{section: finite indecomposable solutions}, we then turn to the second question mentioned above. Here we associate to certain matrices a finite indecomposable involutive solution with abelian permutation group and we show that every such solution can be obtained in this way. However, solutions coming from different matrices might give isomorphic solution. In the case where the multipermutation level is 2 or 3, we are able to take into account these isomorphisms and count explicitly the number of isomorphism classes, extending the enumeration and classification results in \cite{JPZ2021}. At last, in \cref{section: infinite solutions} we use some of the techniques of the preceding section to study also infinite indecomposable involutive solution with abelian permutation group and finite multipermutation level. We prove that every infinite indecomposable involutive solution with abelian permutation group and multipermutation level 2 is isomorphic to one of the examples constructed in \cite{JPZ2021} and obtain results on infinite indecomposable involutive solution with torsion-free  abelian permutation group.

\section{Preliminaries}\label{section: preliminaries}
A \emph{skew (left) brace} is a triple $(A,+,\circ)$ where $A$ is a set, $+$ and $\circ$ are binary operations such that $(A,+)$ and $(A,\circ)$ are groups, and the equality 
\begin{equation}
    a\circ(b+c)=a\circ b-a+a\circ c,\label{eq: left brace}
\end{equation}
holds for all $a,b,c\in A$. Here, $-a$ denotes the inverse of $a$ in the group $(A,+)$ and $\circ$ has precedence over $+$. The inverse of an element $a\in A$ with respect to the group $(A,\circ)$ is denoted by $a^-$. The group $(A,+)$ is the \emph{additive group} of A and $(A,\circ)$ is the \emph{multiplicative group} of $A$. It follows directly from \eqref{eq: left brace} that the identity elements of $(A,+)$ and $(A,\circ)$ coincide, this element is denoted by $0$. For skew braces $A$ and $B$, a map $f:A\to B$ is a \emph{skew brace homomorphism} if $f(a+b)=f(a)+f(b)$ and $f(a\circ b)=f(a)\circ f(b)$ for all $a,b\in A$. The group of skew brace automorphisms of $A$ is denoted by $\Aut(A,+,\circ)$. A skew brace with an abelian additive group is called a \emph{brace}. If for a skew brace $A$, also 
\begin{equation*}
    (a+b)\circ c=a\circ c-c+b\circ c,
\end{equation*}
for all $a,b,c\in A$, then we say that $A$ is a \textit{two-sided skew brace}. Note that any skew brace with an abelian multiplicative group is in particular two-sided.
For any group $(A,\circ)$ we can make $A$ into a two-sided skew brace $(A,\circ,\circ)$. Such a skew brace, where the two operations coincide, is called \textit{trivial}.

For any $a\in A$, the map 
\begin{equation*}
    \lambda_a:A\to A,\quad b\mapsto -a+a\circ b,
\end{equation*}
is an automorphism of $(A,+)$ and moreover we obtain a group homomorphism 
\begin{equation*}
    \lambda:(A,\circ)\to \Aut(A,+),\quad a\mapsto \lambda_a,
\end{equation*}
which is called the \emph{$\lambda$-map} of $A$ \cite{GV17}.
If also $\lambda:(A,+)\to \Aut(A,+)$ is a group homomorphism, then we say that $A$ is $\lambda$-homomorphic. We say that a skew brace $(A,+,\circ)$ is a \textit{bi-skew} brace if also $(A,\circ,+)$ is a skew brace. Bi-skew braces are precisely those skew braces such that $\lambda_a$ is a skew brace automorphism for all $a\in A$.

A subgroup $I$ of $(A,+)$ such that $\lambda_a(I)\subseteq I$ for all $a\in A$, is a \emph{left ideal}. A \textit{strong left ideal} is a left ideal which is moreover a normal subgroup of $(A,+)$.
An \emph{ideal} of $A$ is a strong left ideal that moreover is a normal subgroup of $(A,\circ)$. For an ideal $I$, the \emph{quotient skew brace} $A/I$ can be defined in the natural way. An important example of an ideal of a skew brace $A$ is the socle, 
$$\Soc(A)=\{x\in A\mid x\circ a=x+a=a+x\ \text{for all $a\in A$}\}.$$
If $A$ is a brace, then $\Soc(A)=\ker \lambda$. More generally, we can define the socle series of a skew brace $A$ by $\Soc_0(A)=0$ and $\Soc_{n+1}(A)$ is the ideal containing $\Soc_n(A)$ such that $\Soc_{n+1}(A)/\Soc_n(A)=\Soc(A/\Soc_n(A))$ for $n\geq 0$. A skew brace $A$ is said to be \textit{multipermutation} skew brace if there exists some $n$ such that $\Soc_n(A)=A$. In \cite{CJKVAV23} the smallest such $n$, if it exists, is called the \textit{socle length} of $A$. We will however call this the \textit{multipermutation level} of $A$ and denote it by $\mpl(A)$. The reason is the following, and will be more clear after we introduce the multipermutation level of a solution: we can define the \textit{retraction} of a skew brace as $\Ret(A)=A/\Soc(A)$. By induction we let $\Ret^0(A)=A$ and $\Ret^{n+1}(A)=\Ret(\Ret^{n}(A))$ for $n\in \N$. Then clearly $\Ret^n(A)\cong A/\Soc_n(A)$ and thus $\Ret^n(A)=0$ if and only if $\Soc_n(A)=A$.

It is easily verified that $\theta_{(a,b)}(x)=a+\lambda_b(x)-a$ yields a well-defined action $\theta:(A,+)\rtimes (A,\circ)\to \Aut(A,+)$. In the semidirect product, $(A,\circ)$ acts on $(A,+)$ by $\lambda$. Then we can see strong left ideals of $A$ as the subgroups of $(A,+)$ mapped to themselves under the action of $\theta$. Also, the socle of $A$ contains precisely those elements $x\in A$ such that $(x,0),(0,x)\in \ker \theta$.

On any skew brace $A$ we define a new operation $*$ as $a*b=-a+a\circ b-b$, which can be seen as a measure of the difference between $a\circ b$ and $a+b$. If $A$ is a two-sided brace, then $(A,+,*)$ is a Jacobson radical ring, which by definition is a ring which coincides with its own Jacobson radical. Conversely, every Jacobson radical ring $(A,+,*)$ gives a two-sided brace $(A,+,\circ)$ where $a\circ b=a+a*b+b$ \cite{rump2007braces}. The ideals of a two-sided brace coincide with the ideals of its associated Jacobson radical ring. Also for a general skew brace $A$, this operation $*$ is useful: for $X,Y\subseteq A$ we define $X*Y$ as the additive subgroup of $A$ generated by $\{x*y\mid x\in X,y\in Y\}$. Following \cite{rump2007braces,Smo18}, we introduce two descending series of subskew braces. We set $A^{(1)}=A^1=A$. The left series of a skew brace $A$ is given by $A^{n+1}=A*A^n$, where $n\geq 1$. $A$ is said to be \emph{left nilpotent} if $A^{n }=0$ for some $n$. Similarly, the right series of a skew brace $A$ is defined as $A^{(n+1)}=A^{(n)}*A$, where $n\geq 1$. All $A^{(n)}$, so in particular $A^{(2)}=A^2$, are ideals. $A$ is said to be \emph{right nilpotent} if $A^{(n)}=0$ for some $n$. Similarly, note that for two-sided braces, left and right nilpotency coincide with nilpotency of the associated Jacobson radical ring. In \cite{CSV19} it is proved that a skew brace is multipermutation if and only if it is right nilpotent and its additive group is nilpotent.  By results in \cite{Car20,ST23} we have the following proposition.

\begin{proposition}
    For a brace $A$, the following are equivalent:
\begin{enumerate}
\item $A$ is a bi-skew brace,
\item $A$ is $\lambda$-homomorphic,
\item $\mpl(A)\leq 2$,
\item $A^{(3)}=0$,
\item $\lambda_a$ is a skew brace automorphism for each $a\in A$.
\end{enumerate}
\end{proposition}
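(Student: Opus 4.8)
The plan is to reduce everything to statements about the $\lambda$-map, using that in a brace $\Soc(A)=\ker\lambda$, that $a\circ b=a+b+a*b$ and $x*y=\lambda_x(y)-y$, and---since $\lambda\colon(A,\circ)\to\Aut(A,+)$ is a group homomorphism with $\lambda_0=\id$---the identity $a+b=a\circ\lambda_{a^-}(b)$, which yields $\lambda_{a+b}=\lambda_a\lambda_{\lambda_{a^-}(b)}$. I also use that $A^2=A*A$ is an ideal, hence a $\lambda_c$-invariant subgroup of $(A,+)$ for every $c\in A$.

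I would first prove $(3)\Leftrightarrow(4)$. For a brace $B$ we have $\Soc(B)=\ker\lambda$, so $\Soc(B)=B$ holds exactly when $\lambda_x=\id$ for all $x$, i.e.\ when $\circ=+$ on $B$, i.e.\ when $B$ is trivial. Applying this to $B=A/\Soc(A)$ shows that $\mpl(A)\le 2$ holds precisely when $a*b\in\Soc(A)$ for all $a,b\in A$, that is, when $A*A\subseteq\ker\lambda$. On the other hand, $A^{(3)}=A^2*A=(A*A)*A$ is the additive subgroup generated by the elements $z*c=\lambda_z(c)-c$ with $z\in A*A$ and $c\in A$, so $A^{(3)}=0$ if and only if $\lambda_z=\id$ for every $z\in A*A$, i.e.\ again $A*A\subseteq\ker\lambda$. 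Hence $(3)\Leftrightarrow(4)$.

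Next I would prove $(2)\Leftrightarrow(4)$. For $(2)\Rightarrow(4)$, compare the always-valid identity $\lambda_{a\circ b}=\lambda_a\lambda_b$ with $\lambda_{a\circ b}=\lambda_{a+b+a*b}=\lambda_a\lambda_b\lambda_{a*b}$, the last equality because $A$ is $\lambda$-homomorphic; this forces $\lambda_{a*b}=\id$ for all $a,b$, so $A*A\subseteq\ker\lambda$. For $(4)\Rightarrow(2)$, assume $A*A\subseteq\ker\lambda$. Then $\lambda_{a+z}=\lambda_a$ for every $z\in A^2$: indeed $a+z=a\circ\lambda_{a^-}(z)$ with $\lambda_{a^-}(z)\in A^2\subseteq\ker\lambda$, so $\lambda_{a+z}=\lambda_a\lambda_{\lambda_{a^-}(z)}=\lambda_a$. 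Applying this with $z=a^-*b\in A^2$ to the element $\lambda_{a^-}(b)=b+a^-*b$ gives $\lambda_{\lambda_{a^-}(b)}=\lambda_b$, and hence $\lambda_{a+b}=\lambda_a\lambda_{\lambda_{a^-}(b)}=\lambda_a\lambda_b$. Together with the previous step this gives $(2)\Leftrightarrow(3)\Leftrightarrow(4)$.

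It remains to handle the bi-skew brace conditions. Now $(1)\Leftrightarrow(5)$ is the characterisation of bi-skew braces recalled in \cref{section: preliminaries}, so it suffices to link $(5)$ with $(2)$. Since each $\lambda_a$ already lies in $\Aut(A,+)$, condition $(5)$ says exactly that each $\lambda_a$ preserves $\circ$ as well, and comparing $\lambda_a(x\circ y)$ with $\lambda_a(x)\circ\lambda_a(y)$---using $x\circ y=x+\lambda_x(y)$ and cancelling $\lambda_a(x)$---shows this is equivalent to the operator identity $\lambda_a\lambda_x=\lambda_{\lambda_a(x)}\lambda_a$ for all $a,x\in A$. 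Granting this identity and combining $\lambda_{a+b}=\lambda_a\lambda_{\lambda_{a^-}(b)}$ with the identity for the pair $(a^-,b)$ (and $(\lambda_{a^-})^{-1}=\lambda_a$), one gets $\lambda_{\lambda_{a^-}(b)}=\lambda_{a^-}\lambda_b\lambda_a$, hence $\lambda_{a+b}=\lambda_a\lambda_{a^-}\lambda_b\lambda_a=\lambda_b\lambda_a$; since $(A,+)$ is abelian, exchanging $a$ and $b$ also gives $\lambda_{a+b}=\lambda_{b+a}=\lambda_a\lambda_b$, so $(5)\Rightarrow(2)$. Conversely $(2)$ makes all $\lambda_a$ commute---again $\lambda_a\lambda_b=\lambda_{a+b}=\lambda_{b+a}=\lambda_b\lambda_a$ by abelianness---and $(2)\Rightarrow(4)$ gives $a*x\in\ker\lambda$, so $\lambda_{\lambda_a(x)}=\lambda_{x+a*x}=\lambda_x$ and thus $\lambda_a\lambda_x=\lambda_x\lambda_a=\lambda_{\lambda_a(x)}\lambda_a$, i.e.\ $(2)\Rightarrow(5)$. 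I expect the only delicate points to be the bookkeeping in $(3)\Leftrightarrow(4)$---passing carefully between the conditions that $A/\Soc(A)$ is trivial, that $A*A\subseteq\Soc(A)$, and that $A^{(3)}=0$---and keeping track of where abelianness of $(A,+)$ enters: it is exactly what makes $\lambda_{a+b}=\lambda_{b+a}$ available and forces the $\lambda_a$ to commute, which is why the equivalence of $(2)$--$(5)$ is special to braces.
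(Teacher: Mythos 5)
Your proof is correct. Note, however, that the paper does not actually prove this proposition: it is stated without proof as a consequence of results in the cited references (Caranti; Stefanello--Trappeniers), so there is no in-paper argument to compare yours against. Your self-contained chain of equivalences is a valid substitute. The reductions $(3)\Leftrightarrow(4)\Leftrightarrow(2)$ via the single condition $A*A\subseteq\ker\lambda$ are clean: you correctly use that $\mpl(A)\le 2$ means $A/\Soc(A)$ is trivial, that $\Soc(A)=\ker\lambda$ for a brace, and that $A^{(3)}=0$ says exactly $\lambda_z=\id$ for $z\in A^2$. The two places that genuinely need care are both handled: in $(4)\Rightarrow(2)$ you need $\lambda_{a^-}(z)\in\ker\lambda$ for $z\in A^2$, which you get from $A^2$ being a left ideal (equivalently one could use that $\Soc(A)$ is a left ideal); and in $(5)\Leftrightarrow(2)$ the commutativity of $(A,+)$ is exactly what lets you play $\lambda_{a+b}=\lambda_b\lambda_a$ against $\lambda_{b+a}=\lambda_a\lambda_b$, which is why the equivalence with $(1)$ and $(5)$ is a brace phenomenon rather than a general skew brace one --- a point you correctly flag. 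The identity $\lambda_{a+b}=\lambda_a\lambda_{\lambda_{a^-}(b)}$, obtained from $a+b=a\circ\lambda_{a^-}(b)$, does all the work and is the standard device for such arguments.
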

We say that a skew brace is \textit{annihilator nilpotent} if it is left nilpotent and multipermutation. This is not the original definition as in \cite{JVAV22}, but coincides with their definition because of \cite[Corollary 2.15]{JVAV22}.

A \emph{set-theoretic solution to the Yang--Baxter equation} is a pair $(X,r)$ with $X$ a nonempty set and $r\colon X^2\to X^2$ a bijective map satisfying the braid equation
\begin{equation*}
    r_1r_2r_1=r_2r_1r_2,
\end{equation*}
where $r_1=r\times \id_X$ and $r_2=\id_X\times r$. On every nonempty set $X$, the map $(x,y)\mapsto (y,x)$ satisfies the braid equation, such a solution is said to be \emph{trivial}. A solution is \emph{non-degenerate} if the maps $\sigma_x,\tau_x\colon X\to X$ defined by $r(x,y)=(\sigma_x(y),\tau_y(x))$ are bijective. In the rest of this section, we will shortly refer to a non-degenerate set-theoretic solution to the Yang--Baxter equation as a \emph{solution}. Furthermore, we say that a solution is \emph{involutive} if $r^2=\id$. For an involutive solution $(X,r)$, the maps $\tau_x$ are completely determined by the maps $\sigma_x$ as $\tau_x(y)=\sigma^{-1}_{\sigma_x(y)}(x)$ for all $x,y\in X$. It therefore suffices to give only the maps $\sigma_x$ when describing an involutive solution.

Given two solutions $(X,r)$ and $(Y,s)$, we say that a map $f\colon X\to Y$ is a \emph{homomorphism of solutions} if $(f\times f)r=s(f\times f)$. One can prove that in this case the image of $f$ is a subsolution of $(Y,s)$, meaning that $s$ restricts to $f(X)\times f(X)$. A bijective homomorphism of solutions is called an \emph{isomorphism of solutions}.

Given a skew brace $A$, define 
\begin{equation*}
    r_A\colon A^2\to A^2, \quad (a,b)\mapsto (\lambda_a(b),\lambda^{-1}_{\lambda_a(b)}(-(a\circ b)+a+(a\circ b))).
\end{equation*}
Then the pair $(A,r_A)$ is a solution~\cite{LYZ00,GV17}. This construction is functorial: a skew brace homomorphism $f\colon A\to B$ induces a homomorphism of solutions $f\colon (A,r_A)\to (B,r_B)$. 

Following~\cite{etingof1998set} we define the \emph{structure group} of a solution $(X,r)$ as 
\begin{equation*}
    G(X,r)=\langle X\mid x\circ y=\sigma_x(y)\circ \tau_y(x) \text{ for all }x,y\in X\rangle.
\end{equation*}
For each solution $(X,r)$, one can also define its \emph{derived structure group} as
\begin{equation*}
    A(X,r)=\langle X\mid x+ y=y+ \sigma_y\tau_{\sigma^{-1}_x(y)}(x)\rangle.
\end{equation*}
It is possible to construct a bijection between $A(X,r)$ and $G(X,r)$ such that, transferring the group structure of $A(X,r)$ to $G(X,r)$, one obtains a skew brace $(G(X,r),+,\circ)$. This is a brace if and only if $(X,r)$ is involutive; see~\cite{Sol00,LV19}. Under this bijection, the generator of $A(X,r)$ corresponding to some element $x\in X$ is mapped to the generator of $G(X,r)$ corresponding to $x$. We define $\iota\colon X\to G(X,r)$ as the canonical map sending each element to its corresponding generator. We therefore find that $\iota(X)\subseteq (G(X,r),+,\circ)$ generates both the additive and multiplicative group. Moreover, the skew brace $(G(X,r),+,\circ)$ satisfies the property that $\iota$ is a homomorphism of solutions $\iota\colon (X,r)\to (G(X,r),r_{G(X,r)})$. In particular, this means that for all $x,y\in X$ the equality $\iota(\sigma_x(y))=\lambda_{\iota(x)}{(\iota(y))}$ holds in $G(X,r)$. A solution is \emph{injective} if $\iota$ is an injective map and one can show that involutive solutions are always injective. The map $\iota$ also satisfies the following universal property: if $A$ is any skew brace and $f:(X,r)\to (A,r_A)$ is a homomorphism of solutions, then there exists a unique skew brace homomorphism $\overline{f}:G(X,r)\to A$ such that $f=\overline{f}\pi$. From the universal property, we see that injective solutions are those that appear as a subsolution of $(A,r_A)$ for some skew brace $A$. To any solution $(X,r)$ one can associate its \emph{injectivization} $\Inj(X,r)$, which is the image of the homomorphism $\iota\colon(X,r)\to (G(X,r),r_{G(X,r)})$. It is clear that $G(\Inj(X,r))$ and $G(X,r)$ are isomorphic as skew braces and therefore $\Inj(X,r)$ is indeed an injective solution. 

The group $(G(X,r),\circ)$ acts on $X$, where $\iota(x)$ maps $y$ to $\sigma_x(y)$ for all $x,y\in X$. Also the group $(G(X,r),+)$ acts on $X$, where $\iota(x)$ maps $\sigma_x\tau_{\sigma^{-1}_y(x)}(y)$ to $y$. These actions are compatible in the sense that we can combine them into an action of $(G(X,r),+)\rtimes (G(X,r),\circ)$ on $X$, where $(G(X,r),\circ)$ acts on $(G(X,r),+)$ by $\lambda$. Note that if $(X,r)$ is injective and we identify $\iota(X)$ with $X$, then the action $\theta$ of $(G(X,r),+)\rtimes (G(X,r),\circ)$ on $G(X,r)$ restricts to $\iota(X)$ and this restriction coincides with the above action. Orbits of the above action are simply referred to as the \textit{orbits of} $(X,r)$. A solution $(X,r)$ is \textit{indecomposable} if $X$ consists of a single orbit. This coincides with the definition of indecomposable solutions in \cite{EtSoGu01}, although we work with a slightly different action. A possibly more intuitive way to view orbits of a solutions $(X,r)$ is as the smallest partition of $X$ which is preserved under all maps $\sigma_x,\sigma_x^{-1},\tau_x,\tau^{-1}_x$ for $x\in X$.

Following~\cite{Sol00}, we can also associate to any solution $(X,r)$ the \emph{permutation group}, defined as 
\begin{equation*}
    \mathcal{G}(X,r)=\langle (\sigma_x,\tau_x^{-1})\mid x\in X\rangle \subseteq \Perm(X)\times \Perm(X).
\end{equation*}
There exists a unique surjective group homomorphism $\pi\colon G(X,r)\to \mathcal{G}(X,r)$ satisfying $\pi(\iota(x))=(\sigma_x,\tau_x^{-1})$. It can be shown that the kernel $\Gamma$ of this map $\pi$ is an ideal of $(G(X,r),+,\circ)$, hence there is a natural skew brace structure $(\mathcal{G}(X,r),+,\circ)$ such that $\pi$ is a skew brace homomorphism. We call this the \textit{permutation skew brace}. The ideal $\Gamma$ is contained in $\Soc(G(X,r))$ and if $(X,r)$ is injective, then $\Gamma=\Soc(G(X,r))$. In particular, note that for any involutive (hence injective) solution $(X,r)$, we find that $\Gamma=\ker (\lambda)$. For an involutive solution $(X,r)$, we say that it has a regular permutation group if the action of $(\G(X,r),\circ)$ on $X$ (coming from the action of $(G(X,r),\circ)$ on $X$) is regular. As the construction of the solution associated to a skew brace is functorial, we find a homomorphism of solutions $\pi\iota\colon (X,r)\to (\G(X,r),r_{\G(X,r)})$. The image of this homomorphism is called the \emph{retraction} of $(X,r)$, denoted by $\Ret(X,r)$. The retraction $\Ret(X,r)$ can also be obtained as the induced solution on the equivalence classes of the equivalence relation given by
\begin{equation*}
    x\sim y\iff \sigma_x=\sigma_y\text{ and }\tau_x=\tau_y,
\end{equation*}
which is its original definition in literature; see \cite{etingof1998set,CJKVAV23}. We inductively define $\Ret^0(X,r)=(X,r)$ and $\Ret^{n+1}(X,r)=\Ret(\Ret^n(X,r))$ for $n\geq 0$. If there exists some $n$ such that $\Ret^n(X,r)$ has size $1$, we say that $(X,r)$ is a \textit{multipermutation solution}. In this case, the \textit{multipermutation level} of $(X,r)$, denoted $\mpl(X,r)$, is the smallest $n$ such that $\Ret^n(X,r)$ has size $1$. A solution of multipermutation level 1 is called a \textit{permutation solution}. Equivalently, a solution $(X,r)$ is a permutation solution if $\sigma_x=\sigma_y$ and $\tau_x=\tau_y$ for all $x,y\in X$. As noted in \cite[Remark 4.4]{CJKVAV23}, for any skew brace $A$ we have that $\Ret_n(A,r_A)=(\Ret_n(A),r_{\Ret_n(A)})$ so a skew brace $A$ is multipermutation precisely if $(A,r_A)$ is a multipermutation solution and $\mpl(A)=\mpl(A,r_A)$.

Let $A$ be a skew brace. A nonempty set $X\subseteq A$ is a \textit{cycle base} if it is a union of orbits under the action $\theta$ and $X$ generates the additive (or equivalently the multiplicative) group of $A$. If $X$ consists of a single orbit, then $X$ is a \textit{transitive cycle base}. Cycle bases for braces coincide with the common definition in literature. For skew braces this is a new notion, whose definition already showed up in literature (e.g. \cite{Ba18}) but without being given an explicit name. If $(X,r)$ is a solution then $\iota(X)$ is a cycle base of $G(X,r)$, hence also the image of the canonical map $(X,r)\to \G(X,r)$ is a cycle base.

 Bachiller showed in \cite{Ba18} how starting from a cycle base $X$ of a skew brace $A$, one can construct a solution such that its permutation skew brace brace is precisely $A$ and here proved that every solution can be constructed in this way from its permutation skew brace. Under this construction, an indecomposable solution can only be obtained from a transitive cycle base and every transitive cycle base yields an indecomposable solution. We mention the following more restrictive versions of \cite[Theorems 3.19 and 3.20]{Ba18} (which also follow from \cite[Theorems 3.1 and 4.1]{bachiller2016solutions} or the results in \cite{rump2019covering, rump2022class}).
\begin{theorem}\label{theorem: construction solution Bachiller}
    Let $A$ be a brace. Then for any transitive cycle base $X$ of $A$ and $x\in X$, $(A,r_x)$ with
    \[r_x(a,b)=(\sigma_a(b),\sigma^{-1}_{\sigma_a(b)}(a)),\]
    where $\sigma_a(b)=\lambda_a(x)\circ b$, is an indecomposable solution with regular permutation group such that $\G(A,r)\cong A$ as braces. Furthermore, any indecomposable solution $(Y,s)$ with a regular permutation group and such that $\G(Y,s)\cong A$, is isomorphic to such a solution.
\end{theorem}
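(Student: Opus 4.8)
The plan is to establish the two halves in turn; both are also special cases of \cite[Theorems 3.19 and 3.20]{Ba18}, but a short direct proof is possible. For the first half, fix a transitive cycle base $X$ of the brace $A$ and some $x\in X$. Since $A$ is a brace, $\theta_{(a,b)}=\lambda_b$, so $\theta$-orbits are exactly $\lambda$-orbits; as $X$ is a union of $\theta$-orbits we get $\lambda_a(x)\in X$ for all $a\in A$, and since $X$ generates $(A,\circ)$ the maps $\sigma_a\colon b\mapsto\lambda_a(x)\circ b$ are left translations by elements of $X$ and together generate the left regular representation $L(A)\subseteq\Perm(A)$ of $(A,\circ)$; in particular every $\sigma_a$ is bijective. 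For a map of the shape $r_x(a,b)=(\sigma_a(b),\sigma_{\sigma_a(b)}^{-1}(a))$ a direct check gives $r_x^2=\id$, and the braid relation is then equivalent to the cycle-set identity $\sigma_a\sigma_{\sigma_a^{-1}(b)}=\sigma_b\sigma_{\sigma_b^{-1}(a)}$ for all $a,b$. Putting $u=\lambda_a(x)$, $v=\lambda_b(x)$ and using $\lambda_{c\circ d}=\lambda_c\lambda_d$ together with $c\circ d=c+\lambda_c(d)$, the left-hand side is left translation by $u\circ\lambda_u^{-1}(v)=u+v$ and the right-hand side by $v\circ\lambda_v^{-1}(u)=v+u$, which agree because $(A,+)$ is abelian. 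So $(A,r_x)$ is an involutive solution (non-degeneracy of the $\tau$-maps then follows by a standard argument), and it is indecomposable since already $(G(A,r_x),\circ)$ acts on $A$ through the transitive group $L(A)$.

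The core of the argument is identifying the permutation brace. First, $g\colon A\to A$, $a\mapsto\lambda_a(x)$, is a homomorphism of solutions $(A,r_x)\to(A,r_A)$: on $\sigma$-maps it is the identity $\lambda_{\lambda_a(x)}(\lambda_b(x))=\lambda_{\lambda_a(x)\circ b}(x)=g(\lambda_a(x)\circ b)$, and the rest follows from involutivity. By the universal property of $G(A,r_x)$ it extends to a brace homomorphism $\bar g\colon G(A,r_x)\to A$ with $\bar g\iota=g$, surjective because $\im\bar g$ contains $g(A)=X$. Now the $\circ$-action of $G(A,r_x)$ on $A$ sends $\iota(a)$ to $\sigma_a=L_{\lambda_a(x)}$, so it agrees on the generators $\iota(A)$, hence everywhere, with $L\circ\bar g$ where $L$ is the left regular representation; as $L$ is injective, $\ker\bar g$ equals the kernel of this $\circ$-action. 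Using $\iota(\sigma_x(y))=\lambda_{\iota(x)}(\iota(y))$ and that $\iota(A)$ generates $(G(A,r_x),+)$, an element $w$ of that kernel has $\lambda_w$ fixing the additive generating set $\iota(A)$ pointwise, hence $\lambda_w=\id$; so the kernel is $\ker\lambda=\Soc(G(A,r_x))$, which equals $\Gamma=\ker\pi$ because $(A,r_x)$ is injective. Thus $\ker\bar g=\Gamma$, so $A\cong G(A,r_x)/\Gamma=\G(A,r_x)$ as braces, and under this identification $(\G(A,r_x),\circ)$ acts on $A$ as $L(A)$ does, i.e.\ regularly.

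For the converse, let $(Y,s)$ be an indecomposable involutive solution with a regular permutation group and a brace isomorphism $\psi\colon\G(Y,s)\to A$. Put $p=\pi\iota\colon(Y,s)\to(\G(Y,s),r_{\G(Y,s)})$ and $X'=p(Y)$; this is a cycle base of $\G(Y,s)$, and since $p$ is a homomorphism of solutions and $(Y,s)$ has a single orbit, $X'$ lies in one orbit of $\G(Y,s)$ and, being a union of orbits, is a transitive cycle base. Fix $y_0\in Y$, put $x'=p(y_0)\in X'$, and let $\phi\colon\G(Y,s)\to Y$, $g\mapsto g\cdot y_0$, be the bijection given by regularity of the $\circ$-action of $\G(Y,s)$ on $Y$. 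Since $p$ is a homomorphism of solutions, $p(g\cdot w)=\lambda_g(p(w))$ for all $g$ and $w$; in particular $p(g\cdot y_0)=\lambda_g(x')$, so for the solution $(\G(Y,s),r_{x'})$ one computes
\[
\phi\bigl(\sigma_g^{r_{x'}}(h)\bigr)=\phi\bigl(\lambda_g(x')\circ h\bigr)=\bigl(\lambda_g(x')\circ h\bigr)\cdot y_0=\lambda_g(x')\cdot\phi(h)=\sigma^s_{\phi(g)}\bigl(\phi(h)\bigr),
\]
the last equality because $\sigma^s_{\phi(g)}$ acts on $Y$ the way $p(\phi(g))=\lambda_g(x')$ does. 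Hence $\phi$ intertwines the $\sigma$-maps of these two involutive solutions and is therefore an isomorphism; transporting along $\psi$ gives $(Y,s)\cong(A,r_x)$ with $X=\psi(X')$ and $x=\psi(x')$.

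I expect the main obstacle to be the kernel computation in the second paragraph — equivalently, the standard fact that for involutive solutions the first-coordinate projection $\G(X,r)\to\langle\sigma_z\mid z\in X\rangle$ is an isomorphism — since this is what simultaneously yields the brace isomorphism $\G(A,r_x)\cong A$ and the regularity of the permutation group; the remaining ingredients are the elementary brace computation for the braid relation and formal use of the universal property together with the compatible actions on $Y$.
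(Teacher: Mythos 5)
First, a point of order: the paper gives no proof of this theorem at all --- it is quoted as a restricted form of \cite[Theorems 3.19 and 3.20]{Ba18} --- so your argument has to stand on its own, and most of it does. The reduction of the braid relation to the identity $\sigma_a\sigma_{\sigma_a^{-1}(b)}=\sigma_b\sigma_{\sigma_b^{-1}(a)}$ and its verification (both sides are left translation by $\lambda_a(x)+\lambda_b(x)$, which agree since $(A,+)$ is abelian) are correct; so are the identification of the permutation brace via the solution homomorphism $a\mapsto\lambda_a(x)$ into $(A,r_A)$, the computation $\ker\bar{g}=\ker\lambda=\Soc(G(A,r_x))=\Gamma$, the resulting regularity, and the converse via the regular action and the retraction map $p=\pi\iota$.

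The one genuine gap is the parenthetical ``non-degeneracy of the $\tau$-maps then follows by a standard argument.'' For an involutive map $r(a,b)=(\sigma_a(b),\sigma_{\sigma_a(b)}^{-1}(a))$ with all $\sigma_a$ bijective and satisfying the braid relation, bijectivity of $\tau_b\colon a\mapsto\sigma_{\sigma_a(b)}^{-1}(a)$ is \emph{not} automatic: in Rump's cycle-set language it is equivalent to bijectivity of the squaring map $a\mapsto\sigma_a^{-1}(a)$, which is a nontrivial theorem of Rump for \emph{finite} cycle sets and can fail for infinite ones. Since the paper applies this theorem to infinite braces such as $F_n$ and $F_2/I_m$ in \cref{section: infinite solutions}, you cannot quietly restrict to the finite case; moreover your second paragraph already presupposes non-degeneracy in order to invoke the skew brace structure and universal property of $G(A,r_x)$, so nothing later in your argument can be used to repair this step. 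This is precisely what the explicit description of the second component in \cite{Ba18} (or in Guarnieri--Vendramin and Lu--Yan--Zhu for $(A,r_A)$) is designed to handle: here one should note that $\tau_b(a)=\bigl(\lambda_{\lambda_a(x)}(\lambda_b(x))\bigr)^-\circ a$ and exhibit the inverse of $a\mapsto\tau_b(a)$ by solving $a=\lambda_{\lambda_a(x)}(\lambda_b(x))\circ d$ for $a$ in terms of $d$. Supply that brace computation and the proof is complete.
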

Given an element $z\in A$, then we denote the right translation by $z$ in $(A,\circ)$ by $t_z$.
\begin{theorem}\label{theorem: isomorphism solution Bachiller}
    Let $A$ be a left brace and let $(A,r_x)$ and $(A,r_{y})$ be solutions as obtained in \cref{theorem: construction solution Bachiller}. Given $z\in A$ and automorphism $\psi$ of $A$ such that $\psi(x)=\lambda_z(y)$, then 
    \[t_z\circ \psi:(A,r_x)\to(A,r_{y}),\quad a\mapsto \psi(a)\circ z\] 
    is an isomorphism of solutions and moreover every isomorphism between $(A,r_x)$ and $(A,r_{y})$ arises in this way.
\end{theorem}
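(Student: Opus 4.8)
The plan is to prove the first assertion by a direct computation and the converse by passing to the permutation skew brace. For the forward direction, observe first that $(A,r_x)$ and $(A,r_y)$ are involutive and that $t_z\circ\psi$ is a bijection, being the composite of the brace automorphism $\psi$ with the $\circ$-translation $t_z$. Since for an involutive solution a bijection intertwining the maps $\sigma_a$ is automatically an isomorphism of solutions, it therefore suffices to verify
\begin{equation*}
    (t_z\circ\psi)\bigl(\sigma_a(b)\bigr)=\sigma_{(t_z\circ\psi)(a)}\bigl((t_z\circ\psi)(b)\bigr)\qquad\text{for all }a,b\in A,
\end{equation*}
where $\sigma$ denotes the structure maps of $r_x$ on the left (so $\sigma_a(b)=\lambda_a(x)\circ b$) and of $r_y$ on the right (so $\sigma_c(d)=\lambda_c(y)\circ d$). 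Expanding both members via $(t_z\circ\psi)(c)=\psi(c)\circ z$, using that $\psi$ is multiplicative and commutes with $\lambda$ (being a brace automorphism), that $\lambda\colon(A,\circ)\to\Aut(A,+)$ is a homomorphism, and finally the hypothesis $\psi(x)=\lambda_z(y)$, one checks that both sides equal $\lambda_{\psi(a)}\bigl(\lambda_z(y)\bigr)\circ\psi(b)\circ z$; this completes the verification.

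For the converse, let $g\colon(A,r_x)\to(A,r_y)$ be an isomorphism, set $z:=g(0)$, and put $\psi:=t_{z^-}\circ g$, a bijection fixing $0$ with $g=t_z\circ\psi$ automatically. The $\sigma$-intertwining relation for $g$ reads $g\bigl(\lambda_a(x)\circ b\bigr)=\lambda_{g(a)}(y)\circ g(b)$; evaluating at $b=0$ gives $\psi(\lambda_a(x))=\lambda_{g(a)}(y)$ for every $a$, whence $\psi(x)=\lambda_z(y)$, and feeding this back into the relation reduces it to $\psi\bigl(\lambda_a(x)\circ b\bigr)=\psi(\lambda_a(x))\circ\psi(b)$ for all $a,b$. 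Now $X=\{\lambda_a(x)\mid a\in A\}$ is the transitive cycle base we started from (since $\theta_{(c,a)}(x)=\lambda_a(x)$ in a brace), so it generates $(A,\circ)$; extending the last identity first to the $\circ$-inverses of elements of $X$ and then along words, an easy induction yields $\psi(c\circ d)=\psi(c)\circ\psi(d)$ for all $c,d$, so the bijection $\psi$ is a group automorphism of $(A,\circ)$ with $\psi(x)=\lambda_z(y)$ and $g=t_z\circ\psi$.

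It remains to promote $\psi$ to a brace automorphism, and I expect this to be the crux: the data ``$g$ is an isomorphism of solutions'' only force $\psi$ to be a multiplicative automorphism intertwining the $\lambda$-action correctly along the orbit of $x$, which for an infinite brace need not by itself be additive. To obtain additivity I would route through the permutation skew brace: by functoriality $g$ induces a brace isomorphism $\overline{g}\colon\G(A,r_x)\to\G(A,r_y)$, and composing with the identifications $\G(A,r_x)\cong A\cong\G(A,r_y)$ of \cref{theorem: construction solution Bachiller} — under which the generator $(\sigma_a,\tau_a^{-1})$ corresponds to $\lambda_a(x)$, respectively to $\lambda_a(y)$ — produces a genuine brace automorphism $\widetilde\psi$ of $A$ satisfying $\widetilde\psi(\lambda_a(x))=\lambda_{g(a)}(y)$ for all $a$. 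Then $\psi$ and $\widetilde\psi$ agree on the multiplicative generating set $X$, hence everywhere, so $\psi=\widetilde\psi$ is a brace automorphism and we are done. The delicate point is thus the explicit description of the isomorphism $\G(A,r_x)\cong A$ on generators; alternatively, when $(A,+)$ is torsion one can avoid $\G$ altogether and argue directly, propagating additivity from $X$ by means of $c+d=c\circ\lambda_c^{-1}(d)$ together with the closure of $X$ under every $\lambda_c$.
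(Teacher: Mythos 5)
The paper does not actually prove this statement; it is quoted as a restricted form of results of Bachiller, so your argument can only be measured against the cited sources. Your forward direction is correct and is the standard computation: both sides of the $\sigma$-intertwining identity reduce to $\lambda_{\psi(a)}(\lambda_z(y))\circ\psi(b)\circ z$, and for involutive solutions intertwining the $\sigma$-maps does imply intertwining the $\tau$-maps. Your converse is also correct up to the point you yourself flag: with $z=g(0)$ and $\psi=t_{z^-}\circ g$ one gets $\psi(\lambda_a(x))=\lambda_{g(a)}(y)$, hence $\psi(x)=\lambda_z(y)$, and propagation along the $\circ$-generating set $\{\lambda_a(x)\mid a\in A\}$ shows $\psi\in\Aut(A,\circ)$. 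The deferred ``delicate point'' is, however, genuinely the heart of the converse: you must know that the isomorphism $\G(A,r_x)\cong A$ of \cref{theorem: construction solution Bachiller} sends the generator $(\sigma_a,\tau_a^{-1})$ to $\lambda_a(x)$. This is true and fillable: $\sigma_a$ is left $\circ$-translation by $\lambda_a(x)$, so sending $c$ to left translation by $c$ identifies $(A,\circ)$ with the permutation group, and the brace structure on $\G(A,r_x)$ is uniquely determined by the requirement that $a\mapsto(\sigma_a,\tau_a^{-1})$ be a morphism of solutions onto an additive generating set; the structure transported from $A$ satisfies this because $\lambda_{\lambda_a(x)}(\lambda_b(x))=\lambda_{\sigma_a(b)}(x)$. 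With that supplied, $\psi$ agrees with a brace automorphism on a $\circ$-generating set and is therefore one. By contrast, your fallback for the torsion case does not work as sketched: to propagate additivity via $c+d=c\circ\lambda_c^{-1}(d)$ you would need $\psi\lambda_c=\lambda_{\psi(c)}\psi$ for every $c\in A$, whereas what you have is this relation only evaluated on the cycle base, and since neither side is multiplicative it cannot be extended by generation. Drop that remark and complete the identification of generators in the $\G$-route instead.
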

If $X$ is a union of orbits of a skew brace $A$, then the solution $(A,r_A)$ can be restricted to a solution $(X,r_A|_{X\times X})$. If moreover $X$ is a cycle base, then the orbits of $(X,r_A|_{X\times X})$ coincide with the orbits of $X$ in $A$, because $X$ generates both $(A,+)$ and $(A,\circ)$.

\section{A variation of the multipermutation level}\label{section: variation mpl}
In this section, we introduce a variation of the multipermutation level of a solution and show that this is closer related to the multipermutation level of the associated skew braces than the usual multipermutation level.
\begin{definition}
Let $(X,r)$ be a multipermutation solution. We define $\mpl'(X,r)$ as the smallest $n$ such that $\Ret^{n}(X,r)$ is a trivial solution (possibly of size $>1$). 
\end{definition}
If $(X,r)$ is a multipermutation solution of level $n$, then $\Ret^{n}(X,r)$ has size 1 and is therefore trivial. Also note that if $(X,r)$ is a solution and $\Ret^n(X,r)$ is a trivial solution, then $(X,r)$ is a multipermutation solution as $|\Ret^{n+1}(X,r)|=1$. This proves the following result.
\begin{lemma}\label{lem: trivial inequality mpl}
    For $(X,r)$ a multipermutation solution, we have
    \begin{equation}\label{eq: inequality mpl mpl'}
    \mpl'(X,r)\leq \mpl(X,r)\leq \mpl'(X,r)+1.
\end{equation}
\end{lemma}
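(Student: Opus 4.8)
The plan is to read both inequalities straight off the definitions of $\mpl$ and $\mpl'$, the only non-formal ingredient being the observation that the retraction of a trivial solution of arbitrary size has exactly one element.

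First I would prove the left inequality. Put $n=\mpl(X,r)$. By definition $\Ret^{n}(X,r)$ has size $1$, and a one-element solution is in particular a trivial solution. Hence $n$ lies in the set of indices $m$ for which $\Ret^{m}(X,r)$ is trivial, and since $\mpl'(X,r)$ is the least such index we get $\mpl'(X,r)\le n=\mpl(X,r)$.

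Next I would prove the right inequality. Set $m=\mpl'(X,r)$ and let $S=\Ret^{m}(X,r)$, a trivial solution on some set $Y$. Because $S$ is trivial, its structure maps satisfy $\sigma_y=\tau_y=\id_Y$ for every $y\in Y$, so the equivalence relation $y\sim y'\iff \sigma_y=\sigma_{y'}\text{ and }\tau_y=\tau_{y'}$ defining the retraction identifies all of $Y$; thus $\Ret(S)$ has size $1$. Consequently $\Ret^{m+1}(X,r)=\Ret(S)$ has size $1$, which both confirms that $(X,r)$ is a multipermutation solution (already part of the hypothesis) and gives $\mpl(X,r)\le m+1=\mpl'(X,r)+1$. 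Combining the two inequalities yields \eqref{eq: inequality mpl mpl'}.

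I do not anticipate any real obstacle here: the argument is entirely a matter of unwinding the definitions, and in fact the two key remarks — that a one-element solution is trivial, and that a trivial solution retracts to a point — were already recorded in the paragraph preceding the statement. The only spot deserving a line of justification is that last remark about the retraction of a trivial solution, which follows at once from $\sigma_y=\tau_y=\id$.
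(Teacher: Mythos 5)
Your proof is correct and follows exactly the same route as the paper's, which is contained in the paragraph immediately preceding the lemma: the left inequality from the fact that a one-element solution is trivial, and the right inequality from the fact that a trivial solution retracts to a point. The only difference is that you spell out the latter observation via $\sigma_y=\tau_y=\id$, which the paper leaves implicit.
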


\begin{proposition}\label{prop: mpl indecomposable}
    Let $(X,r)$ be an indecomposable solution of finite multipermutation level, then $\mpl'(X,r)=\mpl(X,r)$.
\end{proposition}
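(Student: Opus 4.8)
The plan is to use the trivial inequality from \cref{lem: trivial inequality mpl}, which already gives $\mpl'(X,r)\leq \mpl(X,r)\leq \mpl'(X,r)+1$, so that it suffices to rule out the case $\mpl(X,r)=\mpl'(X,r)+1$. Set $n=\mpl'(X,r)$, so $\Ret^n(X,r)$ is a trivial solution; the goal is to show that in fact $|\Ret^n(X,r)|=1$. The key point is that indecomposability is inherited by retractions: since the homomorphism $\pi\iota\colon (X,r)\to \Ret(X,r)$ is a surjective homomorphism of solutions, and the image of the $\theta$-action on $X$ under such a map is again a single orbit, $\Ret(X,r)$ is indecomposable whenever $(X,r)$ is; iterating, every $\Ret^k(X,r)$ is indecomposable. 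So it remains to prove that an indecomposable trivial solution has size $1$.

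First I would unpack what it means for the trivial solution $(Y,s)$ with $s(a,b)=(b,a)$ to be indecomposable. For the trivial solution all the maps $\sigma_a$ and $\tau_a$ are the identity, hence so are all the $\sigma_a^{-1},\tau_a^{-1}$. By the "intuitive" description of orbits recalled in the preliminaries — orbits are the blocks of the smallest partition of $Y$ preserved by all the maps $\sigma_a,\sigma_a^{-1},\tau_a,\tau_a^{-1}$ — the partition into singletons is already preserved (everything is fixed), so the orbits of the trivial solution are exactly the singletons. Thus $(Y,s)$ indecomposable forces $|Y|=1$. (Alternatively, one can see this on the level of the structure group: for the trivial solution $G(Y,s)$ is the free abelian group on $Y$ acting trivially as a brace, and the $\theta$-orbits are singletons.)

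Putting the pieces together: $\Ret^n(X,r)$ is indecomposable by the first paragraph and trivial by the choice of $n=\mpl'(X,r)$, hence has size $1$ by the second paragraph, so $\mpl(X,r)\leq n=\mpl'(X,r)$, and combined with \cref{lem: trivial inequality mpl} this gives $\mpl'(X,r)=\mpl(X,r)$.

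The only step requiring genuine care is the claim that retractions of indecomposable solutions are indecomposable; this is presumably folklore, but since the paper works with a specific (slightly nonstandard) action $\theta$ of $(G(X,r),+)\rtimes (G(X,r),\circ)$, I would want to verify it directly: a surjection $f\colon (X,r)\to (Z,t)$ of solutions induces a compatible surjection $G(X,r)\to G(Z,t)$ of skew braces by the universal property of $\iota$, which is equivariant for the $\theta$-actions, so $f$ carries a single $\theta$-orbit onto a single $\theta$-orbit; applying this to $f=\pi\iota$ handles $\Ret$, and induction handles $\Ret^k$. Everything else is immediate from the definitions and \cref{lem: trivial inequality mpl}.
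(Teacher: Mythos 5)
Your argument is correct and is essentially the paper's proof: both reduce to the facts that indecomposability is preserved under retraction and that an indecomposable trivial solution must be a singleton, then invoke \cref{lem: trivial inequality mpl}. You simply spell out the two folklore ingredients in more detail than the paper does.
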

\begin{proof}
    Let $n$ be such that $\Ret^n(X,r)$ is a trivial solution. As indecomposability is preserved under retraction, $\Ret^n(X,r)$ is also indecomposable hence $|\Ret^n(X,r)|=1$. Therefore the statement follows.
\end{proof}
As a generalisation of \cite[Definition 4.3]{gateva2018set}, we introduce the following condition.
\begin{definition}
    Let $(X,r)$ be a solution. We say that $(X,r)$ satisfies condition $(*)$ if for every $x\in X$, there exist $y,y'\in X$ such that $\sigma_y(x)= x$ and $\tau_{y'}(x)=x$. 
\end{definition}
It is easily seen that solutions $(X,r)$ such that $r(x,x)=(x,x)$ for all $x\in X$, also known as \textit{square-free solutions}, satisfy condition $(*)$. Also, the solution $(A,r_A)$ associated to a skew brace $A$ satisfies condition $(*)$ because $\sigma_0=\tau_0=\id$.
\begin{proposition}\label{prop: mpl square-free}
Let $(X,r)$ be a multipermutation solution with $|X|>1$ and satisfying property $(*)$. Then $\mpl(X,r)=\mpl'(X,r)+1$.
\end{proposition}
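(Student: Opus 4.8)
The plan is to show that equality $\mpl(X,r)=\mpl'(X,r)$ is impossible; together with \cref{lem: trivial inequality mpl} this forces $\mpl(X,r)=\mpl'(X,r)+1$. The whole argument rests on two elementary observations about property $(*)$ and its interaction with retraction.

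\textbf{Observation (i): property $(*)$ passes to surjective images.} If $f\colon (X,r)\to (Z,t)$ is a surjective homomorphism of solutions and $(X,r)$ satisfies $(*)$, then so does $(Z,t)$. Indeed, given $z\in Z$ write $z=f(x)$; picking $y$ with $\sigma_y(x)=x$ and unwinding $(f\times f)r=t(f\times f)$ to get $f(\sigma_y(x))=\sigma_{f(y)}(f(x))$, we obtain $\sigma_{f(y)}(z)=z$, and symmetrically one handles $\tau$. Since the canonical map $(X,r)\to\Ret^{k}(X,r)$ is a surjective homomorphism of solutions for every $k$, it follows that each $\Ret^{k}(X,r)$ again satisfies $(*)$. \textbf{Observation (ii): a permutation solution satisfying $(*)$ is trivial.} If $\sigma_x$ and $\tau_x$ do not depend on $x$, say they equal $\sigma$ and $\tau$, then $(*)$ gives $\sigma(x)=x$ and $\tau(x)=x$ for all $x$, so $\sigma=\tau=\id$ and hence $r(x,y)=(y,x)$.

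Now suppose for contradiction that $\mpl(X,r)=\mpl'(X,r)=m$. Since $|X|>1$ we have $m\geq 1$. Put $Z=\Ret^{m-1}(X,r)$. From $\mpl(X,r)=m$ we get that $\Ret(Z)=\Ret^{m}(X,r)$ has size $1$, so $Z$ is a permutation solution; by Observation (i) the solution $Z$ satisfies $(*)$; hence by Observation (ii) the solution $Z$ is trivial. But then $\Ret^{m-1}(X,r)$ is a trivial solution, so $\mpl'(X,r)\leq m-1<m$, a contradiction. Therefore $\mpl(X,r)\neq\mpl'(X,r)$, and combining this with \cref{lem: trivial inequality mpl} yields $\mpl(X,r)=\mpl'(X,r)+1$.

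There is no real obstacle here: the only point needing a little care is Observation (i), where one must unwind the definition of a homomorphism of solutions to see that $f$ intertwines the $\sigma$-maps (and the $\tau$-maps), but this is a routine verification; the rest is bookkeeping with the definitions of $\mpl$ and $\mpl'$ and the fact (recalled in \cref{section: preliminaries}) that $\Ret(X,r)$ is the image of a homomorphism of solutions out of $(X,r)$.
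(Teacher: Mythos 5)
Your proof is correct and follows essentially the same route as the paper's: both arguments rest on the two facts that condition $(*)$ is preserved under retraction and that a permutation solution satisfying $(*)$ is trivial, applied to $\Ret^{\mpl(X,r)-1}(X,r)$ and then combined with \cref{lem: trivial inequality mpl}. Your contradiction framing is only a cosmetic repackaging of the paper's direct derivation of $\mpl'(X,r)\leq \mpl(X,r)-1$.
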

\begin{proof}
Let $n=\mpl(X,r)$. As $|X|>1$, we know that $n\geq 1$, hence $\Ret^{n-1}(X,r)$ is well-defined and must be a permutation solution on a set of size more than 1. The condition $(*)$ is preserved under retractions and a permutation solution satisfying this condition is necessarily trivial. It follows that $\Ret^{n-1}(X,r)$ is a trivial solution and $\mpl'(X,r)\leq n-1$ and therefore the statement follows in combination with \cref{lem: trivial inequality mpl}.
\end{proof}
The following lemma appears as Proposition~4.8 in \cite{CJKVAV23} except for the inequality about $\mpl'(X,r)$, but the proof is the same as for the one about $\mpl(X,r)$.
\begin{lemma}\label{lem: multipermutation surjection}
Let $f:(X,r)\to (Y,s)$ be a surjective homomorphism of solutions, then $f$ induces a surjective homomorhism of solutions $\overline{f}:\Ret(X,r)\to \Ret(Y,s)$. 
If $(X,r)$ is a multipermutation solution, then so is $(Y,s)$ and moreover $\mpl(Y,s)\leq \mpl(X,r)$ and $\mpl'(Y,s)\leq \mpl'(X,r)$.
\end{lemma}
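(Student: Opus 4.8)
The plan is to first establish the part about $\overline f$ and then bootstrap the two inequalities by iterating retraction. Recall from \cref{section: preliminaries} that $\Ret(X,r)$ is the solution induced on the equivalence classes of the relation $x\sim x'\iff\sigma^r_x=\sigma^r_{x'}\text{ and }\tau^r_x=\tau^r_{x'}$, and similarly for $\Ret(Y,s)$; here I write $\sigma^r,\tau^r$ and $\sigma^s,\tau^s$ for the defining maps of $r$ and $s$. Writing the homomorphism condition $(f\times f)r=s(f\times f)$ out componentwise, and relabelling the bound variables in the second coordinate, gives $\sigma^s_{f(x)}(f(y))=f(\sigma^r_x(y))$ and $\tau^s_{f(x)}(f(y))=f(\tau^r_x(y))$ for all $x,y\in X$. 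Hence if $x\sim x'$ in $X$, then $\sigma^s_{f(x)}$ and $\sigma^s_{f(x')}$ agree on $f(X)$, and likewise for $\tau^s$; since $f$ is surjective we have $f(X)=Y$, so $\sigma^s_{f(x)}=\sigma^s_{f(x')}$ and $\tau^s_{f(x)}=\tau^s_{f(x')}$, i.e. $f(x)\sim f(x')$. Thus $f$ descends to a well-defined surjection $\overline f$ between the underlying sets of the two retractions, and it is a homomorphism of solutions because $r$ and $s$ descend to them — equivalently, this is functoriality of $\Ret$ applied to $f$.

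Iterating the previous paragraph yields a surjective homomorphism $\Ret^k(X,r)\to\Ret^k(Y,s)$ for every $k\geq 0$, so in particular $|\Ret^k(Y,s)|\leq|\Ret^k(X,r)|$. If $(X,r)$ is multipermutation with $n=\mpl(X,r)$, then $|\Ret^n(X,r)|=1$, forcing $|\Ret^n(Y,s)|=1$; hence $(Y,s)$ is multipermutation and $\mpl(Y,s)\leq n$.

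For the inequality about $\mpl'$, put $m=\mpl'(X,r)$, so $\Ret^m(X,r)$ is a trivial solution (the flip map). The key observation is that a surjective homomorphic image of a trivial solution is trivial: if $g\colon(Z,t)\to(Z',t')$ is a surjective homomorphism with $t(a,b)=(b,a)$, then the componentwise form of $(g\times g)t=t'(g\times g)$ reads $t'(g(a),g(b))=(g(b),g(a))$, and surjectivity of $g$ shows $t'$ is the flip on $Z'$. Applying this to the surjection $\Ret^m(X,r)\to\Ret^m(Y,s)$ from the previous paragraph shows $\Ret^m(Y,s)$ is trivial, so $\mpl'(Y,s)\leq m=\mpl'(X,r)$.

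I do not expect a serious obstacle: every step is definitional. The only points requiring a little care are the index bookkeeping when expanding $(f\times f)r=s(f\times f)$ into statements about the $\sigma$- and $\tau$-maps, and making sure that surjectivity of $f$ itself (rather than of the individual maps $\sigma_x,\tau_x$) is what lets one pass from ``the maps coincide on $f(X)$'' to ``the maps coincide on $Y$''; once these are handled, the multipermutation and $\mpl'$ claims are immediate inductions on the level.
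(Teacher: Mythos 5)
Your proof is correct; the paper gives no argument of its own here, merely citing Proposition~4.8 of \cite{CJKVAV23} for the first two claims and noting that the $\mpl'$ inequality follows by the same reasoning. Your argument --- descending $f$ to the equivalence classes defining the retraction, iterating to get surjections $\Ret^k(X,r)\to\Ret^k(Y,s)$, and observing that a surjective homomorphic image of a trivial solution is trivial --- is precisely the standard proof being alluded to, with all the index bookkeeping handled correctly.
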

\begin{corollary}\label{cor: multipermutation level of injectivization}
Let $(X,r)$ be a multipermutation solution, then 
\begin{align*}
    \mpl(X,r)-1&\leq \mpl\Inj(X,r)\leq \mpl(X,r),\\
    \mpl'(X,r)-1&\leq \mpl'\Inj(X,r)\leq \mpl'(X,r).
\end{align*}
\end{corollary}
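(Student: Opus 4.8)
The plan is to obtain the statement as a direct consequence of \cref{lem: multipermutation surjection} combined with the universal/structural facts about the injectivization recalled in \cref{section: preliminaries}. First I would recall that the canonical map $\iota\colon (X,r)\to (G(X,r),r_{G(X,r)})$ is a homomorphism of solutions whose image is, by definition, $\Inj(X,r)$. In particular $\iota$ corestricts to a \emph{surjective} homomorphism of solutions $(X,r)\to \Inj(X,r)$. Since $(X,r)$ is assumed multipermutation, \cref{lem: multipermutation surjection} immediately gives that $\Inj(X,r)$ is multipermutation and that
\begin{equation*}
\mpl\Inj(X,r)\leq \mpl(X,r),\qquad \mpl'\Inj(X,r)\leq \mpl'(X,r),
\end{equation*}
which are the two upper bounds.

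For the lower bounds I would argue in the other direction, using that $(X,r)$ and $\Inj(X,r)$ have \emph{the same} structure skew brace, hence the same permutation skew brace: indeed the excerpt notes that $G(\Inj(X,r))\cong G(X,r)$ as skew braces, and therefore $\G(\Inj(X,r))\cong \G(X,r)$. Consequently $\Ret(X,r)$ and $\Ret(\Inj(X,r))$, being the images of $(X,r)$ and $\Inj(X,r)$ under the maps to their (common) permutation skew brace solution, coincide up to isomorphism; more precisely, since $\iota\colon(X,r)\to\Inj(X,r)$ is surjective, the induced map $\overline{\iota}\colon\Ret(X,r)\to\Ret(\Inj(X,r))$ from \cref{lem: multipermutation surjection} is surjective, but it is also injective because both are quotients of $(X,r)$ (equivalently of $\Inj(X,r)$) by the relation $x\sim y\iff \sigma_x=\sigma_y,\ \tau_x=\tau_y$, which only depends on the permutation group data shared by the two solutions. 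Hence $\Ret(X,r)\cong\Ret(\Inj(X,r))$, and by induction $\Ret^{n}(X,r)\cong\Ret^{n}(\Inj(X,r))$ for all $n\geq 1$. Applying the upper-bound argument already obtained to the surjection $\Ret(X,r)\to\Ret(\Inj(X,r))$ (or simply reading off the isomorphism) yields $\mpl(X,r)\leq \mpl\Inj(X,r)+1$ and $\mpl'(X,r)\leq \mpl'\Inj(X,r)+1$, which rearrange to the desired lower bounds.

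The one point that needs a little care — and which I expect to be the main (small) obstacle — is the claim that $\overline{\iota}\colon\Ret(X,r)\to\Ret(\Inj(X,r))$ is injective, equivalently that passing to the injectivization does not "merge" retraction classes beyond what $\iota$ already does. The clean way to see this is to note that $\Ret$ of any solution $(Z,s)$ is the image of the canonical homomorphism $(Z,s)\to(\G(Z,s),r_{\G(Z,s)})$, and this image depends only on the pair $\bigl(\G(Z,s),\ \text{image of }Z\text{ in }\G(Z,s)\bigr)$; since $\iota$ identifies these data for $(X,r)$ and $\Inj(X,r)$ — the image of $X$ in $\G(X,r)$ is the same as the image of $\Inj(X,r)$ in $\G(\Inj(X,r))$ under the isomorphism $\G(X,r)\cong\G(\Inj(X,r))$ — the two retractions are canonically identified. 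Once this is granted, the induction and the resulting inequalities are routine. Finally, I would remark that the inequalities for $\mpl$ in the statement are in any case already implicit in the literature, so the genuinely new content is the pair of inequalities for $\mpl'$, which follow by exactly the same bookkeeping.
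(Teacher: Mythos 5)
Your upper bounds are obtained exactly as in the paper, by applying \cref{lem: multipermutation surjection} to the surjection $(X,r)\to\Inj(X,r)$, and are fine. The lower bounds, however, rest on the claim that $\overline{\iota}\colon\Ret(X,r)\to\Ret(\Inj(X,r))$ is injective, i.e.\ that $\Ret(X,r)\cong\Ret(\Inj(X,r))$, and your justification of this has a genuine gap. The step ``$G(\Inj(X,r))\cong G(X,r)$, therefore $\G(\Inj(X,r))\cong\G(X,r)$'' is a non sequitur: $\G(X,r)$ is the quotient of $G(X,r)$ by the kernel $\Gamma$ of the action on the set $X$, and for a non-injective solution one only knows $\Gamma\subseteq\Soc(G(X,r))$, whereas $\G(\Inj(X,r))\cong G(X,r)/\Soc(G(X,r))$ because $\Inj(X,r)$ \emph{is} injective; a priori $\G(\Inj(X,r))$ is therefore a proper quotient of $\G(X,r)$. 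Likewise, the retraction relation on $X$ (equality of $\sigma_x,\sigma_y$ and of $\tau_x,\tau_y$ as permutations of $X$) is not ``permutation group data shared by the two solutions'': the corresponding relation on $\Inj(X,r)$ compares permutations of $\iota(X)$, and when $\iota$ is not injective, $\iota(\sigma_x(z))=\iota(\sigma_y(z))$ for all $z$ does not force $\sigma_x=\sigma_y$. This is exactly the point you flagged as needing care, and it is where the argument breaks. As a sanity check, your isomorphism would force $\mpl(X,r)=\mpl\Inj(X,r)$ whenever $|\Inj(X,r)|>1$, which is stronger than what the corollary asserts.

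The repair is simpler than what you attempt and is the paper's route: the canonical surjection $X\to\Ret(X,r)$ factors through $\iota$, because $\iota(x)=\iota(y)$ implies $(\sigma_x,\tau_x^{-1})=\pi(\iota(x))=\pi(\iota(y))=(\sigma_y,\tau_y^{-1})$. Hence there is a chain of surjective homomorphisms $(X,r)\to\Inj(X,r)\to\Ret(X,r)$, and \cref{lem: multipermutation surjection} applied to the second map gives $\mpl(\Ret(X,r))\leq\mpl\Inj(X,r)$ and $\mpl'(\Ret(X,r))\leq\mpl'\Inj(X,r)$; combining with $\mpl(X,r)\leq\mpl(\Ret(X,r))+1$ and $\mpl'(X,r)\leq\mpl'(\Ret(X,r))+1$ yields the lower bounds. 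Note the direction: what is needed is $\Ret(X,r)$ as a quotient of $\Inj(X,r)$, not the surjection $\Ret(X,r)\to\Ret(\Inj(X,r))$ that you construct, since by \cref{lem: multipermutation surjection} the latter only bounds $\mpl(\Ret(\Inj(X,r)))$ from above by $\mpl(\Ret(X,r))$ --- the wrong way around unless the injectivity claim is secured.
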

\begin{proof}
If we apply \cref{lem: multipermutation surjection} to the surjective homomorphisms
\[(X,r)\to \Inj(X,r)\to \Ret(X,r),\]
we obtain that $\mpl(\Ret(X,r))\leq \mpl(\Inj(X,r))\leq \mpl(X,r)$ and $\mpl'(\Ret(X,r))\leq \mpl'(\Inj(X,r))\leq \mpl'(X,r)$. It then suffices to note that the inequalities $\mpl(X,r)\leq \mpl(\Ret(X,r))+1$ and $\mpl'(X,r)\leq \mpl'(\Ret(X,r))+1$ hold.
\end{proof}

\begin{proposition}\label{prop: perm skew brace cycle base}
    Let $A$ be a skew brace, $X$ a cycle base of $A$ and let $(X,r)$ denote the restriction of $(A,r_A)$ to $X$. Then $\G(X,r)\cong \Ret(A)$.
\end{proposition}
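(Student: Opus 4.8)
The plan is to identify $\G(X,r)$ explicitly as a quotient of $A$ and then recognise that quotient as $\Ret(A) = A/\Soc(A)$. Recall that $\G(X,r)$ is by definition the subgroup of $\Perm(X)\times\Perm(X)$ generated by the pairs $(\sigma_a,\tau_a^{-1})$ for $a\in X$, where here $\sigma_a$ and $\tau_a$ are the maps attached to the restricted solution $(X,r_A|_{X\times X})$; since $X$ is a union of orbits of $A$ under $\theta$, these are exactly the restrictions to $X$ of the maps $\sigma_a,\tau_a$ of the solution $(A,r_A)$, which in turn are governed by $\lambda_a$ and the conjugation action. So $\G(X,r)$ is a quotient of the subgroup of $\Perm(A)\times\Perm(A)$ generated by the corresponding pairs acting on all of $A$, by the congruence "agree on $X$''.

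First I would use that $X$ is a cycle base, so $X$ generates both $(A,+)$ and $(A,\circ)$. Because two permutations of $A$ that are each built from $\lambda$'s and conjugations and that agree on a generating set of $(A,+)$ must agree everywhere, the restriction map "act on $X$'' is injective on the relevant subgroup of $\Perm(A)\times\Perm(A)$; hence $\G(X,r)$ is isomorphic, as a group, to the image of $\G(A,r_A)$ under restriction, which is all of $\G(A,r_A)$ itself. Thus $\G(X,r)\cong\G(A,r_A)$ as groups, and I would check the same identification respects the two skew brace operations (both are transported from $G(X,r)$, resp.\ $G(A,r_A)$, via $\pi$, and the generators match up), so $\G(X,r)\cong\G(A,r_A)$ as skew braces. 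The key structural input is the fact recorded in the preliminaries that for an injective solution the kernel $\Gamma$ of $\pi\colon G\to\G$ is exactly $\Soc(G)$; but here it is cleaner to argue directly on $A$.

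The cleanest route, which I would actually write out, is: consider the canonical map $\pi\iota\colon (X,r)\to(\G(X,r),r_{\G(X,r)})$ composed with the inclusion $X\subseteq A$, and on the other side the quotient map $q\colon A\to A/\Soc(A)=\Ret(A)$ restricted to $X$. Both $\G(X,r)$ and $\Ret(A)$ are skew braces generated (additively and multiplicatively) by the image of $X$, and in both the image of $a\in X$ is identified with another image of $b\in X$ precisely when $\lambda_a=\lambda_b$ (equivalently $\sigma_a=\sigma_b$ and $\tau_a=\tau_b$ on $A$, equivalently on $X$ since $X$ generates): for $\Ret(A)$ this is because $\Soc(A)=\ker\lambda$ for a brace — wait, $A$ is only assumed a skew brace, so instead use $\Soc(A)=\{x:(x,0),(0,x)\in\ker\theta\}$ and the description of $\G$ via the $\theta$-action. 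Matching these two presentations gives a skew brace isomorphism $\G(X,r)\cong\Ret(A)$.

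The main obstacle is the bookkeeping in the skew brace (not just group) case: for a brace one has the clean identity $\Soc(A)=\ker\lambda$, but for a general skew brace $\Soc(A)$ also involves the conjugation part of $\theta$, so one must be careful that "agreeing on $X$'' for the pair $(\sigma_a,\tau_a^{-1})$ is the same as lying in the same $\Soc(A)$-coset. I expect this to go through because $X$ generates $(A,+)$ and the $\theta$-action is determined by its values on $\lambda$ together with additive conjugation, so two elements inducing the same permutation pair on $X$ induce the same $\theta$-action on all of $A$, i.e.\ differ by an element of $\Soc(A)$; conversely elements of $\Soc(A)$ act trivially. Verifying that this bijection is additive and multiplicative is then routine, using that in both skew braces the operations are the ones transported from the respective structure groups along $\pi$, and that the generators are matched.
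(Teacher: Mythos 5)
Your overall strategy --- realise $\G(X,r)$ as the image of $A$ acting on $X$ and identify the kernel of that action with $\Soc(A)$ --- is viable and close in spirit to the paper's proof, which instead uses the universal property to produce a surjection $f\colon G(X,r)\to A$ and shows $f^{-1}(\Soc(A))=\Gamma=\Soc(G(X,r))$. However, two steps of your write-up have genuine gaps. First, your injectivity argument for the restriction map $\G(A,r_A)\to\Perm(X)\times\Perm(X)$ rests on the claim that the permutations involved are ``built from $\lambda$'s and conjugations'' and hence determined by their values on an additive generating set. That is true of the first components ($\sigma_a=\lambda_a$ is an automorphism of $(A,+)$), but it fails for the second components: $\tau_a(b)=\lambda_{\lambda_b(a)}^{-1}\bigl(-(b\circ a)+b+(b\circ a)\bigr)$ is a pointwise expression in which the twisting automorphism depends on $b$, so $\tau_a$ is in general not additive, and agreeing with the identity on a generating set of $(A,+)$ does not force it to be the identity. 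This is precisely where one must trade the pair $(\sigma_a,\tau_a^{-1})$ for the $\theta$-action, whose constituent maps are additive automorphisms, via the identification $\Gamma=\Soc(G(X,r))$ for injective solutions; the paper does exactly this by letting $\theta_{(g,0)}$ and $\theta_{(0,g)}$ act on the cycle base $\iota(X)$, which is a union of $\theta$-orbits.

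Second, the ``matching presentations'' step is not a valid inference. Both $\G(X,r)$ and $\Ret(A)$ are quotients of $G(X,r)$ generated by the image of $\iota(X)$, but knowing that the two quotient maps induce the same identifications on $\iota(X)$ does not imply that the quotients are isomorphic: the kernels could still differ, since they are determined by relations among arbitrary products of generators and not only by which generators get glued. What must be shown is that the kernel $\Gamma$ of $G(X,r)\to\G(X,r)$ equals the kernel $f^{-1}(\Soc(A))$ of $G(X,r)\to A\to\Ret(A)$ as subgroups of $G(X,r)$ --- equivalently, that an element acts trivially on $X$ if and only if its image in $A$ lies in $\Soc(A)$ --- and this is the actual content of the paper's proof. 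Your final paragraph states the needed equivalence but only ``expects'' it to hold; note also that the isomorphism $\G(A,r_A)\cong\Ret(A)$, which you invoke as known, is the special case $X=A$ of the proposition and would require the same argument.
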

\begin{proof}
    The universal property of $\G(X,r)$ yields a surjective skew brace homomorphism $f:G(X,r)\to A$, which is injective on the set $\iota(X)$. We claim that $f^{-1}(\Soc(A))=\Gamma$,  with $\Gamma$ the kernel of the canonical skew brace homomorphism $\pi:G(X,r)\to \G(X,r)$, from which the statement then follows. Note that $(X,r)$ is injective, hence $\Gamma=\Soc(G(X,r))$. From this fact, the inclusion from right to left follows. Now let $g\in \G(X,r)\setminus \Gamma$. As $\iota(X)$ is a transitive cycle base of $(G(X,r),+)$ there exist some $x,y\in X$, $x\neq y$, such that $\theta_{(g,0)}(\iota(x))=\iota(y)$ or $\theta_{(0,g)}(\iota(x))=\iota(y)$. As $f(\iota(x))\neq f(\iota(y))$, this implies that $f(g)\notin \Soc(A)$ and this proves the remaining inclusion.
\end{proof}

\begin{corollary}\label{cor: optimized cor mpl}
    Let $(X,r)$ be a solution, then $\G(\Ret(X,r))\cong \Ret(\G(X,r))$.
\end{corollary}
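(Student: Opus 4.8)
The plan is to reduce the statement to \cref{prop: perm skew brace cycle base} by recognising $\Ret(X,r)$ as the restriction of the solution attached to the skew brace $\G(X,r)$ to a suitable cycle base. Recall that, by definition, $\Ret(X,r)$ is the image of the homomorphism of solutions $\pi\iota\colon (X,r)\to (\G(X,r),r_{\G(X,r)})$, that is, the subsolution of $(\G(X,r),r_{\G(X,r)})$ supported on the set $Y:=\pi(\iota(X))$. So the first step is to observe that $Y$ is a cycle base of the skew brace $\G(X,r)$: this is already recorded in the preliminaries (since $\iota(X)$ is a cycle base of $G(X,r)$ and $\pi\colon G(X,r)\to\G(X,r)$ is a surjective skew brace homomorphism, $Y$ generates both the additive and the multiplicative group of $\G(X,r)$, and it is a union of $\theta$-orbits because $\pi$ intertwines the $\theta$-actions).

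The second step is to identify the two descriptions of the induced structure on $Y$. Since $r_{\G(X,r)}$ restricts to $Y\times Y$, the subsolution carried by $Y$ is literally the restriction of $(\G(X,r),r_{\G(X,r)})$ to the cycle base $Y$; hence $\Ret(X,r)$ is isomorphic to $(Y,\,r_{\G(X,r)}|_{Y\times Y})$ in the sense required by \cref{prop: perm skew brace cycle base}.

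Finally, apply \cref{prop: perm skew brace cycle base} with $A=\G(X,r)$ and the cycle base $Y$: it gives $\G(Y,\,r_{\G(X,r)}|_{Y\times Y})\cong\Ret(\G(X,r))$, and combining with the previous identification yields $\G(\Ret(X,r))\cong\Ret(\G(X,r))$, as claimed.

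The argument is essentially a bookkeeping exercise: no new computation is needed, only the unwinding of the definition of $\Ret(X,r)$ as an image solution and the invocation of the preceding proposition. The one point that deserves a line of justification — the closest thing to an obstacle here — is the verification that $Y=\pi(\iota(X))$ is genuinely a cycle base, in particular that it is a union of $\theta$-orbits; this rests on the compatibility of $\pi$ with the combined action of $(G(X,r),+)\rtimes(G(X,r),\circ)$, which is part of the functoriality already set up in \cref{section: preliminaries}.
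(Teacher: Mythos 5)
Your proof is correct and follows the same route as the paper: identify $\Ret(X,r)$ with the restriction of $(\G(X,r),r_{\G(X,r)})$ to the cycle base $\pi(\iota(X))$ and then invoke \cref{prop: perm skew brace cycle base}. The extra verification you supply (that $\pi(\iota(X))$ is a union of $\theta$-orbits generating $\G(X,r)$) is exactly the fact the paper records in the preliminaries and cites implicitly.
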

\begin{proof}
    Let $(X,r)$ be a solution, then we know that the canonical image of $X$ in $\G(X,r)$ is a cycle base with its associated solution isomorphic to $\Ret(X,r)$. \cref{prop: perm skew brace cycle base} now yields that $\G(\Ret(X,r))\cong \Ret(\G(X,r))$.
\end{proof}

We now obtain the main result of this section. 
\begin{theorem}\label{theorem: mpl of permutation skew brace is mpl of solution}
Let $(X,r)$ be a solution, then
\begin{equation*}
    \mpl'(X,r)=\mpl(\G{(X,r)}).\label{eq: mpl G and (X,r) 1}
\end{equation*}
\end{theorem}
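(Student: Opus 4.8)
The plan is to relate the iterated retraction of $(X,r)$ to the iterated retraction of $\G(X,r)$ via \cref{cor: optimized cor mpl}, and then use the characterisation of $\mpl'$ in terms of triviality of a retraction together with the already-established fact (quoted from \cite{CJKVAV23}) that $\mpl(A)=\mpl(A,r_A)$ for a skew brace $A$.

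First I would iterate \cref{cor: optimized cor mpl}: by induction on $n$, $\G(\Ret^n(X,r))\cong \Ret^n(\G(X,r))$ for all $n\geq 0$, the base case being trivial and the inductive step applying \cref{cor: optimized cor mpl} to the solution $\Ret^n(X,r)$. Now I claim $\mpl'(X,r)$ is exactly the smallest $n$ with $\Ret^n(\G(X,r))=0$, i.e. $\mpl(\G(X,r))$, using the identification $\Ret^n(\G(X,r))\cong \G(X,r)/\Soc_n(\G(X,r))$ from the preliminaries. So everything reduces to: \emph{$\Ret^n(X,r)$ is a trivial solution if and only if $\G(\Ret^n(X,r))$ is the zero skew brace} (equivalently, a solution $(Y,s)$ is trivial iff $\G(Y,s)=0$). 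The forward direction is immediate: if $(Y,s)$ is trivial then all $\sigma_x=\tau_x=\id$, so $\G(Y,s)=\langle(\id,\id)\rangle=0$. For the converse, if $\G(Y,s)=0$ then $(\sigma_x,\tau_x^{-1})=(\id,\id)$ for all $x$, which forces $s(x,y)=(y,x)$, i.e. $(Y,s)$ is trivial. Combining, $\Ret^n(X,r)$ trivial $\iff \Ret^n(\G(X,r))=0$, so the smallest such $n$ on the left equals the smallest such $n$ on the right, giving $\mpl'(X,r)=\mpl(\G(X,r))$. One should also address well-definedness: $(X,r)$ must be a multipermutation solution precisely when $\G(X,r)$ is a multipermutation skew brace — this again follows from the same $n$-by-$n$ correspondence (if one side stabilises at a trivial object/zero, so does the other), and if neither side is multipermutation the equality reads $\infty=\infty$, which is consistent with the statement as phrased.

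I do not expect a serious obstacle here; the genuinely substantive input — that $\G(\Ret(X,r))\cong\Ret(\G(X,r))$, which rests on \cref{prop: perm skew brace cycle base} and the identification of $\G(X,r)$ with $G(X,r)/\Gamma$ where $\Gamma=\Soc(G(X,r))$ on injective solutions — has already been done. The only point requiring a little care is the clean statement and proof of the lemma ``a solution is trivial iff its permutation skew brace is zero,'' and making sure the triviality is invoked at the level of $\Ret^n(X,r)$ rather than $(X,r)$ itself; this is exactly what distinguishes $\mpl'$ from $\mpl$ and is the reason the proposition gives an equality rather than the one-sided inequality one gets with the usual multipermutation level. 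I would present it as: prove the auxiliary equivalence, iterate \cref{cor: optimized cor mpl}, then read off the equality of the two minimal indices.
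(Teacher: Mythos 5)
Your proposal is correct and follows essentially the same route as the paper: iterate \cref{cor: optimized cor mpl} to get $\G(\Ret^n(X,r))\cong\Ret^n(\G(X,r))$, observe that a solution is trivial if and only if its permutation skew brace is the zero brace, and read off the equality of the minimal indices. The paper states this in two lines; your version merely spells out the auxiliary equivalence and the well-definedness in the non-multipermutation case, both of which check out.
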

\begin{proof}
From \cref{cor: optimized cor mpl}, we find that in general $\G(\Ret^n(X,r))\cong\Ret^n(\G(X,r))$. Note that a solution is trivial if and only if its permutation skew brace is the zero brace, from which the equality follows.
\end{proof}
As $\mpl(X,r)$ can vary between $\mpl'(X,r)$ and $\mpl'(X,r)+1$, there is no general way to express $\mpl(\G(X,r))$ directly in terms of $\mpl(X,r)$. This shows the main advantage of $\mpl'(X,r)$ over $\mpl(X,r)$. If $(X,r)$ is indecomposable, we obtain the following result from \cref{prop: mpl indecomposable}.
\begin{corollary}\label{cor: mpl indecomposable solution}
    Let $(X,r)$ be an indecomposable multipermutation solution, then $\mpl(X,r)=\mpl(\G(X,r)).$
\end{corollary}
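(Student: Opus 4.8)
The plan is to simply chain together the two results proved just before. First I would record that a multipermutation solution has, by definition, finite multipermutation level, so that the hypotheses of \cref{prop: mpl indecomposable} are met; since $(X,r)$ is moreover indecomposable, that proposition gives $\mpl'(X,r)=\mpl(X,r)$.

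Next I would invoke \cref{theorem: mpl of permutation skew brace is mpl of solution}, which holds for an arbitrary solution and in particular for $(X,r)$, yielding $\mpl'(X,r)=\mpl(\G(X,r))$. Combining the two equalities gives $\mpl(X,r)=\mpl'(X,r)=\mpl(\G(X,r))$, which is the claim.

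I do not expect any genuine obstacle: the statement is a formal consequence of the two facts just established, and the only point to verify is that the extra hypothesis beyond \cref{theorem: mpl of permutation skew brace is mpl of solution} — namely indecomposability — is precisely what \cref{prop: mpl indecomposable} uses to upgrade the inequality $\mpl'(X,r)\le\mpl(X,r)\le\mpl'(X,r)+1$ of \cref{lem: trivial inequality mpl} to an equality. As the remark following \cref{theorem: mpl of permutation skew brace is mpl of solution} points out, without indecomposability one genuinely cannot replace $\mpl'$ by $\mpl$ on the left, so the role of indecomposability here is essential rather than cosmetic.
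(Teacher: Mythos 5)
Your proposal is correct and is exactly the paper's argument: the corollary is stated as an immediate consequence of \cref{theorem: mpl of permutation skew brace is mpl of solution} combined with \cref{prop: mpl indecomposable}, which is precisely the chain of equalities $\mpl(X,r)=\mpl'(X,r)=\mpl(\G(X,r))$ you give. Your remark on the essential role of indecomposability matches the paper's surrounding discussion as well.
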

\begin{corollary}\label{cor: biskew}
    Let $(X,r)$ be an involutive solution. Then, the following conditions are equivalent:
    \begin{enumerate}
        \item $\mpl'(X,r)=2$,
        \item $\G(X,r)$ is a non-trivial $\lambda$-homomorphic bi-skew brace.
    \end{enumerate}
\end{corollary}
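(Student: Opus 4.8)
The plan is to reduce both conditions to the multipermutation level of the brace $\G(X,r)$ and then invoke the proposition on $\lambda$-homomorphic braces from \cref{section: preliminaries}. First I would note that, since $(X,r)$ is involutive, $G(X,r)$ is a brace, and hence so is the quotient skew brace $\G(X,r)$; in particular that proposition applies to $A=\G(X,r)$. By \cref{theorem: mpl of permutation skew brace is mpl of solution} we have $\mpl'(X,r)=\mpl(\G(X,r))$, so condition (1) is literally the statement $\mpl(\G(X,r))=2$, and it only remains to identify this with (2).

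For the identification I would use two observations about an arbitrary brace $A$. The first is the cited equivalence: $A$ is $\lambda$-homomorphic if and only if $A$ is a bi-skew brace, and this holds if and only if $\mpl(A)\le 2$. The second is that $A$ is trivial if and only if $\mpl(A)\le 1$; indeed $\mpl(A)\le 1$ means $\Soc(A)=A$, which unwinds to $a\circ b=a+b$ for all $a,b\in A$, i.e.\ to $\circ=+$, and conversely a trivial brace has $\Soc(A)=A$ because its additive group is abelian. Hence $A$ is non-trivial precisely when $\mpl(A)\le 1$ fails, and if moreover $A$ is $\lambda$-homomorphic, so that $\mpl(A)\le 2$, this forces $\mpl(A)=2$. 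Applying this with $A=\G(X,r)$: condition (2) asserts exactly that $\mpl(\G(X,r))\le 2$ (being $\lambda$-homomorphic, equivalently bi-skew) together with $\mpl(\G(X,r))\ge 2$ (non-triviality), which together is $\mpl(\G(X,r))=2$, that is, condition (1).

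There is no serious obstacle here; the only point requiring care is the bookkeeping around trivial braces, namely verifying that the non-triviality hypothesis in (2) is precisely what excludes the two small cases $\mpl(\G(X,r))=0$ (the zero brace) and $\mpl(\G(X,r))=1$ (a non-zero trivial brace), both of which are vacuously $\lambda$-homomorphic. Once this is checked, the equivalence is immediate from \cref{theorem: mpl of permutation skew brace is mpl of solution} and the proposition on $\lambda$-homomorphic braces.
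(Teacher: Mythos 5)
Your proposal is correct and follows essentially the same route as the paper: both reduce condition (1) to $\mpl(\G(X,r))=2$ via \cref{theorem: mpl of permutation skew brace is mpl of solution} and then invoke the preliminary proposition equating $\mpl(A)\le 2$ with being a $\lambda$-homomorphic bi-skew brace, with non-triviality excluding the cases $\mpl\le 1$. The paper's proof is just a one-line citation of these two facts; your extra bookkeeping on trivial braces is a correct and harmless elaboration.
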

\begin{proof}
    This follows from \cref{theorem: mpl of permutation skew brace is mpl of solution} and the fact that for a brace $A$, $\mpl(A)\leq 2$ if and only if $A$ is a $\lambda$-homomorphic bi-skew brace.
\end{proof}
\begin{theorem}\label{theorem: mpl solution (X r) and G(X r)}
Let $(X,r)$ be a multipermutation solution with $|X|>1$, then 
\begin{equation*}
    \mpl(G(X,r))-1\leq \mpl'(X,r)\leq \mpl(G(X,r))
\end{equation*}
If moreover $(X,r)$ is injective, then $\mpl'(X,r)+1=\mpl(G{(X,r)})$.
\end{theorem}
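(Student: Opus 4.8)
The plan is to route everything through the presentation $\G(X,r)=G(X,r)/\Gamma$, where $\Gamma$ is an ideal of $G(X,r)$ contained in $\Soc(G(X,r))$ (and equal to $\Soc(G(X,r))$ precisely when $(X,r)$ is injective), and to leverage the equality $\mpl'(X,r)=\mpl(\G(X,r))$ of \cref{theorem: mpl of permutation skew brace is mpl of solution}. Abbreviate $G=G(X,r)$ and $\G=\G(X,r)$, and set $m=\mpl(\G)=\mpl'(X,r)$; this is finite since $(X,r)$ is a multipermutation solution. The one auxiliary input I would isolate first is that a surjective skew brace homomorphism $A\twoheadrightarrow B$ with $A$ multipermutation forces $B$ to be multipermutation with $\mpl(B)\le\mpl(A)$: apply \cref{lem: multipermutation surjection} to the induced surjection of solutions $(A,r_A)\twoheadrightarrow(B,r_B)$, together with \cite[Remark 4.4]{CJKVAV23} identifying the multipermutation level of a skew brace with that of its associated solution (alternatively, this is immediate from $f(\Soc A)\subseteq\Soc B$ and induction on the retraction series).

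For the two-sided inequality, the key point is to establish that $G$ is itself multipermutation. Since $\Gamma\subseteq\Soc(G)$, the third isomorphism theorem for skew braces yields a surjection $\G=G/\Gamma\twoheadrightarrow G/\Soc(G)=\Ret(G)$, so by the auxiliary input $\Ret(G)$ is multipermutation of level at most $m$; hence $\Ret^{m+1}(G)=\Ret^{m}(\Ret(G))=0$, so $G$ is multipermutation with $\mpl(G)\le m+1$, i.e. $\mpl(G)-1\le\mpl'(X,r)$. The reverse bound is then immediate: $\G=G/\Gamma$ is a quotient of the multipermutation skew brace $G$, so $\mpl'(X,r)=m=\mpl(\G)\le\mpl(G)$.

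For the injective refinement, the simplification is that $\Gamma=\Soc(G)$, so $\G=\Ret(G)$ on the nose. If $m\ge1$, then $\Ret^{m}(G)=\Ret^{m-1}(\Ret(G))=\Ret^{m-1}(\G)\ne0$ because $\mpl(\G)=m$, which combined with $\Ret^{m+1}(G)=0$ from the previous paragraph upgrades $\mpl(G)\le m+1$ to $\mpl(G)=m+1$. The boundary case $m=0$ must be treated separately: then $\G=0$, so $\Ret(G)=0$, so $G$ is a trivial brace, and $G\ne0$ since $|\iota(X)|=|X|>1$, whence $\mpl(G)=1=m+1$. In all cases $\mpl'(X,r)+1=\mpl(G(X,r))$.

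I expect the only genuine obstacle to be the bookkeeping point that $G(X,r)$ belongs to the multipermutation class at all; once that is in hand the rest is formal manipulation of the socle/retraction filtration. The two things to be careful about are invoking the ``surjection does not raise $\mpl$'' lemma only when the source is already known to be multipermutation, and not overlooking the degenerate case $m=0$ (that is, $(X,r)$ trivial of size $>1$) in the injective statement.
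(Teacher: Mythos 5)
Your proof is correct, and its injective half coincides with the paper's: both rest on the identification $\G(X,r)=\Ret(G(X,r))$ together with \cref{theorem: mpl of permutation skew brace is mpl of solution}. Where you diverge is in the general two-sided inequality. The paper deduces it from the injective case by passing to the injectivization, using $G(X,r)=G(\Inj(X,r))$ and the bounds $\mpl'(X,r)-1\leq \mpl'\Inj(X,r)\leq \mpl'(X,r)$ of \cref{cor: multipermutation level of injectivization}. You instead work directly inside the structure skew brace: from $\Gamma\subseteq\Soc(G(X,r))$ you get a surjection $\G(X,r)\twoheadrightarrow\Ret(G(X,r))$, which shows $\Ret(G(X,r))$ is multipermutation of level at most $m=\mpl'(X,r)$ and hence $\mpl(G(X,r))\leq m+1$, while the quotient map $G(X,r)\twoheadrightarrow\G(X,r)$ gives $m\leq\mpl(G(X,r))$. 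Both routes use the same two ingredients (\cref{theorem: mpl of permutation skew brace is mpl of solution} and the fact that surjective homomorphisms do not raise the multipermutation level, as in \cref{lem: multipermutation surjection}); yours avoids \cref{cor: multipermutation level of injectivization} entirely, and has the merit of making explicit two points the paper leaves implicit, namely that $G(X,r)$ is multipermutation at all (so that $\mpl(G(X,r))$ is defined) and the degenerate case $m=0$ in the injective refinement.
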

\begin{proof}
If $(X,r)$ is injective, then $\G(X,r)=\Ret(G(X,r))$, hence the second part of the statement follows from \cref{theorem: mpl of permutation skew brace is mpl of solution}. The first part now follows from \cref{cor: multipermutation level of injectivization} and the fact that $G(X,r)=G(\Inj(X,r))$.
\end{proof}
As a consequence, we find the following extension of \cite[Corollary 4.16]{CJKVAV23} and \cite[Theorem 5.15]{gateva2018set}.
\begin{corollary}\label{cor: generalisation GI}
    Let $(X,r)$ be an injective multipermutation solution, then 
    \[\mpl(G(X,r))-1\leq \mpl(X,r)\leq \mpl(G(X,r)).\]
    If moreover, $(X,r)$ satisfies condition $(*)$ then $\mpl(X,r)=\mpl(G(X,r))$.
\end{corollary}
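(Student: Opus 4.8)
The plan is to read off both inequalities from the corresponding---and already proven---statements about the variant level $\mpl'$, using only the two elementary sandwich estimates. Throughout I would assume $|X|>1$; the one-point case contributes nothing of substance (there $(X,r)$ is the trivial solution on a single element, and every quantity can be computed by hand), and the interesting content is for $|X|>1$ anyway.

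First I would invoke the injectivity hypothesis. By the second half of \cref{theorem: mpl solution (X r) and G(X r)} we then have the exact identity $\mpl'(X,r)+1=\mpl(G(X,r))$, i.e.\ $\mpl'(X,r)=\mpl(G(X,r))-1$. Substituting this into the trivial two-sided bound $\mpl'(X,r)\leq \mpl(X,r)\leq \mpl'(X,r)+1$ of \cref{lem: trivial inequality mpl} gives at once
\[\mpl(G(X,r))-1\leq \mpl(X,r)\leq \mpl(G(X,r)),\]
which is the first assertion. For the second assertion, condition $(*)$ rules out the lower end of this interval: since $(X,r)$ is multipermutation with $|X|>1$ and satisfies $(*)$, \cref{prop: mpl square-free} gives $\mpl(X,r)=\mpl'(X,r)+1$, and combining this with $\mpl'(X,r)=\mpl(G(X,r))-1$ yields $\mpl(X,r)=\mpl(G(X,r))$.

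There is essentially no technical obstacle here: the argument is just a bookkeeping combination of \cref{lem: trivial inequality mpl}, \cref{prop: mpl square-free} and \cref{theorem: mpl solution (X r) and G(X r)}, with all the real work already done in establishing the last of these (in particular the fact $G(\Inj(X,r))\cong G(X,r)$ that makes injectivity bite is built into its proof, via \cref{cor: multipermutation level of injectivization}). The only points needing a moment's attention are checking that the hypotheses of the invoked results---multipermutation, $|X|>1$, injective, and $(*)$ for the final claim---are genuinely in force, disposing of the degenerate one-element case separately, and keeping track of which of the estimates used are exact (the injective case of \cref{theorem: mpl solution (X r) and G(X r)}, and \cref{prop: mpl square-free}) as opposed to merely one-sided.
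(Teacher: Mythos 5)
Your proof is correct and takes essentially the same route as the paper's: both parts combine the exact identity $\mpl'(X,r)+1=\mpl(G(X,r))$ from the injective case of \cref{theorem: mpl solution (X r) and G(X r)} with \cref{lem: trivial inequality mpl} for the sandwich and with \cref{prop: mpl square-free} for the equality under condition $(*)$. Your explicit attention to the hypotheses ($|X|>1$, injectivity) is a welcome bit of extra care, though note that for $|X|=1$ the equality in the second assertion in fact fails (there $\mpl(X,r)=0$ while $G(X,r)\cong\Z$ gives $\mpl(G(X,r))=1$), so that case is genuinely excluded rather than trivially true.
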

\begin{proof}
    This follows directly from \cref{lem: trivial inequality mpl} and \cref{theorem: mpl solution (X r) and G(X r)}. The second part follows from \cref{prop: mpl square-free} and \cref{theorem: mpl solution (X r) and G(X r)}.
\end{proof}

\section{Generators of a skew brace}\label{section: generators of skew brace}
Let a skew brace $A$ be given. In light of the connection between indecomposable solutions and transitive cycle sets, it is natural to ask whether we can easily characterise when $A$ has a transitive cycle base. More generally, one might search for the minimal number of orbits that a cycle base of $A$ can contain. We start this section by translating this question purely in terms of strong left ideals of $A$. Afterwards, we relate this to the number of generators of the skew brace $A$ and its weight. 
\begin{definition}
    Let $A$ be a skew brace. For a set $X\subseteq A$, the subskew brace, respectively strong left ideal or ideal, of $A$ generated by $X$ is the smallest subskew brace, respectively strong left ideal or ideal, of $A$ containing $X$. If $A$ has a singleton generating set as a skew brace, respectively strong left ideal or ideal, then we say that $A$ is \textit{one-generated} as a skew brace, respectively strong left ideal or ideal. If $A$ is one-generated as a skew brace, we also say that $A$ is a \textit{one-generator skew brace}.
\end{definition}
For some subset $X$ of a skew brace $A$, we denote the subgroup of $(A,+)$ generated by $X$ by $\langle X\rangle_+$.
\begin{lemma}\label{lem: strong left ideal generated by set}
Let $A$ be a skew brace and $X\subseteq A$. The strong left ideal generated by $X$ is $L(X)=\langle \theta_{(a,b)}(x)\mid x\in X, a,b\in A\rangle_+$.
\end{lemma}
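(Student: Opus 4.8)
The plan is to show that $L(X) := \langle \theta_{(a,b)}(x)\mid x\in X,\ a,b\in A\rangle_+$ is the smallest strong left ideal containing $X$. There are three things to verify: that $L(X)$ contains $X$, that $L(X)$ is a strong left ideal, and that any strong left ideal containing $X$ must contain $L(X)$.

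First I would observe that $X\subseteq L(X)$: for $x\in X$, taking $a=b=0$ gives $\theta_{(0,0)}(x)=0+\lambda_0(x)-0=x$, since $\lambda_0=\id$. Next, to see that $L(X)$ is a strong left ideal, recall from the preliminaries that strong left ideals are precisely the subgroups of $(A,+)$ that are mapped to themselves under the action $\theta\colon(A,+)\rtimes(A,\circ)\to\Aut(A,+)$. By construction $L(X)$ is an additive subgroup, so it remains to check $\theta$-invariance. Since each $\theta_{(a,b)}$ is an additive automorphism, it suffices to check that $\theta_{(a,b)}$ maps each generator $\theta_{(c,d)}(x)$ into $L(X)$; this follows because $\theta$ is a group action, so $\theta_{(a,b)}\theta_{(c,d)}=\theta_{(a,b)(c,d)}$, which is again of the form $\theta_{(a',b')}$ for the appropriate product $(a',b')$ in the semidirect product $(A,+)\rtimes(A,\circ)$, and hence $\theta_{(a,b)}(\theta_{(c,d)}(x))=\theta_{(a',b')}(x)\in L(X)$.

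Finally, for minimality, let $I$ be any strong left ideal with $X\subseteq I$. Since $I$ is $\theta$-invariant, $\theta_{(a,b)}(x)\in I$ for all $x\in X$ and $a,b\in A$, and since $I$ is an additive subgroup it contains the additive subgroup generated by all such elements, i.e. $L(X)\subseteq I$. Combining the three points gives $L(X)$ as the strong left ideal generated by $X$.

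The only mildly delicate point is the $\theta$-invariance step: one must use that $\theta$ is genuinely an action of the semidirect product (as asserted in the preliminaries) so that composites of the $\theta_{(a,b)}$ are again among the $\theta_{(a',b')}$; there is nothing deep here, just bookkeeping with the semidirect product multiplication. Everything else is immediate from the characterisation of strong left ideals as $\theta$-invariant additive subgroups, so I do not anticipate a real obstacle.
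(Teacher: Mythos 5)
Your proposal is correct and follows essentially the same route as the paper's (very terse) proof: both verify that $L(X)$ is a $\theta$-invariant additive subgroup containing $X$ and that it sits inside any strong left ideal containing $X$. You merely spell out the details — containment via $\theta_{(0,0)}=\id$ and invariance via the action property $\theta_{(a,b)}\theta_{(c,d)}=\theta_{(a,b)(c,d)}$ — which the paper leaves implicit.
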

\begin{proof}
    It is clear that any $L(X)$ is contained in any strong left ideal containing $X$. Moreover, $\theta_{(a,b)}(L(X))\subseteq L(X)$ for all $a,b\in A$, hence $L(X)$ is a strong left ideal itself.
\end{proof}
From \cref{lem: strong left ideal generated by set} we immediately obtain the following result, which lets us translate the question ``What is the minimal number of orbits contained in a cycle base of $A$?" to ``What is the minimal number of elements that generate $A$ as a strong left ideal?".
\begin{proposition}
    Let $A$ be a non-zero skew brace. If $X$ is a cycle base of $A$ and $Y$ a set of representatives of the orbits in $X$, then $Y$ generates $A$ as a strong left ideal. Conversely, if $Y\subseteq A$ generates $A$ as a strong left ideal, then the union of the orbits of elements in $Y$ form a cycle base of $A$.
\end{proposition}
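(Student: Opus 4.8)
The plan is to deduce the proposition directly from \cref{lem: strong left ideal generated by set}. That lemma identifies the strong left ideal generated by a set $Y\subseteq A$ with the additive subgroup $L(Y)=\langle \theta_{(a,b)}(y)\mid y\in Y,\ a,b\in A\rangle_+$. The one extra ingredient I would isolate first is the observation that, since $\theta$ is an action of $(A,+)\rtimes(A,\circ)$ on $A$, the orbit of an element $y\in A$ is exactly $\{\theta_{(a,b)}(y)\mid a,b\in A\}$; consequently, if $Y$ is any set and $X$ denotes the union of the $\theta$-orbits of the elements of $Y$, then $\langle X\rangle_+=L(Y)$. With this in hand both implications become a bookkeeping exercise with the definition of a cycle base (a nonempty union of orbits that generates $(A,+)$, equivalently $(A,\circ)$).

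For the first implication I would start from a cycle base $X$ with a set $Y$ of orbit representatives. Because $X$ is a union of orbits, $X=\bigcup_{y\in Y}\{\theta_{(a,b)}(y)\mid a,b\in A\}$, so by the observation above $\langle X\rangle_+=L(Y)$; since $X$ generates $(A,+)$ this gives $L(Y)=A$, i.e.\ $Y$ generates $A$ as a strong left ideal by \cref{lem: strong left ideal generated by set}. For the converse I would take $Y\subseteq A$ generating $A$ as a strong left ideal, so $L(Y)=A$, and set $X$ to be the union of the $\theta$-orbits of the elements of $Y$. By construction $X$ is a union of orbits, and again $\langle X\rangle_+=L(Y)=A$, so $X$ generates $(A,+)$ (hence also $(A,\circ)$); thus $X$ is a cycle base.

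I do not expect a genuine obstacle here—the content is entirely in \cref{lem: strong left ideal generated by set} and the description of $\theta$-orbits. The only point that needs a word of care is non-emptiness: a cycle base is by definition nonempty, which is automatic in the first implication; in the converse one notes that $A\neq 0$ forces $Y\neq\varnothing$ (otherwise $L(Y)=0\neq A$), so that the resulting $X$ is nonempty and the definition of cycle base is genuinely met.
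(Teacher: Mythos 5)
Your argument is correct and is exactly the route the paper takes: the paper states that the proposition follows immediately from \cref{lem: strong left ideal generated by set}, and your proof simply fills in the (genuinely immediate) details, namely that the $\theta$-orbit of $y$ is $\{\theta_{(a,b)}(y)\mid a,b\in A\}$ so that the union of orbits of $Y$ additively generates the same subgroup as $L(Y)$. The remark about non-emptiness, using $A\neq 0$, is a sensible touch that the paper leaves implicit.
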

In particular we find the following corollary.
\begin{corollary}\label{cor: element in transitive cycle base generates skew brace}
    Let $A$ be a skew brace. If $X$ is a transitive cycle base of $A$, then every element of $X$ generates $A$ as a strong left ideal. 
\end{corollary}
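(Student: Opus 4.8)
The statement to prove is \cref{cor: element in transitive cycle base generates skew brace}: if $X$ is a transitive cycle base of a skew brace $A$, then every element of $X$ generates $A$ as a strong left ideal.

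The plan is to derive this as an immediate consequence of the preceding \cref{lem: strong left ideal generated by set}. Let $x \in X$ be arbitrary. By \cref{lem: strong left ideal generated by set}, the strong left ideal generated by $x$ is $L(\{x\}) = \langle \theta_{(a,b)}(x) \mid a, b \in A\rangle_+$. Now I would observe that $\{\theta_{(a,b)}(x) \mid a, b \in A\}$ is exactly the orbit of $x$ under the action $\theta$ of $(A,+) \rtimes (A,\circ)$ on $A$. Since $X$ is a transitive cycle base, this orbit is all of $X$. Hence $L(\{x\}) = \langle X \rangle_+$, and since $X$ generates the additive group of $A$ by definition of a cycle base, $L(\{x\}) = A$. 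Therefore $x$ generates $A$ as a strong left ideal.

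I would present this compactly, perhaps just citing \cref{lem: strong left ideal generated by set} together with the definitions of cycle base and transitive cycle base; in fact the excerpt's phrasing ``In particular we find the following corollary'' suggests the authors intend it to follow directly from the \cref{cor: element in transitive cycle base generates skew brace}'s predecessor, the (unlabeled in this plan) proposition about cycle bases and representatives of orbits: taking $Y = \{x\}$ as a one-element set of orbit representatives immediately gives that $\{x\}$ generates $A$ as a strong left ideal. Either route works.

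There is essentially no obstacle here; the only point requiring a moment's care is the identification of the $\theta$-orbit of $x$ with $\{\theta_{(a,b)}(x) \mid a,b\in A\}$, which is immediate since $\theta$ is defined as an action of $(A,+)\rtimes(A,\circ)$ and every element of this semidirect product has the form $(a,b)$. The rest is bookkeeping with the definition of (transitive) cycle base.
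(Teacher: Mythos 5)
Your proposal is correct and follows exactly the route the paper intends: the corollary is stated without proof as an immediate specialization of the preceding proposition (take $Y=\{x\}$ as the singleton set of orbit representatives), which in turn rests on \cref{lem: strong left ideal generated by set} and the observation that $\{\theta_{(a,b)}(x)\mid a,b\in A\}$ is the $\theta$-orbit of $x$, namely all of $X$. Nothing further is needed.
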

In the rest of this section, we study the relation between the generators of $A$ as a skew brace, strong left ideal and ideal. Inspired by results in \cite{Smo18, rump2020one} we first consider multipermutation skew braces. Later, also left nilpotency or annihilator nilpotency appear as a natural assumption. The following theorem generalises \cite[Theorem 5.4]{Smo18} and \cite[Proposition 10]{rump2020one}.
\begin{theorem}\label{theorem: finite mpl strong left implies skew}
Let $A$ be a multipermutation skew brace. If $X$ generates $A$ as a strong left ideal, then $X$ generates $A$ as a skew brace.
\end{theorem}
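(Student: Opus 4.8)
The plan is to argue by induction on $\mpl(A)$, reducing modulo the socle at each step.

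For the base case $\mpl(A)\leq 1$ we have $\Soc(A)=A$. This forces $\lambda_a=\id$ for every $a\in A$ (as $\lambda_x(a)=-x+x\circ a=-x+(x+a)=a$), makes $(A,+)$ abelian, and gives $a\circ b=a+b$ for all $a,b$; hence a subset of $A$ is a subskew brace precisely when it is an additive subgroup. Since moreover $\theta_{(a,b)}(x)=a+\lambda_b(x)-a=x$ for all $a,b,x$, the strong left ideal generated by $X$ is, by \cref{lem: strong left ideal generated by set}, just $\langle X\rangle_+$ in this case, which is the same as the subskew brace generated by $X$. The claim follows at once.

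For the inductive step, take $\mpl(A)=n\geq 2$, so that $I:=\Soc(A)$ is a nonzero ideal and $A/I$ is a multipermutation skew brace of level $n-1$. Writing $q\colon A\to A/I$ for the quotient map, I would first check — using \cref{lem: strong left ideal generated by set}, the identity $q(\theta_{(a,b)}(x))=\theta_{(q(a),q(b))}(q(x))$, and the surjectivity of $q$ — that $q$ carries the strong left ideal generated by $X$ onto the strong left ideal generated by $q(X)$. As $X$ generates $A$ as a strong left ideal, $q(X)$ then generates $A/I$ as a strong left ideal, so by the induction hypothesis $q(X)$ generates $A/I$ as a skew brace. Letting $B$ denote the subskew brace of $A$ generated by $X$, the subskew brace $q(B)$ of $A/I$ is generated by $q(X)$ and hence equals $A/I$; that is, $A=B+I$.

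The final step, and the one requiring real care, is to deduce $B=A$ from $A=B+\Soc(A)$. I would show that every generator $\theta_{(a,b)}(x)$ (with $a,b\in A$, $x\in X$) of the strong left ideal generated by $X$ already lies in $B$; since that strong left ideal is all of $A$, this gives $A\subseteq B$ and closes the induction. Write $a=b_0+i$ and $b=b_1+j$ with $b_0,b_1\in B$ and $i,j\in\Soc(A)$. Since $\Soc(A)$ is a left ideal it is $\lambda$-invariant, so replacing $j$ by $\lambda_{b_1^-}(j)\in\Soc(A)$ we may assume $b=b_1\circ j$; then, because $\Soc(A)\subseteq\ker\lambda$, we get $\lambda_b=\lambda_{b_1}\lambda_j=\lambda_{b_1}$, and in particular $\lambda_b(x)=\lambda_{b_1}(x)\in B$. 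Finally, as $\Soc(A)$ is central in $(A,+)$, conjugation by $a$ in $(A,+)$ agrees with conjugation by $b_0$, so
\[\theta_{(a,b)}(x)=a+\lambda_b(x)-a=b_0+\lambda_{b_1}(x)-b_0\in B,\]
since $b_0,b_1,x\in B$ and $B$ is a subskew brace. The obstacle is exactly this manoeuvre of pushing the arbitrary elements $a$ and $b$ into $B$ inside $\theta_{(a,b)}(x)$; it works thanks to the two special features of the socle — being central in $(A,+)$ and being killed by $\lambda$ — combined with the inductive identity $A=B+\Soc(A)$.
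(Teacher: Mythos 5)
Your proof is correct and follows essentially the same route as the paper's: induction on the multipermutation level, using the induction hypothesis to get $A=B+\Soc(A)$ and then absorbing the socle components of $a$ and $b$ in $\theta_{(a,b)}(x)$ via centrality of $\Soc(A)$ in $(A,+)$ and $\Soc(A)\subseteq\ker\lambda$. The paper states this last absorption step without detail, so your explicit verification is a faithful expansion of the same argument.
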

\begin{proof}
We will prove this claim by induction on the multipermutation level of $A$. If $A$ is a trivial brace, then the claim clearly holds. Now assume that $A$ is not a trivial brace and let $A(X)$ denote the subskew brace of $A$ generated by $X$, the induction hypothesis implies that $A=A(X)+\Soc(A)$. As $X$ generates $A$ as a strong left ideal, we know that $A=L(X)$ with $L(X)$ as in \cref{lem: strong left ideal generated by set}. For any $a,b\in A$, we can write $a=a_1+a_2$ and $b=b_1+b_2$ with $a_1,b_1\in A(X)$ and $a_2,b_2\in \Soc(A)$. This then implies that $\theta_{(a,b)}(x)=\theta_{(a_1,b_1)}(x)\in A(X)$ for all $x\in X$ and therefore $A=L(X)\subseteq A(X)$.
\end{proof}
\begin{corollary}\label{cor: transitive cycle base finite mpl is one generator}
    Let $A$ be a multipermutation skew brace. The following are equivalent:
    \begin{enumerate}
    \item $A$ is one-generated as a skew brace,
    \item $A$ is one-generated as a strong left ideal.
    \end{enumerate}
    In particular, if $x\in A$ generates $A$ as a strong left ideal then it generates $A$ as a skew brace. 
\end{corollary}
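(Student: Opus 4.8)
The plan is to deduce the corollary directly from \cref{theorem: finite mpl strong left implies skew}, using only one elementary auxiliary observation.

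First I would record that every strong left ideal $L$ of a skew brace is in particular a subskew brace. Indeed, if $a,b\in L$ then $a\circ b=a+\lambda_a(b)\in L$, since $L$ is a subgroup of $(A,+)$ that is stable under $\lambda_a$; and $a^-=\lambda_{a^-}(-a)\in L$ for the same reason, using $\lambda_{a^-}=\lambda_a^{-1}$ together with $a\circ a^-=0$. Consequently, for any subset $X\subseteq A$, the strong left ideal generated by $X$ contains the subskew brace generated by $X$. This immediately gives $(1)\Rightarrow(2)$: if a single element $x$ generates $A$ as a skew brace, then it generates $A$ as a strong left ideal; note that this implication needs no hypothesis on $A$.

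For $(2)\Rightarrow(1)$, and simultaneously for the final ``in particular'' assertion, I would apply \cref{theorem: finite mpl strong left implies skew} to the singleton $X=\{x\}$: since $A$ is multipermutation, if $x$ generates $A$ as a strong left ideal then it generates $A$ as a skew brace, and in particular $A$ admits a singleton generating set as a skew brace.

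I do not expect any obstacle here. Both implications are formal once \cref{theorem: finite mpl strong left implies skew} is in hand, and the only computation involved is the routine verification that a strong left ideal is closed under $\circ$ and under taking $\circ$-inverses.
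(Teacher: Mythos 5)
Your proof is correct and follows the route the paper intends: the corollary is stated as an immediate consequence of \cref{theorem: finite mpl strong left implies skew}, which handles $(2)\Rightarrow(1)$, while your verification that a strong left ideal is closed under $\circ$ and $\circ$-inverses (hence is a subskew brace) correctly supplies the easy direction $(1)\Rightarrow(2)$ that the paper leaves implicit.
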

We obtain the following corollary which generalises \cite[Theorem 3]{rump2020one}. For simplicity, we say that a solution $(X,r)$ is the cycle base of a skew brace $A$ if $X$ is a cycle base of $A$ and the restriction of $(A,r_A)$ to $X$ is precisely $(X,r)$.
\begin{corollary}
    Let $(X,r)$ be an injective multipermutation solution. The following statements are equivalent.
    \begin{enumerate}
        \item $(X,r)$ is indecomposable,
        \item Every skew brace with $(X,r)$ as a cycle base is generated by any $x\in X$,
        \item Every $x\in X$ generates a skew brace $A$ such that $X$ is the orbit of $x$ in $A$.
        \item There exists a skew brace generated by some $x\in A$, such that $X$ is the orbit of $x$ in $A$.
    \end{enumerate}
\end{corollary}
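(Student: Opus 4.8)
The plan is to prove the implications $(1)\Rightarrow(2)$, $(2)\Rightarrow(1)$, $(1)\Rightarrow(3)$, $(3)\Rightarrow(4)$ and $(4)\Rightarrow(1)$; together these give the stated equivalence. Two observations will be used throughout. Since $(X,r)$ is injective, $\iota$ identifies it with a cycle base subsolution of $(G(X,r),r_{G(X,r)})$, and $G(X,r)$ is a multipermutation skew brace by \cref{theorem: mpl solution (X r) and G(X r)} (trivially so when $|X|=1$). Moreover, a strong left ideal $I$ of a skew brace is itself a subskew brace: it is closed under $\circ$ because $a\circ b=a+\lambda_a(b)$, and under $^-$ because $b^-=\lambda_b^{-1}(-b)$, using in both cases that $\lambda_c(I)\subseteq I$ for all $c$. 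In particular the subskew brace generated by an element $x$ is contained in $L(\{x\})=\langle\,\theta_{(a,b)}(x)\mid a,b\in A\,\rangle_+$ (\cref{lem: strong left ideal generated by set}), which is the additive span of the $\theta$-orbit of $x$.

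For $(1)\Rightarrow(2)$, assume $(X,r)$ is indecomposable and let $A$ be any skew brace with $(X,r)$ as a cycle base. By the universal property of $G(X,r)$, the inclusion $X\hookrightarrow A$ realises $A$ as a skew brace quotient of $G(X,r)$, so $A$ is multipermutation (equivalently, $\Ret(A)\cong\G(X,r)$ is multipermutation by \cref{prop: perm skew brace cycle base}). Since the orbits of the restricted solution $(X,r_A|_{X\times X})=(X,r)$ coincide with the orbits of $X$ in $A$, the set $X$ is a single orbit, i.e.\ a transitive cycle base of $A$. By \cref{cor: element in transitive cycle base generates skew brace} every $x\in X$ generates $A$ as a strong left ideal, and then as a skew brace by \cref{cor: transitive cycle base finite mpl is one generator}. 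For $(1)\Rightarrow(3)$, apply this with $A=G(X,r)$: here $(X,r)$ is a transitive cycle base and $X=\iota(X)$ is exactly the orbit of each of its elements in $G(X,r)$, so $(3)$ holds with this choice of $A$.

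The implication $(3)\Rightarrow(4)$ needs nothing. For $(4)\Rightarrow(1)$, let $A$ be a skew brace generated by $x$ with $r=r_A|_{X\times X}$ and $X$ the orbit of $x$ in $A$. By the first paragraph, $\langle X\rangle_+=L(\{x\})$ contains the subskew brace generated by $x$, which is all of $A$; hence $\langle X\rangle_+=A$, so $X$ is a transitive cycle base of $A$, and therefore $(X,r)=(X,r_A|_{X\times X})$ has exactly one orbit and is indecomposable.

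The crux is $(2)\Rightarrow(1)$, which I prove by contraposition. Suppose $(X,r)$ is decomposable; write $\mathcal{O}$ for its set of orbits, so $|\mathcal{O}|\ge2$, and take $A=G(X,r)$, which has $(X,r)$ as a cycle base. It suffices to find $x\in X$ not generating $G(X,r)$ as a skew brace, and since the subskew brace generated by $\iota(x)$ lies in $L(\{\iota(x)\})=\langle\iota(O)\rangle_+$ for $O$ the orbit of $x$, it is enough to show $\langle\iota(O)\rangle_+\subsetneq G(X,r)$ for each $O\in\mathcal{O}$. To see this, identify $(G(X,r),+)$ with the derived structure group $A(X,r)=\langle X\mid x+y=y+\sigma_y\tau_{\sigma^{-1}_x(y)}(x)\rangle$ and note that $\sigma_y\tau_{\sigma^{-1}_x(y)}(x)$ always lies in the orbit of $x$; hence collapsing each orbit to a single symbol sends every defining relation to an identity in the free abelian group $\mathbb{Z}^{(\mathcal{O})}$, giving a surjective group homomorphism $\phi\colon(G(X,r),+)\to\mathbb{Z}^{(\mathcal{O})}$ with $\phi(\iota(x))=e_O$ whenever $x\in O$. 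Then $\phi(\langle\iota(O)\rangle_+)=\mathbb{Z}e_O\subsetneq\mathbb{Z}^{(\mathcal{O})}$, so $\langle\iota(O)\rangle_+$ is proper and $(2)$ fails. The heart of this implication is the construction of $\phi$; the only computation it requires is its well-definedness, which amounts to the fact that the maps $\sigma_z^{\pm1},\tau_z^{\pm1}$ preserve orbits, immediate from the description of orbits as the coarsest partition stable under these maps.
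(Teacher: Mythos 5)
Your proof is correct, and for most of the implications it runs along the same lines as the paper's: the paper proves the single cycle $1\Rightarrow 2\Rightarrow 3\Rightarrow 4\Rightarrow 1$, using exactly the ingredients you use — the universal property of $G(X,r)$, the fact that $(X,r)$ embeds as a transitive cycle base of $G(X,r)$, \cref{cor: element in transitive cycle base generates skew brace} and \cref{cor: transitive cycle base finite mpl is one generator}, and (for $4\Rightarrow 1$) the containment of the subskew brace generated by $x$ in the strong left ideal it generates. Your decomposition differs in that you replace the paper's $2\Rightarrow 3$ by $1\Rightarrow 3$ and close the loop with a direct proof of $2\Rightarrow 1$. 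That last step is genuinely new relative to the paper: the orbit-collapsing surjection $\phi\colon (G(X,r),+)\to \Z^{(\mathcal{O})}$, well defined because $\sigma_y\tau_{\sigma_x^{-1}(y)}(x)$ stays in the orbit of $x$, is not used anywhere in the paper's argument. It buys you something concrete — it makes fully explicit why the additive span of a single orbit is proper whenever there are at least two orbits, a point the paper's terse ``follows from the embedding of $(X,r)$ into $G(X,r)$'' leaves to the reader — at the cost of being somewhat longer than the paper's purely formal closing of the cycle. Both arguments are sound; yours is the more self-contained of the two.
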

\begin{proof}
    We first prove the implication from 1 to 2. Recall that every injective solution $(X,r)$ is a cycle base of $G(X,r)$. If moreover $(X,r)$ is multipermutation, then so is $G(X,r)$ by \cref{theorem: mpl solution (X r) and G(X r)}. If $(X,r)$ is indecomposable, then $(X,r)$ is a transitive cycle base of $G(X,r)$ hence $G(X,r)$ is generated by any element in $X$ by \cref{cor: element in transitive cycle base generates skew brace}. By the universal property of $G(X,r)$ it follows that every skew brace $A$ with $(X,r)$ as a cycle base is a homomorphic image of $G(X,r)$ (where the cycle base $X\subseteq G(X,r)$ is mapped to the cycle base $X\subseteq A$). Therefore also $A$ is generated by any $x\in X$.

    The implication from 2 to 3 once again follows from the embedding of $(X,r)$ into $G(X,r)$. The implication from 3 to 4 is trivial. To see that 4 implies 1, note that $X$ is a transitive cycle base of $A$ so $(X,r)$ must be indecomposable.
\end{proof}
Next, we study the relation between sets that generate a skew brace as an ideal and as a strong left ideal. The following result, which was proved in \cite{JKVAV2021}, gives a nice way to determine the minimal number of generators as an ideal, for a large class of skew braces. For a skew brace $A$, we let $\omega(A)$ be the minimal (possibly infinite) cardinality of a subset of $A$ which generates it as an ideal. Also, following \cite{BFP2022,LV2022} for any skew brace $A$ we define $A'$ as the ideal generated by $A^2$ and the commutator subgroup of $(A,+)$, and call this the \textit{commutator} of $A$.
\begin{theorem}\label{theorem: weight}
    Let $A$ be a skew brace with $\omega(A)<\infty$ and satisfying the DCC on ideals. Then $\omega(A)=\omega(A/A^2)=\omega(A/A')$.
\end{theorem}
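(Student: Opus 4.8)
The plan is to reduce the statement to a single clean claim: for a skew brace $A$ satisfying the DCC on ideals with $\omega(A) < \infty$, one has $\omega(A) = \omega(A/A^2)$, and then to observe that the identical argument applies verbatim with $A'$ in place of $A^2$ since $A'$ is also an ideal contained in every ``obstruction'' to generation. So let me focus on $\omega(A) = \omega(A/A^2)$. The inequality $\omega(A/A^2) \leq \omega(A)$ is immediate: the image of a generating set of $A$ as an ideal generates the quotient as an ideal. The content is the reverse inequality. The key structural fact I would use is that $A^2 = A^{(2)}$ is the ``smallest'' obstruction in the following sense: if $I$ is an ideal of $A$ and $I + A^2 = A$ (equivalently, $I$ generates $A/A^2$ as an ideal, equivalently as additive group modulo $A^2$), then I claim $I = A$. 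This is the skew-brace analogue of Nakayama's lemma; it should follow because $A^{(n)}$ is a descending chain of ideals, $A/A^{(n)}$ behaves well, and the DCC forces this chain to stabilise — combined with the fact that $A * A = A^{(2)}$ and iterating $A^{(n+1)} = A^{(n)} * A$ together with $I + A^{(2)} = A$ gives $A^{(2)} = A^{(2)} * A \subseteq I * A + \cdots$, pushing everything into $I$ after finitely many steps by DCC. The hard part will be making this Nakayama-type argument precise in the non-commutative, non-associative skew-brace setting, keeping careful track of which subsets are additive subgroups, which are ideals, and ensuring the $*$-products land where expected.

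Granting the Nakayama claim, here is how I would assemble the proof. Let $n = \omega(A/A^2)$ and pick $\bar x_1, \dots, \bar x_n \in A/A^2$ generating it as an ideal; lift them to $x_1, \dots, x_n \in A$ and let $I$ be the ideal of $A$ they generate. Then $I + A^2 = A$ by construction, so by the Nakayama claim $I = A$, whence $\omega(A) \leq n = \omega(A/A^2)$. Combined with the trivial inequality this gives $\omega(A) = \omega(A/A^2)$. For the statement with the commutator $A'$: since $A^2 \subseteq A'$ (by definition $A'$ is the ideal generated by $A^2$ together with the additive commutator subgroup), we get $\omega(A/A') \leq \omega(A/A^2) = \omega(A)$ from the quotient map $A/A^2 \twoheadrightarrow A/A'$; conversely $\omega(A) \leq \omega(A/A')$ follows by running the same Nakayama-type argument but now needing: if $I$ is an ideal with $I + A' = A$ then $I = A$. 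This requires upgrading the Nakayama claim to handle $A'$, which should work because $A/A'$ has abelian additive group and trivial $*$-operation, so $A' $ plays exactly the role of a ``radical'': $I + A' = A$ together with $A/A'$ being trivial as a skew brace means that modulo $I$, everything is central and the $*$ and commutator structure collapses, and one re-runs the stabilisation-under-DCC argument.

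The main obstacle, as noted, is the Nakayama lemma itself — specifically verifying that $I + A^2 = A$ propagates to $I + A^{(k)} = A$ for all $k$ and then invoking DCC (via stabilisation of $A^{(k)}$, or of an auxiliary descending chain of ideals built from $I$) to conclude $I = A$. A subtle point to watch is that $A^{(k)} = A$ need not stabilise to $0$ — the hypothesis is only DCC, not right nilpotency — so the argument cannot simply say ``$A^{(k)} = 0$ eventually.'' Instead the correct move is: set $A_\infty = \bigcap_k A^{(k)}$, note that by DCC this equals $A^{(k_0)}$ for some $k_0$ and satisfies $A_\infty * A = A_\infty$; show $I + A_\infty = A$ by downward induction from $I + A^2 = A$; then observe $A_\infty = A_\infty * A$ forces, via the brace relation and $I + A_\infty = A$, that $A_\infty \subseteq I$, hence $A = I + A_\infty = I$. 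I would also double-check the base reductions using \cref{theorem: finite mpl strong left implies skew} and the preliminary facts about ideals of two-sided braces coinciding with ring ideals, in case a cleaner route through the associated radical ring is available when $A$ is two-sided — but the DCC-plus-Nakayama argument above should work in full generality.
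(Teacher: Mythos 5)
The first thing to say is that the paper does not prove this theorem at all: it is imported verbatim from \cite{JKVAV2021} (the sentence introducing it reads ``The following result, which was proved in \cite{JKVAV2021}\dots''), so there is no in-paper proof to compare your argument with. As for your proposal, the gap sits exactly where you feared: the Nakayama-type claim ``if $I$ is an ideal with $I+A^2=A$ then $I=A$'' does not follow from the DCC on ideals together with $\omega(A)<\infty$. Your proposed derivation of it collapses at the last step: from $A_\infty=A_\infty*A$ and $I+A_\infty=A$, the identity $u*(i+v)=u*i+i+u*v-i$ together with $A_\infty*I\subseteq I$ only yields $A_\infty\subseteq I+\langle A_\infty*A_\infty\rangle_+\subseteq I+A_\infty$, which is a tautology; nothing forces $A_\infty\subseteq I$ when $A_\infty*A_\infty$ is as large as $A_\infty$. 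There are also technical problems earlier in the induction: since $*$ is not associative, decomposing the right-hand factor of $a*b$ modulo $I$ produces terms in $A*A^{(2)}=A^3$, i.e.\ in the \emph{left} series rather than in $A^{(3)}$, and the terms $A^k$ of the left series are only left ideals, so the DCC on ideals does not apply to that chain directly.

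The problem is not merely one of write-up: the Nakayama claim is genuinely false under the stated hypotheses. It is equivalent to ``$A$ has no nonzero quotient $B$ with $B^2=B$'' (respectively, with $B'=B$ for the $A'$ version), and this is an honest extra assumption. For the almost trivial skew brace $B=(G,\cdot,\cdot^{\op})$ on $G=A_5$ one computes $a*b=a^{-1}bab^{-1}$, so $B^2=[G,G]=B$; here $B$ is finite, its ideals are exactly the normal subgroups of $A_5$, and $\omega(B)=1$, yet $I=0$ satisfies $I+B^2=B$ and $\omega(B/B^2)=0\neq 1=\omega(B)$. Likewise $\Triv(A_5)$ has $A'=A$ and $\omega(A/A')=0\neq 1=\omega(A)$. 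So the statement as reproduced here is missing hypotheses that must be present in \cite{JKVAV2021} --- presumably left nilpotency and nilpotency of the additive group --- which is consistent with the fact that every use of \cref{theorem: weight} in this paper (\cref{cor: left nilpotent number of generators strong left ideal is number of generators trivialisation}, \cref{cor: left nilpotent onegenerated}, \cref{prop: annihilator nilpotent }) concerns left nilpotent skew braces. In that setting your plan does go through: $A^n=0$ for some $n$, the induction $I+A^k=A\Rightarrow I+A^{k+1}=A$ runs exactly as in the proof of \cref{theorem: left nilpotent generators ideal} (which works verbatim with ideals in place of strong left ideals) and terminates with $I=A$, and the $A'$ statement then follows from nilpotency of $(A,+)$ as in \cref{lem: A/A' cyclic left nilpotent}. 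You should either import the precise hypotheses from the reference or prove the theorem only in the left nilpotent case where it is actually applied.
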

The following example shows that in general it is not true that if a skew brace is one-generated as an ideal, then it is also one-generated as a strong left ideal. If we want such a result to hold, we thus need to impose extra conditions on $A$.
\begin{example}
     Let $A=\Triv(\Z/2)$ and $B=\Triv(\Z/p\times \Z/p)$ for some odd prime $p$ and consider the semidirect product $C=A\ltimes B$, in the sense of \cite[Corollary 3.36]{SV18}, where $A$ acts by inversion. Explicitly,
    \begin{align*}
        (n,m,l)+(n',m',l')&=(n+n',m+m',l+l'),\\
        (n,m,l)\circ (n',m',l')&=(n+n',m+(-1)^{n}m',l+(-1)^nl'),\\
        \lambda_{(n,m,l)}(n',m',l')&=(n',(-1)^nm',(-1)^nl').
    \end{align*}
    Then we find that $C^2=B$ and thus $(C/C^2,+)\cong \Z/2$ is cyclic, from which it follows that $\omega(A)=1$ by \cref{theorem: weight}. We claim that $C$ is not one-generated as a strong left ideal. If it were, then the image of this generator should generate $C/C^2$ hence it is of the form $(1,l,m)$, with $l,m\in \Z/p$. We can easily see however that for any choice of $l,m\in \Z/p$, the set $\{(n,rm,rl)\mid n\in \Z/2,r\in \Z/p\}$ is a strong left ideal which contains $(1,l,m)$ and which has index $p$. Thus $C$ is not one-generated as a strong left ideal.
\end{example}
\begin{theorem}\label{theorem: left nilpotent generators ideal}
    Let $A$ be a left nilpotent skew brace and $X$ a subset of $A$. If the image of $X$ in $A/A^2$ generates $A/A^2$ as a strong left ideal, then $X$ generates $A$ as a strong left ideal.
\end{theorem}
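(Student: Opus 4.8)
The plan is to reduce the statement about strong left ideals in $A$ to the corresponding statement in the successive quotients $A^{(n)}/A^{(n+1)}$, exploiting left nilpotency to make this a finite descent. Set $L = L(X)$, the strong left ideal of $A$ generated by $X$ as described in \cref{lem: strong left ideal generated by set}; I must show $L = A$. The hypothesis says that $L + A^2 = A$, i.e. $X$ together with $A^2$ generates $A$ as a strong left ideal (note $A^2 = A^{(2)}$ is an ideal, so this makes sense). The natural strategy is to show that once we know $L + A^{(n)} = A$ for some $n \geq 2$, we also get $L + A^{(n+1)} = A$; since $A$ is left nilpotent there is some $m$ with $A^{(m)} = 0$ — wait, left nilpotency is about the \emph{left} series $A^{n+1} = A * A^n$, not the right series, so I must be careful which series to filter along. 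The correct choice is the left series $A^n$: left nilpotency gives $A^m = 0$ for some $m$, and $A^2 = A * A$ appears as the first proper term, matching the hypothesis $L + A^2 = A$.

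So the key inductive step is: \textbf{if $L + A^n = A$ then $L + A^{n+1} = A$}, for $n \geq 2$. Here $A^{n+1} = A * A^n$ is generated as an additive group by elements $a * y$ with $a \in A$, $y \in A^n$. Using the hypothesis $L + A^n = A$ write $a = \ell + z$ with $\ell \in L$, $z \in A^n$. The plan is to expand $a * y = (\ell + z) * y$ using the identity $(u+v)*w = u*w + \,{}^{?}(v*w)$ — more precisely, one has the standard relation $ (u+v)\circ w - w = (u \circ({}^{?}) \cdots)$; concretely, from $a * b = -a + a\circ b - b$ one computes $(\ell+z)*y = \ell * y + \lambda_\ell(z * y)$ after a short manipulation (this is the bilinearity-type identity $ (\ell+z)*y = \ell*(?) + \ldots$ that holds in any skew brace and is where one must be slightly careful). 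The first term $\ell * y$: since $\ell \in L$ and $L$ is a strong left ideal, and $y \in A^n \subseteq A$, we need $\ell * y \in L$ — this holds because $\ell * y = -\ell + \ell \circ y - y = \lambda_\ell(y') - \ell$ type expression lies in the additive subgroup generated by $L$-translates; more cleanly, $\ell * y = \theta$-type image... Actually the cleanest route: $\ell * y \in L$ because $\ell * y = (-\ell) + (\ell \circ y) - y$ and $\ell \circ y \in \ell + \lambda_\ell(\text{stuff})$; rather than fight this, I would use that for a strong left ideal $L$ and any $y$, $\ell * y$ need not lie in $L$ in general, so instead I filter by $A^n$ on the \emph{second} argument and use $L + A^n = A$ to replace $y$, combined with $L * A^{n-1} \subseteq$ something — let me restructure.

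\textbf{Cleaner structure.} Proceed by downward induction showing $A = L + A^n$ for all $n \geq 2$, the base case $n = 2$ being the hypothesis, and $A^m = 0$ giving $A = L$ at the end. For the step, assume $A = L + A^n$. Then $A^{n+1} = A * A^n$ is additively generated by $a * y$, $a \in A$, $y \in A^n$; write $a = \ell + z$, $\ell \in L$, $z \in A^n$, and expand. One term involves $z * y \in A^n * A^n \subseteq A^{n+1}$... no, that's circular. The honest approach: $A^{n+1} = A * A^n \subseteq A * (L + A^{n}) $... this doesn't immediately shrink. The real mechanism must be: $A^{n+1}$ is also additively generated by $a * y$ with $y$ ranging over additive generators of $A^n$, and $A^n = A * A^{n-1}$, so $y = b * y'$ with $y' \in A^{n-1}$; now use left-nilpotency relations to rewrite $a * (b * y')$. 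The hard part — and the step I expect to be the main obstacle — is exactly this: controlling $a * y$ modulo $A^{n+1}$ using only that $X$ generates modulo $A^2$, which requires the identity that in a left nilpotent skew brace $*$ induces a well-defined "bracket" $A/A^2 \times A^{n}/A^{n+1} \to A^{n+1}/A^{n+2}$ that factors through $A/A^2$ in the first variable. Granting that factorization (provable directly from \eqref{eq: left brace} and the definition of the left series), one gets $A*A^n \equiv L * A^n \pmod{A^{n+2}}$, hence $A^{n+1} \subseteq L + A^{n+2}$; feeding $A = L + A^n$ and iterating collapses everything once $A^m = 0$. I would carry out: (1) establish the factorization identity for $*$ modulo the left series; (2) deduce $A^{n+1} \subseteq L + A^{n+2}$; (3) combine with the hypothesis $A = L + A^2$ and finite left nilpotency to conclude $A = L$.
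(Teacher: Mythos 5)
Your overall scheme is the same as the paper's: set $L=L(X)$, note the hypothesis gives $L+A^2=A$, and show by induction that $L+A^n=A$ for all $n$, filtering along the left series so that left nilpotency finishes the argument. However, the inductive step as you set it up has a genuine gap, and you flag it yourself as ``the main obstacle'' without closing it. You try to control $A^{n+1}=A*A^n$ by decomposing the \emph{first} argument of $a*y$ as $a=\ell+z$, but in a skew brace $*$ is not additive (even up to conjugation) in its first argument: the only clean identity is $a*(b+c)=a*b+b+a*c-b$, in the \emph{second} argument. The two repairs you propose are both unjustified: the ``bracket'' $A/A^2\times A^n/A^{n+1}\to A^{n+1}/A^{n+2}$ factoring through $A/A^2$ in the first variable is not established (and is delicate outside the two-sided case), and the step from $A*A^n\equiv L*A^n$ to $A^{n+1}\sub L+A^{n+2}$ tacitly uses $L*A^n\sub L$, which is false in general: a left ideal satisfies $A*L\sub L$ (because $a*\ell=\lambda_a(\ell)-\ell$), not $L*A\sub L$.

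The paper's proof sidesteps all of this by expanding in the second argument, where everything works. From $L+A^n=A$ and normality of $L$ in $(A,+)$ one writes $A^2=A*A=A*(A^n+L)$, and the identity $a*(y+\ell)=a*y+y+a*\ell-y$ together with $a*y\in A^{n+1}$, $a*\ell\in L$, and normality of $L$ gives $A^2\sub A^{n+1}+L$; combining with $A=L+A^2$ yields $A=L+A^{n+1}$, and left nilpotency concludes. So the fix is not to prove your factorization claim but to reorganize the computation: rather than generating $A^{n+1}$ and splitting the first factor, generate $A^2$ and split the second factor using $A=A^n+L$.
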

\begin{proof}
   Let $L=L(X)$ denote the strong left ideal of $A$ generated by $X$. By induction on $n$ we will prove that $L+A^n=A$, which then implies the statement. As the natural image of $X$ in $A/A^2$ generates $A/A^2$ as a strong left ideal, we find that $L+A^2=A$. Now let $n\geq 2$ and assume that $L+A^n=A$. As for $a,b,c\in A$ we have $a*(b+c)=a*b+b+a*c-b$, we find
   \begin{equation*}
       A^2=A*(L+A^n)=A*(A^n+L)\subseteq A^{n+1}+L.
   \end{equation*}
   We know however that $L+A^2=A$, hence $A\subseteq L+A^{n+1}$.
\end{proof}
\begin{corollary}\label{cor: left nilpotent number of generators strong left ideal is number of generators trivialisation}
    Let $A$ be a left nilpotent skew brace of finite weight satisfying the descending chain condition (henceforth DCC) on ideals. The minimal number of generators of $A$ as a strong left ideal coincides with $\omega(A)$.
\end{corollary}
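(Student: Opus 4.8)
The plan is to prove the two inequalities $\omega(A)\le d$ and $d\le\omega(A)$ separately, where $d$ denotes the minimal number of elements generating $A$ as a strong left ideal. Since ``finite weight'' means $\omega(A)<\infty$ and $A$ satisfies the DCC on ideals, \cref{theorem: weight} applies and yields $\omega(A)=\omega(A/A^2)$; this is what will power the harder inequality.

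For $\omega(A)\le d$ I would observe that any subset $X\subseteq A$ that generates $A$ as a strong left ideal also generates it as an ideal: the ideal generated by $X$ is in particular a strong left ideal containing $X$, so it contains the strong left ideal generated by $X$, which is all of $A$. Applying this to an $X$ of size $d$ realizing the minimum gives $\omega(A)\le d$.

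The heart of the argument is the inequality $d\le\omega(A)$, and the key point is that $A/A^2$ is a \emph{trivial} skew brace: since $A^2=A*A$, for all $\overline a,\overline b\in A/A^2$ we have $\overline a*\overline b=0$, so $a\circ b=a+b$ and $\lambda_a=\id$ for every $a\in A/A^2$. Hence in $A/A^2$ the additive and multiplicative groups coincide, every subgroup is a left ideal, and a subgroup is a strong left ideal exactly when it is normal, which is also exactly the condition to be an ideal; in other words the strong left ideals of $A/A^2$ are precisely its ideals. Therefore $A/A^2$ is generated as a strong left ideal by some set $\overline Y$ with $|\overline Y|=\omega(A/A^2)=\omega(A)$. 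Lifting $\overline Y$ to a subset $Y\subseteq A$ of the same cardinality whose image in $A/A^2$ is $\overline Y$, \cref{theorem: left nilpotent generators ideal} (applicable since $A$ is left nilpotent) shows that $Y$ generates $A$ as a strong left ideal, so $d\le|Y|=\omega(A)$.

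Combining the two inequalities gives $d=\omega(A)$. The only genuinely substantive step is this reduction modulo $A^2$: recognising that $A/A^2$ is trivial, so that ``ideal'' and ``strong left ideal'' coincide there, and invoking \cref{theorem: left nilpotent generators ideal} to pull a strong-left-ideal generating set of $A/A^2$ back up to $A$. The remaining verifications are routine bookkeeping, and the DCC hypothesis enters only through its use in \cref{theorem: weight}.
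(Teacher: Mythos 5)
Your proof is correct and follows essentially the same route as the paper's: both reduce to $A/A^2$ via \cref{theorem: weight}, use that strong left ideals of $A/A^2$ coincide with its ideals, and lift a generating set back to $A$ via \cref{theorem: left nilpotent generators ideal}. You merely spell out the easy inequality $\omega(A)\le d$ and the triviality of $A/A^2$ more explicitly than the paper does.
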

\begin{proof}
     It suffices to prove that $A$ is generated as a strong left ideal by $\omega(A)$ elements. From \cref{theorem: weight} we know that $\omega(A)=\omega(A/A^2)$. As every strong left ideal in $A/A^2$ is an ideal, $\omega(A/A^2)$ is also the minimal numbers of generators of $A/A^2$ as a strong left ideal and by \cref{theorem: left nilpotent generators ideal} we thus obtain a generating set of size $\omega(A)$ which generates $A$ as a strong left ideal.
\end{proof}
\begin{corollary}\label{cor: left nilpotent onegenerated}
    Let $A$ be a left nilpotent skew brace such that $A$ satisfies the DCC on ideals, then the following are equivalent: 
    \begin{enumerate}
        \item $A$ is one-generated as a strong left ideal,
        \item $A$ is one-generated as an ideal,
        \item $(A/A',+)$ is cyclic.
    \end{enumerate}
\end{corollary}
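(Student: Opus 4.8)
The plan is to prove the cycle of implications $(1)\Rightarrow(2)\Rightarrow(3)\Rightarrow(1)$. The first implication needs no hypotheses: the ideal generated by a subset $X$ of any skew brace contains the strong left ideal it generates, so a singleton generating $A$ as a strong left ideal also generates it as an ideal. For $(2)\Rightarrow(3)$, observe that $A$ being one-generated as an ideal means $\omega(A)\leq 1<\infty$, so \cref{theorem: weight} applies and yields $\omega(A)=\omega(A/A')$. Now $A/A'$ is a trivial skew brace with abelian additive group: since $A^2\subseteq A'$ we have $(A/A')^2=0$, which forces $\circ=+$ on the quotient, and $A'$ contains the additive commutator subgroup of $A$. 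For such a skew brace the ideals are exactly the subgroups of its additive group, so $\omega(A/A')$ equals the minimal number of additive generators of $A/A'$; hence $\omega(A/A')\leq 1$ says precisely that $(A/A',+)$ is cyclic.

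The substantive implication is $(3)\Rightarrow(1)$, and this is where left nilpotency enters. I would apply \cref{theorem: left nilpotent generators ideal}: it suffices to produce $x\in A$ whose image in $A/A^2$ generates $A/A^2$ as a strong left ideal, since then any preimage of $x$ generates $A$ as a strong left ideal. Put $G=(A/A^2,+)$. Because $(A/A^2)^2=0$, the skew brace $A/A^2$ is the trivial skew brace $(G,+,+)$; hence its strong left ideals are exactly the normal subgroups of $G$, and the DCC on ideals of $A$ passes to the DCC on normal subgroups of $G$. Moreover the image of $A'$ in $A/A^2$ is the normal closure in $G$ of the image of the additive commutator subgroup, which is just $[G,G]$, so $A/A'\cong G/[G,G]$ and hypothesis $(3)$ says precisely that the abelianization of $G$ is cyclic. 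Thus $(3)\Rightarrow(1)$ reduces to a purely group-theoretic statement: \emph{if a group $G$ satisfies the DCC on normal subgroups and $G/[G,G]$ is cyclic, then $G$ is the normal closure of a single element.}

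To prove this lemma I would argue by contradiction, using the DCC to pass to a minimal counterexample: assume $G\neq 1$, $G/[G,G]$ cyclic, $G$ not the normal closure of one element, but every proper quotient $G/M$ with $M\neq 1$ is. By the DCC choose a minimal nontrivial normal subgroup $L$ of $G$. Since $L\neq 1$, the proper quotient $G/L$ is the normal closure of one element $\bar y$; lifting to $y\in G$ one has $\langle\langle y\rangle\rangle L=G$, and as $\langle\langle y\rangle\rangle\neq G$ we get $\langle\langle y\rangle\rangle\cap L=1$ by minimality of $L$ (otherwise $L\subseteq\langle\langle y\rangle\rangle$, whence $\langle\langle y\rangle\rangle=\langle\langle y\rangle\rangle L=G$). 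Therefore $G=H\times L$ with $H:=\langle\langle y\rangle\rangle\cong G/L$, and $L$ is simple, since its normal subgroups are normal in $G$ as $H$ centralizes $L$. Now I would choose $z\in L$ and show that $(y,z)\in H\times L$ normally generates $G$: when $L$ is non-abelian simple one takes any $z\neq 1$ and uses $[\langle\langle z\rangle\rangle_L,L]=L$; when $L\cong\Z/p$ one takes $z$ a generator and uses that $|H/[H,H]|$ is finite and coprime to $p$, which is forced by $G/[G,G]\cong H/[H,H]\times\Z/p$ being cyclic. In either case a short computation with conjugates and commutators, using also $[\langle\langle y\rangle\rangle_H,H]=[H,H]$, shows that $\langle\langle(y,z)\rangle\rangle$ contains both $H\times 1$ and $1\times L$, hence equals $G$, contradicting the choice of $G$.

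The heart of the argument is this group-theoretic lemma; granting it, the corollary follows formally by combining \cref{theorem: weight} and \cref{theorem: left nilpotent generators ideal} as above. Inside the lemma the two most delicate points are the reduction to a minimal counterexample --- which is what lets a minimal normal subgroup split off as a direct factor --- and the explicit verification that $(y,z)$ normally generates $H\times L$ in the abelian-$L$ case, which is precisely where the cyclicity of $G/[G,G]$ is genuinely used.
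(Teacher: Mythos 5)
Your implications $(1)\Rightarrow(2)$ and $(2)\Rightarrow(3)$ are fine and are essentially what the paper does (it disposes of the whole corollary by citing \cref{cor: left nilpotent number of generators strong left ideal is number of generators trivialisation} and \cref{theorem: weight}). Your reduction of $(3)\Rightarrow(1)$ is also correct as far as it goes: in the trivial skew brace $A/A^2$ the strong left ideals are exactly the normal subgroups of $G=(A/A^2,+)$, one has $A/A'\cong G/[G,G]$, and \cref{theorem: left nilpotent generators ideal} lifts a normal generator of $G$ to a strong-left-ideal generator of $A$. The gap is in your group-theoretic lemma, at its very first step: ``assume every proper quotient $G/M$ with $M\neq 1$ is the normal closure of one element.'' To put yourself in that situation you need a \emph{maximal} element of the family of normal subgroups $M$ for which $G/M$ is not normally one-generated; DCC on normal subgroups only supplies \emph{minimal} elements of such families. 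Your induction runs over quotients from the top down, which is an induction along the reverse inclusion order and therefore needs ACC. This is not a removable formality: the Prüfer group $\Z(p^\infty)$ satisfies DCC on subgroups yet has no maximal proper subgroup, so under DCC alone there is in general no ``minimal counterexample among quotients.'' Everything after that point --- the splitting $G=H\times L$, the simplicity of $L$, the two cases, and the use of cyclicity of $G/[G,G]$ to force $|H/[H,H]|$ finite and coprime to $p$ --- is correct, so what you have actually proved is the lemma for groups satisfying \emph{both} chain conditions on normal subgroups (in particular finite groups), not the lemma you need.

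The missing finiteness is exactly the delicate point here: your lemma, for an arbitrary group with DCC on normal subgroups, is the case of \cref{theorem: weight} for trivial skew braces, and that theorem carries the extra hypothesis $\omega(A)<\infty$ --- which is precisely what your minimal-counterexample reduction is silently trying to dispense with. (The same finiteness issue lurks in invoking \cref{theorem: weight} for $(3)\Rightarrow(2)$, so it deserves explicit attention in any write-up.) The intended argument is simply to stay inside the paper's machinery: $(A/A',+)$ cyclic gives $\omega(A/A')=1$, \cref{theorem: weight} gives $\omega(A)=\omega(A/A^2)=1$, strong left ideals and ideals coincide in $A/A^2$, and \cref{theorem: left nilpotent generators ideal} lifts a single generator back to $A$; equivalently, quote \cref{cor: left nilpotent number of generators strong left ideal is number of generators trivialisation} as the paper does. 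Either take that route, or state your lemma with the chain conditions under which your induction is actually valid and justify why $(A/A^2,+)$ satisfies them.
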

\begin{proof}
    The equivalence of 1 and 2 is clear from \cref{cor: left nilpotent number of generators strong left ideal is number of generators trivialisation}. The equivalence of 2 and 3 follows from \cref{theorem: weight}.
\end{proof}

\begin{lemma}\label{lem: A/A' cyclic left nilpotent}
    Let $A$ be a skew brace such that $(A/A',+)$ is cyclic and $(A,+)$ or $(A,\circ)$ is nilpotent, then $A^2=A'$. 
\end{lemma}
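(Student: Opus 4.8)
The plan is to reduce everything to a statement about the additive group of the trivial brace $A/A^2$. The first observation is that $A/A^2$ is a \emph{trivial} skew brace: since $A^2=A*A$ contains every element $a*b=-a+a\circ b-b$, in the quotient we get $a\circ b=a+b$ for all $a,b$, so the two operations coincide on $A/A^2$.

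Next I would pin down the relevant subgroups. Since $A^2$ is an ideal of $A$, to prove $A'\subseteq A^2$ it suffices to show that the commutator subgroup $[A,A]_+$ of $(A,+)$ is contained in $A^2$, equivalently that $(A/A^2,+)$ is abelian. I would also note that $A'=A^2+[A,A]_+$: the subgroup $A^2+[A,A]_+$ of $(A,+)$ is normal in $(A,+)$ (a sum of a normal subgroup and a characteristic subgroup) and $\lambda$-invariant (as $\lambda_a$ preserves $A^2$ and is an automorphism of $(A,+)$), and its image in the trivial brace $A/A^2$ is the commutator subgroup of the additive group, which is then also normal in the multiplicative group since the two group structures of $A/A^2$ agree; hence $A^2+[A,A]_+$ is an ideal containing $A^2$ and $[A,A]_+$, so it equals $A'$. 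Consequently the abelianization of $(A/A^2,+)$ is isomorphic to $A/A'$, which is cyclic by hypothesis.

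Then I would bring in the nilpotency assumption. If $(A,+)$ is nilpotent, so is its quotient $(A/A^2,+)$; if instead $(A,\circ)$ is nilpotent, then $(A/A^2,\circ)$ is a quotient group of $(A,\circ)$ and hence nilpotent, and since $A/A^2$ is a trivial brace this group coincides with $(A/A^2,+)$. In either case $(A/A^2,+)$ is a nilpotent group whose abelianization is cyclic. Invoking the standard fact that a nilpotent group with cyclic abelianization is itself cyclic (its derived subgroup is contained in the Frattini subgroup, which consists of non-generators), we conclude that $(A/A^2,+)$ is abelian, so $[A,A]_+\subseteq A^2$ and therefore $A'=A^2+[A,A]_+=A^2$.

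The only mildly delicate point is the identification $A'=A^2+[A,A]_+$, where one must check that the naive sum of these two subgroups is already an ideal; this is immediate once one passes to the trivial brace $A/A^2$. The remaining input is the cited group-theoretic fact about nilpotent groups, so I do not anticipate any real obstacle.
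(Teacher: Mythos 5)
Your proof is correct and follows essentially the same route as the paper: pass to the trivial brace $A/A^2$, observe that its underlying group is nilpotent with cyclic abelianization (hence cyclic, hence abelian), and conclude $[A,A]_+\subseteq A^2$ so that $A'=A^2$. The only difference is that you carefully verify the identification $A'=A^2+[A,A]_+$, which the paper uses implicitly, and the paper likewise relegates the group-theoretic fact to ``an easy exercise.''
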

\begin{proof}
    Consider the quotient $A/A^2$ and denote $G$ its underlying group. Then $G$ is nilpotent and $G/G'$ is cyclic, but an easy exercise shows that this implies that $G$ is cyclic. In particular, $A/A^2$ is a trivial brace, hence $A'\subseteq A^2$.
\end{proof}
\begin{proposition}\label{prop: annihilator nilpotent }
    Let $A$ be an annihilator nilpotent skew brace. Then the following are equivalent:
    \begin{enumerate}
        \item $A$ is one-generated as a skew brace,
        \item $A$ is one-generated as a strong left ideal,
        \item $A$ is one-generated as an ideal,
        \item $(A/A^2,+)$ is cyclic.
    \end{enumerate}
    In this case, the following are equivalent for an element $x\in X$:
    \begin{enumerate}
        \item $x$ generates $A$ as a skew brace,
        \item $x$ generates $A$ as a strong left ideal,
        \item $x$ generates $A$ as an ideal,
        \item $x+A^2$ generates $(A/A^2,+)$.
    \end{enumerate}
\end{proposition}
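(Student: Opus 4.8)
The plan is to prove the first list through the cycle $1\Rightarrow 3\Rightarrow 4\Rightarrow 2\Rightarrow 1$, exploiting that an annihilator nilpotent skew brace is at once left nilpotent and multipermutation, so that both \cref{theorem: left nilpotent generators ideal} and \cref{theorem: finite mpl strong left implies skew} are available, and then to deduce the element-wise list by the same four steps. The implication $1\Rightarrow 3$ is formal: every strong left ideal, and in particular every ideal, is a subskew brace (for a left ideal $I$ one has $i\circ j=i+\lambda_i(j)\in I$ and $i^{-}=\lambda_{i^{-}}(-i)\in I$), so the subskew brace generated by a subset is contained in the strong left ideal it generates, which in turn is contained in the ideal it generates. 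Hence if $x$ generates $A$ as a skew brace it also generates $A$ as an ideal; the same chain of inclusions gives $1'\Rightarrow 3'$.

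For $3\Rightarrow 4$, assume $A$ is one-generated as an ideal. Then so is $A/A'$, and since $A'$ contains both $A^{2}$ and the additive commutator subgroup, $A/A'$ is the trivial brace on the abelian group $(A/A',+)$. In a trivial brace on an abelian group every subgroup is an ideal, so being one-generated as an ideal forces $(A/A',+)$ to be cyclic. Since $A$ is multipermutation, $(A,+)$ is nilpotent, so \cref{lem: A/A' cyclic left nilpotent} gives $A^{2}=A'$, and therefore $(A/A^{2},+)=(A/A',+)$ is cyclic, which is condition $4$.

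For $4\Rightarrow 2$, pick $x\in A$ whose class generates $(A/A^{2},+)$. The strong left ideal of $A/A^{2}$ generated by $x+A^{2}$ is an additive subgroup containing $x+A^{2}$, hence equals $A/A^{2}$; as $A$ is left nilpotent, \cref{theorem: left nilpotent generators ideal} then shows that $x$ generates $A$ as a strong left ideal, so condition $2$ holds. Finally, $2\Rightarrow 1$ is \cref{theorem: finite mpl strong left implies skew}, using that $A$ is multipermutation; this closes the cycle. For the element-wise list we are in the situation where conditions $1$--$4$ hold, so $(A/A^{2},+)$ is cyclic and $A/A^{2}$ is the trivial brace on it, a brace in which ideals coincide with subgroups. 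Then $3'\Rightarrow 4'$ holds because the class of $x$ generates $A/A^{2}$ as an ideal, hence as a subgroup; $4'\Rightarrow 2'$ is the argument of $4\Rightarrow 2$ applied to this particular $x$; and $2'\Rightarrow 1'$ is the final assertion of \cref{cor: transitive cycle base finite mpl is one generator}.

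The step requiring care is $3\Rightarrow 4$: one should pass to $A/A'$ rather than directly to $A/A^{2}$, so that cyclicity of the additive group is immediate from the triviality of the brace, and only afterwards invoke nilpotency of $(A,+)$ via \cref{lem: A/A' cyclic left nilpotent} to identify $A'$ with $A^{2}$. Everything else is bookkeeping with the inclusions ``subskew brace $\subseteq$ strong left ideal $\subseteq$ ideal'' together with direct appeals to \cref{theorem: left nilpotent generators ideal} and \cref{theorem: finite mpl strong left implies skew}.
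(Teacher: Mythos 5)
Your proof is correct and complete, and it reaches the conclusion by a genuinely different route in one place. The paper's own proof is a terse citation: it obtains the first list from \cref{cor: transitive cycle base finite mpl is one generator}, \cref{cor: left nilpotent onegenerated} and \cref{lem: A/A' cyclic left nilpotent}, and the element-wise list by adding \cref{theorem: left nilpotent generators ideal}. You use the same core ingredients (\cref{theorem: finite mpl strong left implies skew} for $2\Rightarrow 1$, \cref{theorem: left nilpotent generators ideal} for $4\Rightarrow 2$, and \cref{lem: A/A' cyclic left nilpotent} to identify $A'$ with $A^2$), but you replace the appeal to \cref{cor: left nilpotent onegenerated} with a direct argument for $3\Rightarrow 4$: pass to $A/A'$, note it is a trivial brace on an abelian group in which ideals coincide with subgroups, deduce that $(A/A',+)$ is cyclic, and only then invoke nilpotency of $(A,+)$ to get $A'=A^2$. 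This is a real improvement in self-containedness, because \cref{cor: left nilpotent onegenerated} rests on \cref{theorem: weight} and hence carries a DCC-on-ideals hypothesis that the proposition does not assume and that infinite annihilator nilpotent braces (such as the braces $F_n$ constructed later in the paper, which contain the strictly descending chain of ideals $p^k\Z x^n$) fail to satisfy; your route needs no chain condition. The bookkeeping steps ($1\Rightarrow 3$ via the inclusions subskew brace $\subseteq$ strong left ideal $\subseteq$ ideal, and the element-wise cycle under the standing assumption that $A^2=A'$ and $A/A^2$ is a trivial brace on a cyclic group) are all sound and match the intent of the paper's proof.
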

\begin{proof}
    The first part is a consequence of \cref{cor: transitive cycle base finite mpl is one generator}, \cref{cor: left nilpotent onegenerated} and \cref{lem: A/A' cyclic left nilpotent}.

    The second part now follows if we also take into account \cref{theorem: left nilpotent generators ideal}.
\end{proof}
The first part of \cref{prop: annihilator nilpotent } generalises \cite[Corollary 1]{rump2022class}.
In the same paper, in Proposition 5, Rump showed that for one-generated braces with an abelian multiplicative group, the transitive cycle bases are precisely the cosets of $A^2$ which generate $A/A^2$. Note that a finite brace with an abelian multiplicative group is a finite two-sided brace, which implies that it is annihilator nilpotent, so one might expect that a similar result might hold for all annihilator nilpotent braces. The following example shows that even skew braces that are very similar to this class, in this case annihilator nilpotent braces and annihilator nilpotent skew braces with abelian permutation group, do not exhibit a similar feature.
\begin{example}\label{ex: counter 1}
    Let $(A,+)=(\Z/p)^n$, for $p$ a prime and $2\leq n<p$ and let $\phi\in \Aut(A,+)$ be the automorphism given by the Jordan block of size $n$ (where we consider $(\Z/p)^n$ as column vectors). Let $\lambda:(A,+)\to \Aut(A,+)$ be given by $(a_1,...,a_n)\mapsto \phi^{a_n}$. In particular, we find that $\ker \lambda=(\Z/p)^{n-1}\times \{0\}=\{\phi(a)-a\mid a\in A\}$. So from \cite[Theorem 6.6]{ST23} we find a bi-skew brace $(A,+,\circ)$ where $a\circ b=a+\lambda_a(b)$. It is easily seen that $(A,\circ)$ is abelian if and only if $n=2$. Note that $A$ is right nilpotent of class 2, so in particular it is a bi-skew brace. It is left nilpotent as it is of prime power size (see \cite[Proposition 4.4]{CSV19}), hence $A$ is an annihilator nilpotent brace. As $(A,\circ)/\ker \lambda$ is cyclic of order $p$, we see that all transitive cycle bases have size $p$. Because $|A^2|=p^{n-1}$, the transitive cycle bases are cosets of $A^2$ if and only if $n=2$, in this case $(A,\circ)$ is abelian.
\end{example}
\begin{example}
     As for any choice of a prime $p$ and $2\leq n<p$, the skew brace $(A,+,\circ)$ from \cref{ex: counter 1} is a bi-skew brace, we can also consider the skew brace $(A,\circ,+)$, which is still annihilator nilpotent. Now its multiplicative group is always abelian and its additive group is abelian if and only if $n=2$. As the $\lambda$-map of an element $a\in A$ in this skew brace is given by $\lambda_a^{-1}$, with $\lambda_a$ the $\lambda$-map of $a$ in $(A,+,\circ)$, and $(A,\circ,+)^2=(A,+,\circ)^2$, we have the same conclusion as in the previous case:  the transitive cycle bases of $(A,\circ,+)$ are cosets of $(A,\circ,+)^2$ if and only if $n=2$, so when $A$ is a brace.
\end{example}

\section{An application to solutions of finite multipermutation level}\label{section: an application to mp solutions}
In \cite[Proposition 5.1]{JPZ2021}, Jedli\v{c}ka, Pilitowska and Zamojska-Dzienio showed that the automorphisms group of a finite indecomposable involutive solution with abelian permutation group and multipermutation level $2$ coincides with the whole endomorphism semigroup. In the following theorem we show the remarkable fact that an indecomposable multipermutation solution has no subsolutions. As a consequence we generalise the above result on the endomorphism semigroup to all finite indecomposable involutive solutions with finite multipermutation level.
\begin{theorem}\label{nosubsol}
    Let $(X,r)$ be an indecomposable solution of finite multipermutation level, then $(X,r)$ contains no non-trivial subsolutions.
\end{theorem}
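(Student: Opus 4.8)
The plan is to leverage the translation between solutions and skew braces, in particular the correspondence between subsolutions of $(X,r)$ and certain substructures of the structure skew brace $G(X,r)$, together with the generator results of \cref{section: generators of skew brace}. First I would reduce to the case that $(X,r)$ is injective: if $Y \subseteq X$ is a non-trivial subsolution, then $\Inj(Y,s)$ embeds into $\Inj(X,r)$, and since $\Inj(X,r)$ is again indecomposable of finite multipermutation level (by \cref{cor: multipermutation level of injectivization} and the fact that indecomposability passes to the image of $\iota$), it suffices to rule out subsolutions in the injective case. Actually it is cleaner to argue directly: suppose $(Y,s)$ with $Y\subseteq X$ is a subsolution, and consider the image $\overline{Y}$ of $Y$ under $\iota\colon (X,r)\to (G(X,r),r_{G(X,r)})$; I would analyze $\overline{Y}$ inside $G(X,r)$.

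**Next**, the key observation is that $\iota(X)$ is a \emph{transitive} cycle base of $G(X,r)$ (by indecomposability, as recorded in \cref{section: preliminaries}), and that $G(X,r)$ is a multipermutation skew brace because $(X,r)$ is (by \cref{theorem: mpl solution (X r) and G(X r)}). Hence, by \cref{cor: element in transitive cycle base generates skew brace}, \emph{every} element $\iota(x)$ with $x\in X$ generates $G(X,r)$ as a strong left ideal, and therefore — by \cref{cor: transitive cycle base finite mpl is one generator} — generates $G(X,r)$ as a skew brace. Now pick $x \in Y$. On one hand $\iota(x)$ generates all of $G(X,r)$ as a skew brace. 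On the other hand, the subsolution $(Y,s)$ generates a subskew brace of $G(X,r)$, namely (the image of) $G(Y,s)$, and by functoriality and the universal property this subskew brace contains $\iota(Y)$ and is generated by it; but $\iota(x)$ already lies in $\iota(Y)$, so the subskew brace generated by $\iota(x)$ is contained in the subskew brace generated by $\iota(Y)$. Combining, the subskew brace generated by $\iota(Y)$ must be all of $G(X,r)$. This forces $\iota(Y)$ to be a cycle base of $G(X,r)$ that already contains a transitive cycle base (the orbit of $\iota(x)$), hence $\iota(Y)$ is itself the full orbit $\iota(X)$ (orbits partition any cycle base), so $\iota(Y)=\iota(X)$ and thus $Y=X$ when $(X,r)$ is injective. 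For the general case one transfers this back through $\iota$, using that $\Inj(X,r)$ has no subsolutions together with the fact that a subsolution of $(X,r)$ maps to a subsolution of $\Inj(X,r)$ of the same "size profile."

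**The step I expect to be the main obstacle** is making precise the claim that a subsolution $(Y,s)$ of an injective solution corresponds to $\iota(Y)$ being a sub-cycle-base whose generated subskew brace sees the orbit structure correctly — i.e. carefully justifying that the orbit of $\iota(x)$ inside the subskew brace generated by $\iota(Y)$ coincides with the orbit of $\iota(x)$ inside $G(X,r)$, so that it really equals $\iota(X)$. This is where one must use that $\iota(Y)$ generates $G(X,r)$ as a strong left ideal (equivalently as a skew brace, by \cref{cor: transitive cycle base finite mpl is one generator}), so that the $\theta$-action restricted to the generated substructure is the full $\theta$-action, and hence the orbit of any $\iota(x)$, $x \in Y$, inside $G(X,r)$ is entirely contained in $\iota(Y)$; since $(X,r)$ is indecomposable this orbit is all of $\iota(X)$. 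The non-injective case then follows because passing to $\Inj$ does not merge distinct orbits and does not destroy a strict inclusion $Y \subsetneq X$ without it being visible after retraction, so a non-trivial subsolution of $(X,r)$ would yield one of $\Inj(X,r)$, contradicting the injective case.
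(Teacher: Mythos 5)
Your core idea is the same as the paper's: because $(X,r)$ is indecomposable of finite multipermutation level, $\iota(X)$ is a transitive cycle base of the multipermutation skew brace $G(X,r)$, so by \cref{cor: element in transitive cycle base generates skew brace} together with \cref{theorem: finite mpl strong left implies skew} a single $\iota(x)$ with $x\in Y$ already generates $G(X,r)$ as a skew brace; hence the substructure coming from $Y$ generates everything, the associated action is transitive, and it preserves $Y$. The paper packages exactly this via the induced surjection $G(Y,s)\to G(X,r)$ and then lets $(G(Y,s),+)\rtimes(G(Y,s),\circ)$ act on $X$ itself, which immediately yields $Y=X$.

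Your write-up has one genuine weak point: you run the orbit argument inside $G(X,r)$, on $\iota(X)$ rather than on $X$, so what you actually obtain is $\iota(Y)=\iota(X)$. (Also, the parenthetical ``orbits partition any cycle base'' does not apply as stated, since $\iota(Y)$ is not a priori a union of $\theta$-orbits; the later justification you give via the $\theta$-action is the correct one.) For injective solutions $\iota(Y)=\iota(X)$ finishes the proof, but your reduction of the general case --- that passing to $\Inj$ ``does not destroy a strict inclusion $Y\subsetneq X$'' --- is asserted rather than proved: a priori $\iota$ could identify every element of $X\setminus Y$ with an element of $Y$, so $\iota(Y)=\iota(X)$ does not by itself contradict $Y\subsetneq X$. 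The fix is precisely the paper's move: let the group generated by the $Y$-data act on $X$ itself (the action is expressible through $\sigma_y^{\pm 1},\tau_y^{\pm 1}$ for $y\in Y$); this action is transitive because the corresponding elements generate all of $(G(X,r),+)\rtimes(G(X,r),\circ)$, yet it maps $Y$ into $Y$ since $r$ restricts to $Y\times Y$, forcing $Y=X$ with no injectivity assumption.
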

\begin{proof}
    Let $(Y,s)=(Y,r_{Y\times Y})$ be a subsolution of $(X,r)$. Note that the defining relations of the additive, respectively multiplicative, group of the structure skew brace $G(Y,s)$ are contained in the defining relations of the additive, respectively multiplicative, group of the structure skew brace $G(X,r)$. Therefore, we get a well-defined skew brace homomorphism $\phi:G(Y,s)\to G(X,r)$. As the image of $\phi$ contains elements of the transitive cycle base $\iota(X)$ of $G(X,r)$ and every element of this transitive cycle base generates $G(X,r)$ by \cref{theorem: finite mpl strong left implies skew}, we find that $\phi$ is surjective. Because $\phi$ is a skew brace homomorphism, it induces a homomorphism 
    $$ \phi':(G(Y,s),+)\rtimes (G(Y,s),\circ)\to (G(X,r),+)\rtimes (G(X,r),\circ),\quad (a,b)\mapsto (\phi(a),\phi(b)),$$
    which factorises the action of $(G(Y,s),+)\rtimes (G(Y,s),\circ)$ through the action of $(G(X,r),+)\rtimes (G(X,r),\circ)$ on $X$. In particular we know that $(G(Y,s),+)\rtimes (G(Y,s),\circ)$ acts transitively. Note that the image of an element under this action is completely expressible by permutations $\sigma_y,\sigma_y^{-1},\tau_y,\tau_y^{-1}$ for $y\in Y$. As $r(Y\times Y)\subseteq Y\times Y$, this implies that $Y=X$.
\end{proof}

\begin{corollary}
    Let $(X,r)$ be an indecomposable multipermutation solution, then every endomorphism of $(X,r)$ is surjective. In particular, if $|X|<\infty$ then every endomorphism of $(X,r)$ is an automorphism.
\end{corollary}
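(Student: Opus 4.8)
The plan is to read off both assertions directly from \cref{nosubsol}, which does all of the work. Let $f\colon(X,r)\to(X,r)$ be an endomorphism of solutions. First I would invoke the fact, recalled in \cref{section: preliminaries}, that the image of a homomorphism of solutions is a subsolution; thus $(f(X),r_{f(X)\times f(X)})$ is a subsolution of $(X,r)$, and it is nonempty since $X$ is. As $(X,r)$ is an indecomposable solution of finite multipermutation level, \cref{nosubsol} applies to this subsolution.

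The one point that deserves a sentence of care is the passage from ``$(X,r)$ has no non-trivial subsolutions'' to ``$f(X)=X$'': a priori a constant endomorphism could have image a one-point subsolution, which the verbatim statement need not exclude. Here I would appeal to the conclusion actually extracted in the proof of \cref{nosubsol}, namely that $Y=X$ for \emph{every} nonempty subsolution $(Y,s)$ — the transitive action of $(G(X,r),+)\rtimes(G(X,r),\circ)$ on $X$ restricts, along the closure of any point of $Y$ under the maps $\sigma_y^{\pm1},\tau_y^{\pm1}$ (which stays inside $Y$ because $Y$ is a subsolution), to a transitive action on that closure, forcing it to be all of $X$. Consequently $f(X)=X$, i.e.\ $f$ is surjective, which is the first assertion.

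For the second assertion, assume moreover that $X$ is finite. Then the surjective self-map $f$ of the finite set $X$ is automatically injective, hence bijective; and a bijective homomorphism of solutions is an isomorphism of solutions (its set-theoretic inverse again intertwines $r$ with itself, since $(f\times f)r=r(f\times f)$ gives $r(f^{-1}\times f^{-1})=(f^{-1}\times f^{-1})r$), so $f$ is an automorphism of $(X,r)$. I expect no genuine obstacle: the entire substance lies in \cref{nosubsol}, and the only thing to be attentive to is using the effective statement ``every nonempty subsolution equals $X$'' rather than its literal phrasing.
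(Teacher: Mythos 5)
Your proposal is correct and follows the same route as the paper, whose entire proof is the observation that the image of an endomorphism is a subsolution, so that \cref{nosubsol} applies. Your extra care about the meaning of ``non-trivial'' (ruling out a one-point image by noting that the argument in \cref{nosubsol} in fact forces every nonempty subsolution to equal $X$) is a reasonable clarification but does not change the approach.
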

\begin{proof}
    It suffices to recall that the image under an endomorphism of $X$ is a subsolution of $X$.
\end{proof}
The following example shows that the hypothesis on the multipermutation level can not be dropped.

\begin{example}
Let $X:=\{1,2,3,4\}$ and $r$ be the involutive solution given by $\sigma_1:=(3\;4)$, $\sigma_2:=(1\;3\;2\;4)$, $\sigma_3:=(1\;4\;2\;3)$, $\sigma_4:=(1\;2)$, for all $x,y\in X$. Then, the set $Y:=\{1\}$ is a subsolution of $X$.
\end{example}

In \cite{Bonatto22}, Bonatto studied a particular class of left quasigroups called \emph{superconnected} left quasigroups. Recall that a left quasigroup is an algebraic structure with a single binary operation such that all the left multiplications are invertible. Following the terminology of \cite{Bonatto22}, a left quasigroup is said to be \emph{connected} if the group $\mathcal{G}(X)$ generated by the left multiplications acts transitively on $X$ and superconnected if every sub-left quasigroup is connected. Indecomposable involutive solutions give rise to connected left-quasigroups by the operation $x\cdotp y:=\sigma_x^{-1}(y)$ for all $x,y\in X$ (see \cite{rump2005decomposition} for more details). As a further corollary of \cref{nosubsol}, we have the following result.

\begin{corollary}\label{corosol}
    Let $(X,r)$ be an indecomposable involutive non-degenerate solution having finite multipermutation level. Then, the left-quasigroup associated to $X$ has no nontrivial sub-left quasigroup. In particular, $X$ is superconnected.
    \end{corollary}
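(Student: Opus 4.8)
The plan is to deduce this directly from \cref{nosubsol} via the correspondence between subsolutions of an involutive solution and subalgebras of its associated left quasigroup. Recall that the left quasigroup attached to $(X,r)$ has operation $x\cdot y=\sigma_x^{-1}(y)$, so its left division is $x\backslash y=\sigma_x(y)$; hence a sub-left quasigroup is exactly a nonempty $Y\subseteq X$ which is closed under every map $\sigma_x$ and $\sigma_x^{-1}$ with $x\in Y$. As noted just before the statement, indecomposability of $(X,r)$ guarantees that this left quasigroup is connected, so it will be enough to prove that $X$ is its only sub-left quasigroup; superconnectedness is then automatic.

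The one computation to carry out is that every sub-left quasigroup $Y$ of $(X,\cdot)$ is a subsolution of $(X,r)$, i.e.\ $r(Y\times Y)\subseteq Y\times Y$. Given $x,y\in Y$, the first coordinate of $r(x,y)$ is $\sigma_x(y)=x\backslash y\in Y$. For the second coordinate, involutivity yields $\tau_y(x)=\sigma^{-1}_{\sigma_x(y)}(x)$; here $\sigma_x(y)\in Y$ and $Y$ is closed under $\sigma^{-1}_z$ for each $z\in Y$, so $\tau_y(x)\in Y$ as well. Thus $r(x,y)\in Y\times Y$, and since $r^2=\id$ the restriction $r|_{Y\times Y}$ is automatically a bijection, so $(Y,r|_{Y\times Y})$ is a subsolution of $(X,r)$.

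Now \cref{nosubsol} applies and forces $Y=X$ (indeed its proof shows that any subsolution of an indecomposable solution of finite multipermutation level equals the whole solution). Consequently $X$ has no proper sub-left quasigroup, and its unique sub-left quasigroup $X$ is connected because $(X,r)$ is indecomposable; hence the left quasigroup associated to $X$ is superconnected.

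I expect no real obstacle: the substance of the argument is \cref{nosubsol}, and involutivity enters only through the identity $\tau_y(x)=\sigma^{-1}_{\sigma_x(y)}(x)$, which is precisely what turns closure of $Y$ under the two left-quasigroup operations into closure under $r$. For non-involutive solutions this link between subalgebras of the left quasigroup and subsolutions breaks down, which is why the statement is restricted to the involutive case.
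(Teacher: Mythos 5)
Your proof is correct and follows exactly the route the paper intends: the paper states this as an immediate consequence of \cref{nosubsol} (leaving the verification implicit), and your computation that a sub-left quasigroup $Y$ is closed under $r$ via $\tau_y(x)=\sigma^{-1}_{\sigma_x(y)}(x)$ is precisely the missing detail. Nothing to add.
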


\section{Automorphism group of solutions}\label{section: automorphism group}

In this section, we investigate the automorphisms group of indecomposable involutive solutions with a regular permutation group. In particular, we recover and extend some results contained in \cite[Section $5$]{JPZ2021}, where the automorphisms group of involutive indecomposable solutions with an abelian permutation group and having multipermutation level $2$ is studied in detail. 
\begin{remark}\label{Arx}
We exclusively consider indecomposable involutive solutions with a regular permutation group in this section. By \cref{theorem: construction solution Bachiller}, it is no restriction to assume for the rest of the section that $(X,r)=(A,r_x)$ with $x$ some element contained in a transitive cycle base of a skew brace $A$. 
\end{remark}
We start by giving a description of the automorphisms group of an indecomposable solution with regular permutation group (not necessarily abelian) by means of left braces. Recall that for a skew brace $A$ and $z\in A$, $t_z$ is the right translation by $z$ in the multiplicative group.
\begin{proposition}\label{propo2.9}
Let $A$ be a brace, $x\in A$ contained in a transitive cycle base and $(X,r):=(A,r_x)$. Then, 
 $$\Aut(X,r)=\{t_z\circ \psi\mid\psi\in \Aut(A,+,\circ),z\in A, \psi(x)=\lambda_z(x)  \}.$$
Moreover, if $X$ has finite multipermutation level, we have that $|\Aut(X,r)|\leq |X|$.
\end{proposition}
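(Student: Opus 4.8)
The plan is to establish the description of $\Aut(X,r)$ directly from \cref{theorem: isomorphism solution Bachiller}, then deduce the cardinality bound from \cref{propo2.9} itself combined with \cref{theorem: finite mpl strong left implies skew}.

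For the first part, I would apply \cref{theorem: isomorphism solution Bachiller} with $(A,r_x)=(A,r_y)$, i.e.\ $y=x$. That theorem states that every isomorphism between $(A,r_x)$ and $(A,r_x)$ — that is, every automorphism of $(X,r)$ — arises as $t_z\circ\psi$ for some $z\in A$ and some $\psi\in\Aut(A,+,\circ)$ with $\psi(x)=\lambda_z(y)=\lambda_z(x)$, and conversely each such pair $(z,\psi)$ gives an automorphism. This is essentially a restatement, so the content here is just unwinding the quantifiers; the set on the right-hand side is exactly the set of maps $a\mapsto \psi(a)\circ z$ ranging over admissible $(z,\psi)$.

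For the cardinality bound, suppose $X=A$ has finite multipermutation level. Fix $z\in A$. I claim there is at most one $\psi\in\Aut(A,+,\circ)$ with $\psi(x)=\lambda_z(x)$: indeed, since $x$ lies in a transitive cycle base of $A$, \cref{cor: element in transitive cycle base generates skew brace} gives that $x$ generates $A$ as a strong left ideal, and then \cref{theorem: finite mpl strong left implies skew} (using that $A$ has finite multipermutation level) gives that $x$ generates $A$ as a skew brace. A skew brace automorphism is determined by its value on a generating set, so $\psi$ is determined by $\psi(x)=\lambda_z(x)$. Hence the assignment $(t_z\circ\psi)\mapsto z$ — equivalently, recording the value $\psi(x)$, or recording the image of $0$ under $t_z\circ\psi$, which is $z$ — is injective on $\Aut(X,r)$, so $|\Aut(X,r)|\leq |A|=|X|$.

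The only genuinely delicate point is the uniqueness of $\psi$ given $z$; everything else is bookkeeping on top of the two Bachiller-type theorems. The key input making uniqueness work is precisely the passage from ``generates as a strong left ideal'' to ``generates as a skew brace,'' which is where the finite multipermutation level hypothesis enters and which is supplied by \cref{theorem: finite mpl strong left implies skew}. I would flag that without that hypothesis one only controls $\psi$ on the strong left ideal generated by $x$ (all of $A$), but not necessarily pins it down as a skew brace map — so the bound could fail in general, matching the role of the hypothesis in \cref{nosubsol}.
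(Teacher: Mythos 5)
Your proposal is correct and follows essentially the same route as the paper: the description of $\Aut(X,r)$ is read off from \cref{theorem: isomorphism solution Bachiller} with $y=x$, and the bound $|\Aut(X,r)|\leq|X|$ comes from the fact that $x$ generates $A$ as a skew brace (the paper cites \cref{cor: transitive cycle base finite mpl is one generator}, which packages exactly your two-step appeal to \cref{cor: element in transitive cycle base generates skew brace} and \cref{theorem: finite mpl strong left implies skew}), so that $\psi$ is pinned down by $\psi(x)=\lambda_z(x)$ and the automorphism is determined by $z=F(0)$. Your explicit remark that $z$ is recovered as the image of $0$ is a welcome clarification of a point the paper leaves implicit.
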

\begin{proof}
By \cref{theorem: isomorphism solution Bachiller},  if $(A,r_x)$ and $(A,r_y)$ are two isomorphic solutions with $x,y\in X$ and $F$ is an isomorphism from $(A,r_x)$ to $(A,r_y)$, then there exist $\psi\in Aut(A,+,\circ)$ and $z\in A$ such that $\psi(x)=\lambda_z(y)$ and $F=t_z\circ \psi$. Therefore, the first part of the thesis follows setting $x=y$. For the last part, it is sufficient to note that if $\psi,\psi'\in \Aut(A,+,\circ)$ are such that $\psi(x)=\psi'(x)$, since with our hypothesis it follows from \cref{cor: transitive cycle base finite mpl is one generator} that $A$ is generated by $x$ as a skew brace, from which we obtain that $\psi=\psi'$.
\end{proof}

\begin{remark}\label{remm}
    A well-known result due by Rump \cite{rump2007classification} states that if $A$ is a finite brace with abelian multiplicative group, then, as left brace, it is isomorphic to the direct product of the Sylow $p$-subgroups of its additive group. This fact, together with the previous proposition, implies that the calculation of the automorphisms group of a finite indecomposable involutive solution with abelian permutation group can be reduced to the calculation of the ones having prime-power size. 
\end{remark}

Let $A$ be a skew brace with transitive cycle base $Y$. We define $\Aut(A,Y)$ as the group of all skew brace automorphisms $\psi:A\to A$ such that $\psi(Y)\subseteq Y$. As a further corollary, we have the following. 

\begin{corollary}\label{cor: normal subgroup aut(X r) isomorphic to Soc(A)}
Let $A$ be a brace, $Y$ a transitive cycle base of $A$, $x\in Y$ and $(X,r):=(A,r_x)$. Then $\Aut(X,r)$ has a normal subgroup isomorphic to $(\Soc(A),\circ)$ such that its quotient is isomorphic to $\Aut(A,Y)$.
\end{corollary}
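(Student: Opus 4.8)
The plan is to realize $\Aut(X,r)$ as described in \cref{propo2.9} and then identify the relevant normal subgroup and quotient explicitly. By \cref{propo2.9}, an automorphism of $(X,r)$ is a pair $(t_z\circ\psi)$ with $\psi\in\Aut(A,+,\circ)$, $z\in A$ and $\psi(x)=\lambda_z(x)$; moreover, since $A$ is generated by $x$ as a skew brace (the brace here has a transitive cycle base, so $A$ is one-generated as a strong left ideal by \cref{cor: element in transitive cycle base generates skew brace}, hence as a skew brace by \cref{theorem: finite mpl strong left implies skew}), the decomposition $t_z\circ\psi$ is unique: $\psi$ is determined by its value $\psi(x)=\lambda_z(x)$, and then $z$ is determined as well (left translations in $(A,\circ)$ by distinct elements are distinct). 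First I would record the group law on this parametrized set. Composing $(t_z\circ\psi)(t_{z'}\circ\psi')$, one uses that $\psi$ is a multiplicative homomorphism to push it past $t_{z'}$, obtaining $t_{\psi(z')\circ z}\circ(\psi\psi')$, so the multiplication in $\Aut(X,r)$ reads $(z,\psi)\cdot(z',\psi')=(\psi(z')\circ z,\ \psi\psi')$.

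Next I would define the candidate normal subgroup $N=\{t_z\circ\id\mid z\in A,\ \lambda_z(x)=x\}=\{t_z\mid z\in\Stab\}$ where $\Stab=\{z\in A\mid \lambda_z(x)=x\}$. The constraint $\psi=\id$ forces $\psi(x)=x=\lambda_z(x)$, so this is exactly the set of automorphisms of the form $t_z$. I claim $\Stab=\Soc(A)$ as a subset and that $N\cong(\Soc(A),\circ)$. For the inclusion $\Soc(A)\subseteq\Stab$: if $z\in\Soc(A)$ then $\lambda_z=\id$, so certainly $\lambda_z(x)=x$. For the reverse inclusion: $\Stab$ is the stabilizer of $x$ under the $\lambda$-action, hence a subgroup of $(A,\circ)$; since $x$ lies in a transitive cycle base, the $\theta$-orbit of $x$ is all of the cycle base, but one must be careful — $\lambda$-stabilizing $x$ is a priori weaker than $\theta$-stabilizing $x$. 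The key point (this is the analogue of the fact that for a regular permutation group the structure skew brace equals the permutation skew brace) is that here $\G(X,r)\cong A$ with regular action, so $\Stab$, the kernel of the $(A,\circ)$-action on $X=A$ via $\lambda$, is exactly $\ker\lambda=\Soc(A)$ (recall for a brace $\Soc(A)=\ker\lambda$). Granting this, $N=\{t_z\mid z\in\Soc(A)\}$ and the map $z\mapsto t_z$ is a group homomorphism $(\Soc(A),\circ)\to\Aut(X,r)$ which is injective (distinct right translations are distinct), so $N\cong(\Soc(A),\circ)$.

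Then I would show $N\trianglelefteq\Aut(X,r)$ with quotient $\Aut(A,Y)$ by exhibiting the homomorphism onto the quotient directly. Define $\Phi\colon\Aut(X,r)\to\Aut(A,+,\circ)$ by $t_z\circ\psi\mapsto\psi$; by the uniqueness of the decomposition above this is well-defined, and from the multiplication rule $(z,\psi)\cdot(z',\psi')\mapsto\psi\psi'$ it is a group homomorphism. Its kernel is precisely $N$ (those with $\psi=\id$), so $N$ is normal. Its image is $\{\psi\in\Aut(A,+,\circ)\mid \exists z\in A:\ \psi(x)=\lambda_z(x)\}$, i.e.\ the set of skew brace automorphisms $\psi$ with $\psi(x)$ in the $\lambda$-orbit of $x$. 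Since $Y$ is a transitive cycle base, the $\lambda$-orbit of $x$ together with the $(A,+)$-part of $\theta$ sweeps out all of $Y$; but in fact, because the permutation group is regular, the $\lambda$-orbit of $x$ is already a full set of coset representatives and, combined with $\psi$ being a skew brace automorphism (hence $\theta$-equivariant), one checks $\psi(x)\in\lambda\text{-orbit of }x \iff \psi(Y)\subseteq Y$. Thus $\operatorname{im}\Phi=\Aut(A,Y)$ and the first isomorphism theorem gives $\Aut(X,r)/N\cong\Aut(A,Y)$.

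The main obstacle I anticipate is the precise bookkeeping around the $\lambda$-orbit versus the $\theta$-orbit of $x$: one needs that for a brace $A$ with transitive cycle base $Y$ and regular permutation group, $\{\lambda_z(x)\mid z\in A\}=Y$ and $\{z\mid\lambda_z(x)=x\}=\Soc(A)$, and that $\psi(x)\in Y\iff\psi(Y)\subseteq Y$ for a skew brace automorphism $\psi$. These follow from $\G(X,r)\cong A$ acting regularly (\cref{theorem: construction solution Bachiller}) together with $\theta$-equivariance of $\psi$, but stating them cleanly is where the real content lies; everything else is the computation of the group law and an application of the first isomorphism theorem.
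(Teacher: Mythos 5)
Your write-up follows the paper's proof exactly: the same map $t_z\circ\psi\mapsto\psi$, the same identification of its kernel with $\{t_z\mid z\in\Soc(A)\}$ and of its image with $\Aut(A,Y)$, and the first isomorphism theorem. One small remark on the extra detail you add: the uniqueness of the decomposition $t_z\circ\psi$ does not require one-generation (your appeal to \cref{cor: transitive cycle base finite mpl is one generator} would need a multipermutation hypothesis that the corollary does not make); it suffices to evaluate at $0$, which recovers $z=(t_z\circ\psi)(0)$ and hence $\psi$.

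The one step you rightly single out as the real content --- that $\Stab:=\{z\in A\mid\lambda_z(x)=x\}$ equals $\Soc(A)$ --- is however justified by a non sequitur. You describe $\Stab$ as ``the kernel of the $(A,\circ)$-action on $X=A$ via $\lambda$'' and invoke regularity of the permutation group. But the regular action of $\G(X,r)\cong(A,\circ)$ on $X=A$ is by the maps $\sigma_a\colon b\mapsto\lambda_a(x)\circ b$, i.e.\ by left translations, and triviality of its point stabilizers says nothing about the stabilizer of the point $x$ under the different action $z\mapsto\lambda_z$ on the cycle base $Y$. The set $\Stab$ is the stabilizer of a single point of $Y$, whereas $\Soc(A)=\ker\lambda$ is the kernel of the whole $\lambda$-action, namely the intersection of all the stabilizers $\Stab_y$ for $y\in Y$ (an additive automorphism fixing $Y$ pointwise fixes the group $Y$ generates). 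Thus $\Soc(A)\subseteq\Stab$ is clear, but the reverse inclusion is precisely the assertion that $\Stab$ is normal in $(A,\circ)$, since $\lambda_z$ fixes $\lambda_w(x)$ if and only if $w^-\circ z\circ w\in\Stab$. This is automatic when $(A,\circ)$ is abelian, which covers every application in the paper, but for a general brace it requires an argument that neither you nor the paper's own (equally terse) proof supplies. The remainder of your argument --- the group law, the identification of the image with $\Aut(A,Y)$ via $\psi\lambda_w=\lambda_{\psi(w)}\psi$ --- is correct and coincides with the paper's intent.
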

\begin{proof}
    Let $\phi: \Aut(X,r)\to \Aut(A,Y)$ be the map sending $t_z\circ \psi\in \Aut(X,r)$ to $\psi$. This is a well-defined group homomorphism with kernel precisely $\{t_z\mid z\in \Soc(A)\}$. Moreover, the definition of $\Aut(A,Y)$ ensures that $\phi$ is surjective.
\end{proof}

\subsection{Indecomposable solutions with abelian permutation group and multipermutation level $2$}

Now, we consider indecomposable solutions with abelian permutation group and multipermutation level $2$. 

\begin{lemma}\label{lem65}
Let $A$ be a brace with a transitive cycle base $X$. Moreover, suppose that $A$ has multipermutation level $2$. Then $\Aut(A,X)=\{\lambda_a\mid a\in A\}$.
\end{lemma}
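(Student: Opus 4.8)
\textbf{Proof plan for Lemma~\ref{lem65}.}

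The plan is to show both inclusions. Since $A$ has multipermutation level $2$, we have $\Soc(A) = \ker\lambda \supseteq A^{(2)} = A^2$ actually more precisely $A^{(3)}=0$ by the equivalences in the proposition on bi-skew braces, so $A^2 \subseteq \Soc(A)$ and $A$ is a $\lambda$-homomorphic bi-skew brace. The key consequence of being a bi-skew brace is that every $\lambda_a$ is a skew brace automorphism of $A$; hence each $\lambda_a$ is certainly an element of $\Aut(A,+,\circ)$. So for the inclusion $\{\lambda_a\mid a\in A\}\subseteq \Aut(A,X)$ it remains to verify $\lambda_a(X)\subseteq X$, and since $X$ is a union of $\theta$-orbits (it is a cycle base) and $\theta_{(0,a)}(x) = \lambda_a(x)$, we get $\lambda_a(X) = X$ immediately. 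Thus $\{\lambda_a\mid a\in A\}\subseteq\Aut(A,X)$, and in fact this already lands inside $\Aut(A,X)$ without any work beyond recalling the definitions.

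For the reverse inclusion, let $\psi\in\Aut(A,X)$, so $\psi$ is a skew brace automorphism with $\psi(X)\subseteq X$ (hence $\psi(X)=X$ as $\psi$ is bijective and, if $X$ is finite, or more generally since $\psi^{-1}$ is also such an automorphism). Fix $x\in X$. Then $\psi(x)\in X$, and since $X$ is a \emph{transitive} cycle base, $\psi(x)$ lies in the single $\theta$-orbit of $x$, so there is $g\in (A,+)\rtimes(A,\circ)$ with $\theta_g(x)=\psi(x)$. Writing $\theta_{(a,b)}(x) = a + \lambda_b(x) - a$ and using that $A$ has abelian additive group (wait — $A$ is a brace, so $(A,+)$ is abelian), this simplifies to $\theta_{(a,b)}(x) = \lambda_b(x)$, so in fact $\psi(x) = \lambda_b(x)$ for some $b\in A$. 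Now the strategy is to show $\psi = \lambda_b$. Consider $\varphi := \lambda_b^{-1}\circ\psi$, which by the bi-skew brace property is again a skew brace automorphism, and which fixes $x$. Since $A$ has multipermutation level $2$, it follows from \cref{cor: transitive cycle base finite mpl is one generator} (together with \cref{cor: element in transitive cycle base generates skew brace}, noting $x$ lies in a transitive cycle base and multipermutation skew braces have the strong-left-ideal-implies-skew-brace property of \cref{theorem: finite mpl strong left implies skew}) that $x$ generates $A$ as a skew brace. A skew brace automorphism fixing a generating set is the identity, so $\varphi = \id$ and $\psi = \lambda_b\in\{\lambda_a\mid a\in A\}$.

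The main obstacle is the reverse inclusion, specifically the step identifying $\psi(x)$ with some $\lambda_b(x)$: one must use transitivity of the cycle base to write the transported element via the $\theta$-action and then exploit that $(A,+)$ abelian collapses $\theta_{(a,b)}$ to $\lambda_b$. Everything else — that $\lambda_b^{-1}\psi$ is a skew brace automorphism, that it fixes $x$, and that $x$ generates $A$ — follows cleanly from the bi-skew brace property (equivalent to $\mpl(A)\le 2$ for braces) and from the results of \cref{section: generators of skew brace}. One minor point to double-check is the surjectivity/bijectivity of $\psi$ on $X$ and that $\Aut(A,X)$ as defined with $\psi(X)\subseteq X$ does force $\psi(X)=X$; this is automatic when $X$ is finite, and in the infinite case one applies the same argument to $\psi^{-1}$, which also preserves the transitive cycle base $X$ since $\psi^{-1}(X) = \psi^{-1}(\psi(X)) \supseteq X$ forces equality once $\psi(X)=X$ is known — so it is cleanest to first argue $\psi(x)\in X$ lies in the orbit of $x$, produce $b$, and then conclude.
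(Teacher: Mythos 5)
Your proposal is correct and follows essentially the same route as the paper: use that $\mpl(A)\le 2$ makes $A$ a bi-skew brace so each $\lambda_a$ is a skew brace automorphism preserving the $\theta$-orbit $X$, then use transitivity of $X$ together with $(A,+)$ abelian to write $\psi(x)=\lambda_b(x)$, and conclude $\psi=\lambda_b$ because $x$ generates $A$ as a skew brace by \cref{cor: transitive cycle base finite mpl is one generator}. The extra care you take about $\psi(X)=X$ versus $\psi(X)\subseteq X$ is not needed, since only $\psi(x)\in X$ is used.
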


\begin{proof}
Since by \cref{cor: mpl indecomposable solution} and \cref{cor: biskew} $A$ is a bi-skew brace, we find that $\lambda_a$ is a skew brace automorphism for each $a\in A$. Because in particular $A$ has finite multipermutation level, we know by \cref{cor: transitive cycle base finite mpl is one generator} that every element $x\in X$, generates $A$ as a brace. Hence for every $\psi\in \Aut(A,X)$ we find $\psi(x)=\lambda_a(x)$ for some $a$, hence $\psi=\lambda_a$.
\end{proof}

In this context, the description of the automorphisms group of an indecomposable solution can be simplified, as we can see in the following result.

\begin{corollary}\label{coro}
Let $A$ be a brace, $x\in A$ contained in a transitive cycle base and $(X,r):=(A,r_x)$. Moreover, suppose that $\mpl(X,r)=2$. Then, 
$$\Aut(X,r)=\{t_a\circ \lambda_a  \mid a\in A\}.$$
\end{corollary}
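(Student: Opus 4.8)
The plan is to combine the general description of $\Aut(X,r)$ from \cref{propo2.9} with the simplification of $\Aut(A,X)$ available in multipermutation level $2$ from \cref{lem65}. By \cref{propo2.9}, every automorphism of $(X,r)$ has the form $t_z\circ \psi$ with $\psi\in\Aut(A,+,\circ)$, $z\in A$, and $\psi(x)=\lambda_z(x)$. The key point is that, since $\mpl(X,r)=2$, we have $\mpl(A)=2$ as well: indeed $A=\G(X,r)$ since the permutation group is regular (or, more to the point, $A$ is the brace from \cref{theorem: construction solution Bachiller}), and by \cref{cor: mpl indecomposable solution} the multipermutation level of the solution equals that of its permutation skew brace. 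In particular $A$ is a bi-skew brace and $\lambda_a\in\Aut(A,+,\circ)$ for every $a\in A$.

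First I would show that every $\psi$ occurring in \cref{propo2.9} is forced to be of the form $\lambda_a$. The constraint $\psi(x)=\lambda_z(x)$ together with the fact that $x$ generates $A$ as a brace (\cref{cor: transitive cycle base finite mpl is one generator}, using that $\mpl(A)<\infty$) means $\psi$ is determined by its value on $x$; and since $\psi(x)=\lambda_z(x)$ with $\lambda_z$ also a brace automorphism, we get $\psi=\lambda_z$. Equivalently, one can invoke \cref{lem65}: since $\psi(X)\subseteq X$ is automatic here — actually one should note that a brace automorphism $\psi$ with $\psi(x)=\lambda_z(x)\in X$ sends the transitive cycle base $X$ (the orbit of $x$) into $X$, because $\psi$ commutes with the $\theta$-action — we have $\psi\in\Aut(A,X)=\{\lambda_a\mid a\in A\}$. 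So $\psi=\lambda_a$ for some $a\in A$, and then the condition $\psi(x)=\lambda_z(x)$ becomes $\lambda_a(x)=\lambda_z(x)$, whence $\lambda_a=\lambda_z$ (again since $x$ generates $A$), so $t_z\circ\psi=t_z\circ\lambda_a$ where $\lambda_a=\lambda_z$. Renaming, every element of $\Aut(X,r)$ is of the form $t_a\circ\lambda_a$.

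Conversely I would check that $t_a\circ\lambda_a\in\Aut(X,r)$ for every $a\in A$: taking $\psi=\lambda_a$ and $z=a$ in \cref{propo2.9}, the required condition is $\psi(x)=\lambda_z(x)$, i.e. $\lambda_a(x)=\lambda_a(x)$, which holds trivially. Hence $\{t_a\circ\lambda_a\mid a\in A\}\subseteq\Aut(X,r)$, and combined with the previous paragraph we get the claimed equality.

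The main obstacle — really the only subtle point — is justifying that the $\psi$ appearing in \cref{propo2.9} lies in $\Aut(A,X)$ so that \cref{lem65} applies, i.e. that $\psi(X)\subseteq X$. This follows because $X$ is the $\theta$-orbit of $x$, $\psi$ is a skew brace automorphism hence equivariant for the $\theta$-action (it commutes with all $\lambda_b$ up to relabelling, and with the additive translations), so $\psi$ maps the orbit of $x$ onto the orbit of $\psi(x)=\lambda_z(x)$, which is again $X$ since $\lambda_z(x)\in X$ and $X$ is a single orbit. Alternatively, one bypasses \cref{lem65} entirely and argues directly via the generation property of $x$ as in the first displayed argument above; I would present whichever is cleaner, but I expect the direct argument using \cref{cor: transitive cycle base finite mpl is one generator} to be the most economical.
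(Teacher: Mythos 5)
Your proposal is correct and follows essentially the same route as the paper, which proves the corollary by combining \cref{cor: mpl indecomposable solution}, \cref{propo2.9} and \cref{lem65}; you have simply spelled out the details (in particular the equivariance argument showing $\psi(X)\subseteq X$ so that \cref{lem65} applies, and the identification $\lambda_a=\lambda_z$ from agreement on the generator $x$) that the paper leaves implicit.
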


\begin{proof}
The proof follows by \cref{cor: mpl indecomposable solution}, \cref{propo2.9}, and \cref{lem65}.
\end{proof}

In the next theorem, which is the main result of the section, we provide the structure of the automorphisms group of an indecomposable involutive solution with abelian permutation group and multipermutation level $2$ in terms of the permutation brace.

\begin{theorem}\label{isoaut}
Let $A$ be a brace with abelian multiplicative group, $x\in A$ contained in a transitive cycle base and $(X,r):=(A,r_x)$. Moreover, suppose that $\mpl(X,r)=2$. Then, its automorphism group $\Aut(X,r)$ is isomorphic to the additive group $(A,+)$.
\end{theorem}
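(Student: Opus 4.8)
The plan is to use \cref{coro}, which tells us that $\Aut(X,r)=\{t_a\circ\lambda_a\mid a\in A\}$ when $\mpl(X,r)=2$, and then to show that the assignment $a\mapsto t_a\circ\lambda_a$ is a group isomorphism from $(A,+)$ onto $\Aut(X,r)$. First I would check that it is a bijection. Surjectivity is immediate from \cref{coro}. For injectivity, if $t_a\circ\lambda_a=t_b\circ\lambda_b$ then evaluating at $0$ gives $\lambda_a(0)\circ a=\lambda_b(0)\circ b$, i.e. $a=b$ (using $\lambda_c(0)=0$); so the map is injective. Thus it only remains to verify it is a homomorphism, and this is where the hypothesis that $(A,\circ)$ is abelian together with $\mpl(A)=2$ (so $A$ is a bi-skew brace and $\lambda_a$ is a skew brace automorphism for all $a$, by \cref{cor: mpl indecomposable solution} and \cref{cor: biskew}) will do the work.

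The key computation is to expand $(t_a\circ\lambda_a)\circ(t_b\circ\lambda_b)$, composing as maps $A\to A$, and show it equals $t_{a+b}\circ\lambda_{a+b}$. Tracking an element $c\in A$: applying $t_b\circ\lambda_b$ gives $\lambda_b(c)\circ b$, and then applying $t_a\circ\lambda_a$ gives $\lambda_a\bigl(\lambda_b(c)\circ b\bigr)\circ a$. Since $\lambda_a$ is a multiplicative homomorphism (indeed a skew brace automorphism), this is $\lambda_a\lambda_b(c)\circ\lambda_a(b)\circ a$. On the other side, $t_{a+b}\circ\lambda_{a+b}$ sends $c$ to $\lambda_{a+b}(c)\circ(a+b)$. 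Now I invoke $\lambda$-homomorphicity of a $\mpl\le 2$ brace (part (2) of the Proposition in the preliminaries, or directly \cref{cor: biskew}): $\lambda_{a+b}=\lambda_a\lambda_b$ as maps, so $\lambda_{a+b}(c)=\lambda_a\lambda_b(c)$. Hence it suffices to prove the identity
\begin{equation*}
    \lambda_a(b)\circ a=a+b.
\end{equation*}
But $\lambda_a(b)=-a+a\circ b$ by definition of the $\lambda$-map, so $\lambda_a(b)\circ a=(-a+a\circ b)\circ a$; using the brace axiom \eqref{eq: left brace} in the form $(-a+a\circ b)\circ a = (-a)\circ a - a + (a\circ b)\circ a$ requires the two-sided identity, which holds here since $(A,\circ)$ is abelian (hence $A$ is two-sided). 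Since $(A,\circ)$ is abelian, $(a\circ b)\circ a = (a\circ a)\circ b$ wait — more directly, in an abelian multiplicative group $(-a+a\circ b)\circ a$: I would instead argue that since $A$ is a bi-skew brace, $\lambda_a$ is also an automorphism of $(A,+)$ and of $(A,\circ)$, and $a\circ b = a+\lambda_a(b)$ for bi-skew braces with the relevant structure — so $\lambda_a(b)\circ a = \lambda_a(b)+\lambda_{\lambda_a(b)}(a)$, and one reduces using $\mpl\le 2$ so that $\lambda_{\lambda_a(b)}=\lambda_{\lambda_a(b)}$ acts trivially modulo $\Soc$... This last reduction is the step I expect to be the main obstacle: pinning down the exact identity $\lambda_a(b)\circ a=a+b$ (equivalently $a\circ b=\lambda_a(b)\circ a$, a commutativity-type statement combining the $\lambda$-homomorphic property with commutativity of $\circ$).

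The cleanest route around that obstacle is probably to avoid the pointwise map computation altogether and instead use \cref{cor: normal subgroup aut(X r) isomorphic to Soc(A)}: with $Y$ the transitive cycle base containing $x$, $\Aut(X,r)$ is an extension of $\Aut(A,Y)$ by $(\Soc(A),\circ)$, and by \cref{lem65} $\Aut(A,Y)=\{\lambda_a\mid a\in A\}\cong (A,\circ)/\Soc(A)=(A,\circ)/\ker\lambda$, which since $(A,\circ)$ is abelian is an abelian group; one then checks the extension $1\to(\Soc(A),\circ)\to\Aut(X,r)\to(A,\circ)/\ker\lambda\to1$ is the abelian extension corresponding to $(A,+)$ by identifying the section $a\mapsto t_a\lambda_a$ and computing the cocycle, which vanishes precisely because of the $\mpl\le 2$ hypothesis. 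Either way, once the homomorphism property is established, combined with the bijectivity shown above we conclude $\Aut(X,r)\cong(A,+)$, as desired. I would present the direct computation, taking care to justify the identity $a\circ b=\lambda_a(b)\circ a$ from the bi-skew/abelian hypotheses, as the write-up.
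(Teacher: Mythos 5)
Your starting point (reducing to \cref{coro} and checking that the bijection $A\to\Aut(X,r)$ is a group homomorphism) matches the paper's, but the specific bijection you chose is the wrong one, and the identity your computation hinges on is false. You need $\lambda_a(b)\circ a=a+b$, equivalently $a*b+b*a=0$; since $(A,\circ)$ abelian forces $a*b=b*a$, this amounts to $2(a*b)=0$, which has no reason to hold. Concretely, take $A=\Z/p^2$ with $a\circ b=a+b+pab$ for $p$ an odd prime: this is a one-generated brace of multipermutation level $2$ with abelian multiplicative group and transitive cycle base $1+pA$, yet $\lambda_a(b)\circ a=(b+pab)\circ a=a+b+2pab\neq a+b$. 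In this example one checks directly that $(t_a\lambda_a)(t_b\lambda_b)=t_{a+b+2pab}\lambda_{a+b}\neq t_{a+b}\lambda_{a+b}$, so the assignment $a\mapsto t_a\circ\lambda_a$ is genuinely not a homomorphism from $(A,+)$; this is not a presentational obstacle that a cleverer reduction will remove. Your fallback via the extension $1\to(\Soc(A),\circ)\to\Aut(X,r)\to\Aut(A,Y)\to 1$ is not carried out (and the claim that the relevant cocycle ``vanishes'' is not what is needed: one must identify it with the cocycle of $(A,+)$ as an extension of $(A/\Soc(A),+)$ by $(\Soc(A),+)$, not show it is trivial).

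The paper's proof fixes exactly this point by using the bijection $\Phi(t_a\circ\lambda_a):=a^-$ (the $\circ$-inverse) instead. One has $(t_a\lambda_a)(t_b\lambda_b)=t_{a\circ\lambda_a(b)}\circ\lambda_{a\circ b}$, and then
\begin{equation*}
(a\circ\lambda_a(b))^- = a^-\circ\lambda_a(b^-) = a^-+\lambda_{a^-}\lambda_a(b^-) = a^-+b^-,
\end{equation*}
where the first equality uses that $\lambda_a\in\Aut(A,\circ)$ (from \cref{cor: biskew}) together with commutativity of $(A,\circ)$, and the second uses $c\circ d=c+\lambda_c(d)$. So the extra twist by $2(a*b)$ that breaks your map is absorbed by the passage to inverses. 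In the example above, $(a+b+2pab)^-=-(a+b)+pa^2+pb^2=a^-+b^-$, as it should be. I recommend you rerun your direct computation with the map $t_a\lambda_a\mapsto a^-$; the rest of your argument (surjectivity from \cref{coro}, injectivity by evaluating at $0$) then goes through unchanged.
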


\begin{proof}
Let $\Phi$ be the function from $\Aut(X,r)$ to $(A,+)$ given by $\Phi(t_a\circ \lambda_a):=a^- $ for all $a\in A$ (recall that, by \cref{coro}, with our hypothesis every element of $\Aut(X,r)$ is of the form $t_a\circ \lambda_a $). Then, $\Phi$ is a bijection and
$$\Phi(t_a\circ \lambda_a\circ t_b\circ \lambda_b)=\Phi(t_{a\circ \lambda_a(b)}\circ \lambda_{a\circ b})=(a\circ \lambda_a(b))^- $$
for all $a,b\in B$. Now, by \cref{cor: biskew} $\lambda_a\in Aut(A,\circ)$ and by hypothesis $(A,\circ)$ is abelian, hence we have that 
$$ (a\circ \lambda_a(b))^-=a^-\circ \lambda_a(b^-)=a^-+b^-=\Phi(t_a\circ \lambda_a)+\Phi(t_b\circ \lambda_b)$$
therefore the statement follows.
\end{proof}

As a first application, we can give a shorter proof of \cite[Proposition 5.1]{JPZ2021}.

\begin{corollary}\label{regab2}
Let $A$ be a brace with abelian multiplicative group, $x\in A$ contained in a transitive cycle base and $(X,r):=(A,r_x)$. Moreover, suppose that $\mpl(X,r)=2$. Then, its automorphisms group $\Aut(X,r)$ is a regular abelian group.
\end{corollary}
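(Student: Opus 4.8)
The plan is to leverage \cref{isoaut}: it already gives $\Aut(X,r)\cong (A,+)$, and the additive group of a brace is abelian by definition, so $\Aut(X,r)$ is abelian with no further work. Since an automorphism of the solution $(X,r)$ is in particular a bijection of the underlying set $X=A$, the action of $\Aut(X,r)$ on $X$ is faithful; and a faithful transitive action of an abelian group is automatically free, hence regular (if $g$ fixes a point $x_0$ and $x=h\cdot x_0$ for some $h$, then $g\cdot x=gh\cdot x_0=hg\cdot x_0=h\cdot x_0=x$, so $g$ fixes everything and $g=\mathrm{id}$). Thus the whole corollary reduces to showing that $\Aut(X,r)$ acts transitively on $X$.

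For transitivity I would argue concretely. By \cref{coro} every element of $\Aut(X,r)$ equals $t_a\circ\lambda_a$ for a unique $a\in A$, and by \cref{theorem: isomorphism solution Bachiller} it acts on $X=A$ by $b\mapsto\lambda_a(b)\circ a$. Since $(A,\circ)$ is abelian, $A$ is a two-sided brace and $(A,+,*)$ is a Jacobson radical ring; commutativity of $(A,\circ)$ moreover forces $a*b=b*a$. By \cref{cor: mpl indecomposable solution} applied to the indecomposable solution $(A,r_x)$, together with $\G(A,r_x)\cong A$ from \cref{theorem: construction solution Bachiller}, we get $\mpl(A)=2$, hence $A^{(3)}=0$, so all threefold $*$-products vanish. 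Using $\lambda_a(b)=a*b+b$ and $u\circ a=u+u*a+a$, these identities collapse $\lambda_a(b)\circ a$ to $a+b+2(a*b)$.

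Now fix $b,c\in A$: an automorphism sending $b$ to $c$ is one with $a+2(a*b)=c-b$, so it suffices to show the map $\Psi_b\colon A\to A$, $a\mapsto a+2(a*b)$, is bijective (it is additive in $a$, since $*$ distributes over $+$ in a two-sided brace). Because $(a*b)*b=0$, one checks directly that $a\mapsto a-2(a*b)$ is a two-sided inverse of $\Psi_b$; hence $\Psi_b$ is a bijection and the required $a$ exists. This yields transitivity and completes the proof. The only computational content is the identity $\lambda_a(b)\circ a=a+b+2(a*b)$ and the inversion of $\Psi_b$, both immediate once $A$ is recognised as a commutative Jacobson radical ring with vanishing threefold products, so there is no real obstacle; in particular this gives the promised shorter route to \cite[Proposition 5.1]{JPZ2021}.
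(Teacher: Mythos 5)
Your proof is correct, and its skeleton (abelianness from \cref{isoaut}, then transitivity, then the standard fact that a faithful transitive abelian permutation group is regular) is the same as the paper's. The only real divergence is in how you establish transitivity, and there you do far more work than necessary: the paper simply observes that $(t_a\circ\lambda_a)(0)=\lambda_a(0)\circ a=0\circ a=a$, so the orbit of $0$ is already all of $A$. Note that this is exactly the $b=0$ specialisation of your own identity $\lambda_a(b)\circ a=a+b+2(a*b)$; once one point has full orbit, transitivity follows, and there is no need to solve $\lambda_a(b)\circ a=c$ for arbitrary $b,c$, hence no need for the radical-ring computation, the vanishing of threefold $*$-products, or the inversion of $\Psi_b$. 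Your computations are all valid (commutativity of $(A,\circ)$ does give $a*b=b*a$, and $A^{(3)}=0$ does kill the correction terms), and as a small bonus your route to regularity via faithfulness works verbatim for infinite $X$, whereas the paper's appeal to $|\Aut(X,r)|=|X|$ is a cardinality argument best suited to the finite case; but for the statement at hand the one-line evaluation at $0$ is the intended shortcut.
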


\begin{proof}
Since $t_a(\lambda_a(0))=a$ for all $a\in A$, we have that $\Aut(X,r)$ is transitive and, by the previous theorem, $\Aut(X,r)$ is an abelian group with $|\Aut(X,r)|=|X|$.
\end{proof}

In \cite[Proposition 5.3]{JPZ2021}, necessary and sufficient conditions on the size of an indecomposable involutive solution $(X,r)$ with $\mpl(X,r)=2$ and cyclic permutation group were given to state when $\Aut(X,r)$ is cyclic. In the following, we give the exact structure of $\Aut(X,r)$ when $(X,r)$ has cyclic permutation group.

\begin{corollary}\label{mpl22}
 Let $(X,r)$ be an indecomposable involutive solution with cyclic permutation group having size $p^n$ for some prime number $p$. Moreover, suppose that $\mpl(X,r)=2$. Then, $\Aut(X,r)$ is isomorphic to the cyclic group of size $p^n$ if $(p,n)\neq (2,2)$ and is isomorphic to the Klein group otherwise.
\end{corollary}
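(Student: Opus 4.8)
The plan is to reduce to the two foregoing corollaries and then compute a cyclic group from the data of a brace whose multiplicative group is cyclic of prime-power order. First I would invoke \cref{regab2}: since $(X,r)$ is an indecomposable involutive solution with cyclic (hence abelian) permutation group and $\mpl(X,r)=2$, \cref{regab2} applies, so $\Aut(X,r)$ is a regular abelian group of order $|X|=p^n$. In particular, by \cref{isoaut}, $\Aut(X,r)\cong (A,+)$ where $A=\G(X,r)$ is the permutation brace, which here is a brace with cyclic multiplicative group $(A,\circ)\cong \Z/p^n$. So the whole problem becomes: \emph{given a brace $A$ with $(A,\circ)\cong \Z/p^n$ and $\mpl(A)=2$, determine $(A,+)$ up to isomorphism.} By \cref{cor: mpl indecomposable solution} together with \cref{cor: biskew}, such an $A$ is a $\lambda$-homomorphic bi-skew brace, and $\mpl(A)=2$ forces $A$ to be non-trivial.

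The key computation is to pin down $(A,+)$. Since $(A,\circ)$ is cyclic of order $p^n$, it has a single generator $g$ up to the usual ambiguity, and the brace is determined by the additive structure together with $\lambda$. Because $A$ is a bi-skew brace, $\lambda\colon (A,+)\to\Aut(A,+)$ is a homomorphism; because $A$ is also $\lambda$-homomorphic on $(A,+)$ and $\mpl(A)=2$ means $\Soc(A)=\ker\lambda$ has index exactly the order of the image of $\lambda$, which is a nontrivial cyclic quotient of the cyclic group $(A,\circ)$. Writing $A^2=A*A$, we have $A^2\subseteq\Soc(A)$ (since $\mpl(A)=2$ gives $A^{(3)}=0$, i.e. $A^2*A=0$, so $A^2\subseteq\Soc(A)$), and $A/A^2$ is a trivial brace, so $(A/A^2,+)\cong (A/A^2,\circ)$ is a cyclic quotient of $\Z/p^n$. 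Now $(A,\circ)$ being cyclic of order $p^n$ forces $(A,+)$ to be an abelian $p$-group of order $p^n$ that surjects onto the same cyclic quotient and whose automorphism group receives the image of $\lambda$ in a way compatible with $a\circ b=a+\lambda_a(b)$ holding for $a,b$ ranging over a cyclic $\circ$-generating set. Running through the abelian $p$-groups of order $p^n$ and checking which admit a compatible bi-skew $\lambda$-homomorphic structure with cyclic $(A,\circ)$ of full order, one finds: $(A,+)\cong\Z/p^n$ in all cases except the single sporadic coincidence at $(p,n)=(2,2)$, where $(A,+)\cong\Z/2\times\Z/2$ is forced instead (the point being that over $\Z/4$ there is no nontrivial bi-skew brace with cyclic multiplicative group $\Z/4$, while $\Triv(\Z/2\times\Z/2)$ does not have $\Z/4$ as $(A,\circ)$ either — so the realisable brace at size $4$ with cyclic multiplicative group and $\mpl=2$ has additive group the Klein four-group). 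Combined with \cref{isoaut}, this yields $\Aut(X,r)\cong\Z/p^n$ for $(p,n)\neq(2,2)$ and $\Aut(X,r)\cong\Z/2\times\Z/2$ otherwise.

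The main obstacle I anticipate is the finite-group-theoretic case analysis isolating exactly why $(p,n)=(2,2)$ is special: one must rule out, for every other prime power $p^n$, the possibility that the additive group is a noncyclic abelian $p$-group, and this hinges on the interaction between the bi-skew/$\lambda$-homomorphic condition (which makes $\lambda$ a homomorphism from \emph{both} group structures into $\Aut(A,+)$) and the requirement that $(A,\circ)$ be cyclic of the full order $p^n$. Concretely, one shows that if $(A,+)$ were noncyclic then the cyclic $\circ$-generator $g$ would satisfy an additive relation of order strictly less than $p^n$ after accounting for the correction terms $\lambda_g,\lambda_g^2,\dots$, contradicting $|g|_\circ=p^n$ — except in characteristic $2$ at $n=2$, where the correction terms collapse and the Klein group survives. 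I would likely cite the classification of one-generator braces of prime-power order (available via \cite{rump2022class} and \cref{prop: annihilator nilpotent }, since such an $A$ is annihilator nilpotent and one-generated with $(A/A^2,+)$ cyclic) to shortcut this enumeration, after first noting that $\G(X,r)$ here is one-generated as a skew brace by \cref{cor: transitive cycle base finite mpl is one generator}.
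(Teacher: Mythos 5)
Your reduction is exactly the paper's: apply \cref{isoaut} to identify $\Aut(X,r)$ with the additive group $(\G(X,r),+)$ of the permutation brace, after which the entire content is the fact that a brace of order $p^n$ with cyclic multiplicative group has cyclic additive group unless $p^n=4$, where the Klein group occurs. The paper closes that last step simply by citing \cite[Proposition 5.4]{bachiller2016solutions} together with the classification of braces of order $p^2$ in \cite{bachiller2015classification} --- which is precisely the citation route you fall back on at the end --- so your sketched by-hand enumeration of the possible additive groups, while not actually carried out, is not needed.
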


\begin{proof}
Since by \cref{isoaut} $\Aut(X,r)$ is isomorphic to $(\G(X,r),+)$, by \cite[Proposition 5.4]{bachiller2016solutions} it is cyclic if $(p,n)\neq (2,2)$. If $p=n=2$, the result follows by the classification of braces having size $p^2$ given in \cite{bachiller2015classification}.
\end{proof}

\begin{corollary}
 Let $(X,r)$ be an indecomposable involutive solution with abelian non-cyclic permutation group having size $p^n$ for some prime number $p$. Moreover, suppose that $\mpl(X,r)=2$. Then, $\Aut(X,r)$ is cyclic if and only if $(p,n)=(2,2)$. 
\end{corollary}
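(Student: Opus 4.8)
The plan is to translate the statement, via \cref{isoaut}, into a question about the permutation brace, and then into elementary arithmetic in a finite commutative nilpotent ring. Since $(X,r)$ is an indecomposable involutive solution, indecomposability here means that $(\G(X,r),\circ)$ acts transitively on $X$; being abelian by hypothesis, this action is regular, so $A:=\G(X,r)$ is a brace of order $p^n$ with $(A,\circ)$ abelian and non-cyclic. By \cref{cor: mpl indecomposable solution} we have $\mpl(A)=\mpl(X,r)=2$, and by \cref{isoaut}, $\Aut(X,r)\cong(A,+)$. Hence it suffices to prove: \emph{if $A$ is a brace of order $p^n$ with $(A,\circ)$ abelian non-cyclic and $\mpl(A)=2$, then $(A,+)$ is cyclic if and only if $(p,n)=(2,2)$.}

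Since $(A,\circ)$ is abelian, $A$ is two-sided, so $R:=(A,+,*)$ is a \emph{commutative} Jacobson radical ring, finite of order $p^n$; moreover $\mpl(A)\le 2$ gives $A^{(3)}=0$, i.e.\ $R^3=0$, and $\mpl(A)=2$ (i.e.\ $A$ is not trivial) gives $R^2\ne 0$. For the forward direction I would assume $(A,+)=\langle e\rangle\cong\Z/p^n$. Then $R^2=\langle e*e\rangle_+$; writing $e*e=ke$, the condition $R^3=0$ forces $p^n\mid k^2$ and $R^2\ne0$ forces $k\ne0$ (in particular $n\ge2$, since a brace of order $p$ is trivial). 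An easy induction gives $e^{\circ j}=m_j e$ with $m_j=\sum_{i=0}^{j-1}(1+k)^i$, which reduces modulo $p^n$ to $m_j\equiv j+\binom j2 k$ because $k^2\equiv0$. The key step is the valuation identity $v_p\bigl(j+\binom j2 k\bigr)=v_p(j)$ for all $j\ge1$: using $v_p(k)\ge\lceil n/2\rceil$ and the coprimality of $j$ and $j-1$, one checks that $v_p\bigl(\binom j2 k\bigr)>v_p(j)$ in every case \emph{except} $(p,n)=(2,2)$, where the division by $2$ in $\binom j2$ is not absorbed and instead $m_j\equiv j^2$. Consequently, for $(p,n)\ne(2,2)$ the order of $e$ in $(A,\circ)$ is exactly $p^n$, so $(A,\circ)=\langle e\rangle$ is cyclic, contradicting the hypothesis; hence $(p,n)=(2,2)$.

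For the converse, let $(p,n)=(2,2)$, so $|R|=4$. If $(A,+)\cong(\Z/2)^2$, then $R^2$ is an additive subgroup with $0\subsetneq R^2\subsetneq R$ (strict, as $R^3=0$), hence of order $2$; choosing $e\notin R^2$ one checks that all products in $R$ vanish except $e*e$, which must therefore generate $R^2$, so $e*e\ne0$. Then $e^{\circ2}=e+e+e*e=e*e\ne0$ (as $2e=0$) while $e^{\circ4}=(e*e)^{\circ2}=0$, so $e$ has order $4$ in $(A,\circ)$, making it cyclic — a contradiction. Hence $(A,+)\cong\Z/4$ is cyclic, and therefore so is $\Aut(X,r)$. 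The case is genuine: the brace on $\Z/4$ with $a*b=2ab$, equivalently $a\circ b=a+b+2ab$, has $(A,+)\cong\Z/4$, $(A,\circ)\cong(\Z/2)^2$ and $\mpl(A)=2$, and arises from a transitive cycle base via \cref{theorem: construction solution Bachiller}.

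The routine part is the two-sided-brace/radical-ring dictionary and the reduction via \cref{isoaut} and \cref{cor: mpl indecomposable solution}; the only real computation — and the exact point where $(p,n)=(2,2)$ enters — is the $p$-adic valuation of $j+\binom j2 k$. For $p=2$ this must be analysed separately for $j$ even and $j$ odd, and the bound $v_2(k)\ge\lceil n/2\rceil\ge2$, valid precisely once $n\ge3$, is what makes the identity $v_2(m_j)=v_2(j)$ hold; at $n=2$ it degenerates to $v_2(m_j)=2v_2(j)$, which is the source of the exceptional Klein-four behaviour.
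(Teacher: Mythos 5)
Your proof is correct, and the reduction at the start (regularity of the abelian permutation group, \cref{cor: mpl indecomposable solution}, and $\Aut(X,r)\cong(\G(X,r),+)$ via \cref{isoaut}) is exactly the paper's. Where you diverge is in how the resulting brace-theoretic statement is settled. The paper disposes of it in two citations: for the forward direction it passes to the opposite brace $(\G(X,r),\circ,+)$ (legitimate since $\mpl=2$ makes $\G(X,r)$ a bi-skew brace) and invokes Bachiller's result that a brace of order $p^n$ with cyclic multiplicative group has cyclic additive group except when $p^n=4$; for the converse it cites the classification of braces of order $p^2$. You instead reprove the relevant special case from scratch inside the commutative nilpotent ring $(A,+,*)$: with $(A,+)=\langle e\rangle$, $e*e=ke$, $R^3=0$, you get $e^{\circ j}=\bigl(j+\tbinom{j}{2}k\bigr)e$ and the valuation analysis $v_p\bigl(j+\tbinom{j}{2}k\bigr)=v_p(j)$ (which I checked in all cases, including the split into $j$ even/odd for $p=2$, $n\ge3$) shows $(A,\circ)$ is cyclic unless $(p,n)=(2,2)$; the converse is a finite check on the two possible additive groups of order $4$, plus an explicit witness $a\circ b=a+b+2ab$ on $\Z/4$. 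What the paper's route buys is brevity and a statement valid for arbitrary multipermutation level; what yours buys is self-containedness (no appeal to \cite{bachiller2016solutions,bachiller2015classification}), an explicit formula for the $\circ$-powers that pinpoints precisely why $(2,2)$ is exceptional ($m_j\equiv j^2$ rather than $v_p(m_j)=v_p(j)$), and a concrete realization of the exceptional solution. Both directions of your argument are complete as written.
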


\begin{proof}
If $\Aut(X,r)$ is cyclic, by \cref{isoaut} the additive group of $\G(X,r)$ is cyclic and since $\G(X,r)$ is a bi-skew brace, we have that $(\G(X,r),\circ,+) $ is a brace, hence by \cite[Proposition 5.4]{bachiller2016solutions} we must have $p=n=2$. The converse follows by the classification of left braces having size $p^2$ given in \cite{bachiller2015classification}.
\end{proof}

 We close the section showing a limitation for the structure of $\Aut(X,r)$.

\begin{corollary}
 Let $(X,r)$ be an indecomposable involutive solution with abelian permutation group. Moreover, suppose that $\mpl(X,r)=2$. Then, $\Aut(X,r)$ can be generated by at most two elements.
\end{corollary}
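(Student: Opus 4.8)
The plan is to leverage \cref{isoaut}, which identifies $\Aut(X,r)$ with the additive group $(A,+)$ of the permutation brace $A=\G(X,r)$, where $A$ is a brace (so $(A,+)$ is abelian) with abelian multiplicative group and $\mpl(A)=2$ (by \cref{cor: mpl indecomposable solution}). So it suffices to show that such an additive group $(A,+)$ can be generated by at most two elements. First I would invoke \cref{remm}: since $A$ has abelian multiplicative group, it decomposes as a direct product of braces of prime-power order, and since the property ``$(A,+)$ is generated by two elements'' is inherited from and assembled across the Sylow components only if each component is itself $2$-generated with the additional control of primes, I would actually argue directly on each $p$-component $A_p$. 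So reduce to the case $|A|=p^n$.

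Next, the key structural input is that $\mpl(A)=2$ forces a strong constraint on $(A,+)$. By \cref{cor: biskew} (or the \cref{prop} characterising $\mpl\le 2$), $A$ is a $\lambda$-homomorphic bi-skew brace with $A^{(3)}=0$, i.e. $A^{(2)}=A^2$ is contained in $\Soc(A)$ and $A/A^2$ is a trivial brace. Now, $A$ is indecomposable as a solution gives that $A$ is one-generated as a skew brace (this is exactly the content used in \cref{lem65}: $x$ generates $A$ as a brace, via \cref{cor: transitive cycle base finite mpl is one generator}). For the trivial brace $A/A^2$, being generated by the image of a single element $\bar x$ \emph{as a brace} is the same as being generated by $\bar x$ as a group, so $(A/A^2,+)$ is cyclic. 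I would then use left nilpotency (which holds since $|A|=p^n$, by \cite[Proposition 4.4]{CSV19}) together with \cref{lem: A/A' cyclic left nilpotent}-type reasoning — or more directly the relation $a*(b+c)=a*b+b+a*c-b$ — to analyse $A^2$. The claim I want is that $(A^2,+)$ is also cyclic: indeed $A^2=A*A$ is additively generated by $\{a*x : a\in A\}$ since $x$ generates $A$ and $*$ is, modulo $A^{(3)}=0$, additive in the right variable; but $A^{(3)}=A^2*A=0$ means $A^2\subseteq\Soc(A)\subseteq\ker\lambda$, so for $a\in A$ one has $a*x=-a+a\circ x-x=\lambda_a(x)-x$, and as $a$ ranges over $A$ modulo $A^2$ (which acts trivially) these all lie in the cyclic subgroup generated by the ``$\lambda$-derivative'' — more carefully, $\{\lambda_a(x)-x:a\in A\}$ generates a subgroup that is a homomorphic image of the abelian group $A/A^2$, hence cyclic. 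Thus $(A^2,+)$ is cyclic.

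Finally I would assemble: $(A,+)$ is an abelian $p$-group sitting in an extension $0\to A^2\to A\to A/A^2\to 0$ with both ends cyclic $p$-groups. Any such abelian group is generated by two elements (its $p$-rank is at most $2$ since $\dim_{\F_p}(A/pA)\le \dim_{\F_p}(A^2/pA^2)+\dim_{\F_p}((A/A^2)/p(A/A^2))\le 1+1=2$). Hence each Sylow component $A_p$ is $2$-generated additively, and therefore so is $A=\prod_p A_p$ — here I must be slightly careful, since a direct product of $2$-generated $p$-groups over distinct primes is again $2$-generated as an abelian group (the $p$-rank bound is uniform in $p$). Therefore $\Aut(X,r)\cong(A,+)$ is generated by at most two elements.

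The main obstacle I anticipate is the claim that $(A^2,+)$ is cyclic: making precise that $A^2$ is additively generated by the single ``$\lambda$-orbit'' $\{\lambda_a(x)-x : a\in A\}$ requires using both that $x$ generates $A$ as a brace and that $A^{(3)}=0$ makes $*$ behave linearly enough; one must check that no additional generators of $A^2$ are produced by the additive or multiplicative combinations of $x$ beyond this orbit. If instead this turns out to be awkward, a cleaner route is to bypass $A^2$ entirely: bound the $p$-rank of $(A,+)$ directly by noting $(A/A^2,+)$ cyclic and that $(A^2,+)$, being a homomorphic image of $A\otimes_{\Z}(A/A^2)$-type object built from the one-generated structure, has $p$-rank $\le 1$; either way the two-generation conclusion follows from the rank-$2$ bound.
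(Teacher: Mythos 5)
Your proof is correct, but it takes a genuinely different route from the paper. The paper's argument is short and indirect: since $\G(X,r)$ is a bi-skew brace of multipermutation level $2$ (\cref{cor: biskew}), one passes to the opposite brace $(\G(X,r),\circ,+)$, observes that a transitive cycle base of $(\G(X,r),+,\circ)$ is also one of $(\G(X,r),\circ,+)$, so the latter is again the permutation brace of an indecomposable solution of multipermutation level $2$, and then invokes \cite[Proposition 4.1]{JPZ2021} to conclude that its multiplicative group $(\G(X,r),+)$ is $2$-generated, finishing with \cref{isoaut}. You instead argue directly on the additive group: $A/A^2$ is a trivial one-generated brace, hence cyclic, and $A^2$ is cyclic because the map $a\mapsto a*x=\lambda_a(x)-x$ is an additive homomorphism (here $A$ is two-sided, indeed a commutative nilpotent radical ring, and $A*A^2=A^2*A=A^{(3)}=0$) whose kernel contains $A^2$ and whose image is all of $A^2$; the extension of two cyclic groups then gives the bound. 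This is the correct mechanism, and it has the advantage of being self-contained (no appeal to \cite{JPZ2021}) and of working verbatim for infinite $X$; in fact your Sylow reduction via \cref{remm} is superfluous, since the cyclic-by-cyclic argument needs no finiteness. The only soft spot is the informal phrasing around the ``$\lambda$-orbit'': the precise statement you need, and which does hold, is that $A^2$ is a homomorphic image of the cyclic group $(A/A^2,+)$ under $a+A^2\mapsto a*x$. The paper's later \cref{corn} gives a related self-contained bound (the multiplicative group of the permutation brace of an indecomposable solution of level $n$ is $n$-generated), but for the additive group at level $2$ the paper relies on the op-trick, which your argument bypasses.
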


\begin{proof}
Let $(\G(X,r),+,\circ)$ be the permutation brace of $X$. Then, by \cref{cor: biskew} $(\G(X,r),+,\circ)$ is a bi-skew brace of multipermutation level $2$, moreover a transitive cycle base of $(\G(X,r),+,\circ)$ also is a transitive cycle base of $(\G(X,r),\circ,+)$, and this implies that $(\G(X,r),\circ,+)$ is the permutation brace of an indecomposable solutions having multipermutation level $2$. Therefore, by \cite[Proposition 4.1]{JPZ2021}, $(\G(X,r),+)$ can be generated by at most two elements, hence the thesis follows by \cref{isoaut}.
\end{proof}

\subsection{Indecomposable involutive solutions with cyclic permutation group}

In this section we consider the automorphisms group of indecomposable involutive solutions with cyclic permutation group. If the permutation brace of an indecomposable involutive solution $(X,r)$ is \emph{cyclic}, i.e. has cyclic additive group, then we can give a more explicit description of $\Aut(X,r)$.

\begin{proposition}
Let $A$ be a cyclic brace, $x\in A$ contained in a transitive cycle base and $(X,r):=(A,r_x)$. Then, its automorphisms group $\Aut(X,r)$ is equal to the set 
 $$\Aut(X,r)=\{t_a\circ \lambda_a \mid a\in A,\lambda_a\in Aut(A,\circ)  \}.$$
\end{proposition}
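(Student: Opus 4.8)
The plan is to start from the general description of $\Aut(X,r)$ in \cref{propo2.9}, which already tells us that every automorphism of $(X,r)$ is of the form $t_z\circ\psi$ with $\psi\in\Aut(A,+,\circ)$ and $\psi(x)=\lambda_z(x)$. The goal is to show that for a cyclic brace $A$ the only relevant automorphisms $\psi$ are precisely those $\lambda_a$ which happen to be multiplicative automorphisms, and that any such $\lambda_a$ does indeed occur (with $z=a$). So the proof splits into the inclusion ``$\subseteq$'' and the inclusion ``$\supseteq$''.

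For the inclusion ``$\supseteq$'', I would argue that if $a\in A$ is such that $\lambda_a\in\Aut(A,\circ)$, then $\lambda_a$ is automatically a skew brace automorphism: it is always an automorphism of $(A,+)$ by definition of the $\lambda$-map, so the extra hypothesis $\lambda_a\in\Aut(A,\circ)$ makes it an element of $\Aut(A,+,\circ)$. Since $\lambda_a(x)=\lambda_a(x)$ trivially, taking $z=a$ shows $t_a\circ\lambda_a\in\Aut(X,r)$ by \cref{propo2.9}. The main content is therefore the inclusion ``$\subseteq$'': given $\psi\in\Aut(A,+,\circ)$ and $z\in A$ with $\psi(x)=\lambda_z(x)$, I must show $t_z\circ\psi = t_a\circ\lambda_a$ for some $a$ with $\lambda_a\in\Aut(A,\circ)$.

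The key observation here is that $(A,+)$ is cyclic, so $\Aut(A,+)$ is abelian and every subgroup of $(A,+)$ is characteristic; more importantly, since $A$ is a cyclic brace it is one-generated as a skew brace (indeed any additive generator generates it), and in fact $x$ itself generates a strong left ideal equal to $A$ because $x$ lies in a transitive cycle base, so by the general principle used throughout (an automorphism fixing the action on a generator is determined) $\psi$ is determined by $\psi(x)=\lambda_z(x)$. The crucial step is to upgrade this to $\psi=\lambda_z$ as maps on all of $A$: since $x$ generates $A$ as a strong left ideal, $A=L(\{x\})=\langle\theta_{(a,b)}(x)\mid a,b\in A\rangle_+$, and both $\psi$ and $\lambda_z$ are skew brace automorphisms, hence commute appropriately with $\theta$; from $\psi(x)=\lambda_z(x)$ one deduces $\psi(\theta_{(a,b)}(x))=\theta_{(\psi(a),\psi(b))}(\psi(x))$ and similarly for $\lambda_z$, and a short argument comparing these (using that $\psi$ is a bijection and $\lambda$ is a homomorphism $(A,\circ)\to\Aut(A,+)$) yields $\psi=\lambda_z$ on the additive generating set, hence everywhere. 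Then $t_z\circ\psi=t_z\circ\lambda_z$ with $a:=z$, and since $\psi=\lambda_z\in\Aut(A,+,\circ)$ we in particular get $\lambda_z\in\Aut(A,\circ)$, as required.

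I expect the main obstacle to be making the step ``$\psi(x)=\lambda_z(x)\implies\psi=\lambda_z$'' fully rigorous: one needs to be careful that the equivariance of $\psi$ with respect to $\theta$ genuinely forces agreement on the whole orbit of $x$, and then invoke that the orbit generates $(A,+)$. This is really the observation underlying \cref{cor: element in transitive cycle base generates skew brace} together with the fact that an additive generating set on which two additive homomorphisms agree forces them to be equal. Once that is in place the rest is bookkeeping with \cref{propo2.9}, and the cyclicity of $(A,+)$ is what guarantees (via \cref{cor: transitive cycle base finite mpl is one generator} or directly) that the transitive cycle base condition on $x$ indeed makes $x$ a skew brace generator, closing the argument.
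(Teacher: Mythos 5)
Your ``$\supseteq$'' inclusion is fine, but the key step of ``$\subseteq$'' --- upgrading $\psi(x)=\lambda_z(x)$ to $\psi=\lambda_z$ --- is not correctly justified and, as written, is circular. You assert in the middle of the argument that both $\psi$ and $\lambda_z$ are skew brace automorphisms, but $\lambda_z\in\Aut(A,\circ)$ is precisely (part of) what has to be proved: for a general cyclic brace $\lambda_z$ is only an automorphism of $(A,+)$, and indeed the statement only admits $t_a\circ\lambda_a$ for those $a$ with $\lambda_a\in\Aut(A,\circ)$. Moreover, even granting that both maps were skew brace automorphisms, the equivariance identities you invoke compare $\psi(\theta_{(a,b)}(x))=\theta_{(\psi(a),\psi(b))}(\psi(x))$ with $\lambda_z(\theta_{(a,b)}(x))=\theta_{(\lambda_z(a),\lambda_z(b))}(\lambda_z(x))$; to conclude that these coincide you would already need $\psi(a)=\lambda_z(a)$ and $\psi(b)=\lambda_z(b)$, which is the statement being proved. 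The fallback you mention (that $x$ generates $A$ as a skew brace, via \cref{cor: transitive cycle base finite mpl is one generator}) does not help either: that corollary requires a multipermutation hypothesis which the proposition does not impose, and in any case two maps that are merely additive homomorphisms are not determined by their values on a skew-brace generating set.

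The correct (and much shorter) route, which is the one the paper takes, exploits cyclicity of $(A,+)$ directly: the subgroup $\langle x\rangle_+$ equals $mA$ for some $m$, hence is characteristic in $(A,+)$ and therefore stable under every $\theta_{(a,b)}$; so it contains the whole transitive cycle base $Z$ containing $x$, giving $A=\langle Z\rangle_+\subseteq mA$ and thus $m=1$. In other words, $x$ is an \emph{additive} generator of the cyclic group $(A,+)$. Since $\psi$ and $\lambda_z$ are both automorphisms of $(A,+)$ (the latter always, by the definition of the $\lambda$-map) and agree on this single additive generator, $\psi=\lambda_z$ follows at once, and then $\lambda_z=\psi\in\Aut(A,\circ)$ a posteriori. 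You had the right ingredient in your passing remark that every subgroup of a cyclic group is characteristic, but you did not use it; this, rather than generation as a strong left ideal or as a skew brace, is what closes the argument.
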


\begin{proof}
Under our hypothesis, we have that $x$ is a generator of $(A,+)$: indeed, if the additive subgroup generated by $x$ is equal to $(mA,+)$, for some $m\in \mathbb{N}$, and $Z$ is the transitive cycle base containing $x$, we obtain that $A=\langle Z\rangle_+\subseteq mA$, hence necessarily $m=1$.
Therefore, if $\psi\in Aut(A,+,\circ)$ and $a\in A$ are such that $\lambda_a(x)=\psi(x)$, then $\psi=\lambda_a$ and hence the statement follows by \cref{propo2.9}.
\end{proof}
Before giving the following results we recall that, if $(A,+,\circ)$ is a brace having prime-power size $p^n$, with cyclic additive group (which we indentify with $\mathbb{Z}/p^n$) and abelian multiplicative group, then by \cite{rump2007braces} there exists $k\in \mathbb{N}$ such that $a\circ b=a+b+p^kab$ for all $a,b\in A$.

In \cite[Section $5$]{JPZ2021} it was showed that an indecomposable involutive solution with $\mpl(X,r)=2$ and with abelian permutation group has abelian automorphisms group and the ones having both a cyclic automorphisms group and cyclic permutation group were determined. In the following, we extend the calculation of $\Aut(X,r)$ for indecomposable involutive solutions with cyclic permutation group and arbitrary multipermutation level.   

\begin{theorem}
Let $A$ be a brace having size $p^n$ in which both the additive and the multiplicative group are cyclic, $x\in A$ is contained in a transitive cycle base and $(X,r):=(A,r_x)$. Moreover, let $k\in \mathbb{N}$ be such that $a\circ b:=a+b+abp^k$ for all $a,b\in A $. Then, $\Aut(X,r)$ is isomorphic to $(A,+)$ if and only if $k=0$ or $k\geq \frac{n}{2} $ and it is isomorphic to $(p^{n-2k}A,+)$ otherwise. 
\end{theorem}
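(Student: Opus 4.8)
The plan is to compute $\Aut(X,r)$ explicitly using the description $\Aut(X,r)=\{t_a\circ \lambda_a\mid a\in A,\ \lambda_a\in\Aut(A,\circ)\}$ from the preceding proposition, and then to identify the resulting group. First I would work out the $\lambda$-map of the brace $a\circ b=a+b+abp^k$ on $(\Z/p^n,+)$: from $\lambda_a(b)=-a+a\circ b=b+abp^k=(1+ap^k)b$, so $\lambda_a$ is multiplication by $(1+ap^k)$ on $\Z/p^n$. Since $(A,\circ)$ is abelian here, the condition ``$\lambda_a\in\Aut(A,\circ)$'' is the condition that $\lambda_a$ is a brace automorphism, equivalently a bijection, which holds precisely when $1+ap^k$ is a unit in $\Z/p^n$; for $k\geq 1$ this is automatic, and for $k=0$ it requires $1+a\in(\Z/p^n)^\times$. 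The key structural point is that the map $a\mapsto t_a\circ\lambda_a$ need not be injective: as in the proof of \cref{cor: transitive cycle base finite mpl is one generator}, two such automorphisms agree iff they agree on the generator $x$, and $t_a(\lambda_a(x))=\lambda_a(x)\circ a=(1+ap^k)x+a+(1+ap^k)xap^k$. Since $x$ is a generator of $(\Z/p^n,+)$ I may as well take $x=1$, so $t_a\circ\lambda_a$ is determined by the element $a+(1+ap^k)(1+ap^k)\cdots$ — more cleanly, by $\lambda_a(x)\circ a$, an element of $A$.

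Next I would turn $\Aut(X,r)$ into a concrete subgroup of $A$. Define $\Psi:\Aut(X,r)\to A$ by $\Psi(t_a\circ\lambda_a)=\lambda_a(x)\circ a=(t_a\circ\lambda_a)(x)$, i.e. the image of $x$; since elements of $\Aut(X,r)$ are determined by their value on $x$ this is injective, and one checks it is a homomorphism when the target carries the operation $\circ$ (this is exactly the computation in the proof of \cref{isoaut}, using that $\lambda$ respects $\circ$ because $A$ is bi-skew — indeed right-nilpotent of class $2$ here, or directly from the explicit formula). So $\Aut(X,r)\cong(\im\Psi,\circ)$ as a subgroup of $(A,\circ)\cong\Z/p^n$; hence $\Aut(X,r)$ is cyclic and I only need its order, namely the number of admissible $a$ modulo the equivalence ``same value on $x$''. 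Working with $x=1$: $(t_a\circ\lambda_a)(1)=a\circ\lambda_a(1)=a\circ(1+ap^k)=a+(1+ap^k)+a(1+ap^k)p^k = 1 + a + ap^k + ap^k + a^2p^{2k}$, i.e. $1+a(1+2p^k)+a^2p^{2k}$. I would substitute $b=a(1+2p^k)+a^2p^{2k}$ and determine, as $a$ ranges over the admissible set, which residues $b\bmod p^n$ occur and with what multiplicity.

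The heart of the argument — and the place the case split $k=0$, $1\le k<n/2$, $k\ge n/2$ comes from — is this counting. When $k=0$: $b=3a+a^2$, and $a$ ranges over $\{a:1+a\in(\Z/p^n)^\times\}$; here $\lambda_a$ already acts faithfully enough that $t_a\circ\lambda_a$ is determined by $a$ and one gets all of $A$, so $\Aut(X,r)\cong(A,+)$ (this matches $k=0$). When $k\ge n/2$: $a^2p^{2k}\equiv 0$ and $2p^k\equiv 0$ or at least $a\,2p^k$ contributes nothing new past $p^n$ in a controlled way, so $b\equiv a$ and again $\Psi$ is surjective, giving $\Aut(X,r)\cong(A,+)\cong\Z/p^n$. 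In the intermediate range $1\le k<n/2$, the term $a^2p^{2k}$ is nonzero but the map $a\mapsto a(1+2p^k)+a^2p^{2k}$ collapses: $1+2p^k$ is a unit, so after renormalizing it suffices to understand $a\mapsto a+ u a^2 p^{2k}$ with $u$ a unit, and a standard $p$-adic/Hensel count shows its image is exactly $p^{2k}A$-saturated... more precisely the image consists of the residues in a subgroup of index $p^{\min(2k,n)}$ — one must check the image is literally $p^{n-2k}A$ shifted, equivalently that the induced subgroup of $(A,\circ)$ has order $p^{n-2k}$. I would verify this by a direct valuation argument: $a\mapsto a+ua^2p^{2k}$ is a bijection on $A$ composed with nothing, so actually it IS a bijection, which forces me to re-examine — the collapse must instead come from the constraint set on $a$ together with the fact that different $a$'s with $a\equiv a'\pmod{p^{n-k}}$ can give the same automorphism because $\lambda_a=\lambda_{a'}$ when $ap^k\equiv a'p^k$. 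That is the real mechanism: $\lambda_a$ depends only on $a\bmod p^{n-k}$, so the fibers of $a\mapsto t_a\circ\lambda_a$ have size controlled by $p^k$, and chasing this carefully against the $t_a$ part yields index $p^{2k}$ hence order $p^{n-2k}$ in the range $k<n/2$.

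The main obstacle I anticipate is precisely pinning down these fiber sizes: one must simultaneously track that $\lambda_a$ only sees $a\bmod p^{n-k}$ while $t_a$ sees $a\bmod p^n$, and show the combined map $t_a\circ\lambda_a$, evaluated on the generator $x$, has image a subgroup of $(A,\circ)$ of order exactly $p^{\max(n-2k,\,0)}$ when $k\ge 1$ (and all of $A$ when $k=0$), with the boundary case $2k=n$ matching the ``$\cong(A,+)$'' side of the dichotomy since then $p^{n-2k}A=A$. Once the order and cyclicity are established, the isomorphism $\Aut(X,r)\cong(p^{n-2k}A,+)$ (reading $p^{n-2k}A=A$ when $k=0$ or $2k\ge n$) follows because every subgroup of the cyclic group $(A,+)\cong\Z/p^n$ of a given order is unique. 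I would present the $k=0$ and $k\ge n/2$ cases first as warm-ups and then handle $1\le k<n/2$ with the fiber-counting lemma.
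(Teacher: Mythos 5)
There is a genuine gap, and it sits exactly at the point where the case split of the theorem has to come from. You read the condition $\lambda_a\in\Aut(A,\circ)$ in the preceding proposition as ``$\lambda_a$ is a bijection'' and conclude that for $k\geq 1$ it is automatic. But $\lambda_a$ is \emph{always} a bijection (it is an automorphism of $(A,+)$ by the brace axioms); the condition is that $\lambda_a$ is multiplicative with respect to $\circ$. Writing $\lambda_a(b)=(1+ap^k)b$, the identity $\lambda_a(b\circ c)=\lambda_a(b)\circ\lambda_a(c)$ for all $b,c$ reduces to $(1+ap^k)\,a\,p^{2k}\equiv 0\pmod{p^n}$, i.e.\ (for $k\geq 1$, where $1+ap^k$ is a unit) to $p^{n-2k}\mid a$ when $2k<n$ and to no condition when $2k\geq n$. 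This is the ``standard calculation'' the paper invokes, and it is the entire content of the theorem: the admissible $a$ form exactly $p^{n-2k}A$ in the intermediate range. Under your reading, $\Aut(X,r)$ would be all of $(A,+)$ for every $k\geq 1$, contradicting the statement being proved.

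Because of this misreading you are forced to invent a different mechanism for the collapse, namely non-injectivity of $a\mapsto t_a\circ\lambda_a$ and a fiber count. That mechanism does not exist: $(t_a\circ\lambda_a)(0)=\lambda_a(0)\circ a=a$, so the assignment $a\mapsto t_a\circ\lambda_a$ is injective and every fiber is a singleton. (You half-notice this when your valuation argument shows $a\mapsto a+ua^2p^{2k}$ is a bijection, but the subsequent ``re-examination'' lands on the wrong fix.) The paper's route is essentially the one you started on: $\Phi(t_a\circ\lambda_a)=a^-$ is an injective homomorphism into $(A,+)$ exactly as in \cref{isoaut}, and its image is $\{z\in A\mid \lambda_z\in\Aut(A,\circ)\}$, which the computation above identifies as $A$ when $k=0$ or $k\geq n/2$ and as $p^{n-2k}A$ otherwise. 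Your evaluation-at-$x$ map $\Psi$ could be made to work as well, but only after the multiplicativity condition on $\lambda_a$ is correctly identified; the order and cyclicity arguments you append afterwards are then unnecessary detours.
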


\begin{proof}
    Let $\Phi$ be the function from $\Aut(X,r)$ to $(A,+)$ given by $\Phi(t_a\circ \lambda_a):=a^- $ for all $a\in A$. As in \cref{isoaut}, one can show that $\Phi$ is a homomorphism. Moreover, $\Phi$ is injective. Now, by a standard calculation we have that if $z\in A$, then $\lambda_z\in Aut(A,\circ)$ if and only if $k=0$, $k\geq \frac{n}{2}$ or $p^{n-2k}$ divides $z$. 
    From this fact, it follows that the image of $\Phi$ is $A$ if and only if $k=0$ or $k\geq \frac{n}{2}$, otherwise $\Phi(A)$ is equal to $(p^{n-2k}A,+)$.
\end{proof}

Since by \cite[Proposition 5.4]{bachiller2016solutions}, except the case $p^n=4$, every brace of prime power size $p^n$ and with cyclic multiplicative group has cyclic additive group, the previous theorem, together with \cref{mpl22} (to cover the case $p^n=4$), allows to compute the isomorphism class of the automorphisms group of every indecomposable involutive solution with cyclic permutation group.

\begin{corollary}\label{abelcyc}
   Let $(X,r)$ be an indecomposable involutive solution with cyclic permutation group and size $p^n$, for a prime number $p$. Then, $\Aut(X,r)$ is cyclic of order $|X|$ if $(|X|,\mpl(X,r))\neq (4,2)$ and is isomorphic to the Klein group otherwise.
\end{corollary}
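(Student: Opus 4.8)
The plan is to transfer the whole question to the permutation brace of $(X,r)$, where the structure theorem just proved computes the automorphism group, and to isolate the single small exception $|X|=4$ by quoting \cref{mpl22}.

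First I would observe that an indecomposable involutive solution with cyclic permutation group necessarily has a \emph{regular} permutation group. Indeed, since $(X,r)$ is indecomposable the image of $X$ in $\G(X,r)$ is a transitive cycle base, so $(\G(X,r),\circ)$ acts transitively on $X$; and a transitive abelian permutation group acts regularly, because all its point stabilizers are conjugate, hence equal, hence equal to the (trivial, as the action is faithful) kernel of the action on $X$. Thus \cref{theorem: construction solution Bachiller} applies and we may assume $(X,r)=(A,r_x)$, where $A=\G(X,r)$ is a brace of order $p^n$ with cyclic multiplicative group and $x$ lies in a transitive cycle base of $A$; moreover $\mpl(X,r)=\mpl(A)$ by \cref{cor: mpl indecomposable solution}.

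Next I would split into two cases. If $p^n=4$, then $A$ has multipermutation level at most $2$ (every brace of order $p^2$ does). When $\mpl(X,r)=2$, \cref{mpl22} gives directly that $\Aut(X,r)$ is the Klein four-group, which is the claimed exceptional case; when $\mpl(X,r)=1$, the solution is the permutation solution $r(a,b)=(\sigma(b),\sigma^{-1}(a))$ attached to a $4$-cycle $\sigma$ of $X$, so $\Aut(X,r)$ is the centralizer of $\sigma$ in $\Perm(X)$, which is cyclic of order $4$. If instead $p^n\neq 4$, then \cite[Proposition~5.4]{bachiller2016solutions} upgrades ``cyclic multiplicative group'' to ``cyclic additive group'', so by \cite{rump2007braces} we may take $(A,+)=\Z/p^n$ and $a\circ b=a+b+p^kab$ for a unique $k$. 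The previous theorem then returns $\Aut(X,r)$ as a function of $k$, and it remains to rephrase the dividing line of that theorem in terms of the multipermutation level, using that $\Soc_j(A)=p^{\max(n-jk,0)}\Z/p^n$, so that $\mpl(A)$ is controlled by comparing $n$ with $k$, and to read off the statement.

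The only delicate point is this last translation: matching the conclusion of the previous theorem, phrased through the parameter $k$, against the statement of the corollary, phrased through $|X|$ and $\mpl(X,r)$, and checking that the pair for which $\Aut(X,r)$ fails to be cyclic of order $|X|$ is exactly $(|X|,\mpl(X,r))=(4,2)$. Everything else is a formal application of results already established.
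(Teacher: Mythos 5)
Your route is the same as the paper's: reduce to $(A,r_x)$ via \cref{theorem: construction solution Bachiller}, upgrade ``cyclic multiplicative group'' to ``cyclic additive group'' by \cite[Proposition 5.4]{bachiller2016solutions} when $|X|\neq 4$, invoke the immediately preceding theorem (the one computing $\Aut(A,r_x)$ in terms of the parameter $k$ with $a\circ b=a+b+p^kab$), and dispose of $(|X|,\mpl(X,r))=(4,2)$ by \cref{mpl22}. Two of your additions are genuinely useful and are glossed over in the paper: the observation that a cyclic (abelian) permutation group of an indecomposable solution acts regularly, so that \cref{theorem: construction solution Bachiller} really applies, and the separate treatment of $(|X|,\mpl(X,r))=(4,1)$, where the cited Proposition 5.4 is not applicable.

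The problem is precisely the step you defer as ``the only delicate point''. Carrying out the translation, one has $\Soc_j(A)=p^{\max(n-jk,0)}A$, hence $\mpl(A)=\lceil n/k\rceil$, and the dividing line $k\geq n/2$ of the preceding theorem is exactly the condition $\mpl(X,r)\leq 2$. Consequently, whenever $\mpl(X,r)\geq 3$, i.e.\ $1\leq k<n/2$, that theorem returns $\Aut(X,r)\cong (p^{n-2k}A,+)\cong \Z/p^{2k}$, which is cyclic but of order $p^{2k}<p^n=|X|$. So what can be read off is only that $\Aut(X,r)$ is cyclic, not that it has order $|X|$; your assertion that the unique failure of ``cyclic of order $|X|$'' is the pair $(4,2)$ is not what the preceding theorem yields. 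A concrete instance is $p=3$, $n=3$, $k=1$: here $(A,\circ)\cong\Z/27$ is cyclic, $\mpl(A)=3$, and a direct count using \cref{propo2.9} gives $|\Aut(X,r)|=9$, not $27$. In fairness, the paper's own one-line proof (``the thesis follows by the previous theorem'') skates over exactly the same point, so the tension is between the corollary as stated and the theorem it rests on; but as a proof of the statement as written, your argument does not close at the step you left unchecked.
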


\begin{proof}
If $(|X|,\mpl(X,r))\neq (4,2)$, by \cite[Proposition 5.4]{bachiller2016solutions} $(\G(X),+,\circ)$ is a brace in which both the additive and the multiplicative group are cyclic, hence the thesis follows by the previous theorem. If $(|X|,\mpl(X,r))= (4,2)$, then the thesis follows by \cref{mpl22}.
\end{proof}

 In \cite[Proposition 5.1]{JPZ2021} it was shown that, if $X$ is an indecomposable solution with an abelian permutation group and $\mpl(X,r)\leq 2$, then $\Aut(X,r)$ is an abelian group that acts transitively on $X$. We conclude the section stating that if in addition $X$ has cyclic permutation group, even with no restriction on the multipermutation level the abelianity of $\Aut(X,r)$ can be showed. 

\begin{corollary}
   Let $(X,r)$ be an indecomposable involutive solutions with cyclic permutation group. Then, $\Aut(X,r)$ is an abelian group; moreover, it acts transitively on $X$ if and only if $X$ has multipermutation level at most $2$.
\end{corollary}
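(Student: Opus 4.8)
The plan is to reduce everything to the explicit description of $\Aut(X,r)$ from the previous theorem, which computes $\Aut(X,r)$ for indecomposable involutive solutions with cyclic permutation brace as a subgroup of $(A,+)$ (with $A = \G(X,r)$). The two claims are: (i) $\Aut(X,r)$ is always abelian, and (ii) it acts transitively on $X$ precisely when $\mpl(X,r)\leq 2$. The starting point is to recall, via \cite[Proposition 5.4]{bachiller2016solutions} and \cref{mpl22}, that in all cases $\Aut(X,r)$ is either cyclic of order $|X|$ or the Klein four-group, so abelianity is immediate; this settles (i).

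For (ii), I would argue as follows. First note that $(X,r)=(A,r_x)$ with $A=\G(X,r)$ a brace with cyclic additive group (by \cite[Proposition 5.4]{bachiller2016solutions}, except the small case $p^n=4$, where one checks it directly), and the action of $\Aut(X,r)$ on $X=A$ sends $0$ to the orbit $\{t_a(\lambda_a(0)) = a \mid t_a\circ\lambda_a\in\Aut(X,r)\}$. Since $x$ generates $(A,+)$, as in the preceding propositions, every automorphism has the form $t_a\circ\lambda_a$ with the constraint $\lambda_a\in\Aut(A,\circ)$, so $\Aut(X,r)$ acts transitively on $X$ if and only if every $a\in A$ satisfies $\lambda_a\in\Aut(A,\circ)$, i.e. if and only if $A$ is a bi-skew brace. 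By \cref{cor: biskew} (or the proposition characterising $\mpl(A)\leq 2$ for braces), combined with \cref{cor: mpl indecomposable solution} giving $\mpl(X,r)=\mpl(\G(X,r))=\mpl(A)$, this happens exactly when $\mpl(X,r)\leq 2$. One should also dispose of the degenerate case $|X|=1$ (where $\mpl(X,r)\leq 2$ trivially and the action is transitive), and the case $p^n=4$, $\mpl(X,r)=2$, where transitivity follows from \cref{regab2} or \cref{mpl22}.

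The main obstacle I anticipate is making the equivalence ``transitive $\iff$ every $\lambda_a\in\Aut(A,\circ)$'' airtight: one direction is clear from $a = t_a(\lambda_a(0))$, but for the other direction one needs that transitivity forces, for \emph{every} $a\in A$, the existence of an automorphism moving $0$ to $a$, and then that this automorphism must be exactly $t_a\circ\lambda_a$ (using that $x$ generates $(A,+)$ so the automorphism is determined by its value on $x$, as in the cyclic-brace proposition). Once the form is pinned down, the constraint $\lambda_a\in\Aut(A,\circ)$ from \cref{propo2.9} forces $A$ to be bi-skew, and the rest is bookkeeping via the already-established dictionary between $\mpl(X,r)$, $\mpl(\G(X,r))$, and the bi-skew/$\lambda$-homomorphic condition. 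A minor subtlety is that the theorem preceding this corollary was stated for $A$ of prime-power size with both groups cyclic; for a general indecomposable involutive solution with cyclic permutation group one first uses \cref{remm} to reduce to prime-power size, and then \cite[Proposition 5.4]{bachiller2016solutions} to pass from cyclic multiplicative to cyclic additive group, handling $p^n=4$ separately.
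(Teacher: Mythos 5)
Your proof is correct, and for the transitivity statement it takes a genuinely different route from the paper. For abelianity you do essentially what the paper does: reduce to prime-power size via \cref{remm} and quote the computation of $\Aut(X,r)$ there; note that only the fact that $\Aut(X,r)$ is a subgroup of a cyclic group or the Klein group is needed, so the order claim you echo from \cref{abelcyc} is harmless here. For the second claim the paper's proof is the terse ``\cref{propo2.9} and \cref{regab2}'': \cref{regab2} gives transitivity when $\mpl(X,r)=2$, and the converse is meant to come from the bound $|\Aut(X,r)|\leq |X|$ of \cref{propo2.9} (so transitivity forces $|\Aut(X,r)|=|X|$) together with the explicit order formula for $\Aut(X,r)$ in the unlabelled theorem preceding \cref{abelcyc}, which shows this order equals $|X|$ only when $k\geq n/2$, i.e.\ when $\mpl\leq 2$. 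You instead observe that the orbit of $0$ under $\Aut(X,r)=\{t_a\circ\lambda_a\mid \lambda_a\in\Aut(A,\circ)\}$ is exactly $\{a\in A\mid \lambda_a\in\Aut(A,\circ)\}$, so transitivity is equivalent to every $\lambda_a$ being a multiplicative (hence brace) automorphism, which is the bi-skew condition, equivalent to $\mpl(A)\leq 2$ and hence to $\mpl(X,r)\leq 2$ by \cref{cor: mpl indecomposable solution}. This is cleaner: it bypasses the numerical $k$-computation entirely, makes the equivalence structural, and your care about pinning down the form of an automorphism moving $0$ to $a$ (using that $x$ generates $(A,+)$) is exactly the content already isolated in the proposition on cyclic braces. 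Your handling of the boundary cases ($p^n=4$, and the passage from cyclic multiplicative to cyclic additive group via \cite[Proposition 5.4]{bachiller2016solutions}) matches what the section already set up, so there is no gap.
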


\begin{proof}
The first part of the statement follows by \cref{remm} and \cref{abelcyc}; the second part follows by \cref{propo2.9} and \cref{regab2}.
\end{proof}

\section{Finite indecomposable involutive solutions with abelian permutation group}\label{section: finite indecomposable solutions}
In this section we study, and up to some extent classify and enumerate, indecomposable solutions with abelian permutation group. The starting point for this is \cref{prop: bijective correspondence pairs and solutions}, which is a slight variation of \cite[Corollary 2]{rump2022class}.
    Let $\AbBr$ be the category with as objects braces $A$ with an abelian multiplicative group, with a distinguished transitive cycle base $X$. We denote such an object by $(A,X)$. Morphisms $(A,X)\to (B,Y)$ are brace morphisms $f:A\to B$ such that $f(X)\subseteq Y$.
    Also, let $\AbSol$ be the category formed by indecomposable involutive solutions with abelian permutation group and homomorphisms of solutions. Note that for an object $(A,X)\in \AbBr$, $\Aut(A,X)$ coincides with the definition in the previous section.
\begin{proposition}\label{prop: bijective correspondence pairs and solutions}
    There exists a bijective correspondence between isomorphism classes of $\AbSol$ and isomorphism classes in $\AbBr$. Moreover, the size and multipermutation level of objects are preserved under this correspondence.

    Under this correspondence, an isomorphism class in $\AbBr$ with as representative $(A,X)$ is mapped to the isomorphism class of the solution $(A,r_x)$ for an arbitrary choice of $x\in X$. 
\end{proposition}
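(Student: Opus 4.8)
The plan is to establish the bijective correspondence by going through the structure group (equivalently, derived structure group) construction and the results of Bachiller recalled in the preliminaries, namely \cref{theorem: construction solution Bachiller} and \cref{theorem: isomorphism solution Bachiller}. First I would define the map in one direction: to an object $(A,X)\in\AbBr$, associate the solution $(A,r_x)$ for a chosen $x\in X$. By \cref{theorem: construction solution Bachiller}, $(A,r_x)$ is an indecomposable solution with regular permutation group and $\G(A,r_x)\cong A$; since $A$ has abelian multiplicative group, the permutation group is abelian, and since $A$ is a brace (abelian additive group), $(A,r_x)$ is involutive. Thus $(A,r_x)$ is an object of $\AbSol$. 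The independence of the isomorphism class from the choice of $x\in X$ follows immediately from \cref{theorem: isomorphism solution Bachiller}: given $x,y\in X$, transitivity of the cycle base gives $z\in A$ with $\lambda_z(y)=x$, and then $t_z\circ\id$ is an isomorphism $(A,r_x)\to(A,r_y)$.

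Next I would check this assignment is well-defined on isomorphism classes, i.e.\ that isomorphic objects in $\AbBr$ give isomorphic solutions. If $\psi\colon(A,X)\to(B,Y)$ is an isomorphism in $\AbBr$, then $\psi$ is a brace isomorphism with $\psi(X)=Y$; by functoriality of $r_{(-)}$, $\psi$ induces an isomorphism of solutions $(A,r_A)\to(B,r_B)$, and restricting to the cycle bases (using that for a union of orbits which is a cycle base the restricted solution has the same orbits, as recalled at the end of \cref{section: preliminaries}) yields an isomorphism $(A,r_x)\to(B,r_{\psi(x)})$, with $\psi(x)\in Y$; combining with the previous paragraph's independence of base point finishes this direction.

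For the reverse direction and bijectivity, I would use the permutation-skew-brace functor. Given an indecomposable involutive solution $(X,r)$ with abelian permutation group, set $A=\G(X,r)$, which is a brace (as $(X,r)$ is involutive) with abelian multiplicative group, and let $Y\subseteq A$ be the canonical image of $X$, which is a transitive cycle base of $A$ since $(X,r)$ is indecomposable. This gives an object $(A,Y)\in\AbBr$, and one checks it is well-defined on isomorphism classes by functoriality of $\G(-)$. For an involutive solution the permutation group coincides (up to the identification recalled in the preliminaries) with $\Ret(X,r)$ being taken to the quotient by $\ker\lambda$; crucially, for an involutive solution $(X,r)$ with abelian permutation group, I would argue that $(X,r)$ already has regular permutation group, so that \cref{theorem: construction solution Bachiller} applies to recover $(X,r)$ from $(A,Y)$ up to isomorphism. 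Indeed the action of $(\G(X,r),\circ)$ on $X$ is transitive by indecomposability, and since $(X,r)$ is involutive one has $\iota$ injective and the canonical image $Y$ of $X$ in $A$ is exactly an orbit; then \cref{theorem: construction solution Bachiller} gives that $(X,r)\cong(A,r_y)$ for $y\in Y$, which is precisely the solution attached to $(A,Y)$ by the forward map. Conversely, starting from $(A,X)\in\AbBr$ and forming $(A,r_x)$, \cref{theorem: construction solution Bachiller} gives $\G(A,r_x)\cong A$ as braces and this isomorphism carries the canonical cycle base of $\G(A,r_x)$ onto $X$, recovering $(A,X)$; here one must verify that the composite $\AbBr\to\AbSol\to\AbBr$ returns an isomorphic object, which amounts to the statement that the canonical cycle base of $\G(A,r_x)$ maps to $X$ under the isomorphism $\G(A,r_x)\cong A$, a point one reads off from the construction in \cref{theorem: construction solution Bachiller}. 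Finally, preservation of size is clear since $|A|=|X|$ in both directions (regularity), and preservation of multipermutation level follows from \cref{cor: mpl indecomposable solution}, $\mpl(X,r)=\mpl(\G(X,r))=\mpl(A)$, using that $(X,r)$, being an involutive solution with finite structure, is multipermutation exactly when $A$ is.

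The main obstacle I anticipate is the bookkeeping needed to show that the two constructions are mutually inverse \emph{on isomorphism classes} rather than merely giving a bijection on the nose: one has to be careful that the canonical transitive cycle base $Y\subseteq\G(X,r)$ obtained from $(X,r)$ is matched, under the isomorphism $\G(A,r_x)\cong A$ of \cref{theorem: construction solution Bachiller}, with the originally distinguished cycle base $X\subseteq A$, and not merely with \emph{some} transitive cycle base; and symmetrically that every choice of $x$ within a cycle base yields the same $\AbSol$-class, which is exactly where \cref{theorem: isomorphism solution Bachiller} is needed. Once these identifications are pinned down, the rest is formal.
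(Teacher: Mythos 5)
Your proposal is correct and follows essentially the same route as the paper's proof: both directions rest on \cref{theorem: construction solution Bachiller} and \cref{theorem: isomorphism solution Bachiller}, with \cref{cor: mpl indecomposable solution} (and \cref{theorem: mpl of permutation skew brace is mpl of solution}) handling preservation of the multipermutation level. One step of your write-up is imprecise: you describe the passage from a brace isomorphism $\psi\colon(A,X)\to(B,Y)$ to a solution isomorphism as ``restricting $(A,r_A)\to(B,r_B)$ to the cycle bases,'' but $(A,r_x)$ is a solution on the \emph{whole} set $A$ (with $\sigma_a(b)=\lambda_a(x)\circ b$), not the restriction of $(A,r_A)$ to $X$; the correct and immediate argument is the computation $\psi(\sigma_a(b))=\psi(\lambda_a(x))\circ\psi(b)=\lambda_{\psi(a)}(\psi(x))\circ\psi(b)$, i.e.\ the two-brace version of \cref{theorem: isomorphism solution Bachiller}, which is what the paper implicitly invokes. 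Likewise, the regularity of the permutation group of an indecomposable involutive solution with abelian permutation group should be pinned down by the standard fact that a faithful transitive action of an abelian group is regular; with these two points tightened, your argument matches the paper's.
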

\begin{proof}
    Recall from \cref{theorem: construction solution Bachiller} that every indecomposable involutive solution with abelian permutation group can be obtained as $(A,r_x)$ with $(A,X)\in \AbBr$ and $x\in X$. Moreover, it follows from \cref{theorem: isomorphism solution Bachiller} that the isomorphism class of $(A,r_x)$ does not depend on the choice of $x\in X$. Also by \cref{theorem: isomorphism solution Bachiller} we find that the solutions associated to $(A,X),(B,Y)\in \AbBr$ are isomorphic if and only if $(A,X)$ and $(B,Y)$ are isomorphic. This correspondence clearly preserves the size of objects and \cref{theorem: mpl of permutation skew brace is mpl of solution} and \cref{cor: mpl indecomposable solution} ensure that also the multipermutation level is preserved.
\end{proof}
Because of the following lemma we can restrict our study of finite objects $(A,X)\in \AbBr$ to those of prime power order.
\begin{lemma}\label{lem: product of prime power braces}
    Every object $(A,X)\in \AbBr$ of finite size $m$ is isomorphic to $\prod_{p|m} (A_p,X_p)$ where $A_p$ is the Sylow $p$-subgroup of $(A,+)$ and $X_p$ is the projection of $X$ onto $A_p$.
\end{lemma}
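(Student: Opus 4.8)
The plan is to reduce everything to Rump's decomposition recalled in \cref{remm}. Since $A$ is a finite brace with abelian multiplicative group, for each prime $p\mid m$ the Sylow $p$-subgroup $A_p$ of $(A,+)$ is a subbrace and $A=\bigoplus_{p\mid m}A_p$ as braces. I would fix the resulting brace isomorphism $\phi\colon A\to\prod_{p\mid m}A_p$, $a\mapsto(\pi_p(a))_p$, where $\pi_p\colon A\to A_p$ is the (surjective) brace homomorphism given by projection. The whole content of the lemma is then that $\phi$ carries the distinguished transitive cycle base $X$ onto $\prod_{p\mid m}X_p$, so that $\phi$ is an isomorphism $(A,X)\to\prod_{p\mid m}(A_p,X_p)$ in $\AbBr$.

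First I would record that the right-hand side is a genuine object of $\AbBr$: each $(A_p,\circ)$ is a subgroup of the abelian group $(A,\circ)$, hence abelian, and $X_p=\pi_p(X)$ is a transitive cycle base of $A_p$. The latter holds because a brace homomorphism commutes with $+$ and with all $\lambda$-maps, hence is equivariant for the actions $\theta$; a surjective equivariant map sends $\theta$-orbits onto $\theta$-orbits, and a surjection sends a generating set of $(A,+)$ onto a generating set of $(A_p,+)$. The product $\prod_{p\mid m}X_p$ will automatically be a transitive cycle base of $\prod_{p\mid m}A_p$ once the equality below is established, so it needs no separate check.

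The heart of the argument is the identity $\phi(X)=\prod_{p\mid m}X_p$. The inclusion ``$\subseteq$'' is immediate from $\pi_p(X)=X_p$. For ``$\supseteq$'', fix $x\in X$, so that $X$ is the single $\theta$-orbit of $x$ and, by equivariance and surjectivity of each $\pi_p$, $X_p=\{\theta_{(c,d)}(\pi_p(x))\mid c,d\in A_p\}$. Given a tuple $(y_p)_p\in\prod_{p\mid m}X_p$, pick $c_p,d_p\in A_p$ with $y_p=\theta_{(c_p,d_p)}(\pi_p(x))$, and then, using surjectivity of $\phi$, choose $c,d\in A$ with $\pi_p(c)=c_p$ and $\pi_p(d)=d_p$ for all $p$. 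Then $\theta_{(c,d)}(x)\in X$, and applying $\pi_p$ in each coordinate gives $\phi(\theta_{(c,d)}(x))=(y_p)_p$. This proves the identity; since $\phi$ is a brace isomorphism and $\phi(X)=\prod_{p\mid m}X_p$ (so $\phi^{-1}$ also respects cycle bases), $\phi$ is an isomorphism in $\AbBr$.

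The one step that goes beyond bookkeeping is this last point: a priori $X$ is a single orbit and looks ``diagonal'', so one must argue it splits as the full Cartesian product of the orbits $X_p$. What makes this work — and what I would be careful to justify — is that under $\phi$ the $\theta$-action on $A$ becomes the product of the $\theta$-actions on the $A_p$, so the group elements $(c,d)$ moving $x$ can be prescribed independently in each $p$-component; this is exactly the use of surjectivity of $\phi$. Finiteness of $m$ enters only through \cref{remm}, and the degenerate case $m=1$ (where $A=0$) is trivial.
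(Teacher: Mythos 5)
Your proof is correct and follows essentially the same route as the paper: the paper's proof is a two-line citation of Rump's Sylow decomposition of finite braces with abelian multiplicative group together with a proposition from \cite{rump2022class} handling the cycle bases, which is exactly the decomposition you invoke via \cref{remm}. The only difference is that you write out in full the orbit-splitting argument ($\phi(X)=\prod_p X_p$ via equivariance and surjectivity of the projections) that the paper delegates to the second citation, and that argument is sound.
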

\begin{proof}
    This follows directly from \cite[Proposition 3]{rump2007classification} and \cite[Proposition 3]{rump2022class}.
\end{proof}

Recall from \cref{theorem: finite mpl strong left implies skew} that multipermutation braces with an abelian multiplicative group that admit a transitive cycle base are necessarily one-generated as a brace. Under the correspondence between Jacobson radical rings and two-sided braces, being one-generated as a ring is generally not the same as being one-generated as a skew brace; the latter is a weaker notion as a subring of a Jacobson radical ring is a monoid but not necessarily a group for the operation $\circ$. But for a nil, so in particular a nilpotent, two-sided brace $A$ these two notions coincide as $a^-=\sum_{i=1}^\infty (-a)^i$ for any $a\in A$. 

Let $A$ be a one-generated multipermutation brace with abelian multiplicative group with generator $x\in A$. As $A$ is one-generated as a ring, every element $a\in A$ is of the form $\sum_{i=1}^na_ix^i$, for some $n\geq 0$ and $a_i\in \Z$, where $x^n$ is the $*$-product of $n$ occurrences of $x$. More generally every element $a\in A^k$ can be written as $\sum_{i=k}^na_ix^n$ for some $n\geq 0$ and $a_i\in \Z$, or equivalently $A^k=\{x^{k-1}*a\mid a\in A\}$ for $k>1$. As $*$-multiplication by $x$ is an endomorphism of $(A,+)$, we obtain the following result.
\begin{lemma}\label{lemma: chain}
    Let $A$ be a brace generated by $x\in A$ with abelian multiplicative group. Then we have a chain of surjective group homomorphisms
    \[(A/A^2,+)\to (A^2/A^3,+)\to (A^3/A^4,+)\to ...\]
    where for all $i\geq 1$, the map $A^k/A^{k+1}\to A^{k+1}/A^{k+2}$ is given by $a+A^{k+1}\mapsto x*a +A^{k+2}$.
\end{lemma}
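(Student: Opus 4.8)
The plan is to verify directly that each map $A^k/A^{k+1}\to A^{k+1}/A^{k+2}$, $a+A^{k+1}\mapsto x*a+A^{k+2}$, is a well-defined surjective group homomorphism, using only the two facts established just before the statement: that $*$-multiplication by $x$ on the left is an endomorphism of $(A,+)$ (because $A$ is a brace, so $x*(b+c)=x*b+x*c$ follows from \eqref{eq: left brace}), and that $A^{k+1}=\{x^{k}*a\mid a\in A\}$ for $k\geq 1$, equivalently $A^{k+1}=x*A^k$ since $A^{k+1}=A*A^k$ and $A$ is one-generated by $x$.

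First I would check well-definedness: if $a\equiv a'\pmod{A^{k+1}}$, then $a-a'\in A^{k+1}$, so $x*(a-a')=x*a-x*a'\in x*A^{k+1}=A^{k+2}$, whence $x*a\equiv x*a'\pmod{A^{k+2}}$. Also one must check the map lands in $A^{k+1}/A^{k+2}$, which is immediate since $x*a\in x*A^k=A^{k+1}$ for $a\in A^k$. Next, additivity is exactly the endomorphism property $x*(a+b)=x*a+x*b$ passed to the quotient. Finally, surjectivity: an arbitrary class in $A^{k+1}/A^{k+2}$ is represented by some $y\in A^{k+1}=x*A^k$, say $y=x*a$ with $a\in A^k$, and then $a+A^{k+1}$ maps to $y+A^{k+2}$. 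Putting the maps together for $k=1,2,3,\dots$ yields the displayed chain.

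There is really no serious obstacle here; the one point that deserves a sentence of care is the identification $A^{k+1}=x*A^k$. This uses that $A$ is one-generated as a brace by $x$: since $A$ is a multipermutation brace with abelian multiplicative group admitting a transitive cycle base, \cref{theorem: finite mpl strong left implies skew} (or \cref{cor: transitive cycle base finite mpl is one generator}) together with the discussion of one-generated nil two-sided braces preceding the lemma guarantees that $A$ is generated by $x$ both as a ring and as a brace, so every element of $A^k$ is of the form $x^{k-1}*a$ and hence $A^k=\{x^{k-1}*a\mid a\in A\}$; applying $A^{k+1}=A*A^k$ and distributivity of $*$ over $+$ on the left then gives $A^{k+1}=x*A^k$. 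Once this is in hand, the three verifications above are purely formal, and the proof is a couple of lines.

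I would therefore write the proof compactly: state that $*$-multiplication by $x$ restricts to a surjection $A^k\twoheadrightarrow A^{k+1}$ sending $A^{k+1}$ into $A^{k+2}$, hence induces a surjective homomorphism on quotients, and that stringing these together gives the asserted chain.
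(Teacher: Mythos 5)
Your proof is correct and follows essentially the same route as the paper, which states the lemma as an immediate consequence of the preceding observations that $A^{k}=\{x^{k-1}*a\mid a\in A\}$ (valid since a one-generated nil two-sided brace is one-generated as a ring) and that left $*$-multiplication by $x$ is an endomorphism of $(A,+)$. Your explicit verification of well-definedness, additivity and surjectivity just fills in the routine details the paper leaves implicit.
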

\begin{definition}
For a finite brace $A$ with abelian multiplicative group, we say that $A$ is of type $(m_1,...,m_n)$, for $m_1,...,m_n\geq 1$, if $|A^i/A^{i+1}|=m_i$ for $1\leq i<n$ and $A^n+1=0$. We define $\AbBr(m_1,...,m_n)$ as the full subcategory of $\AbBr$ consisting of objects $(A,X)$ with $A$ of type $(m_1,...,m_n)$.
\end{definition}
From \cref{lemma: chain} it follows directly that if a multipermutation brace $A$ is of type $(m_1,...,m_n)$, then necessarily $m_{i+1}|m_i$ for $0\leq i<n$. Also, it is well-known that finite Jacobson radical rings are nilpotent, so every finite object in $\AbBr$ is of some type. In particular, if $|A|=p^d$ for some prime $p$, then $A$ is of type $(p^{d_1},...,p^{d_n})$ for some $d_1\geq d_2\geq ...\geq d_n\geq 0$ such that $d=d_1+...+d_n$. 
\begin{definition}
    For a finite multipermutation solution $(X,r)$, we say that $(X,r)$ is of type $(m_1,...,m_n)$, for $m_1,...,m_n\geq 1$, if $|\Ret^{i-1}(X,r)|/|\Ret^{i}(X,r)|=m_i$ for $1\leq i< n$ and $|\Ret^n(X,r)|=1$. We define $\AbSol(m_1,...,m_n)$ as the full subcategory consisting of objects $(X,r)\in \AbSol$ such that $(X,r)$ is of type $(m_1,...,m_n)$.
\end{definition}
The following result generalises the observation of Rump that $|A^2||\Soc(A)|=|A|$ for a finite brace $A$ with a cyclic multiplicative group; this was remarked in the introduction of \cite{rump2022class} and follows from equation $(15)$ and Proposition 9 of \cite{rump2019classification}.
\begin{lemma}\label{lem: relation socle and powers}
    Let $A$ be a finite one-generated brace with abelian multiplicative group, then $|Soc_k(A)||A^{k+1}|=|A|$ or equivalently $|A^k|=|\Ret^{k-1}(A)|$ for all $k\geq 0$.
\end{lemma}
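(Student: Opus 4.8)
The plan is to prove $|\Soc_k(A)|\,|A^{k+1}|=|A|$ by induction on $k$. The base case $k=0$ is trivial since $\Soc_0(A)=0$ and $A^1=A$. For the inductive step, I would like to relate passing from $A$ to $\Ret(A)=A/\Soc(A)$ with passing from the left series of $A$ to that of $\Ret(A)$. The key claim is that $A^{k+1}/(A^{k+1}\cap\Soc(A))\cong \Ret(A)^{k+1}$ for all $k\geq 1$, equivalently that the image of $A^{k+1}$ under the quotient map $A\to\Ret(A)$ is $\Ret(A)^{k+1}$. That the image of $A^{k+1}$ is contained in $\Ret(A)^{k+1}$ is clear since quotient maps of skew braces respect $\ast$, and surjectivity follows because $\Ret(A)$ is generated by the image of $x$, so $\Ret(A)^{k+1}=\{\bar x^{k}\ast \bar a\mid \bar a\in\Ret(A)\}$ is exactly the image of $A^{k+1}=\{x^k\ast a\mid a\in A\}$.

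Granting this, I would argue as follows. Since $A$ is a finite one-generated multipermutation brace with abelian multiplicative group, so is $\Ret(A)$ (it is a quotient, hence still one-generated, abelian multiplicative, and finite; multipermutation is preserved by retraction). By the induction hypothesis applied to $\Ret(A)$ we get $|\Soc_{k-1}(\Ret(A))|\,|\Ret(A)^{k}|=|\Ret(A)|$ for $k\geq 1$. Now $\Soc_{k-1}(\Ret(A))=\Soc_{k-1}(A/\Soc(A))=\Soc_k(A)/\Soc(A)$ by the very definition of the socle series, so $|\Soc_{k-1}(\Ret(A))|=|\Soc_k(A)|/|\Soc(A)|$. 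Also $|\Ret(A)|=|A|/|\Soc(A)|$. Substituting and using the claim $|\Ret(A)^{k}|=|A^{k}|/|A^{k}\cap\Soc(A)|$, we obtain
\begin{equation*}
    \frac{|\Soc_k(A)|}{|\Soc(A)|}\cdot\frac{|A^{k}|}{|A^{k}\cap\Soc(A)|}=\frac{|A|}{|\Soc(A)|}.
\end{equation*}
To finish, I need $|A^{k}\cap\Soc(A)|\cdot|A^{k+1}|=|A^{k}|$, i.e.\ that $\ast$-multiplication by $x$ sends $A^{k}$ onto $A^{k+1}$ with kernel exactly $A^{k}\cap\Soc(A)$. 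Surjectivity is the description $A^{k+1}=\{x^{k}\ast a\mid a\in A\}=\{x\ast b\mid b\in A^{k}\}$. For the kernel: $x\ast b=-x+x\circ b-b$, and since $x$ generates $A$, an element $b$ is killed by $x\ast(-)$ precisely when it is killed by $a\ast(-)$ for all $a\in A$, which for a brace means $b\in\ker\lambda=\Soc(A)$; combined with $b\in A^{k}$ this gives kernel $A^{k}\cap\Soc(A)$. Plugging $|A^{k}|=|A^{k}\cap\Soc(A)|\,|A^{k+1}|$ into the displayed equation and cancelling yields $|\Soc_k(A)|\,|A^{k+1}|=|A|$, as desired. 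The equivalent formulation $|A^{k}|=|\Ret^{k-1}(A)|$ then follows from \cref{lemma: chain} together with \cref{prop: mpl indecomposable}-style bookkeeping, or more directly by noting that iterating the claim gives $|\Ret^{k-1}(A)|=|A^{k}|/|A^{k}\cap\Soc_{k-1}(A)|$ and that $A^{k}\cap\Soc_{k-1}(A)=0$ since $\Soc_{k-1}(A)$ is annihilated by $(k-1)$-fold $\ast$-multiplication while $A^k$ consists of $k$-fold $\ast$-products — more carefully, $|A^k|=|A|/|\Soc_{k-1}(A)|=|\Ret^{k-1}(A)|$ directly from the first identity.

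The main obstacle I anticipate is the kernel computation for $\ast$-multiplication by $x$ on $A^{k}$: one must be careful that ``$b$ is killed by $x\ast(-)$'' genuinely forces $b\in\Soc(A)$ and not merely $b\in\ker\lambda_x$, which uses crucially that $x$ generates $A$ as a brace (so that $\lambda_x$ generating the relevant image of $\lambda$ controls the whole socle). A clean way to package this is to observe that $A^{k}\cap\Soc(A)$ is precisely the kernel of the restriction to $A^k$ of the endomorphism $a\mapsto x\ast a$ of $(A,+)$, and then apply the first isomorphism theorem for abelian groups; this reduces everything to the generation statement, which is guaranteed by \cref{theorem: finite mpl strong left implies skew} and \cref{cor: transitive cycle base finite mpl is one generator} under the standing hypotheses.
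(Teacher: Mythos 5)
Your proof is correct and rests on the same engine as the paper's: $\ast$-multiplication by the generator $x$ is an additive surjection onto the next term of the left series whose kernel is the socle (precisely because $x$ generates $A$ and the associated radical ring is commutative), combined with the first isomorphism theorem. The paper packages this slightly more economically by showing in one induction that $a\mapsto x^k\ast a$ is a surjection $(A,+)\to(A^{k+1},+)$ with kernel exactly $\Soc_k(A)$, whereas you descend step by step through $\Ret(A)$ and track the intersections $A^k\cap\Soc(A)$; the mathematical content is the same.
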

\begin{proof}
We prove by induction that for all $k\geq 1$ and $a\in A$, $x^k*a=0$ if and only if $a\in \Soc_k(A)$. For $k=1$, note that $x^k*a=0$ if and only if $a*A=A*a=0$, the latter is equivalent to $a\in \Soc(A)$. Now assume that the statement holds for $k\geq 1$. Then $x^{k+1}*a=x^k*(x*a)=0$ if and only if $x*a\in \Soc_k(A)$. However, as $x+\Soc_k(A)$ generates $A/\Soc_k(A)$, the case $k=1$ yields that the latter is equivalent to $a+\Soc_k(A)\in \Soc(A/\Soc_k(A))$ hence $a\in \Soc_{k+1}(A)$. Now notice that $x^k*a=0$ if and only if $a$ is contained in the kernel of the surjective homomorphism $(A,+)\to (A^{k+1},+)$ given by multiplication by $x^k$. As $A^{k+1}=\{x^{k}*a\mid a\in A\}$, the statement now follows.
\end{proof}
\begin{proposition}
    Let $m_1,...,m_n\geq 1$. The bijective correspondence from \cref{prop: bijective correspondence pairs and solutions} restricts to a bijective correspondence between isomorphism classes of $\AbSol(m_1,...,m_n)$ and isomorphism classes of $\AbBr(m_1,...,m_n)$.
\end{proposition}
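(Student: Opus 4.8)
The plan is to reduce everything to \cref{prop: bijective correspondence pairs and solutions}, which already supplies the bijection between isomorphism classes of $\AbSol$ and of $\AbBr$ and which preserves size; what remains is to check that this bijection also preserves the finer invariant ``type $(m_1,\dots,m_n)$''. Under the correspondence a class in $\AbBr$ with representative $(A,X)$ is sent to the class of $(A,r_x)$ for any $x\in X$, and $\G(A,r_x)\cong A$ as braces by \cref{theorem: construction solution Bachiller}; so, writing $(X,r)=(A,r_x)$ and $A=\G(X,r)$, it suffices to show that $A$ is of type $(m_1,\dots,m_n)$ if and only if the solution $(X,r)$ is. Since every object of $\AbSol(m_1,\dots,m_n)$ and of $\AbBr(m_1,\dots,m_n)$ has size $m_1\cdots m_n<\infty$, we may work in the finite setting throughout: such a brace $A$ is then of some type (being a finite Jacobson radical ring), has abelian additive group and hence is multipermutation, and carries the transitive cycle base $X$, so it is one-generated by \cref{cor: transitive cycle base finite mpl is one generator}.

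The algebraic heart is \cref{lem: relation socle and powers}: for such a one-generated brace with abelian multiplicative group, $|A^k|=|\Ret^{k-1}(A)|$ for all $k\ge1$, whence $|A^i/A^{i+1}|=|\Ret^{i-1}(A)|/|\Ret^i(A)|$ for each $i$ and $A^{n+1}=0$ if and only if $|\Ret^n(A)|=1$. Comparing with the definition of the type of the solution $(X,r)$, the claim thus comes down to the single identity $|\Ret^j(X,r)|=|\Ret^j(A)|$ for all $j\ge0$. For this I would apply \cref{cor: optimized cor mpl} iteratively to obtain $\G(\Ret^j(X,r))\cong\Ret^j(\G(X,r))\cong\Ret^j(A)$ as braces; since retraction preserves indecomposability and involutivity, and $\Ret^j(A)$, being a quotient of $(A,\circ)$, has abelian multiplicative group, the solution $\Ret^j(X,r)$ again lies in $\AbSol$, and applying the size-preservation in \cref{prop: bijective correspondence pairs and solutions} to it gives $|\Ret^j(X,r)|=|\G(\Ret^j(X,r))|=|\Ret^j(A)|$. (Alternatively, one may observe directly that a solution in $\AbSol$ has a regular permutation group, the transitive action of the abelian group $(\G,\circ)$ on the underlying set being faithful, hence free, so that the solution and $\G$ have the same size.) Assembling the displayed equalities shows that type is preserved; running the same chain of identities starting from an arbitrary $(X,r)\in\AbSol$ with $A:=\G(X,r)$ gives the converse, so the correspondence restricts as claimed.

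The step I expect to require the most care is precisely this bookkeeping: verifying that $A=\G(X,r)$ is genuinely one-generated so that \cref{lem: relation socle and powers} is legitimately available; that each successive retraction stays inside $\AbSol$, respectively $\AbBr$; and that the index shift $|A^k|=|\Ret^{k-1}(A)|$ of \cref{lem: relation socle and powers} lines up exactly with the indexing conventions in the two ``type'' definitions, so that $|A^i/A^{i+1}|=m_i$ matches $|\Ret^{i-1}(X,r)|/|\Ret^i(X,r)|=m_i$ and $A^{n+1}=0$ matches $|\Ret^n(X,r)|=1$. Once these alignments are in place the argument is a short assembly of \cref{prop: bijective correspondence pairs and solutions}, \cref{cor: transitive cycle base finite mpl is one generator}, \cref{lem: relation socle and powers} and \cref{cor: optimized cor mpl}.
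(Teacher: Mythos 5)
Your argument is correct and follows essentially the same route as the paper, whose proof simply cites \cref{cor: optimized cor mpl} and \cref{lem: relation socle and powers}; you have spelled out the bookkeeping ($|\Ret^j(X,r)|=|\Ret^j(\G(X,r))|$ via iterated application of \cref{cor: optimized cor mpl} together with $|A^i/A^{i+1}|=|\Ret^{i-1}(A)|/|\Ret^i(A)|$) that the paper leaves implicit.
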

\begin{proof}
    This follows directly from \cref{cor: optimized cor mpl} and \cref{lem: relation socle and powers}.
\end{proof}
\begin{corollary}
    Let $m_1,...,m_n\geq 1$ and $(X,r)$ in $\AbSol(m_1,...,m_n)$, then $$|\Aut(X,r)|=m_1|\Aut(\G(X,r),X')|$$ with $X'$ the image of $X$ in $\G(X,r)$.
\end{corollary}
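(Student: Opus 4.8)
The plan is to produce a concrete model for $(X,r)$, read off $|\Aut(X,r)|$ from \cref{cor: normal subgroup aut(X r) isomorphic to Soc(A)}, and then identify the size of the socle that appears using \cref{lem: relation socle and powers}.

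First I would fix a model. Since $(X,r)$ is a finite indecomposable involutive solution, $\G(X,r)$ acts transitively on $X$, and as this group is abelian the action is regular; so $(X,r)$ has a regular permutation group. By \cref{theorem: construction solution Bachiller} we may therefore write $(X,r)=(A,r_x)$ with $A=\G(X,r)$ a brace with abelian multiplicative group and $x$ an element of the transitive cycle base $X'$ of $A$, where $X'$ is precisely the image of $X$ in $\G(X,r)$ under the canonical map (this is exactly the pairing underlying \cref{prop: bijective correspondence pairs and solutions}, since $\G(A,r_x)\cong A$ identifies the canonical cycle base of $(A,r_x)$ with $X'$). Since $(X,r)\in\AbSol(m_1,\dots,m_n)$, the corresponding object $(A,X')$ lies in $\AbBr(m_1,\dots,m_n)$ by the preceding proposition, so $A$ is a brace of type $(m_1,\dots,m_n)$; in particular $|A/A^2|=m_1$.

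Next I would apply \cref{cor: normal subgroup aut(X r) isomorphic to Soc(A)} with $A$, $X'$ and $x$ as above: it provides a normal subgroup of $\Aut(X,r)$ isomorphic to $(\Soc(A),\circ)$ whose quotient is isomorphic to $\Aut(A,X')=\Aut(\G(X,r),X')$, whence
\[|\Aut(X,r)|=|\Soc(A)|\cdot|\Aut(\G(X,r),X')|.\]
It then remains to show $|\Soc(A)|=m_1$. Here I would note that $(X,r)$, being of type $(m_1,\dots,m_n)$, has finite multipermutation level, so $A=\G(X,r)$ does too by \cref{cor: mpl indecomposable solution}; as $A$ is a finite multipermutation brace admitting a transitive cycle base, \cref{theorem: finite mpl strong left implies skew} together with \cref{cor: transitive cycle base finite mpl is one generator} shows that $A$ is one-generated, and its multiplicative group is abelian. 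Thus \cref{lem: relation socle and powers} applies and yields $|\Soc(A)|\,|A^2|=|A|$, i.e. $|\Soc(A)|=|A/A^2|=m_1$. Substituting into the displayed equality gives $|\Aut(X,r)|=m_1\,|\Aut(\G(X,r),X')|$.

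The only delicate points are bookkeeping ones: one must make sure that the transitive cycle base $X'$ appearing in \cref{cor: normal subgroup aut(X r) isomorphic to Soc(A)} really is the image of $X$ in $\G(X,r)$ named in the statement (handled by \cref{prop: bijective correspondence pairs and solutions}), and that $A$ is one-generated so that \cref{lem: relation socle and powers} is available. Beyond these I do not anticipate any genuine obstacle; the argument is an assembly of results already established.
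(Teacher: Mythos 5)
Your proof is correct and follows essentially the same route as the paper: it combines \cref{cor: normal subgroup aut(X r) isomorphic to Soc(A)} with \cref{lem: relation socle and powers} to identify $|\Soc(\G(X,r))|$ with $|A/A^2|=m_1$. The extra bookkeeping you supply (fixing the model $(A,r_x)$ and verifying that $A$ is one-generated so the socle lemma applies) is a sound, if more verbose, elaboration of the same argument.
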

\begin{proof}
    From \cref{cor: normal subgroup aut(X r) isomorphic to Soc(A)} we know that $\Aut(X,r)$ has a normal subgroup isomorphic to $\Soc(\G(X,r))$ such that the quotient is isomorphic to $\Aut(\G(X,r),X')$. As $m_1=|A/A^2|$, which is in turn equal to $|\Soc(A)|$ by \cref{lem: relation socle and powers}, the statement follows.
\end{proof}
Note that if we want to study finite objects of $\AbBr$, then as a direct consequence of \cref{lem: product of prime power braces} it suffices to study $\AbBr(p^{d_1},...,p^{d_n})$ for all primes $p$ and $d_1\geq ...\geq d_n\geq 0$. This is precisely the goal of the rest of this section. In the remainder of this section, we fix the notation that $n$ is a non-zero positive integer, $p$ a prime, $d_1,...,d_n$ are integers such that $d_1\geq ...\geq d_n\geq 0$ and $d=\sum_{i=1}^nd_i$.

\begin{example}
Consider the ring $\Z[x]/(x^{n+1})$. Let $F_n$ denote its subring generated by $x$. Then clearly $F_n$ is nilpotent, in particular $F_n^{n}\neq 0$ but $F_n^{n+1}=0$. It follows that $F_n$ is a one-generated brace with abelian multiplicative group. The orbit of $x$ is easily seen to be the coset $x+F_n^2$ and is a transitive cycle base of $F_n$. 
\end{example}

From now on, the object $(F_n,x+F_n^2)\in \AbBr$ will be denoted by $F_n^*$. If $I$ is an ideal of $F_n$, then the image of $x+F_n^2$ in $F_n/I$ is a transitive cycle base of $F_n/I$ and $F_n/I$ together with this cycle base will be denoted by $(F_n/I)^*$.

The following proposition is easily verified and shows that $F_n$ can be seen as the free one-generator brace of nilpotency class $n$ and with abelian multiplicative group.
\begin{proposition}\label{prop: free one-generator ring}
    Let $A$ be a one-generator two-sided brace with multipermutation level at most $n$ and $y\in A$ a generator, there is a unique surjective brace homomorphism $f:F_n\to A$ mapping $x$ to $y$.
\end{proposition}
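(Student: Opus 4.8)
The plan is to exhibit the map $f$ directly on the level of rings and then verify it respects all the structures in sight. Since $A$ is a one-generator two-sided brace with generator $y$ and multipermutation level at most $n$, the associated Jacobson radical ring $(A,+,*)$ is nilpotent of class at most $n$, i.e. $A^{n+1}=0$, and every element of $A$ is an integer polynomial expression in $y$ with no constant term (as explained in the paragraph preceding \cref{lemma: chain}). First I would define $f$ on $F_n=\Z[x]/(x^{n+1})$-restricted-to-the-subring-generated-by-$x$ by sending a polynomial $\sum_{i=1}^{n} a_i x^i$ to $\sum_{i=1}^{n} a_i y^i$, where the powers on the right are $*$-powers in $A$. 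This is visibly well-defined because $x^{n+1}=0$ in $F_n$ and $y^{n+1}=0$ in $A$ (as $A^{n+1}=0$), so the relation defining $F_n$ is respected; and it is visibly a ring homomorphism $(F_n,+,*)\to (A,+,*)$ since it is induced by the universal property of polynomial rings. Passing through the correspondence between nilpotent Jacobson radical rings and two-sided braces (recalled in \cref{section: preliminaries}, with $a\circ b = a+a*b+b$), a ring homomorphism is automatically a brace homomorphism, so $f\colon F_n\to A$ is a brace homomorphism with $f(x)=y$.

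Next I would check surjectivity: the image of $f$ is a subbrace of $A$ containing $y$, and since $y$ generates $A$ as a brace, the image is all of $A$. (One should note here that for a nilpotent two-sided brace ``one-generated as a brace'' and ``one-generated as a ring'' coincide, as the paragraph before \cref{lemma: chain} points out, because $a^-=\sum_{i\ge 1}(-a)^{*i}$ is again a polynomial in $a$; so the subring generated by $y$ already equals the subbrace generated by $y$.) For uniqueness, suppose $g\colon F_n\to A$ is another surjective brace homomorphism with $g(x)=y$. Then $g$ and $f$ agree on $x$, hence on all $*$-powers $x^i$ of $x$ (as both are ring homomorphisms for the $*$-operation, which is the multiplication of the associated radical ring), hence on all $\Z$-linear combinations $\sum a_i x^i$; since these exhaust $F_n$, we get $g=f$.

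The only genuinely delicate point — and the one I would spell out carefully — is the bookkeeping that makes ``$f$ is a ring homomorphism for $*$'' equivalent to ``$f$ is a brace homomorphism''. This is exactly the content of the equivalence of categories between two-sided braces and Jacobson radical rings recalled in the preliminaries: additivity is shared by both descriptions, and compatibility with $\circ$ follows from compatibility with $+$ and $*$ via $a\circ b=a+a*b+b$, while conversely compatibility with $*$ follows from that with $+$ and $\circ$ via $a*b=-a+a\circ b-b$. So once $f$ is checked to be additive and multiplicative for $*$ (which is immediate from the universal property of $\Z[x]$), nothing further is needed. I do not expect any real obstacle here; the statement is essentially a restatement of the universal property of the free commutative nilpotent $\Z$-algebra without unit, transported across the ring/brace dictionary, with the transitive-cycle-base data $(x+F_n^2)\mapsto (y+A^2)$ following automatically since $f(A^2_{F_n})\subseteq A^2$ and $f$ being surjective forces $f(x+F_n^2)=y+A^2$.
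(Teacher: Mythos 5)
Your proof is correct; the paper itself offers no argument (it declares the proposition ``easily verified''), and your route --- the universal property of the free nonunital nilpotent ring $x\Z[x]/(x^{n+1})$ transported across the radical-ring/two-sided-brace dictionary, with surjectivity from the coincidence of one-generation as a ring and as a brace in the nilpotent case --- is exactly the intended verification. The one step worth stating explicitly is that $\mpl(A)\le n$ forces $A^{n+1}=0$ for a two-sided brace (so that $y^{n+1}=0$ and the map factors through $F_n$); this is the standard equivalence between socle length and right-nilpotency index, which the paper records only for $n=2$.
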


\begin{corollary}\label{cor: pair isomorphic to quotient of F_n}
    Let $(A,Y)\in \AbBr$ with $A$ of multipermutation level at most $n$, then there exists a surjective homomorphism $f:F_n^*\to (A,Y)$. In particular, $(A,Y)$ is isomorphic to $(F_n/\ker f)^*$.
\end{corollary}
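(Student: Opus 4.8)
The plan is to combine \cref{prop: free one-generator ring} with the classification results collected earlier in the section. First I would recall that, by \cref{theorem: finite mpl strong left implies skew}, a multipermutation brace with abelian multiplicative group admitting a transitive cycle base is one-generated as a brace; indeed, picking any $y\in Y$, \cref{cor: transitive cycle base finite mpl is one generator} shows $y$ generates $A$ (it generates as a strong left ideal by \cref{cor: element in transitive cycle base generates skew brace}, hence as a brace). Moreover, since $A$ is finite with abelian multiplicative group it is a (nilpotent) two-sided brace of some finite multipermutation level, and by hypothesis this level is at most $n$.

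Next, I would invoke \cref{prop: free one-generator ring} with this generator $y$: there is a unique surjective brace homomorphism $f\colon F_n\to A$ with $f(x)=y$. It remains to check that $f$ is in fact a morphism in $\AbBr$, i.e.\ that it carries the distinguished cycle base $x+F_n^2$ into $Y$. Since $f$ is a surjective brace homomorphism sending $x\mapsto y$, it sends the orbit of $x$ in $F_n$ onto the orbit of $y$ in $A$ (homomorphisms of braces induce homomorphisms of the associated solutions, so they are equivariant for the action $\theta$). The orbit of $x$ in $F_n$ is exactly $x+F_n^2$ by the preceding example, and the orbit of $y$ in $A$ is $Y$ because $Y$ is a \emph{transitive} cycle base. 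Hence $f(x+F_n^2)=Y\subseteq Y$, so $f\colon F_n^*\to (A,Y)$ is a surjective morphism in $\AbBr$.

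Finally, the kernel $I=\ker f$ is an ideal of $F_n$ (equivalently of the associated Jacobson radical ring), and $f$ factors as an isomorphism $F_n/I\xrightarrow{\ \sim\ } A$ of braces carrying $x+I$ to $y$; by the same orbit argument this isomorphism sends the transitive cycle base $x+I+ (F_n/I)^2$ of $F_n/I$ onto $Y$. Therefore $(A,Y)\cong (F_n/\ker f)^*$ in $\AbBr$, which is the claim. The only mildly delicate point — and the one I would spell out carefully — is the equivariance used to track the cycle bases: one must note that a brace homomorphism intertwines the $\theta$-actions (this is implicit in the functoriality of $A\mapsto (A,r_A)$ recorded in the preliminaries), so that images and preimages of orbits behave well; everything else is a direct appeal to the quoted results.
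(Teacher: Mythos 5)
Your argument is correct and takes the only natural route: the paper itself gives no written proof for this corollary, treating it as immediate from \cref{prop: free one-generator ring}, and your write-up supplies exactly the implicit steps (one-generation of $A$ by any $y\in Y$ via \cref{cor: element in transitive cycle base generates skew brace} and \cref{theorem: finite mpl strong left implies skew}, plus $\theta$-equivariance of the surjection to identify the image of the orbit $x+F_n^2$ with the orbit $Y$). One correction: delete the phrase ``since $A$ is finite'' --- the corollary assumes no finiteness and is later applied to infinite braces in \cref{section: infinite solutions}; what your argument actually needs is that $A$ is two-sided (which already follows from the multiplicative group being abelian) and that the associated radical ring is nilpotent (which follows from $A$ being multipermutation, hence right nilpotent), neither of which requires $|A|<\infty$.
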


\begin{corollary}\label{corn}
The permutation group associated to an indecomposable involutive solution of multipermutation level $n$ is generated by at most $n$ elements.
\end{corollary}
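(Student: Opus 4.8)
The plan is to recognise the permutation group as a quotient of the group $(F_n,\circ)$ by means of \cref{cor: pair isomorphic to quotient of F_n}, and then to check that $(F_n,\circ)$ is itself generated by $n$ elements. Since $(X,r)$ is involutive with abelian permutation group, $\G(X,r)$ is a brace with abelian multiplicative group, so together with the image $X'$ of the canonical map $(X,r)\to\G(X,r)$ it is an object of $\AbBr$; here $X'$ is a transitive cycle base because $(X,r)$ is indecomposable. By \cref{cor: mpl indecomposable solution} the brace $\G(X,r)$ has multipermutation level $n$, and \cref{cor: pair isomorphic to quotient of F_n} then provides a surjective brace homomorphism $F_n\to\G(X,r)$. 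In particular the permutation group $(\G(X,r),\circ)$ is a homomorphic image of the group $(F_n,\circ)$, so it suffices to prove that $(F_n,\circ)$ can be generated by $n$ elements.

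For the latter I would work with the chain $F_n=F_n^1\supseteq F_n^2\supseteq\dots\supseteq F_n^{n+1}=0$, whose terms are ideals since $F_n$ is commutative. Let $y_k$ denote the $k$-fold $*$-product of the generator $x$ with itself, so that $y_k\in F_n^k$. By \cref{lemma: chain} the quotient $F_n^k/F_n^{k+1}$ is cyclic, generated by the image of $y_k$, and it is a trivial brace because $F_n^k*F_n^k\subseteq F_n^{2k}\subseteq F_n^{k+1}$, so $+$ and $\circ$ coincide on it. Given $0\neq a\in F_n$, pick the largest $k$ with $a\in F_n^k$ (it exists as $F_n^{n+1}=0$); writing the image of $a$ in $F_n^k/F_n^{k+1}$ as $\overline{y_k^{\circ m}}$ for a suitable $m\in\Z$ shows that $y_k^{\circ(-m)}\circ a\in F_n^{k+1}$. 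Iterating this, the strictly increasing indices being bounded by $n$, expresses $a$ as a $\circ$-product of $\circ$-powers of $y_1,\dots,y_n$; hence $(F_n,\circ)=\langle y_1,\dots,y_n\rangle_\circ$. Pushing these generators forward along $F_n\to\G(X,r)$ yields $n$ generators of the permutation group.

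The only genuinely non-formal step is showing that $(F_n,\circ)$ is $n$-generated, and even that is just the standard observation that a group carrying a finite subnormal series with cyclic quotients is generated by lifts of generators of those quotients, applied to the above chain; the points to verify are that the $F_n^k$ are ideals and that the successive quotients are cyclic trivial braces, both immediate from commutativity of $F_n$ and \cref{lemma: chain}. Everything else only assembles results already established earlier in the paper, so I do not expect any serious obstacle beyond this verification.
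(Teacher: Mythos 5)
Your proof is correct and follows essentially the same route as the paper: both deduce from \cref{cor: pair isomorphic to quotient of F_n} that $(\G(X,r),\circ)$ is a quotient of $(F_n,\circ)$, and both use the filtration $F_n\supseteq F_n^2\supseteq\dots\supseteq F_n^{n+1}=0$ with $(F_n^k/F_n^{k+1},\circ)\cong\Z$ to conclude that $(F_n,\circ)$ is generated by $n$ elements (the paper phrases this as $(F_n,\circ)$ being free abelian of rank $n$). You merely spell out the generation argument in more detail and, like the paper's own proof, you implicitly rely on the section's standing hypothesis that the permutation group is abelian.
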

\begin{proof}
    As $(F_n^k/F_n^{k+1},\circ)\cong\Z$ for all $1\leq k\leq n$ and $(F_n,\circ)$ is abelian, we find that $(F_n,\circ)$ is free abelian of rank $n$. The result then follows by \cref{cor: pair isomorphic to quotient of F_n}.
\end{proof}
\begin{remark}
For $n=2$ the previous corollary was proved in \cite{JPZ2021}.
\end{remark}

From \cref{cor: pair isomorphic to quotient of F_n} it follows that every finite object in $\AbBr(p^{d_1},...,p^{d_n})$ is isomorphic to $(F_n/I)^*$ for some ideal $I$. So if we want to study finite objects in $\AbBr(p^{d_1},...,p^{d_n})$, it suffices study those of the form $(F_n/I)^*$. This now yields two natural questions: can we determine all ideals $I$ of $F_n$ such that $(F_n/I)^*\in \AbBr(p^{d_1},...,p^{d_n})$ and can we determine when two such ideals give an isomorphic quotient? We start by providing an answer to the first question. 

We define group endomorphisms $s^+,s^-:\Z^n\to \Z^n$ by $s^+(v_1,...,v_n)=(0,v_1,...,v_{n-1})$ and $s^{-}(v_1,...,v_n)=(v_2,...,v_n,0)$.

\begin{definition}\label{def: M}
    Let $\M{(p^{d_1},...,p^{d_n})}$ be the set of all $n\times n$-matrices $M$ such that
\begin{enumerate}
    \item $M$ is upper triangular,
    \item $M$ contains only positive integer elements,
    \item Every diagonal element of $M$ is strictly greater than every other element in the same column,
    \item For every $k$ such that $1\leq k\leq n$, the image of the $k$-th row of $M$ under $s^+$ is contained in the subgroup of $\Z^n$ generated by $k+1$-th until $n$-th row,
    \item The diagonal of $M$ is $(p^{d_1},...,p^{d_n})$. 
\end{enumerate}
\end{definition}

\begin{proposition}\label{prop: matrix ideal}
    There exists a bijective correspondence between matrices in $\M{(p^{d_1},...,p^{d_n})}$ and ideals of $I$ in $F_n$ such that $F_n/I$ is of type $(p^{d_1},...,p^{d_n})$.
\end{proposition}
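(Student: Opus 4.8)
The plan is to describe explicitly, for an ideal $I$ of $F_n$ with $F_n/I$ of type $(p^{d_1},\dots,p^{d_n})$, a canonical generating set that is encoded precisely by the rows of a matrix in $\mathcal{M}(p^{d_1},\dots,p^{d_n})$, and conversely to check that every such matrix produces an ideal of the prescribed type. The starting observation is that $(F_n,+)$ is free abelian on the basis $x, x^2, \dots, x^n$ (writing $x^k$ for the $k$-fold $*$-product), so every element of $F_n$ is uniquely $\sum_{i=1}^n a_i x^i$ and we may identify $(F_n,+)$ with $\mathbb{Z}^n$ via $\sum a_i x^i \leftrightarrow (a_1,\dots,a_n)$. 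Under this identification, $*$-multiplication by $x$ on the left or right is the shift $s^+$ (send $x^i \mapsto x^{i+1}$, kill $x^n$), which is why $s^+$ appears in \cref{def: M}. An ideal $I$ of the two-sided brace $F_n$ is the same as an ideal of the nilpotent ring $(F_n,+,*)$, i.e.\ an additive subgroup closed under left and right $*$-multiplication by arbitrary elements; since $F_n$ is generated by $x$, this is equivalent to being an additive subgroup $I\subseteq\mathbb{Z}^n$ with $s^+(I)\subseteq I$.

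**First** I would set up the correspondence from matrices to ideals. Given $M\in\mathcal{M}(p^{d_1},\dots,p^{d_n})$, let $I_M$ be the additive subgroup of $\mathbb{Z}^n$ generated by the rows $r_1,\dots,r_n$ of $M$; condition (4) says exactly $s^+(I_M)\subseteq I_M$, so $I_M$ is an ideal. Because $M$ is upper triangular with diagonal $(p^{d_1},\dots,p^{d_n})$ and positive integer entries, the rows are in Hermite-like (row-echelon) form, so the quotient $\mathbb{Z}^n/I_M$ has a transversal given by vectors $(a_1,\dots,a_n)$ with $0\le a_k < p^{d_k}$, hence $|F_n/I_M| = p^{d_1+\dots+d_n} = p^d$. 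One then has to verify that $|F_n^k/F_n^{k+1}|$ computed in $F_n/I_M$ equals $p^{d_k}$; since $F_n^k/F_n^{k+1}$ in $F_n/I_M$ is the image of $\langle x^k\rangle$, and the $k$-th coordinate is "new" in row $r_k$ of $M$ with pivot $p^{d_k}$, this reduces to tracking which coordinates of $I_M$ are reached — and here condition (3) (the diagonal entry dominates the others in its column) is precisely what is needed to guarantee that the type is exactly $(p^{d_1},\dots,p^{d_n})$ rather than something with the same product but a different profile; combined with \cref{lemma: chain}, which forces $m_{i+1}\mid m_i$, one sees the profile is rigidly determined.

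**For the reverse direction**, starting from an ideal $I$ with $F_n/I$ of type $(p^{d_1},\dots,p^{d_n})$, I would produce the matrix by an echelon-reduction argument adapted to the $s^+$-stability. By \cref{lem: relation socle and powers} (applied to $A=F_n/I$), $|F_n^k/F_n^{k+1}| = p^{d_k}$ is the same as $|(F_n^k+I)/(F_n^{k+1}+I)|$, and $F_n^k+I$ corresponds to $I$ together with the coordinates $\ge k$; the filtration $I\subseteq I+\langle x^n\rangle\subseteq I+\langle x^{n-1},x^n\rangle\subseteq\cdots$ lets one choose, for each $k$ from $n$ down to $1$, a generator $r_k\in I$ whose $k$-th coordinate is the minimal positive value appearing among elements of $I$ supported on coordinates $\ge k$ — this minimal value is forced to be $p^{d_k}$ by the index count. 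Closing the generating set under $s^+$ and re-reducing keeps it $s^+$-stable, yielding condition (4); the minimality of pivots together with reducing the above-pivot entries modulo the corresponding pivots (possible by row operations that preserve $s^+$-stability because $s^+$ commutes with the echelon structure) yields condition (3); upper-triangularity and positivity are just the normal form. Uniqueness of this reduced form gives injectivity of $M\mapsto I_M$, and the construction gives surjectivity.

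**The main obstacle** I anticipate is condition (4): ordinary Hermite/Smith normal form does not interact cleanly with the requirement that the generating set be closed under $s^+$, so the echelon reduction must be carried out in a way that is compatible with $s^+$ at every step (e.g.\ always reducing from the bottom row upward, and after forming $s^+(r_k)$ re-expressing it in terms of $r_{k+1},\dots,r_n$ before touching the entries of $r_k$ itself). Verifying that one can simultaneously achieve (3) and (4) — i.e.\ that reducing above-diagonal entries modulo pivots does not break $s^+$-closure, and that $s^+$-closure does not force above-diagonal entries to exceed pivots — is the delicate bookkeeping; everything else (the index/order computations, the identification of $*$-multiplication with $s^+$, the ideal$\leftrightarrow$ring-ideal$\leftrightarrow$$s^+$-stable-subgroup dictionary) is routine given \cref{prop: free one-generator ring}, \cref{lemma: chain} and \cref{lem: relation socle and powers}.
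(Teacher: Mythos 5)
Your proposal follows essentially the same route as the paper: identify $(F_n,+)$ with $\Z^n$ via the basis $x,\dots,x^n$, so that ideals of $F_n$ are exactly the $s^+$-stable finite-index subgroups, parametrize finite-index subgroups of $\Z^n$ by matrices in Hermite-type normal form satisfying (1)--(3), and match condition (4) with $s^+$-stability and condition (5) with the type. Two points deserve correction, though neither is fatal. First, condition (3) plays no role in guaranteeing the type: the type of $F_n/I_M$ depends only on the subgroup $I_M$, and $(F_n/I_M)^i/(F_n/I_M)^{i+1}\cong \Z/m_{i,i}$ already follows from upper-triangularity, since $F_n^i$ consists of the vectors vanishing on the first $i-1$ coordinates; condition (3) is purely the normalization that makes the matrix representing a given subgroup unique, hence the map $M\mapsto I_M$ injective (you in fact say this correctly later, contradicting the earlier attribution). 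Second, the ``main obstacle'' you anticipate --- carrying out the echelon reduction compatibly with $s^+$ --- is a phantom: decouple the steps as the paper does, first establishing the bijection between matrices satisfying (1)--(3) with nonzero diagonal and finite-index subgroups by pure linear algebra, and only afterwards observing that such a subgroup is $s^+$-stable if and only if its unique normal-form matrix satisfies (4), because $s^+(f_k)$ vanishes on the first $k$ coordinates and an element of $I_M$ vanishing there must lie in $\langle f_{k+1},\dots,f_n\rangle_+$. No interleaving of row reduction with $s^+$-closure is needed, so the ``delicate bookkeeping'' you flag as unverified simply does not arise.
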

\begin{proof}
    Let $\M'$ be the class of $n\times n$-matrices satisfying $(1)$-$(3)$ of \cref{def: M} and such that their diagonal contains only non-zero elements. It is easily seen that every integer $n\times n$-matrix of rank $n$ is row equivalent to a unique matrix in $\M'$. It follows from basic techniques from linear algebra that there exists a bijective correspondence between matrices in $\M'$ and subgroups $I$ of $\Z^n$ of finite index, where to a matrix $M=(m_{i,j})\in \M'$ we associate the subgroup of $\Z^n$ generated by the rows of $M$. As the elements $x,x^2,...,x^n$ form a basis of $(F_n,+)$, we obtain a correspondence between matrices in $\M'$ and finite subgroups of $(F_n,+)$.

     We claim that $I_M$ is an ideal if and only if $M$ satisfies $(4)$. Let $f_1,...,f_n\in F_n$ be the generators of $I_M$ associated to the rows of $M$. In order for $I_M$ to be an ideal, we need that $x*f_k\in I_M$ for all $1\leq k\leq n$. As the first $k$ coordinates of $f_k$ are 0, we find that the latter is equivalent to $x*f_k\in \langle f_k+1,...,f_n\rangle_+$. Because $*$-multiplying $f_k$ by $x$ is the same as shifting its coordinates to the right, we find the wanted equivalence. 

    At last we prove that the lower series of $F_n/I$ is of type $(p^{d_1},...,p^{d_n})$ if and only if $M$ satisfies condition $(5)$. For this it suffices to note that $F_n^i$ are all elements which are 0 on the first $i-1$ coordinates, hence the additive group of $(F_n/I)^i/(F_n/I)^{i+1}$ is isomorphic to $\Z/m_{i,i}$, from which the last part of the statement follows.
\end{proof}
\begin{theorem}\label{theorem: every solution in D is of the form...}
    Let $M\in \M{(p^{d_1},...,p^{d_n})}$ and let $A=F_n/I_M$. Then $(A,r_{x+I_M})$ is a solution in $\AbSol{(p^{d_1},...,p^{d_n})}$ and every solution in $\AbSol{(p^{d_1},...,p^{d_n})}$ is isomorphic to such a solution.
\end{theorem}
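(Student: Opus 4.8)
The plan is to combine the structural results already established in the excerpt, so that the theorem becomes essentially a bookkeeping statement. Let $M\in \M(p^{d_1},\dots,p^{d_n})$ and put $A=F_n/I_M$ with distinguished transitive cycle base $Y=x+I_M$ (the image of the orbit $x+F_n^2$ of $x$ in $F_n$, which is a transitive cycle base of $A$ as remarked after \cref{prop: matrix ideal}). By \cref{theorem: construction solution Bachiller}, $(A,r_{x+I_M})$ is an indecomposable involutive solution with regular — in particular abelian — permutation group, since $(A,\circ)$ is abelian as a quotient of $(F_n,\circ)$. So $(A,r_{x+I_M})\in\AbSol$. To see that it lies in $\AbSol(p^{d_1},\dots,p^{d_n})$, I would invoke \cref{prop: matrix ideal}, which says precisely that condition $(5)$ in \cref{def: M} forces the lower series of $A$ to be of type $(p^{d_1},\dots,p^{d_n})$; then \cref{lem: relation socle and powers} (valid since $A$ is a finite one-generated brace with abelian multiplicative group, being a quotient of $F_n$) gives $|A^k|=|\Ret^{k-1}(A)|$ for all $k$, so the retraction series of $A$ — and hence, by \cref{cor: optimized cor mpl} together with \cref{prop: bijective correspondence pairs and solutions}, the retraction series of the solution $(A,r_{x+I_M})$ — has exactly the required sizes. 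Thus the solution is of type $(p^{d_1},\dots,p^{d_n})$.

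For the converse, take any $(X,r)\in\AbSol(p^{d_1},\dots,p^{d_n})$. By \cref{prop: bijective correspondence pairs and solutions} its isomorphism class corresponds to an object $(B,Z)\in\AbBr$ with $B$ a finite brace of prime-power size $p^d$ and of type $(p^{d_1},\dots,p^{d_n})$, and $(X,r)\cong (B,r_z)$ for any $z\in Z$. Since $B$ is a finite multipermutation brace with abelian multiplicative group admitting a transitive cycle base, \cref{theorem: finite mpl strong left implies skew} (applied via \cref{cor: transitive cycle base finite mpl is one generator}) shows $B$ is one-generated as a brace; as it is a nilpotent two-sided brace, being one-generated as a brace coincides with being one-generated as a ring, and its multipermutation level is at most $n$. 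By \cref{cor: pair isomorphic to quotient of F_n} there is a surjective homomorphism $F_n^*\to (B,Z)$, so $(B,Z)\cong (F_n/I)^*$ for the ideal $I=\ker(F_n\to B)$. Because $B=F_n/I$ is of type $(p^{d_1},\dots,p^{d_n})$, \cref{prop: matrix ideal} associates to $I$ a matrix $M\in\M(p^{d_1},\dots,p^{d_n})$ with $I=I_M$. Finally, again by \cref{prop: bijective correspondence pairs and solutions} (or directly by \cref{theorem: construction solution Bachiller,theorem: isomorphism solution Bachiller}), $(X,r)\cong (F_n/I_M, r_{x+I_M})$, which is of the desired form.

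The only point requiring a little care — and the one I expect to be the main (minor) obstacle — is the identification of ``one-generated as a brace'' with ``one-generated as a ring'' for the brace $B$, needed to legitimately apply \cref{cor: pair isomorphic to quotient of F_n}. This is exactly the issue flagged in the paragraph preceding \cref{lemma: chain}: for a nil (hence, here, nilpotent) two-sided brace one has $a^-=\sum_{i\geq 1}(-a)^i$, so the $\circ$-subgroup generated by an element coincides with the subring it generates; thus a ring generator of $B$ is a brace generator and conversely. Everything else is a direct chaining of the cited results, with no genuine computation, so the proof is short.
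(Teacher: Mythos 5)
Your proposal is correct and follows essentially the same route as the paper, whose proof is the one-line citation of \cref{cor: pair isomorphic to quotient of F_n} and \cref{prop: matrix ideal}; you have simply spelled out the chaining of these with \cref{prop: bijective correspondence pairs and solutions}, \cref{theorem: construction solution Bachiller} and \cref{lem: relation socle and powers} that the authors leave implicit. The point you flag about ``one-generated as a brace'' versus ``one-generated as a ring'' is indeed the only subtlety, and it is already resolved in the paragraph preceding \cref{lemma: chain} exactly as you describe.
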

\begin{proof}
    This follows by \cref{cor: pair isomorphic to quotient of F_n} and \cref{prop: matrix ideal}.
\end{proof}
\begin{remark}\label{rem: explicit form of solution}
    If we identify representatives of elements in $A$ with their coordinates with respect to the basis $x,...,x^n$ of $(F_n,+)$, then we find for $a=(a_1,...,a_n),b=(b_1,...,b_n)\in F_n$ that the $\sigma$-map of the solution $(A,r_{x+I_M})$ in \cref{theorem: every solution in D is of the form...} is given by 
    $\sigma_{a+I_M}(b+I_M)=(c_1(a,b),...,c_n(a,b))+I_M$ with $c_1(a,b)=b_1+1$ and 
    \begin{align*}
        c_i(a,b)=a_{i-1}+b_{i-1}+b_i+\sum_{\substack{1\leq k< i}}a_kb_{i-k}.
    \end{align*}
\end{remark}

\begin{proposition}\label{prop: size M}
    The set $\M{(p^{d_1},...,p^{d_n})}$ has size $p^{d-d_1}$.
\end{proposition}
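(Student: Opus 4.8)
The plan is to count the matrices of $\M{(p^{d_1},\dots,p^{d_n})}$ directly, prescribing their rows one at a time from the bottom row upward. Identify the $k$-th row of a matrix $M=(m_{i,j})$ with the vector $f_k=(0,\dots,0,m_{k,k},m_{k,k+1},\dots,m_{k,n})\in\Z^n$ having $m_{k,k}$ in position $k$; this is the identification of rows of $M$ with their spanning vectors (with respect to the basis $x,\dots,x^n$ of $(F_n,+)$) used in \cref{prop: matrix ideal}. Conditions (1), (2), (3) and (5) of \cref{def: M} say exactly that $M$ is upper triangular with diagonal $(p^{d_1},\dots,p^{d_n})$ and that $0\le m_{k,j}\le p^{d_j}-1$ whenever $k<j$, so each off-diagonal entry $m_{k,j}$ with $k<j$ ranges over a complete residue system modulo $p^{d_j}$. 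Condition (4), that $s^+(f_k)\in\langle f_{k+1},\dots,f_n\rangle$ for every $k$, is manifestly a condition involving only $f_k,f_{k+1},\dots,f_n$; hence one may build $M$ by choosing the rows in the order $f_n,f_{n-1},\dots,f_1$ and imposing condition (4) for each $k$ at the moment $f_k$ is chosen.

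The heart of the argument will be the following local count: if $f_{k+1},\dots,f_n$ have already been chosen so as to satisfy conditions (1)--(5) among themselves, then the number of admissible choices of $f_k$ equals $p^{d_{k+1}}$, independently of $f_{k+1},\dots,f_n$ (with the convention $d_{n+1}=0$, so for $k=n$ there is a unique choice). To see this, put $w:=s^+(f_k)$. Unwinding the definition of $s^+$ gives $w=(0,\dots,0,p^{d_k},m_{k,k+1},\dots,m_{k,n-1})$ with $p^{d_k}$ in position $k+1$; in particular the entry $m_{k,n}$ of $f_k$ does not occur in $w$ and is entirely free, contributing a factor $p^{d_n}$. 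Membership $w\in\langle f_{k+1},\dots,f_n\rangle$ is tested by reducing $w$ successively against the echelon vectors $f_{k+1},f_{k+2},\dots$, whose leading entries are $p^{d_{k+1}},p^{d_{k+2}},\dots$: the test in position $k+1$ is automatic because $p^{d_{k+1}}\mid p^{d_k}$ (as $d_k\ge d_{k+1}$), and for each $j$ with $k+2\le j\le n$, once positions $k+1,\dots,j-1$ have been cleared the test in position $j$ reduces to a single congruence
\[
m_{k,j-1}\equiv c_j\pmod{p^{d_j}},
\]
where $c_j$ depends only on $f_{k+1},\dots,f_{j-1}$ and on the already-fixed entries $m_{k,k+1},\dots,m_{k,j-2}$. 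Because $m_{k,j-1}$ runs over a complete residue system modulo $p^{d_{j-1}}$ and $d_j\le d_{j-1}$, exactly $p^{d_{j-1}-d_j}$ of its values satisfy this congruence, whatever $c_j$ is. Hence the number of admissible $f_k$ is the telescoping product
\[
p^{d_{k+1}-d_{k+2}}\cdot p^{d_{k+2}-d_{k+3}}\cdots p^{d_{n-1}-d_n}\cdot p^{d_n}=p^{d_{k+1}}.
\]

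Assembling the local counts, one chooses $f_n$ (one way), then $f_{n-1}$ ($p^{d_n}$ ways), then $f_{n-2}$ ($p^{d_{n-1}}$ ways), and so on, ending with $f_1$ ($p^{d_2}$ ways), each step's count being independent of the preceding choices; therefore
\[
|\M{(p^{d_1},\dots,p^{d_n})}|=\prod_{k=1}^{n}p^{d_{k+1}}=p^{d_2+d_3+\cdots+d_n}=p^{d-d_1}.
\]
The one genuinely delicate point will be the bookkeeping in the local count: one must verify that the echelon reduction of $w$ does indeed produce, at each position $j$, a single congruence constraining only the not-yet-fixed entry $m_{k,j-1}$ (and not the later entries), with a full residue system of solutions, so that the count $p^{d_{k+1}}$ is genuinely uniform in $f_{k+1},\dots,f_n$. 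The remainder is routine manipulation with the echelon form.
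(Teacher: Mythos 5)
Your argument is correct and is essentially the paper's proof: the paper also counts row by row, proving by induction on $n$ that there are exactly $p^{d_2}$ admissible first rows over each fixed lower-right $(n-1)\times(n-1)$ block, which is precisely your local count for $k=1$ iterated down the matrix. The only difference is how the local count is justified --- the paper computes it as the subgroup index $|H:\langle f_2,\dots,f_n\rangle_+|$ with $H=\langle s^{-}(f_3),\dots,s^{-}(f_n)\rangle_+ +\Z x^n$, while you obtain the same number $p^{d_{k+1}}$ by explicit echelon reduction and counting solutions of the resulting congruences; both are valid.
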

\begin{proof}
We will prove this by induction on $n$. If $n=1$ the statement is clear. Now let $n\geq 2$. Let $M'\in \M{(p^{d_2},...,p^{d_n})}$. We will count how many matrices $M\in \M{(p^{d_1},...,p^{d_n})}$ are such that if we remove the first row and column of $M$, we obtain $M'$. 

Let $f_i\in \Z^n$ be the $i$-th row of such $M$. Then the $f_2,...,f_n$ are completely determined by $M'$, so we need to compute the number of choices of $f_1$. Let us consider what the conditions in \cref{def: M} imply on $f_1$: condition $(1)$ does not impose restrictions on $f_1$, condition $(4)$ means that $s^{+}(f_1)\in \langle f_2,...,f_n\rangle_+$ and condition $(5)$ means that the first coordinate of $f_1$ is $p^{d_1}$. Conditions $(2)$ and $(3)$ mean that we can determine $f_1$ up to equivalence in $\Z^n/\langle f_2,...,f_n\rangle_+$, as the conditions then ensure a unique choice of representative. Note that considering $f_1$ up to this equivalence does not conflict in any way with the restrictions imposed by $(4)$ and $(5)$.

Clearly $H=\langle s^{-}(f_3),...,s^{-}(f_n)\rangle+\Z x^n$ is the largest subgroup of $\Z^n$ such that $s^+(H)\subseteq \langle f_3,...,f_n\rangle_+$ so in particular $\langle f_2,...,f_n\rangle_+\subseteq H$. Then the condition $s^+(f_1)\in \langle f_2,...,f_n\rangle_+$ is equivalent to $f_1\in p^{d_1-d_2}s^{-}(f_2)+H$. So the number of choices of $f_1$ is precisely the number of elements in the coset $p^{d_1-d_2}s^-(f_2)+H$ modulo $\langle f_2,...,f_n\rangle_+$, but this is precisely the index $|H:\langle f_2,...,f_n\rangle_+|$, which we can calculate as $$\frac{|s^+(F_n):\langle f_2,...,f_n\rangle_+|}{|s^+(F_n):H| }=\frac{p^{d-d_1}}{p^{d-d_1-d_2}}=p^{d_2}.$$ We conclude that $|\M{(p^{d_1},...,p^{d_n})}|=p^{d_2}|\M{(p^{d_2},...,p^{d_n})}|$ which by the induction hypothesis is $p^{d-d_1}$.
\end{proof}
\begin{corollary}
    $\AbSol(p^{d_1},...,p^{d_n})$ contains at most $p^{d-d_1}$ isomorphism classes.
\end{corollary}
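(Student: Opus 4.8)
The plan is to combine the two results that have just been established: \cref{theorem: every solution in D is of the form...}, which tells us that every solution in $\AbSol(p^{d_1},\dots,p^{d_n})$ is isomorphic to one of the form $(F_n/I_M, r_{x+I_M})$ for some $M\in \M(p^{d_1},\dots,p^{d_n})$, and \cref{prop: size M}, which tells us that $|\M(p^{d_1},\dots,p^{d_n})| = p^{d-d_1}$. Putting these together, the map $M\mapsto (F_n/I_M, r_{x+I_M})$ is a surjection from a set of size $p^{d-d_1}$ onto the set of isomorphism classes of $\AbSol(p^{d_1},\dots,p^{d_n})$, so the latter has at most $p^{d-d_1}$ elements.

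Concretely, I would argue as follows. By \cref{prop: matrix ideal} together with \cref{cor: pair isomorphic to quotient of F_n}, every object $(A,Y)\in \AbBr(p^{d_1},\dots,p^{d_n})$ is isomorphic to $(F_n/I_M)^*$ for some $M\in \M(p^{d_1},\dots,p^{d_n})$; passing through the bijective correspondence of \cref{prop: bijective correspondence pairs and solutions} (restricted as in the proposition following \cref{lemma: chain}), the same holds at the level of solutions, which is exactly the content of \cref{theorem: every solution in D is of the form...}. Hence the assignment $M\mapsto [(F_n/I_M, r_{x+I_M})]$ defines a surjective map from $\M(p^{d_1},\dots,p^{d_n})$ onto the set of isomorphism classes of solutions in $\AbSol(p^{d_1},\dots,p^{d_n})$. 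The cardinality of the domain is $p^{d-d_1}$ by \cref{prop: size M}, so the number of isomorphism classes is bounded above by $p^{d-d_1}$.

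There is essentially no obstacle here: the statement is a one-line consequence of the preceding theorem and proposition, and the proof is just the observation that the image of a set of size $p^{d-d_1}$ has size at most $p^{d-d_1}$. The only thing to be slightly careful about is that the map is genuinely well-defined on $\M(p^{d_1},\dots,p^{d_n})$, i.e. that for $M\in \M(p^{d_1},\dots,p^{d_n})$ the solution $(F_n/I_M, r_{x+I_M})$ indeed lies in $\AbSol(p^{d_1},\dots,p^{d_n})$ (it does, by \cref{theorem: every solution in D is of the form...}), and that surjectivity is exactly the second half of that theorem. No counting of isomorphisms is needed for the upper bound — that refinement is exactly what the later parts of the section (the cases $n=2,3$) are for.

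\begin{proof}
    By \cref{theorem: every solution in D is of the form...}, the assignment $M\mapsto [(F_n/I_M,r_{x+I_M})]$, sending a matrix to the isomorphism class of its associated solution, defines a surjective map from $\M(p^{d_1},\dots,p^{d_n})$ onto the set of isomorphism classes of solutions in $\AbSol(p^{d_1},\dots,p^{d_n})$. By \cref{prop: size M} the domain of this map has cardinality $p^{d-d_1}$, hence its image has cardinality at most $p^{d-d_1}$.
\end{proof}
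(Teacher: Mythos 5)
Your proof is correct and is exactly the paper's argument: the paper also derives the bound directly from \cref{theorem: every solution in D is of the form...} and \cref{prop: size M}, using that a surjection from a set of size $p^{d-d_1}$ onto the isomorphism classes bounds their number. No differences worth noting.
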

\begin{proof}
    This follows directly from \cref{theorem: every solution in D is of the form...} and \cref{prop: size M}.
\end{proof}
Now that we know how to describe and count the finite ideals of $F_n$, we focus on the second question that arose earlier; we describe the isomorphism classes of quotients of $F_n$ through orbits of a certain group action. We first treat general quotients and then restrict to a more specific case as above.

\begin{lemma}\label{lem: automorphisms of F_n}
    For every $y\in x+F_n^2$, there exists a unique automorphism of $F_n^*$ mapping $x$ to $y$.
\end{lemma}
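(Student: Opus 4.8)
The statement to prove is \cref{lem: automorphisms of F_n}: for every $y\in x+F_n^2$, there is a unique brace automorphism of $F_n$ fixing the distinguished cycle base $x+F_n^2$ setwise and sending $x$ to $y$. The plan is to use the universal property of $F_n$ as the free one-generator two-sided brace of multipermutation level at most $n$, recorded in \cref{prop: free one-generator ring}. First I would observe that $y$ is again a generator of $F_n$ as a brace: writing $y=x+a$ with $a\in F_n^2$, one has $F_n^2=x*F_n=\{x^{k-1}*z\mid z\in F_n\}$-style filtration, so modulo $F_n^2$ the element $y$ equals $x$, and since $F_n/F_n^2$ is (infinite) cyclic generated by the image of $x$, the subbrace generated by $y$ surjects onto $F_n/F_n^2$; a standard nilpotency/filtration argument (the filtration $F_n\supset F_n^2\supset\dots\supset F_n^{n+1}=0$) then forces $\langle y\rangle = F_n$. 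Hence \cref{prop: free one-generator ring} applies with $A=F_n$ and the generator $y$, giving a \emph{unique} surjective brace homomorphism $f\colon F_n\to F_n$ with $f(x)=y$.

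Next I would show that this $f$ is an automorphism. Since $F_n$ is a finite-rank free $\Z$-module (basis $x,x^2,\dots,x^n$) — or more abstractly, since $f$ is surjective and, by the same filtration, strictly respects the chain $F_n^k$ inducing surjections $F_n^k/F_n^{k+1}\to F_n^k/F_n^{k+1}$ which on $F_n/F_n^2\cong\Z$ is an isomorphism (it sends the class of $x$ to the class of $y=x$), an induction up the filtration shows each induced map $F_n^k/F_n^{k+1}\to F_n^k/F_n^{k+1}$ is an isomorphism, so $f$ is bijective. (Alternatively: a surjective endomorphism of a Noetherian module is injective.) Thus $f\in\Aut(F_n,+,\circ)$.

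It remains to check $f$ preserves the cycle base, i.e. $f(x+F_n^2)\subseteq x+F_n^2$, and uniqueness as an automorphism of the pair $F_n^*$. For the first: $f(x+F_n^2)=y+f(F_n^2)=y+F_n^2=x+F_n^2$ since $f(F_n^2)=F_n^2$ (as $f$ is a surjective brace homomorphism it maps $F_n^2$ onto $(F_n)^2$, and these coincide) and $y\equiv x \pmod{F_n^2}$. For uniqueness: any automorphism $g$ of $F_n^*$ with $g(x)=y$ is in particular a brace endomorphism sending $x\mapsto y$, hence equals $f$ by the uniqueness clause of \cref{prop: free one-generator ring}. The main (and really only) obstacle is the verification that $y$ generates $F_n$ as a brace — everything else is a formal consequence of the universal property — but this is immediate from the fact that $F_n/F_n^2$ is cyclic on the image of $x$ together with nilpotency of the $*$-operation.
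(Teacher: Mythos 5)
Your proof is correct and follows essentially the same route as the paper: both rest on the freeness/universal property of $F_n$, the fact that $y$ generates $F_n$ as a brace because $y\equiv x\pmod{F_n^2}$ (which the paper obtains by citing \cref{prop: annihilator nilpotent } rather than redoing the filtration argument), and the observation that a surjective endomorphism of the finite-rank free $\Z$-module $(F_n,+)$ is bijective. The remaining checks (preservation of the cycle base and uniqueness) are handled the same way in both arguments.
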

\begin{proof}
Let $y\in x+F_n^2$. It is clear that $y^n=0$, hence there exists a unique ring endomorphism $\phi_y:F_n\to F_n$ with $\phi_y(x)=y$. By \cref{prop: annihilator nilpotent }, $y$ generates $F_n$ as a brace, hence $\phi_y$ is surjective. Because $(F_n,+)$ is free of finite rank, it follows that $\phi_y$ is an automorphism of $F_n$, which by construction is an automorphism of $F_n^*$ as well.
\end{proof}
\begin{corollary}\label{cor: lifting of isomorphisms}
    Let $I$ and $J$ be ideals of $F_n$ and $\phi:(F_n/I)^*\to (F_n/J)^*$ an isomorphism, then there exists an automorphism $\hat{\phi}:F_n^*\to F_n^*$ such that $\hat{\phi}(I)= J$ and $\hat{\phi}$ is a lifting of $\phi$ in the sense that the following diagram commutes.
    \[
\begin{tikzcd}
F_n \arrow[d] \arrow[r, "\hat{\phi}"] & F_n \arrow[d] \\
F_n/I \arrow[r, "\phi"]               & F_n/J        
\end{tikzcd}\]
\end{corollary}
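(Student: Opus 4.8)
The plan is to lift the isomorphism $\phi$ one step at a time along the quotient map $F_n \to F_n/I$, using the freeness property of $F_n^*$ established in \cref{lem: automorphisms of F_n}. First I would pick the generator $x + I$ of $(F_n/I)^*$ and let $z = \phi(x+I) \in F_n/J$, which lies in the distinguished transitive cycle base of $(F_n/J)^*$, i.e.\ in $(x + F_n^2 + J)/J$. Choose any lift $y \in F_n$ of $z$; then $y \in x + F_n^2$, since the cycle base of $F_n/J$ is the image of $x + F_n^2$. By \cref{lem: automorphisms of F_n} there is a unique automorphism $\hat{\phi} := \phi_y$ of $F_n^*$ with $\hat{\phi}(x) = y$.

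Next I would check that this $\hat{\phi}$ is a lifting of $\phi$, i.e.\ that the square commutes. Both composites $F_n \to F_n/J$ in the diagram are brace homomorphisms (the top-then-right is $(\text{proj}_J) \circ \hat{\phi}$, the left-then-bottom is $\phi \circ (\text{proj}_I)$), and both send the generator $x$ of $F_n$ to $z = \phi(x+I)$. Since $F_n$ is generated by $x$ as a brace (\cref{prop: free one-generator ring}, or directly: $x$ generates $F_n$ as a ring), a brace homomorphism out of $F_n$ is determined by the image of $x$, so the two composites coincide. This forces $\hat{\phi}(I) \subseteq J$: indeed, for $a \in I$ we have $\operatorname{proj}_J(\hat{\phi}(a)) = \phi(\operatorname{proj}_I(a)) = \phi(0) = 0$, so $\hat{\phi}(a) \in J$.

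Finally I would upgrade the inclusion $\hat{\phi}(I) \subseteq J$ to equality. The cleanest way is to run the same construction on $\phi^{-1}$: choosing the lift $\hat{\phi}^{-1}$ of $\phi^{-1}$ — note that $\hat{\phi}^{-1}$ is itself an automorphism of $F_n^*$ sending $x$ to $\hat{\phi}^{-1}(x) \in x + F_n^2$, and it is a lifting of $\phi^{-1}$ by the uniqueness part of \cref{lem: automorphisms of F_n} applied to the composite $\operatorname{proj}_I \circ \hat{\phi}^{-1}$ versus $\phi^{-1} \circ \operatorname{proj}_J$ — one gets $\hat{\phi}^{-1}(J) \subseteq I$, i.e.\ $J \subseteq \hat{\phi}(I)$. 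Hence $\hat{\phi}(I) = J$, which is exactly the desired equality; the commutativity of the diagram was already verified. (Alternatively, a counting argument works in the finite case: $\hat{\phi}$ is an automorphism, so $|\hat{\phi}(I)| = |I|$, and if $(F_n/I)^* \cong (F_n/J)^*$ then $|I| = |J|$, forcing $\hat{\phi}(I) = J$ from $\hat{\phi}(I) \subseteq J$ — but the $\phi^{-1}$ argument avoids any finiteness assumption.)

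The only genuinely delicate point is the commutativity of the square; everything else is formal once \cref{lem: automorphisms of F_n} is in hand. The key observation that makes commutativity work is that a brace homomorphism with domain $F_n$ is pinned down by the single value at $x$, so one only needs to match the images of $x$, which holds by construction.
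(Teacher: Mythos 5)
Your proposal is correct and follows essentially the same route as the paper: choose a lift $y\in x+F_n^2$ of $\phi(x+I)$, invoke \cref{lem: automorphisms of F_n} to get $\hat{\phi}=\phi_y$, and conclude. The paper gets the equality $\hat{\phi}(I)=J$ in one step from the observation that $\sum_i a_ix^i\in I$ if and only if $\sum_i a_iy^i\in J$, whereas you derive the inclusion from commutativity and then apply the same argument to $\phi^{-1}$; this is only a presentational difference.
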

\begin{proof}
Let $y\in \phi(x+I)$ for some $y\in x+F_n^2$. In particular, this implies that for any element $\sum_{i=1}^na_ix^i\in I$, where $a_i\in \Z$, if and only if $\sum_{i=1}^na_iy^i\in J$. Now define $\hat{\phi}:F_n\to F_n$ as the automorphism mapping $x$ to $y$, which exists by the previous lemma. Then in particular, $\hat{\phi}(I)\subseteq I=J$, and thus $\hat{\phi}$ fits in the above diagram.
\end{proof}

Consider the action of $\Aut(F_n^*)$ on ideals of $F_n$ where $\phi\in \Aut(F_n^*)$ maps an ideal $I$ to $\phi(I)$. We then obtain the following result.
\begin{proposition}\label{prop: orbits and isomorphism classes general}
    There is a bijective correspondence between isomorphism classes of quotients of $F_n^*$ and orbits of ideals of $F_n$ under the action by $\Aut(F_n^*)$. Under this correspondence, the orbit of an ideal $I$ is mapped to the isomorphism class of $(F_n/I)^*$.
\end{proposition}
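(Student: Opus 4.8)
The plan is to exhibit the claimed bijection explicitly: to the orbit of an ideal $I$ of $F_n$ under $\Aut(F_n^*)$ we assign the isomorphism class of $(F_n/I)^*$, and then verify that this assignment is well defined, surjective and injective. For well-definedness, suppose $I$ and $J$ lie in the same orbit, say $J=\phi(I)$ with $\phi\in\Aut(F_n^*)$. Then $\phi$ is in particular a brace automorphism of $F_n$ with $\phi(I)=J$, so it descends to a brace isomorphism $\overline{\phi}\colon F_n/I\to F_n/J$. Since $\phi$ is an automorphism of the object $F_n^*$ of $\AbBr$, it preserves the distinguished cycle base, i.e.\ $\phi(x+F_n^2)=x+F_n^2$; hence $\overline{\phi}$ sends the image of $x+F_n^2$ in $F_n/I$ onto its image in $F_n/J$. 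As these images are exactly the distinguished cycle bases of $(F_n/I)^*$ and $(F_n/J)^*$, the map $\overline{\phi}$ is an isomorphism in $\AbBr$, and the assignment on orbits is well defined.

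For surjectivity, every quotient of $F_n^*$ is, by the construction of quotients in $\AbBr$, of the form $(F_n/I)^*$ for some ideal $I$ of $F_n$ (see also \cref{cor: pair isomorphic to quotient of F_n}), so every isomorphism class of quotients of $F_n^*$ lies in the image. For injectivity — the only step with real content — I would invoke \cref{cor: lifting of isomorphisms}: if $(F_n/I)^*\cong(F_n/J)^*$, pick an isomorphism $\phi$ between them; by \cref{cor: lifting of isomorphisms} it lifts to an automorphism $\hat{\phi}\in\Aut(F_n^*)$ with $\hat{\phi}(I)=J$, so $I$ and $J$ lie in the same orbit. Together with well-definedness and surjectivity, this shows the assignment is a bijection, with the orbit of $I$ corresponding to the class of $(F_n/I)^*$, as claimed.

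I do not anticipate a genuine obstacle here: the substantive input — that every generator of $F_n$ as a brace induces an automorphism of $F_n^*$, so that isomorphisms between quotients of $F_n^*$ can be lifted to $F_n^*$ — has already been isolated in \cref{lem: automorphisms of F_n} and \cref{cor: lifting of isomorphisms} (which in turn rest on \cref{prop: annihilator nilpotent } applied to $F_n$). What remains is only the routine bookkeeping that automorphisms of $F_n^*$ respect the distinguished cycle base, that $\phi(I)$ is again an ideal so that the action on ideals is genuinely defined, and that the correspondence is set up in the correct direction.
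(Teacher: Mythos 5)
Your proposal is correct and follows essentially the same route as the paper: both directions hinge on \cref{cor: lifting of isomorphisms} for injectivity and on the observation that an automorphism of $F_n^*$ carrying $I$ to $J$ descends to an isomorphism $(F_n/I)^*\to(F_n/J)^*$ for well-definedness. You merely spell out the surjectivity and cycle-base bookkeeping that the paper leaves implicit.
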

\begin{proof}
    Let $I,J$ be ideals of $F_n$. Assume that there exists an isomorphism $\theta:(F_n/I)^*\to (F_n/J)^*$. Then using \cref{cor: lifting of isomorphisms} we find that $\phi$ lifts to an automorphism $\hat{\phi}$ of $F_{n}^*$ such that $\hat{\phi}(I)=J$. Conversely, any automorphism of $F_n^*$ mapping $I$ to $J$ induces an isomorphism between $F_n/I$ and $F_n/J$. 
\end{proof}

For counting purposes, it is more convenient to consider a slight variation of this action such that we have a finite group acting on a finite set. We do so by restricting to ideals $I$ of $F_n$ such that $F_n/I$ is of type $(p^{d_1},...,p^{d_n})$. Let $(p^dx)$ be the ideal of $F_n$ generated by $p^d x$ and let $F_{n,p^d}=F_n/(p^dx)$. Ideals of $F_n$ of index $p^d$ always contain $p^dx$ so they are in correspondence with ideals of $F_{n,p^d}$ of index $p^d$. By abuse of notation we will also denote the image of $x$ in $F_{n,p^d}$ by $x$ and $F_{n,p^d}^*$ is short for the object in $\AbBr$ consisting of $F_{n,p^d}$ and the transitive cycle base containing $x$. It is clear that every $(A,X)\in \AbBr$ with $|A|=p^d$ is isomorphic to $(F_{n,p^d}/I)^*$ for some ideal $I$. As every automorphism of $F_n$ maps $(p^dx)$ to itself, we immediately obtain the following variations of \cref{lem: automorphisms of F_n} and \cref{cor: lifting of isomorphisms}.
\begin{lemma}\label{lem: automorphisms Fnpd}
    For every $y\in x+F_n^2$, there exists a unique automorphism of $F_{n,p^d}^*$ mapping $x$ to $y$. 
\end{lemma}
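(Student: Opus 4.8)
The plan is to obtain both existence and uniqueness by transporting the corresponding statement for $F_n^*$ (\cref{lem: automorphisms of F_n}) along the quotient map $F_n\to F_{n,p^d}=F_n/(p^dx)$. Throughout, given $y\in x+F_n^2$ I write $\overline y$ for its image in $F_{n,p^d}$, and $\overline x$ for the image of $x$.

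\emph{Existence.} By \cref{lem: automorphisms of F_n} there is a unique automorphism $\phi_y$ of $F_n^*$ with $\phi_y(x)=y$. Since $x$ generates $F_n$ as an ideal, the ideal $(p^dx)$ equals $p^dF_n$, which is preserved by every endomorphism of the abelian group $(F_n,+)$, in particular by $\phi_y$. Hence $\phi_y$ descends to a brace automorphism $\overline{\phi_y}$ of $F_{n,p^d}$ with $\overline{\phi_y}(\overline x)=\overline y$. As a brace automorphism, $\overline{\phi_y}$ permutes the orbits of the $\theta$-action, so it maps the orbit of $\overline x$ (which, exactly as for $F_n$, is $\overline x+F_{n,p^d}^2$) onto the orbit of $\overline{\phi_y}(\overline x)=\overline y$; since $y\in x+F_n^2$ we have $\overline y\in\overline x+F_{n,p^d}^2$, so this orbit is again $\overline x+F_{n,p^d}^2$. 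Thus $\overline{\phi_y}$ is an automorphism of $F_{n,p^d}^*$ sending $\overline x$ to $\overline y$.

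\emph{Uniqueness.} It suffices to show that $F_{n,p^d}$ is one-generated by $\overline x$ as a brace. Indeed, $F_{n,p^d}$ is finite of $p$-power order, hence left nilpotent by \cite[Proposition 4.4]{CSV19}, and it is a quotient of the multipermutation brace $F_n$, hence multipermutation; therefore it is annihilator nilpotent. Moreover $F_{n,p^d}/F_{n,p^d}^2$ is a quotient of $F_n/F_n^2\cong\Z$, hence cyclic, and it is generated by the class of $\overline x$. By \cref{prop: annihilator nilpotent }, $\overline x$ generates $F_{n,p^d}$ as a brace, so any brace endomorphism of $F_{n,p^d}$ is determined by its value on $\overline x$. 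Consequently, if $\psi$ is any automorphism of $F_{n,p^d}^*$ with $\psi(\overline x)=\overline y$, then $\psi=\overline{\phi_y}$.

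There is essentially no obstacle here; the statement is genuinely immediate once the right facts are assembled. The only points needing a moment's care are the identification $(p^dx)=p^dF_n$ (so the ideal is automatically characteristic and $\phi_y$ descends), the check that the induced map still respects the distinguished transitive cycle base, and the reduction of uniqueness to the one-generation of $F_{n,p^d}$ via \cref{prop: annihilator nilpotent }.
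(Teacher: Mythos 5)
Your proof is correct and takes essentially the same route as the paper, which deduces this lemma as an immediate variation of \cref{lem: automorphisms of F_n} precisely because every automorphism of $F_n$ maps the ideal $(p^dx)$ to itself and therefore descends to $F_{n,p^d}$. The details you supply --- the identification $(p^dx)=p^dF_n$, the check that the induced map preserves the distinguished cycle base, and the one-generation of $F_{n,p^d}$ by $\overline{x}$ for uniqueness --- are exactly the points the paper leaves implicit.
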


\begin{corollary}\label{cor: lifting of isomorphisms Fnpd}
    Let $I$ and $J$ be ideals of $F_{n,p^d}$ and $\phi:(F_{n,p^d}/I)^*\to (F_{n,p^d}/J)^*$ an isomorphism, then there exists an automorphism $\hat{\phi}:F_{n,p^d}^*\to F_{n,p^d}^*$ such that $\hat{\phi}(I)= J$ and $\hat{\phi}$ is a lifting of $\phi$ in the sense that the following diagram commutes.
    \[
\begin{tikzcd}
F_{n,p^d} \arrow[d] \arrow[r, "\hat{\phi}"] & F_{n,p^d} \arrow[d] \\
F_{n,p^d}/I \arrow[r, "\phi"]               & F_{n,p^d}/J        
\end{tikzcd}\]
\end{corollary}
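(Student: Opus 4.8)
The plan is to mimic almost verbatim the proof of \cref{cor: lifting of isomorphisms}, now working inside $F_{n,p^d}=F_n/(p^dx)$ rather than $F_n$, using \cref{lem: automorphisms Fnpd} in place of \cref{lem: automorphisms of F_n}. First I would pick a representative $y\in x+F_n^2$ such that its image $\bar y$ in $F_{n,p^d}$ lies in $\phi(x+I)$; this is possible since $\phi$ maps the distinguished cycle base to the distinguished cycle base, so $\phi(x+I)$ is a coset of the form $\bar y + I$ with $\bar y$ in the image of $x+F_n^2$. Then I would invoke \cref{lem: automorphisms Fnpd} to get the unique automorphism $\hat\phi:F_{n,p^d}^*\to F_{n,p^d}^*$ sending $x$ to $\bar y$.

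The key remaining point is that $\hat\phi(I)=J$. Writing a general element of $I$ as (the image of) $\sum_{i=1}^n a_ix^i$ with $a_i\in\Z$, the quotient map $F_{n,p^d}\to F_{n,p^d}/I$ kills it, so applying $\phi$ and using that $\phi$ is a brace homomorphism sending $x+I$ to $\bar y+I$... wait, more carefully: $\hat\phi(\sum a_i x^i)=\sum a_i \bar y^{i}$ because $\hat\phi$ is a ring (brace) homomorphism and $\hat\phi(x)=\bar y$, where $\bar y^{i}$ denotes the $*$-power. The condition that $\phi$ descends from $F_{n,p^d}/I$ to $F_{n,p^d}/J$ and sends $x+I$ to $\bar y+I=\bar y+J$ forces $\sum a_i x^i\in I$ to be equivalent to $\sum a_i\bar y^{i}\in J$; hence $\hat\phi(I)\subseteq J$. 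Since $\hat\phi$ is an automorphism and $I,J$ have the same (finite) index $p^d$ in $F_{n,p^d}$, the inclusion $\hat\phi(I)\subseteq J$ is an equality. With $\hat\phi(I)=J$ established, $\hat\phi$ induces a well-defined map $F_{n,p^d}/I\to F_{n,p^d}/J$ making the square commute, and because it agrees with $\phi$ on the generator $x+I$ of the brace $F_{n,p^d}/I$ (using that $x+I$ generates, which follows as in the earlier arguments from \cref{prop: annihilator nilpotent } or directly from \cref{cor: pair isomorphic to quotient of F_n}), the induced map equals $\phi$; so $\hat\phi$ is the desired lifting.

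The only genuine subtlety, and the step I would be most careful about, is making sure the index/finiteness argument is legitimate in $F_{n,p^d}$: unlike $F_n$ (whose additive group is free, so surjective endomorphisms are automatic isomorphisms), here one needs that $I$ and $J$ have the same finite index $p^d$ so that the injective-modulo-nothing argument upgrades $\hat\phi(I)\subseteq J$ to equality. Since $\hat\phi$ is already known to be an automorphism of $F_{n,p^d}$ by \cref{lem: automorphisms Fnpd}, $\hat\phi(I)$ has the same index as $I$, namely $p^d$, which equals the index of $J$, so $\hat\phi(I)=J$. Everything else is a direct transcription of the proof of \cref{cor: lifting of isomorphisms} with the ideal $(p^dx)$ quotiented out, using the remark in the paragraph preceding the statement that every automorphism of $F_n$ fixes $(p^dx)$ and hence descends to $F_{n,p^d}$.
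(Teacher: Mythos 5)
Your proof is correct and takes essentially the same route as the paper, which offers no separate argument but simply observes that \cref{cor: lifting of isomorphisms} carries over because every automorphism of $F_n$ preserves $(p^dx)$; your direct transcription into $F_{n,p^d}$, using \cref{lem: automorphisms Fnpd} and the finiteness of $F_{n,p^d}$ to upgrade $\hat{\phi}(I)\subseteq J$ to equality, is exactly what is intended. (Two cosmetic slips: $\phi(x+I)$ is a coset of $J$, not of $I$, and the biconditional $\sum_i a_ix^i\in I\iff\sum_i a_i\bar{y}^i\in J$ already forces $\hat{\phi}(I)=J$ without the index count.)
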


Consider the action of $\Aut(F_{n,p^d}^*)$ on ideals of $F_{n,p^d}$ where $\phi\in \Aut(F_{n,p^d}^*)$ maps an ideal $I$ to $\phi(I)$. We then obtain the following variation of \cref{prop: orbits and isomorphism classes general}.
\begin{theorem}\label{theorem: isomorphism classes and orbits}
    There is a bijective correspondence between isomorphism classes in $\AbBr(p^{d_1},...,p^{d_n})$ and orbits of ideals $I$ of $F_{n,p^d}$ such that $F_{n,p^d}$ is of type $(p^{d_1},...,p^{d_n})$, under the action of $\Aut(F_{n,p^d}^*)$. Under this correspondence, the orbit of an ideal $I$ is mapped to the isomorphism class of $(F_{n,p^d}/I)^*$.
\end{theorem}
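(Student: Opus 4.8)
The plan is to transpose the proof of \cref{prop: orbits and isomorphism classes general} to the finite brace $F_{n,p^d}$, using \cref{lem: automorphisms Fnpd} and \cref{cor: lifting of isomorphisms Fnpd} in place of their $F_n$-counterparts. The payoff, as already indicated in the text, is that $\Aut(F_{n,p^d}^*)$ is now a \emph{finite} group acting on the \emph{finite} set of index-$p^d$ ideals of $F_{n,p^d}$, which is what makes the statement usable for counting.

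First I would record that the $\Aut(F_{n,p^d}^*)$-action restricts to the set of ideals $I$ with $F_{n,p^d}/I$ of type $(p^{d_1},\dots,p^{d_n})$: every $\phi\in\Aut(F_{n,p^d}^*)$ is in particular a brace automorphism, so it preserves the left series, whence $F_{n,p^d}/I$ and $F_{n,p^d}/\phi(I)$ have the same type. In particular the type is constant on orbits. Next I would check that $I\mapsto[(F_{n,p^d}/I)^*]$ is constant on orbits — if $\phi\in\Aut(F_{n,p^d}^*)$ satisfies $\phi(I)=J$, then $\phi$ descends to an isomorphism $(F_{n,p^d}/I)^*\to(F_{n,p^d}/J)^*$ — so this assignment induces a well-defined map from orbits to isomorphism classes in $\AbBr(p^{d_1},\dots,p^{d_n})$.

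For surjectivity, given $(A,Y)\in\AbBr(p^{d_1},\dots,p^{d_n})$ one has $|A|=p^d$ and $A^{n+1}=0$; by \cref{lem: relation socle and powers} the latter is equivalent to $\mpl(A)\le n$, so \cref{cor: pair isomorphic to quotient of F_n} applies and yields $(A,Y)\cong(F_n/\ker f)^*$ with $\ker f$ an ideal of index $p^d$. Since every index-$p^d$ ideal of $F_n$ contains $(p^dx)$, this quotient is of the form $(F_{n,p^d}/I)^*$, and by the first step $F_{n,p^d}/I$ has type $(p^{d_1},\dots,p^{d_n})$. For injectivity, an isomorphism $\phi\colon(F_{n,p^d}/I)^*\to(F_{n,p^d}/J)^*$ lifts, by \cref{cor: lifting of isomorphisms Fnpd}, to $\hat\phi\in\Aut(F_{n,p^d}^*)$ with $\hat\phi(I)=J$, so $I$ and $J$ lie in the same orbit. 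Combining these gives the claimed bijection.

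I do not expect a genuine obstacle here: all the substance sits in the already-proved \cref{cor: pair isomorphic to quotient of F_n}, \cref{lem: automorphisms Fnpd} and \cref{cor: lifting of isomorphisms Fnpd}. The one point deserving a word of care is the equivalence $A^{n+1}=0\iff\mpl(A)\le n$, needed so that objects of $\AbBr(p^{d_1},\dots,p^{d_n})$ really are quotients of $F_n$ (equivalently of $F_{n,p^d}$); this is immediate from \cref{lem: relation socle and powers}.
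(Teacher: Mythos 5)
Your proposal is correct and follows essentially the same route as the paper, which states \cref{theorem: isomorphism classes and orbits} as the direct analogue of \cref{prop: orbits and isomorphism classes general} obtained by substituting \cref{lem: automorphisms Fnpd} and \cref{cor: lifting of isomorphisms Fnpd} for their $F_n$-counterparts. You merely spell out the routine verifications (type-preservation on orbits, surjectivity via \cref{cor: pair isomorphic to quotient of F_n} and the fact that index-$p^d$ ideals contain $(p^d x)$, injectivity via the lifting corollary) that the paper leaves implicit.
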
 

Using the bijective correspondence from \cref{prop: matrix ideal}, we can obtain an action $\cdot$ of $\Aut(F_{n,p^d}^*)$ on $\M{(p^{d_1},...,p^{d_n})}$, where $\phi\cdot M$ is the unique matrix in $\M{(p^{d_1},...,p^{d_n})}$ associated to $\phi(I_M)$, for $\phi\in \Aut(F_{n,p^d}^*)$ and $M\in \M{(p^{d_1},...,p^{d_n})}$. The orbit of some $M\in \M{(p^{d_1},...,p^{d_n})}$ under this action is denoted by $\cO(M)$. We denote the number of isomorphism classes in 
$\AbSol(p^{d_1},...,p^{d_n})$ by $|\AbSol(p^{d_1},...,p^{d_n})|$. From \cref{prop: bijective correspondence pairs and solutions} and \cref{theorem: isomorphism classes and orbits}, we obtain the main result of this section.
\begin{theorem}\label{theorem: main theorem D and orbits M}
    Isomorphism classes in $\AbSol(p^{d_1},...,p^{d_n})$ are in bijective correspondence with orbits of the action of $\Aut(F_{n,p^d}^*)$ on $\M{(p^{d_1},...,p^{d_n})}$. In particular, 
    \begin{equation*}
        |\AbSol(p^{d_1},...,p^{d_n})|=\sum_{M\in \M{(p^{d_1},...,p^{d_n})}}\frac{1}{|\cO(M)|},
    \end{equation*}
\end{theorem}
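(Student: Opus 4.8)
The plan is to deduce the statement by composing the bijective correspondences already established in this section, so that only one genuinely new (and elementary) ingredient is needed, namely the orbit-counting identity. Concretely, I would build the chain
\[
\AbSol(p^{d_1},\dots,p^{d_n})/\!\cong \ \longleftrightarrow\ \AbBr(p^{d_1},\dots,p^{d_n})/\!\cong \ \longleftrightarrow\ \{\text{orbits of suitable ideals of }F_{n,p^d}\} \ \longleftrightarrow\ \M{(p^{d_1},\dots,p^{d_n})}/\Aut(F_{n,p^d}^*),
\]
where the first bijection is the type-refined version of \cref{prop: bijective correspondence pairs and solutions}, the second is \cref{theorem: isomorphism classes and orbits}, and the third is transported from the bijection of \cref{prop: matrix ideal} via the defining property of the action $\cdot$.

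First I would recall that \cref{prop: matrix ideal} gives a bijection $M\mapsto I_M$ between $\M{(p^{d_1},\dots,p^{d_n})}$ and the ideals $I$ of $F_n$ (equivalently of $F_{n,p^d}$, since every such ideal contains $p^dx$) with $F_n/I$ of type $(p^{d_1},\dots,p^{d_n})$. Since $\phi\cdot M$ is \emph{defined} to be the unique matrix with $I_{\phi\cdot M}=\phi(I_M)$, and since an automorphism of $F_{n,p^d}^*$ sends a type-$(p^{d_1},\dots,p^{d_n})$ ideal to another one (the quotient is isomorphic as a brace, and the lower series is characteristic), the map $M\mapsto I_M$ is $\Aut(F_{n,p^d}^*)$-equivariant; hence it descends to a bijection between $\M{(p^{d_1},\dots,p^{d_n})}/\Aut(F_{n,p^d}^*)$ and the set of $\Aut(F_{n,p^d}^*)$-orbits of those ideals. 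Feeding this into \cref{theorem: isomorphism classes and orbits} identifies these orbits with the isomorphism classes in $\AbBr(p^{d_1},\dots,p^{d_n})$, and then the type-refined form of \cref{prop: bijective correspondence pairs and solutions} identifies the latter with the isomorphism classes in $\AbSol(p^{d_1},\dots,p^{d_n})$. This establishes the claimed bijective correspondence.

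For the numerical formula it remains to invoke the elementary fact that for a finite group acting on a finite set $S$ the number of orbits equals $\sum_{s\in S} 1/|\mathcal O(s)|$, because regrouping the sum by orbits contributes $|\mathcal O|\cdot\tfrac{1}{|\mathcal O|}=1$ for each orbit $\mathcal O$. Since $\M{(p^{d_1},\dots,p^{d_n})}$ is finite (in fact of size $p^{d-d_1}$ by \cref{prop: size M}) and $\Aut(F_{n,p^d}^*)$ is finite, this applies to our action and yields
\[
|\AbSol(p^{d_1},\dots,p^{d_n})| \ = \ \bigl|\M{(p^{d_1},\dots,p^{d_n})}/\Aut(F_{n,p^d}^*)\bigr| \ = \ \sum_{M\in \M{(p^{d_1},\dots,p^{d_n})}}\frac{1}{|\cO(M)|}.
\]

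The substance of this proof is carried entirely by the earlier results; the only points requiring care are that the action $\cdot$ on matrices is well defined and that every bijection in the chain respects the \emph{type} decoration, so that fixing a type on one side corresponds to fixing it on the other. Neither is a real obstacle: well-definedness is immediate from the bijection of \cref{prop: matrix ideal} and characteristicity of the lower series, and the compatibility with type follows because \cref{prop: bijective correspondence pairs and solutions} preserves size and multipermutation level while \cref{lem: relation socle and powers} translates the socle filtration into the lower series. If one prefers, the final count can be rephrased through Burnside's lemma as $\tfrac{1}{|\Aut(F_{n,p^d}^*)|}\sum_{\phi}|\{M : \phi\cdot M = M\}|$, but the orbit-size form above is the one best suited to the explicit small-$n$ computations that follow.
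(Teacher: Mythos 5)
Your proof is correct and follows essentially the same route as the paper, which likewise obtains the theorem by composing \cref{prop: bijective correspondence pairs and solutions} (in its type-refined form) with \cref{theorem: isomorphism classes and orbits} via the matrix--ideal correspondence of \cref{prop: matrix ideal}. The extra care you take over equivariance of $M\mapsto I_M$ and the elementary orbit-counting identity only makes explicit what the paper leaves implicit.
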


\subsection{Explicit calculations}
In what follows we will explicitly apply the preceding results of this section to the cases $n=2$ and $n=3$. 
We first cover the case where $n=2$, which was already done in \cite{JPZ2021} using different techniques, and we subsequently also give an explicit formula for $|\AbSol(p^{d_1},p^{d_2},p^{d_3})|$. For $n>3$, the applied techniques do not seem to generalise, however it would be interesting to see up to which multipermutation level $n$ and size $p^d$ one could enumerate the number of isomorphism classes of solutions with the help of a computer.
 \subsubsection{The case $n=2$}
Let $M\in \M{(p^{d_1},p^{d_2})}$. So $M$ is of the form
$$M=\begin{pmatrix}
    p^{d_1}& m \\
    0& p^{d_2},
    \end{pmatrix}$$ 
    where $m$ can be freely chosen such that $0\leq m<p^{d_2}$. 
    Recall that the automorphisms of $F_{2,p^d}^*$ are in bijection with elements in the coset $x+F_{2,p^d}$; to every element $y$ in this coset we associate the unique automorphism $\phi_{y}$ which maps $x$ to $y$. One easily checks that $\phi_{x+ax^2}\circ \phi_{x+bx^2}=\phi_{x+(a+b)x^2}$ for any $a,b\in \Z/p^d$, so $\Aut(F_{2,p^2}^*)\cong C_{p^d}$. If we let $\phi_{x+ax^2}$ act on $M$, we find that the result $\phi_{x+ax^2}\cdot M$ is the unique matrix in $\M{(p^{d_1},p^{d_2})}$ which is row-equivalent to 
    $$M=\begin{pmatrix}
    p^{d_1}& m+ap^{d_1} \\
    0& p^{d_2},
    \end{pmatrix}.$$
    As $d_1\geq d_2$, we thus find that $\phi_{x+ax^2}\cdot M$ is row-equivalent to $ M$ and therefore the action is trivial.

    \begin{theorem}\label{theorem: classification mpl 2}
        Let $p$ be a prime, $d_1\geq d_2\geq 0$ and $0\leq m<p^{d_2}$, let $I$ be the subgroup of $\Z^2$ generated by $(p^{d_1},m)$ and $(0,p^{d_2})$, and $A=\Z^2/I$. Then $$\sigma_{a+I}(b+I)=(b_1+1,a_1+b_1+b_2),$$ 
        yields a solution $(A,r)\in \AbSol(p^{d_1},p^{d_2})$.
        
        Moreover, every solution in $\AbSol(p^{d_1},p^{d_2})$ is isomorphic to such a solution for a unique choice of $d_1,d_2$ and $m$.
        \end{theorem}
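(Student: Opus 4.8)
The plan is to obtain the statement as the explicit $n=2$ instance of the correspondence established earlier in this section. First I would identify $F_2$ with $\Z^2$ via the additive basis $x,x^2$. Under this identification and the bijection of \cref{prop: matrix ideal}, the ideals $I$ of $F_2$ with $F_2/I$ of type $(p^{d_1},p^{d_2})$ correspond to the matrices of $\M{(p^{d_1},p^{d_2})}$, and a direct check of the conditions in \cref{def: M} shows that, for $n=2$, these matrices are exactly those with rows $(p^{d_1},m)$ and $(0,p^{d_2})$ for $0\leq m<p^{d_2}$: conditions $(2)$--$(3)$ pin down the range of $m$, and condition $(4)$, which asks that $s^{+}(p^{d_1},m)=(0,p^{d_1})$ lie in $\langle(0,p^{d_2})\rangle$, is precisely the hypothesis $d_1\geq d_2$ (consistently with the count in \cref{prop: size M}). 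The matrix with rows $(p^{d_1},m),(0,p^{d_2})$ corresponds to the subgroup $I$ of $\Z^2$ generated by these two vectors, i.e.\ to the group $I$ appearing in the statement.

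For the first assertion I would then apply \cref{theorem: every solution in D is of the form...} with $A=F_2/I$: this gives that $(A,r_{x+I})\in\AbSol(p^{d_1},p^{d_2})$, and the $\sigma$-map of this solution is read off from the $n=2$ case of \cref{rem: explicit form of solution} to be $\sigma_{a+I}(b+I)=(b_1+1,\,a_1+b_1+b_2)$, which is exactly the formula displayed. Hence the displayed data defines a solution in $\AbSol(p^{d_1},p^{d_2})$.

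For the ``moreover'' part I would combine \cref{theorem: every solution in D is of the form...}, which says that every solution in $\AbSol(p^{d_1},p^{d_2})$ arises from some matrix of $\M{(p^{d_1},p^{d_2})}$, with the computation already carried out just above the theorem showing that the action of $\Aut(F_{2,p^d}^{*})$ on $\M{(p^{d_1},p^{d_2})}$ is trivial; here $d_1\geq d_2$ enters again, since $\phi_{x+ax^2}$ changes $m$ into $m+ap^{d_1}\equiv m\pmod{p^{d_2}}$. By \cref{theorem: main theorem D and orbits M} the isomorphism classes of $\AbSol(p^{d_1},p^{d_2})$ are therefore in bijection with the matrices of $\M{(p^{d_1},p^{d_2})}$, that is, with the admissible values of $m$; and since the type of a solution is an isomorphism invariant (it is recovered from the sizes of the iterated retractions, which are preserved by \cref{prop: bijective correspondence pairs and solutions} and the functoriality of $\Ret$), the triple $(d_1,d_2,m)$ is uniquely determined by the isomorphism class. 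This establishes the theorem.

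Once \cref{theorem: every solution in D is of the form...}, \cref{theorem: main theorem D and orbits M} and \cref{prop: bijective correspondence pairs and solutions} are in hand, the argument is essentially bookkeeping. The two points that genuinely use the hypothesis $d_1\geq d_2$, and that I expect to be the only places requiring care, are the identification of $\M{(p^{d_1},p^{d_2})}$ with the stated family of matrices (checking that condition $(4)$ of \cref{def: M} collapses to $d_1\geq d_2$ in the $2\times 2$ case) and the verification that the $\Aut(F_{2,p^d}^{*})$-action is trivial, which is what makes the parametrisation by $m$ irredundant.
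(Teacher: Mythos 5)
Your proposal is correct and follows essentially the same route as the paper: the authors likewise obtain the first part by specialising \cref{theorem: every solution in D is of the form...} and \cref{rem: explicit form of solution} to $n=2$, and the ``moreover'' part from \cref{theorem: main theorem D and orbits M} together with the computation (given just before the theorem) that $\phi_{x+ax^2}$ sends $m$ to $m+ap^{d_1}\equiv m\pmod{p^{d_2}}$, so the action on $\M(p^{d_1},p^{d_2})$ is trivial. Your explicit verification that condition $(4)$ of \cref{def: M} collapses to $d_1\geq d_2$ and your remark that the type is an isomorphism invariant are exactly the bookkeeping the paper leaves implicit.
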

        \begin{proof}
            The first part of the statement is a special case of \cref{theorem: every solution in D is of the form...} and \cref{rem: explicit form of solution}. The second part follows from \cref{theorem: main theorem D and orbits M} and the fact that the action on $\M(p^{d_1},p^{d_2})$ is trivial.
        \end{proof}
        \begin{corollary}
            Let $p$ be a prime, $d\geq 0$. The total number of solutions of size $p^d$ and multipermutation level at most 2 in $\AbSol$ is given by
    \begin{equation*}
        (p^{\lfloor d/2\rfloor+1}-1)/(p-1),
    \end{equation*}
    where $\lfloor d/2\rfloor$ is the largest integer $n$ such that $n\leq d/2$.
        \end{corollary}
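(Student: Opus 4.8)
The plan is to read the count off from \cref{theorem: classification mpl 2}. That theorem attaches to every triple of integers $(d_1,d_2,m)$ with $d_1\ge d_2\ge 0$ and $0\le m<p^{d_2}$ an explicit solution lying in $\AbSol(p^{d_1},p^{d_2})$, shows that every solution in $\AbSol(p^{d_1},p^{d_2})$ is isomorphic to one of these, and shows that distinct triples give non-isomorphic solutions; such a solution has size $p^{d_1+d_2}$ and multipermutation level at most $2$. First I would check the converse matching: every $(X,r)\in\AbSol$ with $|X|=p^{d}$ and $\mpl(X,r)\le 2$ lies in $\AbSol(p^{d_1},p^{d_2})$ for a unique pair with $d_1+d_2=d$. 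If $\mpl(X,r)=2$ this pair is recovered from $p^{d_2}=|\Ret(X,r)|$ and $p^{d_1}=|X|/|\Ret(X,r)|$, and $d_1\ge d_2$ by \cref{lemma: chain} (applied to the permutation brace, whose type agrees with that of $(X,r)$). If $\mpl(X,r)\le 1$ then $(X,r)$ is of type $(p^{d},p^{0})$ and is therefore recorded by the single triple $(d,0,0)$. Consequently the isomorphism classes to be counted are in bijection with the set of triples $(d_1,d_2,m)$ satisfying $d_1+d_2=d$, $d_1\ge d_2\ge 0$ and $0\le m<p^{d_2}$.

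It then remains to count this finite set, which is a routine computation. The admissible values of $d_2$ are $0,1,\dots,\lfloor d/2\rfloor$ (since $d_1=d-d_2\ge d_2$ is equivalent to $d_2\le d/2$), and for each such $d_2$ there are exactly $p^{d_2}$ choices of $m$. Hence the number of solutions equals
\[
\sum_{d_2=0}^{\lfloor d/2\rfloor}p^{d_2}=\frac{p^{\lfloor d/2\rfloor+1}-1}{p-1},
\]
as asserted; for $d=0$ the sum reduces to the single term $p^{0}=1$, corresponding to the unique one-element solution.

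I do not expect a genuine obstacle here: \cref{theorem: classification mpl 2} already carries all the weight. The only thing requiring a little care is the bookkeeping around the value $d_2=0$, namely ensuring that the (at most one) solution of size $p^{d}$ and multipermutation level $\le 1$ is counted exactly once. Treating it as the degenerate member of the family $\AbSol(p^{d_1},p^{d_2})$ — rather than peeling off the permutation solutions by hand — makes this transparent and lets the single geometric sum above produce the claimed closed form.
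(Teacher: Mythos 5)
Your proof is correct and follows essentially the same route as the paper: both reduce the count to $\sum_{d_1+d_2=d,\ d_1\geq d_2\geq 0}|\AbSol(p^{d_1},p^{d_2})|$ via \cref{theorem: classification mpl 2} (which gives $p^{d_2}$ classes per type) and then evaluate the geometric sum $\sum_{d_2=0}^{\lfloor d/2\rfloor}p^{d_2}$. Your extra bookkeeping about the $\mpl\leq 1$ case being the $d_2=0$, $m=0$ member of the family is a harmless elaboration of what the paper leaves implicit.
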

        \begin{proof}
        This follow from the following calculation.
            \begin{equation*}
        \sum_{\substack{d_1+d_2=d\\d_1\geq d_2\geq 0}}|\AbSol(p^{d_1},p^{d_2})|=\sum_{\substack{d_1+d_2=d\\d_1\geq d_2\geq 0}}p^{d_2}=\sum_{d_2=0}^{\lfloor d/2\rfloor}p^{d_2}=(p^{\lfloor d/2\rfloor+1}-1)/(p-1).
    \end{equation*}
        \end{proof}
        \begin{remark}
            The classification in \cref{theorem: classification mpl 2} was also obtained in \cite{JPZ2021}, albeit in another form. It is interesting to note that the techniques used in the classification in \cite{JPZ2021} differ strongly from ours. One benefit of our techniques is that the general theoretical framework earlier in the section does not in any way pose any strict assumptions on the permutation level, the downside is that it is not immediately clear what the multiplicative and additive group of the permutation brace look like precisely. The techniques used in \cite{JPZ2021} rely strongly on the assumption that the multipermutation level is 2 and do not seem to generalise to higher multipermutation levels, however, through their classification it is immediately clear what the permutation group of a solution looks like.
        \end{remark}
    \subsubsection{The case $n=3$}
    Now let us compute the number of solutions in $\AbSol$ of a given size and multipermutation level at most 3. To do so, we first compute $|\AbSol(p^{d_1},p^{d_2},p^{d_3})|$. We may assume that $d_3> 0$, as otherwise the value can be obtained from the case $n= 2$. It will be convenient to consider the matrices in $\M{(p^{d_1},p^{d_2},p^{d_3})}$ up to row-equivalence (which we will denote by $\sim$). This is no problem as no two matrices in $\M{(p^{d_1},p^{d_2},p^{d_3})}$ are row-equivalent and the ideal $I_M$ can easily be constructed from any matrix which is row-equivalent to $M$, hence the action on $\M{(p^{d_1},p^{d_2},p^{d_3})}$ is still straightforward to compute. One easily verifies that all $M\in \M{(p^{d_1},p^{d_2},p^{d_3})}$ are row-equivalent to a unique matrix of the form
\begin{equation*}
    \begin{pmatrix}
    p^{d_1}& p^{d_1-d_2}m_2+\alpha p^{d_3} & m_1\\
    0& p^{d_2}& m_2\\
    0&0&p^{d_3}
    \end{pmatrix},
\end{equation*}
for $0\leq m_1,m_2<p^{d_3}$ and $0\leq \alpha <p^{d_2-d_3}$

Once again, for $y\in x+F_{3,p^d}^2$ we denote the unique automorphism of $F^*_{3,p^d}$ mapping $x$ to $y$ by $\phi_y$. We need to determine all $y\in x+F_{3,p^d}^2$ such that $\phi_y$ acts trivially on $M$. Let $y=x+ax^2+bx^3$. So $y^2=x^2+2ax^3$ and $y^3=x^3$ in $R$. We then know that
\begin{align*}
\phi_y\cdot M&\sim\begin{pmatrix}
    p^{d_1}& p^{d_1-d_2}m_2+\alpha p^{d_3}+ap^{d_1} & m_1+bp^{d_1}+2a(p^{d_1-d_2}m_2+\alpha p^{d_3})\\
    0& p^{d_2}& m_2+p^{d_2}\\
    0&0&p^{d_3}
    \end{pmatrix}\\
    &\sim\begin{pmatrix}
    p^{d_1}& p^{d_1-d_2}m_2+\alpha p^{d_3} & m_1+ap^{d_1-d_2}m_2\\
    0& p^{d_2}& m_2\\
    0&0&p^{d_3}
    \end{pmatrix}.
\end{align*} 
From which we find that $\phi_y\cdot M\sim M$ if and only if
\begin{equation}\label{eq: automorphism mpl 3}
   ap^{d_1-d_2}m_2\equiv 0\pmod{p^{d_3}}
\end{equation}

If we let $r(M)\in \N$ be the smallest value such that \eqref{eq: automorphism mpl 3} holds for $a=p^{r(M)}$, then we find that 
$$\Stab_{\Aut(R^*)}(M)=\{\phi_{x+ax^2+bx^3}\mid a\in p^{r(M)}\Z/p^d,b\in \Z/p^d\},$$
so in particular $|\Stab_{\Aut(R^*)}(M)|=p^{2d-r(M)}$ hence $|\cO(M)|=p^{r(M)}$.

Let $\nu_p:\Q\to \Z\cup \{+\infty\}$ denote the $p$-valuation. Then it is easily seen that $r(M)=\max\{0,-d_1+d_2+d_3-\nu(m_2)\}$. \cref{theorem: main theorem D and orbits M} now yields
\begin{align*}
    |\AbSol(p^{d_1},p^{d_2},p^{d_3})|&=\sum_{M\in \M{(p^{d_1},p^{d_2},p^{d_3})}} p^{-r(M)}\\
    &=\sum_{\substack{0\leq m_1,m_2<p^{d_3}\\0\leq \alpha <p^{d_2-d_3}}}p^{-\max\{0,-d_1+d_2+d_3-\nu(m_2)\}}\\
    &=p^{d_2}\sum_{m_2=0}^{p^{d_3}-1}p^{-\max\{0,-d_1+d_2+d_3-\nu(m_2)\}}
\end{align*}
One easily sees that for a given $v$, with $0\leq v<d_3$, there are precisely $p^{d_3-v}-p^{d_3-v-1}$ integers $k$, with $1\leq k< p^{d_3}$, such that $\nu_p(k)=v$. We therefore find that 
\begin{align*}
    |\AbSol(p^{d_1},p^{d_2},p^{d_3})|&=p^{d_2}\left(1+\sum_{v=0}^{d_3-1}\frac{p^{d_3-v}-p^{d_3-v-1}}{p^{\max\{0,-d_1+d_2+d_3-v\}}}\right)\\
    &=p^{d_2}\left(1+\sum_{w=0}^{d_3-1}\frac{p^{w+1}-p^{w}}{p^{\max\{0,-d_1+d_2+w+1\}}}\right).\\
\end{align*}
If $d_1=d_2$ then
\begin{align*}
    |\AbSol(p^{d_1},p^{d_2},p^{d_3})|&=p^{d_2}\left(1+\sum_{w=0}^{d_3-1}\frac{p^{w+1}-p^{w}}{p^{w+1}}\right)\\
    &=p^{d_2}(1+d_3(1-p^{-1})).
\end{align*}
If $d_1>d_2$ and $d_1<d_2+d_3$ then
\begin{align*}
    |\AbSol(p^{d_1},p^{d_2},p^{d_3})|&=p^{d_2}\left(1+\sum_{w=0}^{d_1-d_2-1}(p^{w+1}-p^{w})+\sum_{w=d_1-d_2}^{d_3-1}\frac{p^{w+1}-p^{w}}{p^{-d_1+d_2+w+1}}\right)\\
    &=p^{d_2}\left(1+p^{d_1-d_2}-1+p^{d_1-d_2}\sum_{w=d_1-d_2}^{d_3-1}(1-p^{-1})\right)\\
    &=p^{d_1}(1+(-d_1+d_2+d_3)(1-p^{-1})).
\end{align*}
If $d_1>d_2$ and $d_1\geq d_2+d_3$ then
\begin{align*}
    |\AbSol(p^{d_1},p^{d_2},p^{d_3})|&=p^{d_2}\left(1+\sum_{w=0}^{d_3-1}p^{w+1}-p^{w}\right)\\
    &=p^{d_2}\left(1+p^{d_3}-1\right)\\
    &=p^{d_2+d_3}
\end{align*}
We obtain the following result, whose proof is given above for $d_3>0$ and follows from \cref{theorem: classification mpl 2} for $d_3=0$.
\begin{theorem}
Let $p$ be a prime and $d_1\geq d_2\geq d_3\geq 0$, then
\begin{equation}
    |\AbSol(p^{d_1},p^{d_2},p^{d_3})|=\begin{cases}
    p^{d_1}(1+(-d_1+d_2+d_3)(1-p^{-1}))& \text{if $d_1<d_2+d_3$}\\
    p^{d_2+d_3} & \text{if $d_1\geq d_2+d_3$}
    \end{cases}.
\end{equation}
In particular, the number of isomorphism classes of solutions of size $p^d$ and multipermutation level at most 3 in $\AbSol$ can be computed as
\begin{equation*}
    \sum_{\substack{d_1\geq d_2\geq d_3\geq 0\\d_1+d_2+d_3=d}}|\AbSol(p^{d_1},p^{d_2},p^{d_3})|.
\end{equation*}
\end{theorem}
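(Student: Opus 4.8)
The plan is to organise the displayed computation that immediately precedes the statement. For $d_3>0$ one starts from \cref{theorem: main theorem D and orbits M}, which writes $|\AbSol(p^{d_1},p^{d_2},p^{d_3})|=\sum_{M}|\cO(M)|^{-1}$, uses the normal form for matrices in $\M(p^{d_1},p^{d_2},p^{d_3})$ up to row-equivalence (parametrised by $0\le m_1,m_2<p^{d_3}$ and $0\le\alpha<p^{d_2-d_3}$), and, for $y=x+ax^2+bx^3$, computes $\phi_y\cdot M$ to find $\phi_y\cdot M\sim M$ exactly when $ap^{d_1-d_2}m_2\equiv 0\pmod{p^{d_3}}$. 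This gives $|\cO(M)|=p^{r(M)}$ with $r(M)=\max\{0,\,-d_1+d_2+d_3-\nu_p(m_2)\}$, whence $|\AbSol(p^{d_1},p^{d_2},p^{d_3})|=p^{d_2}\sum_{m_2=0}^{p^{d_3}-1}p^{-r(M)}$. Grouping the $m_2$ by $\nu_p(m_2)$ and summing the resulting geometric series produces $p^{d_1}(1+(-d_1+d_2+d_3)(1-p^{-1}))$ both when $d_1=d_2$ and when $d_1>d_2$ with $d_1<d_2+d_3$, and $p^{d_2+d_3}$ when $d_1>d_2$ with $d_1\ge d_2+d_3$. Since $d_1=d_2$ forces $d_1<d_2+d_3$ when $d_3>0$, these collapse into exactly the two cases in the statement.

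Next I would dispatch the boundary case $d_3=0$ and the ``in particular'' clause. For $d_3=0$ the type $(p^{d_1},p^{d_2},p^{0})$ coincides with the type $(p^{d_1},p^{d_2})$ from the $n=2$ discussion, and \cref{theorem: classification mpl 2} supplies one solution per $m$ with $0\le m<p^{d_2}$, pairwise non-isomorphic, so $|\AbSol(p^{d_1},p^{d_2},p^{0})|=p^{d_2}$; as $d_1\ge d_2=d_2+d_3$ in this case this agrees with the second branch, so the stated two-case formula is valid for all $d_1\ge d_2\ge d_3\ge 0$.

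For the enumeration of all size-$p^d$ solutions of multipermutation level at most $3$, I would use that every finite object of $\AbSol$ has a well-defined type $(p^{e_1},\dots,p^{e_n})$ with $n=\mpl$ and $e_1\ge\dots\ge e_n\ge 0$ (by \cref{lemma: chain} and \cref{lem: relation socle and powers}) and size $p^{e_1+\dots+e_n}$, and that isomorphic solutions have the same type because retraction is functorial. Padding a shorter type with trailing zeros identifies, up to isomorphism, the size-$p^d$ solutions of level at most $3$ with the disjoint union of the $\AbSol(p^{d_1},p^{d_2},p^{d_3})$ over the finitely many triples $d_1\ge d_2\ge d_3\ge 0$ with $d_1+d_2+d_3=d$, so summing the closed formula over this index set yields the count.

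The only genuinely nontrivial input is the first paragraph, and it is already available in the preceding text; the rest is bookkeeping. If the argument had to be built from scratch, the hard part would be producing the row-equivalence normal form for $\M(p^{d_1},p^{d_2},p^{d_3})$ and tracking the action of the free abelian group $\Aut(F_{3,p^d}^*)$ on it, since that is what reduces the orbit count to the single congruence $ap^{d_1-d_2}m_2\equiv 0\pmod{p^{d_3}}$ and makes the geometric sums tractable.
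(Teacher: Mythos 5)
Your proposal is correct and follows essentially the same route as the paper: the displayed orbit-counting computation via \cref{theorem: main theorem D and orbits M}, the normal form for $\M(p^{d_1},p^{d_2},p^{d_3})$, the congruence $ap^{d_1-d_2}m_2\equiv 0\pmod{p^{d_3}}$ giving $|\cO(M)|=p^{r(M)}$, the geometric sums by $p$-valuation, and the reduction of $d_3=0$ to \cref{theorem: classification mpl 2}. (One cosmetic slip in your closing remarks: $\Aut(F_{3,p^d}^*)$ is a finite abelian group, not free abelian; this does not affect the argument.)
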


\section{Some infinite indecomposable involutive solutions with abelian permutation group}\label{section: infinite solutions}

In \cite[Theorem 6.1]{JPZ2021} for each $m\in \N$ an indecomposable involutive solutions of multipermutation level $2$ and permutation group $\Z\times \Z/m$ was given. In this section, we will show that they are the only ones. Furthermore we classify indecomposable involutive multipermutation solutions with a torsion-free abelian permutation group.

\begin{proposition}\label{prop: infinite brace mpl 2}
    Let $A$ be an infinite one-generated brace with abelian multiplicative group and multipermutation level 2, and let $X$ be a transitive cycle base of $A$. Then $(A,X)$ is isomorphic to $(F_2/I_m)^*$ for some $m\in \N\setminus \{1\}$ where $I_m=\Z m x^2$. Moreover, $(F_2/I_m,\circ)$ is isomorphic to $\Z\times (\Z/m)$.
\end{proposition}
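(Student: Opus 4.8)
The plan is to realise $(A,X)$ as a quotient of the free object $F_2^{*}$ and then to determine which ideals of $F_2$ can occur. Since $A$ has an abelian multiplicative group it is two-sided, and $(A,X)$ is an object of $\AbBr$ of multipermutation level $2$; applying \cref{cor: pair isomorphic to quotient of F_n} with $n=2$ gives a surjective homomorphism $f\colon F_2^{*}\to (A,X)$ together with an isomorphism $(A,X)\cong (F_2/\ker f)^{*}$ in $\AbBr$. It therefore suffices to classify the ideals $I$ of $F_2$ for which $F_2/I$ is infinite, and then to use the hypothesis on the multipermutation level to rule out the one degenerate case.

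Recall that $F_2$ has additive basis $\{x,x^{2}\}$, that the $*$-operation on $F_2$ is its ring multiplication (in $\Z[x]/(x^{3})$), and that $F_2^{2}=\Z x^{2}$. The key step is to show that $F_2/I$ is infinite if and only if $I\subseteq \Z x^{2}$. Indeed, if $I\not\subseteq\Z x^{2}$ then $I$ contains some $\ell x+cx^{2}$ with $\ell>0$; multiplying by $x$ gives $\ell x^{2}=x*(\ell x+cx^{2})\in I$, so $I$ contains the subgroup generated by $\ell x+cx^{2}$ and $\ell x^{2}$, which has index $\ell^{2}<\infty$ in $F_2$, whence $F_2/I$ is finite. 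Conversely, every subgroup of $\Z x^{2}$ is an ideal of $F_2$ (since $F_2*\Z x^{2}=0$) and equals $I_m=\Z m x^{2}$ for a unique $m\in\N$, with $F_2/I_m$ manifestly infinite. Hence $(A,X)\cong (F_2/I_m)^{*}$ for some $m\in\N$. Since $F_2/I_1=F_2/F_2^{2}\cong\Z$ is the trivial brace, of multipermutation level $1$, the hypothesis $\mpl(A)=2$ forces $m\neq 1$; (one checks directly that $\Soc(F_2/I_m)=(m\Z x+\Z x^{2})/I_m$ is a proper nonzero ideal with trivial quotient precisely when $m\neq 1$, so that $\mpl(F_2/I_m)=2$ exactly in these cases, matching the examples of \cite{JPZ2021}).

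For the last assertion I would compute $(F_2/I_m,\circ)$ from the ideal $(F_2/I_m)^{2}=(\Z x^{2}+I_m)/I_m$. On $\Z x^{2}$ the operation $\circ$ coincides with $+$ because $x^{2}*x^{2}\in F_2^{4}=0$, so $((F_2/I_m)^{2},\circ)\cong\Z/m$ (reading $\Z/0=\Z$), while $(F_2/I_m)/(F_2/I_m)^{2}\cong F_2/F_2^{2}\cong\Z$ as braces, so its multiplicative group is $\Z$. As $(F_2/I_m,\circ)$ is abelian, being a quotient of the abelian group $(F_2,\circ)$, this yields a short exact sequence of abelian groups
\[
0\longrightarrow \Z/m \longrightarrow (F_2/I_m,\circ) \longrightarrow \Z \longrightarrow 0,
\]
which splits since $\Z$ is free; therefore $(F_2/I_m,\circ)\cong\Z\times\Z/m$.

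The only genuinely delicate point is the ideal classification, namely being sure that an ideal not contained in $F_2^{2}$ cannot give an infinite quotient; this is exactly the finite-index estimate above. Everything else — verifying that the $\Z m x^{2}$ are ideals, identifying the socle, and the splitting argument — is routine.
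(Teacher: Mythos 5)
Your proposal is correct and follows essentially the same route as the paper: reduce to $(F_2/I)^*$ via \cref{cor: pair isomorphic to quotient of F_n}, show that an ideal meeting the quotient's infinitude must lie in $\Z x^2$ (the paper phrases this as ``$(I,+)$ is cyclic and its generator has no $x$-component,'' which is the same finite-index observation you make), exclude $m=1$ by the multipermutation hypothesis, and compute $(F_2/I_m,\circ)$ from the extension of $\Z$ by $\Z/m$. The extra details you supply (the socle computation and the explicit splitting) are correct but not needed beyond what the paper records.
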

\begin{proof}
By \cref{cor: pair isomorphic to quotient of F_n} we know that $(A,X)$ is isomorphic to $(F_2/I)^*$ for some ideal $I$. It is possible that $I=0$, in which case $I=I_0$. Now assume that $I\neq 0$. We know that $(I,+)$ should be cyclic because otherwise $(F_2/I,+)$ is finite. Let $l x+m x^2$, with $l,m\in \Z$ and $m\geq 0$, be a generator of $(I,+)$. Then $x*(l x+mx^2)=\alpha x^2\in I$ is an additive multiple of $lx+mx^2$, hence $l=0$. We therefore find that $I=I_m$ for some $m\in \N\setminus \{1\}$, as $m=1$ would mean that $\mpl(F_n/I)=1$. Because $F_2*F_2^2=F^2_2*F_2=0$, we find that $I_m$ is an ideal for every choice of $m$. As $(F_2/I_m,+)\cong \Z\times (\Z/m)$, it is clear that different choices of $m$ yield non-isomorphic braces $F_2/I_m$.

It remains to show that $(F_2/I_m,\circ)\cong \Z\times (\Z/m)$. To see this, it suffices to note that $((F_2/I_m)/(F_2/I_m)^2,\circ)\cong \Z$ and $((F_2/I_m)^2,\circ)\cong \Z/m$.
\end{proof}
\begin{theorem}\label{unique111}
    Let $m\in \N\setminus \{1\}$ then $\Z\times \Z/m$ with $$\sigma_{(a_1,a_2)}(b_1,b_2)=(b_1+1,a_1+b_1+b_2),$$ is an infinite indecomposable involutive solution with permutation group isomorphic to $\Z\times \Z/m$ and multipermutation level 2. Moreover, every infinite indecomposable solution with abelian permutation group and multipermutation level 2 is isomorphic to such a solution.
\end{theorem}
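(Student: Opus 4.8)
The plan is to prove the two assertions separately, both times leaning on the brace classification of \cref{prop: infinite brace mpl 2} and on the correspondence between braces carrying a transitive cycle base and indecomposable involutive solutions with abelian permutation group. For the existence statement I would take the brace $A=F_2/I_m$ with $I_m=\Z m x^2$, together with its transitive cycle base (the image of $x+F_2^2$), i.e.\ the object $(F_2/I_m)^*$ of $\AbBr$ from \cref{prop: infinite brace mpl 2}. Applying \cref{theorem: construction solution Bachiller} to $A$ and the element $x$ of this cycle base produces an indecomposable solution $(A,r_x)$ with regular permutation group and $\G(A,r_x)\cong A$ as braces; it is involutive because $(A,+)$ is abelian, infinite because $|A|=\infty$, of multipermutation level $2$ since $\mpl(A,r_x)=\mpl(\G(A,r_x))=\mpl(A)=2$ by \cref{cor: mpl indecomposable solution} and \cref{prop: infinite brace mpl 2}, and with permutation group $(\G(A,r_x),\circ)\cong(A,\circ)\cong\Z\times\Z/m$ by \cref{prop: infinite brace mpl 2}. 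Finally, a one-line computation in $F_2/I_m$ (using $a\circ b=a+a*b+b$ and $x*x=x^2$) gives $\lambda_{(a_1,a_2)}(x)=x+a_1x^2$ and hence, after identifying $A$ with $\Z\times\Z/m$ via the basis $x,x^2$, $\sigma_{(a_1,a_2)}(b_1,b_2)=\lambda_{(a_1,a_2)}(x)\circ(b_1,b_2)=(b_1+1,a_1+b_1+b_2)$; as the solution is involutive this determines it completely, so $(A,r_x)$ is exactly the displayed solution (this is the case $n=2$ of \cref{rem: explicit form of solution}; compare \cref{theorem: classification mpl 2}).

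For the uniqueness statement, let $(X,r)$ be an infinite indecomposable involutive solution with abelian permutation group and $\mpl(X,r)=2$; then $(X,r)$ is an object of $\AbSol$, so by \cref{prop: bijective correspondence pairs and solutions} there is an object $(A,Y)\in\AbBr$ with $(X,r)\cong(A,r_y)$ for some $y\in Y$, where $|A|=|X|$ is infinite and $\mpl(A)=\mpl(X,r)=2$. Here $A$ is a brace with abelian multiplicative group, $Y$ is a transitive cycle base of $A$ (since $(A,r_y)$ is indecomposable), and $\mpl(A)=2$; by \cref{cor: element in transitive cycle base generates skew brace} together with \cref{theorem: finite mpl strong left implies skew}, the element $y$ generates $A$ as a brace, so $A$ is one-generated. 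Now \cref{prop: infinite brace mpl 2} forces $(A,Y)\cong(F_2/I_m)^*$ for some $m\in\N\setminus\{1\}$, and therefore $(X,r)\cong(A,r_y)\cong(F_2/I_m,r_x)$, which by the existence part is the displayed solution.

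I expect the only point that needs real attention to be the one-generation of the infinite brace $A$: this is what makes \cref{prop: infinite brace mpl 2} applicable, and it relies on the fact (\cref{theorem: finite mpl strong left implies skew}) that in a multipermutation skew brace generating as a strong left ideal is the same as generating as a skew brace. Everything else is a matter of invoking \cref{prop: bijective correspondence pairs and solutions} and \cref{theorem: construction solution Bachiller} and carrying out the short explicit multiplication in $F_2/I_m$.
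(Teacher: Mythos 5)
Your proof is correct and follows essentially the same route as the paper: the paper's own proof is just the two-sentence version ("an easy computation" for the explicit solution attached to $(F_2/I_m)^*$, then \cref{prop: infinite brace mpl 2} together with \cref{prop: bijective correspondence pairs and solutions} for uniqueness), and your write-up simply fills in the details of that computation and of why the permutation brace is one-generated.
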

\begin{proof}
 Let $m\in \N\setminus \{1\}$. An easy computation shows that the solution associated to $(F_n/I_m)^*$ is precisely the solution in the statement and it satisfies the required properties. Conversely, if $(X,r)$ is an indecomposable solution with abelian permutation group and multipermutation level 2, then its permutation brace must satisfy the conditions of \cref{prop: infinite brace mpl 2}, hence the statement follows from \cref{prop: bijective correspondence pairs and solutions}.
\end{proof}

\begin{proposition}\label{prop: brace torsion free mult group}
Let $A$ be a brace having multipermutation level $n$, with a transitive cycle base $X$ and with an abelian torsion-free multiplicative group. Then $(A,X)$ is isomorphic to $F_{n}^*$.
\end{proposition}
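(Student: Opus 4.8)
The plan is to realise $(A,X)$ as a quotient of the free object $F_n^*$ and to show that the torsion-freeness of $(A,\circ)$ forces the defining ideal to vanish. Since $A$ has abelian multiplicative group, a transitive cycle base $X$, and $\mpl(A)=n$, \cref{cor: pair isomorphic to quotient of F_n} produces an ideal $I$ of $F_n$ with $(A,X)\cong (F_n/I)^*$; it therefore suffices to prove $I=0$. I would argue by contradiction: assuming $I\neq 0$, I will exhibit a nontrivial torsion element of $(F_n/I,\circ)$.

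First I would push a nonzero element of $I$ into the top power $F_n^n=\Z x^n$. Recall that $F_n^k=\langle x^k,\dots,x^n\rangle_+$, that $F_n^{n+1}=0$, and that, $F_n$ being a two-sided brace, $I$ is also an ideal of its associated Jacobson radical ring, so $F_n*I\subseteq I$. Picking $0\neq a\in I$ and letting $j$ be largest with $a\in F_n^j$, the element $a$ has nonzero leading coefficient $a_j$ at $x^j$, and either $j=n$ (so $a\in\Z x^n$ already) or $j<n$ and $x^{n-j}*a=a_j x^n$, all higher terms vanishing since $F_n^{n+1}=0$. In either case $I\cap F_n^n$ is a nonzero subgroup of $\Z x^n$, say $I\cap F_n^n=d\,\Z x^n$ for some integer $d\geq 1$.

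Next I would use $\mpl(A)=n$ to see that $x^n\notin I$, equivalently $d\geq 2$. For the one-generated brace $A=F_n/I$ with abelian multiplicative group, the description of the powers recalled before \cref{lemma: chain} gives $A^{k+1}=\{x^k*a\mid a\in A\}$, and the induction carried out in the proof of \cref{lem: relation socle and powers} shows, with no finiteness needed, that $x^k*a=0$ if and only if $a\in\Soc_k(A)$; hence $A^{k+1}\cong A/\Soc_k(A)$ additively and $A^{k+1}=0$ exactly when $\Soc_k(A)=A$. From $\mpl(A)=n$ we get $\Soc_{n-1}(A)\neq A$, hence $A^n\neq 0$; since $A^n$ is the image of $F_n^n=\Z x^n$ in $F_n/I$, this forces $x^n\notin I$.

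Finally I would observe that $x^n\in\Soc(F_n)$ --- indeed $x^n*F_n$ and $F_n*x^n$ lie in $F_n^{n+1}=0$ and $(F_n,+)$ is abelian, so $x^n\circ a=x^n+a=a+x^n$ for all $a$ --- and that the image of the socle in any quotient lies in the socle, so $\bar x^n:=x^n+I$ lies in $\Soc(F_n/I)$. Consequently the $\circ$-powers of $\bar x^n$ coincide with its $+$-multiples, so $\bar x^n$ has finite $\circ$-order dividing $d$; since $\bar x^n\neq 0$ (as $x^n\notin I$), this is a nontrivial torsion element of $(F_n/I,\circ)\cong(A,\circ)$, contradicting the hypothesis, and hence $I=0$ and $(A,X)\cong F_n^*$. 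I expect the only genuinely delicate point to be the third step: transferring the identity $A^{k+1}\cong A/\Soc_k(A)$ (and with it the fact that $\mpl$ coincides with the right-nilpotency index) to the possibly infinite brace $F_n/I$, while keeping the bookkeeping about $*$-closedness of ideals of $F_n$ straight; the remaining steps are short direct computations.
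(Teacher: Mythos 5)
Your proof is correct and follows essentially the same route as the paper: realise $(A,X)$ as $(F_n/I)^*$, show that a nonzero $I$ meets $F_n^n=\Soc(F_n)$ nontrivially, and conclude that the image of $x^n$ would be a nontrivial torsion element of the multiplicative group unless $x^n\in I$, which is excluded because $\mpl(A)=n$. The only difference is cosmetic: where the paper invokes \cite[Theorem 2.8]{CSV19} to get $I\cap F_n^n\neq 0$, you obtain it by the direct computation $x^{n-j}*a=a_jx^n$, and you spell out the socle/torsion and multipermutation-level steps that the paper leaves implicit.
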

\begin{proof}
    From \cref{cor: pair isomorphic to quotient of F_n} we know that $A\cong F_n/I$ for some ideal $I$. Assume that $I\neq 0$. As $\Soc(F_n)=F_n^n$, it follows that $I\cap F_n^n\neq 0$ by \cite[Theorem 2.8]{CSV19}. If $I\cap F_n^n\neq F_n^n$, we find that $x^n$ has a finite order in $(F_n,\circ)$. This implies that $I\cap F_n^n= F_n^n$, but then $F_n/I$ has a multipermutation level strictly less than $n$. We conclude that $I=0$.
\end{proof}

\begin{theorem}\label{theorem: unique}
Let $n\in \mathbb{N}$ with $n>1$. Then the solution $(F_n,r_x)$ is the unique indecomposable involutive solution with abelian torsion-free permutation group and multipermutation level $n$.
\end{theorem}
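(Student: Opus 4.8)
The plan is to deduce the theorem from machinery already established in the excerpt, doing essentially no fresh work on the brace side. First, for the existence half, recall that the example introducing $F_n$ exhibits $F_n^*=(F_n,x+F_n^2)$ as an object of $\AbBr$: the multiplicative group $(F_n,\circ)$ is abelian and $x+F_n^2$ is a transitive cycle base of $F_n$. Hence, by \cref{theorem: construction solution Bachiller}, $(F_n,r_x)$ is an indecomposable involutive solution whose permutation brace is $F_n$ itself, so its permutation group is $(F_n,\circ)$, which (as observed in the proof of \cref{corn}) is free abelian of rank $n$, in particular abelian and torsion-free. Since the correspondence of \cref{prop: bijective correspondence pairs and solutions} sends $F_n^*$ to $(F_n,r_x)$ and preserves multipermutation level, we get $\mpl(F_n,r_x)=\mpl(F_n)$, so it only remains to record that $\mpl(F_n)=n$. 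For this I would reuse the computation in the proof of \cref{lem: relation socle and powers}: that argument uses nothing but one-generatedness, so it applies verbatim to the infinite brace $F_n$ and gives $\Soc_k(F_n)=F_n^{\,n+1-k}$ for all $k\ge 0$; therefore $\Soc_n(F_n)=F_n$ while $\Soc_{n-1}(F_n)=F_n^2\neq F_n$, whence $\mpl(F_n)=n$. This completes the check that $(F_n,r_x)$ is a solution of the required kind.

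For uniqueness, let $(X,r)$ be any indecomposable involutive solution with abelian torsion-free permutation group and $\mpl(X,r)=n$. Its permutation brace $\G(X,r)$ is a brace, since its additive group is a quotient of the abelian additive group of the brace $G(X,r)$, and its multiplicative group is exactly the permutation group of $(X,r)$, hence abelian and torsion-free. The canonical image $X'$ of $X$ in $\G(X,r)$ is a cycle base, and it is a single orbit because $(X,r)$ is indecomposable, so $(\G(X,r),X')\in\AbBr$; moreover $\mpl(\G(X,r))=\mpl(X,r)=n$ by \cref{cor: mpl indecomposable solution}. Now \cref{prop: brace torsion free mult group} applies directly and yields $(\G(X,r),X')\cong F_n^{*}$ in $\AbBr$.

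To finish I would invoke the bijective correspondence of \cref{prop: bijective correspondence pairs and solutions}. By \cref{theorem: construction solution Bachiller}, every object of $\AbSol$ is recovered from its own permutation brace together with the canonical image of its underlying set, so the $\AbBr$-object attached to $(X,r)$ under this correspondence is (the isomorphism class of) $(\G(X,r),X')$, which we have just seen is $F_n^{*}$ — the $\AbBr$-object attached to $(F_n,r_x)$. Since the correspondence is a bijection on isomorphism classes, $(X,r)\cong(F_n,r_x)$, as claimed.

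I expect the only delicate point to be the bookkeeping in this last step: one must make sure that the brace isomorphism $\G(X,r)\to F_n$ produced by \cref{prop: brace torsion free mult group} is compatible with the distinguished cycle bases (so that it is genuinely an isomorphism in $\AbBr$), and that the correspondence of \cref{prop: bijective correspondence pairs and solutions} really does assign to $(X,r)$ the class of $(\G(X,r),X')$. Both facts are built into \cref{theorem: construction solution Bachiller} together with the functoriality of the constructions $\G(-)$ and $A\mapsto(A,r_y)$ under isomorphisms of solutions; if one prefers to avoid appealing to the correspondence at all, one can instead argue directly that $(X,r)\cong(\G(X,r),r_{x'})$ for $x'\in X'$ (via \cref{theorem: construction solution Bachiller} and \cref{theorem: isomorphism solution Bachiller}), push this forward along the isomorphism from \cref{prop: brace torsion free mult group} to obtain $(F_n,r_{y})$ with $y\in x+F_n^2$, and then use \cref{theorem: isomorphism solution Bachiller} once more to identify it with $(F_n,r_x)$. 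Everything else is a routine application of the results already proved in the excerpt.
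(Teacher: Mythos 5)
Your proposal is correct and follows essentially the same route as the paper, which proves this theorem simply by combining \cref{prop: bijective correspondence pairs and solutions} with \cref{prop: brace torsion free mult group}; your extra verifications (that $\mpl(F_n)=n$ via the socle computation, and the cycle-base bookkeeping) are sound but are details the paper leaves implicit.
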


\begin{proof}
This is a direct consequence of \cref{prop: bijective correspondence pairs and solutions} and \cref{prop: brace torsion free mult group}.
\end{proof}
\begin{remark}
    The solutions in \cref{theorem: unique} can be expressed explicitly in terms of the additive group $\Z^n$ of $F_n$, just as the ones discussed in \cref{rem: explicit form of solution}.
\end{remark}

\section*{Acknowledgment}
The authors would like to thank Wolfgang Rump for providing them with the preprint of \cite{rump2022class} and for the clarification on his comment herein on the equality $|A|=|A^2||\Soc(A)|$.

\bibliographystyle{amsalpha}

\bibliography{Bibliography.bib,bib.bib}

\end{document}